\documentclass[a4paper, 12pt, reqno]{amsart}

\pdfoutput=1

\usepackage[margin=2.5cm, headsep=0.75cm, footskip=0.75cm]{geometry}
\usepackage{amsmath, amsthm, amssymb, mathtools}
\usepackage{setspace}
\usepackage[english]{isodate}
\usepackage{enumitem}
\usepackage{xcolor}
\usepackage{tikz, tikzsymbols, tikz-cd}
\usepackage[colorlinks=true, linkcolor=blue, linkbordercolor=blue, citecolor=red, citebordercolor=red, urlcolor=indigo, linktocpage=true]{hyperref}
\usepackage[capitalise, nameinlink, noabbrev, nosort]{cleveref}
\usepackage{doi}

\definecolor{indigo}{HTML}{492DA5}

\vbadness=10000

\setenumerate{listparindent=\parindent}

\providecommand{\noopsort}[1]{}

\makeatletter\g@addto@macro\bfseries{\boldmath}\makeatother

\makeatletter
\let\origsection\section
\renewcommand\section{\@ifstar{\starsection}{\nostarsection}}
\newcommand\sectionspace{\vspace{0.5ex}}
\newcommand\nostarsection[1]{\sectionspace\origsection{#1}\sectionspace}
\newcommand\starsection[1]{\sectionspace\origsection*{#1}\sectionspace}
\makeatother


\setlist[enumerate]{font=\normalfont}
\crefname{enumi}{}{}

\newcommand\numberthis{\addtocounter{equation}{1}\tag{\theequation}}

\numberwithin{equation}{section}
\crefname{equation}{Equation}{Equations}

\crefname{condition}{Condition}{Conditions}
\creflabelformat{condition}{#2(#1)#3}

\crefname{claim}{Claim}{Claims}
\creflabelformat{claim}{#2(#1)#3}

\newtheorem{theorem}{Theorem}[section]

\newtheorem{thm}[theorem]{Theorem}
\crefname{thm}{Theorem}{Theorems}

\newtheorem{lemma}[theorem]{Lemma}
\crefname{lemma}{Lemma}{Lemmas}

\newtheorem{prop}[theorem]{Proposition}
\crefname{prop}{Proposition}{Propositions}

\newtheorem{cor}[theorem]{Corollary}
\crefname{cor}{Corollary}{Corollaries}

\theoremstyle{definition}

\newtheorem{definition}[theorem]{Definition}
\crefname{definition}{Definition}{Definitions}

\theoremstyle{remark}

\newtheorem{remark}[theorem]{Remark}
\crefname{remark}{Remark}{Remarks}

\newcommand{\hl}[1]{\textcolor{magenta}{\emph{#1}}}
\newcommand{\C}{\mathbb{C}}
\newcommand{\N}{\mathbb{N}}
\newcommand{\R}{\mathbb{R}}
\newcommand{\T}{\mathbb{T}}
\newcommand{\Z}{\mathbb{Z}}
\renewcommand{\AA}{\mathcal{A}}
\newcommand{\FF}{\mathcal{F}}
\newcommand{\GG}{\mathcal{G}}
\newcommand{\GGo}{\GG^{(0)}}
\newcommand{\GGc}{\GG^{(2)}}
\newcommand{\HH}{\mathcal{H}}
\newcommand{\II}{\mathcal{I}}
\newcommand{\LL}{\mathcal{L}}
\newcommand{\MM}{\mathcal{M}}
\newcommand{\OO}{\mathcal{O}}
\newcommand{\XX}{\mathcal{X}}
\newcommand{\YY}{\mathcal{Y}}
\renewcommand{\d}{\mathrm{d}}
\newcommand{\BT}{\mathcal{B}_T}
\newcommand{\GT}{\GG_T}
\newcommand{\GTo}{\GG_T^{(0)}}
\newcommand{\GTc}{\GG_T^{(2)}}
\newcommand{\HT}{\HH_T}
\newcommand{\HTo}{\HH_T^{(0)}}
\newcommand{\HTc}{\HH_T^{(2)}}
\newcommand{\IT}{\II_T}

\newcommand{\ITc}{\II_T^{(2)}}
\newcommand{\PT}{P_T}
\newcommand{\PTHat}{\widehat{P}_T}
\newcommand{\vecspan}{\operatorname{span}}
\newcommand{\clspan}{\overline{\vecspan}}
\newcommand{\supp}{\operatorname{supp}}
\newcommand{\osupp}{\operatorname{osupp}}
\newcommand{\ev}{\operatorname{ev}}

\newcommand{\Aut}{\operatorname{Aut}}
\newcommand{\Ext}{\operatorname{Ext}}
\newcommand{\ForwardOrbit}[1]{\operatorname{Orb}^+(#1)}
\newcommand{\Ind}{\operatorname{Ind}}
\newcommand{\Iso}{\operatorname{Iso}}
\newcommand{\Orb}[2][{}]{[#2]_{#1}}

\newcommand{\Stabess}[1]{\operatorname{Stab}^{\operatorname{ess}}(#1)}
\newcommand{\lav}{\ensuremath{\lvert}}
\newcommand{\rav}{\ensuremath{\rvert}}
\newcommand{\lv}{\ensuremath{\lVert}}
\newcommand{\rv}{\ensuremath{\rVert}}
\newcommand{\medcap}{\mathbin{\scalebox{1.2}{\ensuremath{\cap}}}}
\newcommand{\medcup}{\mathbin{\scalebox{1.2}{\ensuremath{\cup}}}}

\newcommand{\restr}[1]{\ensuremath{\vert_{#1}}}
\newcommand{\bigrestr}[1]{\ensuremath{\big\vert_{#1}}}
\newcommand{\trace}{\ensuremath{\operatorname{Tr}_T^\omega}}
\newcommand{\twistchar}{\ensuremath{\tau}}

\hypersetup{pdfauthor={Becky Armstrong, Nathan Brownlowe, and Aidan Sims}, pdftitle={Simplicity of twisted C*-algebras of Deaconu–Renault groupoids}}

\date{\today}
\title[Simplicity of twisted C*-algebras of Deaconu--Renault groupoids]{Simplicity of twisted C*-algebras of Deaconu--Renault groupoids}

\author[Armstrong]{Becky Armstrong}
\author[Brownlowe]{Nathan Brownlowe}
\author[Sims]{Aidan Sims}
\address[B.~Armstrong]{School of Mathematics and Statistics, Victoria University of Wellington, PO Box 600, Wellington 6140, NEW ZEALAND}
\email{\href{mailto:becky.armstrong@vuw.ac.nz}{becky.armstrong@vuw.ac.nz}}
\address[N.~Brownlowe]{School of Mathematics and Statistics, The University of Sydney, NSW 2006, AUSTRALIA}
\email{\href{mailto:nathan.brownlowe@sydney.edu.au}{nathan.brownlowe@sydney.edu.au}}
\address[A.~Sims]{School of Mathematics and Applied Statistics, University of Wollongong, NSW 2522, AUSTRALIA}
\email{\href{mailto:asims@uow.edu.au}{asims@uow.edu.au}}

\subjclass[2020]{46L05 (primary)}
\keywords{C*-algebra, groupoid, Deaconu--Renault, twist, simplicity}
\thanks{The first author was supported by an Australian Government Research Training Program Stipend Scholarship. The second and third authors were supported by the Australian Research Council grant DP200100155. The first author would like to thank the second and third authors for being awesome PhD super(hero)visors.}

\begin{document}

\begin{abstract}
We consider Deaconu--Renault groupoids associated to actions of finite-rank free abelian monoids by local homeomorphisms of locally compact Hausdorff spaces. We study simplicity of the twisted C*-algebra of such a groupoid determined by a continuous circle-valued groupoid $2$-cocycle. When the groupoid is not minimal, this C*-algebra is never simple, so we focus on minimal groupoids. We describe an action of the quotient of the groupoid by the interior of its isotropy on the spectrum of the twisted C*-algebra of the interior of the isotropy. We prove that the twisted groupoid C*-algebra is simple if and only if this action is minimal. We describe applications to crossed products of topological-graph C*-algebras by quasi-free actions.
\end{abstract}

\maketitle

\section{Introduction}
\label{sec: intro}

The purpose of this paper is to characterise simplicity of twisted C*-algebras arising from continuous $2$-cocycles on Deaconu--Renault groupoids of actions of $\N^k$ on second-countable locally compact Hausdorff spaces. The study of twisted C*-algebras associated to continuous groupoid $2$-cocycles dates back to Renault's seminal work \cite{Renault1980}. They serve both as a very flexible C*-algebraic framework for modelling dynamical systems, and as a source of tractable models for classifiable C*-algebras \cite{BCSS2016, aHKS2011, Li2020, RT1985}. So it is important to be able to determine when a given twisted groupoid C*-algebra is simple; but this is in general a complicated question.

Deaconu--Renault groupoids encode actions of submonoids of abelian groups by local homeomorphisms of locally compact Hausdorff spaces. In hindsight, the first example of such a groupoid was the one associated to the one-sided full shift on $n$ letters, introduced by Renault in \cite{Renault1980} as a model for the Cuntz algebra. However groupoids of this type for generic local homeomorphisms (that is, actions of $\N$) were first studied by Deaconu \cite{Deaconu1995}, and have come to be known as (rank-$1$) Deaconu--Renault groupoids. Shortly afterwards they were used as models for graph C*-algebras in \cite{KPR1998, KPRR1997}, and later still, Yeend \cite{Yeend2007JOT} showed that rank-$1$ Deaconu--Renault groupoids provide models for the topological-graph C*-algebras of Katsura \cite{Katsura2004TAMS}.

For the dual reasons that most of the key examples studied had been related to $0$\nobreakdash-dimensional spaces, and that $\N$ embeds in $\Z$, which has trivial cohomology, no work was done on twisted C*-algebras associated to Deaconu--Renault groupoids for many years. However, in 2000, Kumjian and Pask \cite{KP2000} introduced higher-rank graphs (or $k$\nobreakdash-graphs) and demonstrated that the associated C*-algebras can be described as the C*-algebras of Deaconu--Renault groupoids of actions of $\N^k$. This led to the development \cite{KPS2012JFA, KPS2013JMAA, KPS2015TAMS} of twisted $k$-graph C*-algebras. Kumjian, Pask, and Sims showed that from a $2$-cocycle on a $k$-graph, one can construct a $2$-cocycle on the associated Deaconu--Renault groupoid so that the twisted C*-algebras coincide, and they used this model to characterise simplicity of twisted $k$-graph C*-algebras \cite{KPS2016JNG}, as well as to describe applications of this characterisation to the study of crossed products of graph algebras by quasi-free actions.

Here we build substantially on elements of the analysis of \cite{KPS2016JNG} to describe precisely when the twisted C*-algebra of a Deaconu--Renault groupoid for an action of $\N^k$ by local homeomorphisms is simple (\cref{thm: simplicity characterisation}). To demonstrate the applicability of our main theorem, we use this result to investigate simplicity of crossed products of C*\nobreakdash-algebras associated to rank-$1$ Deaconu--Renault groupoids by actions of $\Z$ induced by $\T$-valued $1$-cocycles (\cref{thm: CP simple}), and we specialise to the Deaconu--Renault groupoids of topological graphs to characterise simplicity of crossed products of topological-graph C*-algebras by quasi-free automorphisms (\cref{cor: topgraph exact,cor: topgraph checkable}).

The paper is organised as follows. In \cref{sec: background} we establish background and notation. In \cref{sec: isotropy} we describe the periodicity group $\PT$ of a minimal action $T$ of $\N^k$ on a second-countable locally compact Hausdorff space $X$, and we show that the interior of the isotropy of $\GT$ is isomorphic to the group bundle $X \times \PT$. In \cref{sec: cohomology} we show that every $2$-cocycle on $\GT$ is cohomologous to one whose restriction to $X \times \PT$ is determined by a fixed bicharacter $\omega$ of $\PT$ that vanishes on its own centre $Z_\omega$, and we use this to give a concrete description of the spectral action $\theta$ of $\GT/\IT$ on $X \times \widehat{Z}_\omega$. Then in \cref{sec: simplicity} we state and prove our main theorem. We finish in \cref{sec: application} by describing an application to crossed products of rank-$1$ Deaconu--Renault groupoid C*-algebras by automorphisms induced by continuous $1$-cocycles. We provide two appendices---one on group cohomology and one on twisted group C*-algebras---to provide a handy reference to some key results on these two topics that we need in the body of the paper, and have found difficult to locate explicitly in the literature.

\section{Background}
\label{sec: background}

\subsection{Group \texorpdfstring{$2$}{2}-cocycles and bicharacters}
\label{sec: group cohomology}

Here we briefly recall some key facts about second cohomology for discrete groups. For more detail see \cite[Chapter~IV]{Brown1982} and \cite{BK1973, Kleppner1965} (or \cite[Chapter~2]{Armstrong2019} for the key points relevant here collected in one place).

Let $G$ be a discrete group and let $A$ be a multiplicative abelian group. We write $Z^2(G,A)$ for the group of normalised $A$-valued $2$-cocycles on $G$, $B^2(G, A)$ for the subgroup of coboundaries, $\delta^1$ for the coboundary map, and $H^2(G, A)$ for the second cohomology group $Z^2(G, A) / B^2(G, A)$. Given $\sigma \in Z^2(G, A)$, we write $\sigma^*$ for the $2$-cocycle $(g,h) \mapsto \sigma(h,g)^{-1}$. We call $\sigma$ \hl{antisymmetric} if $\sigma = \sigma^*$.

A \hl{bicharacter} of $G$ is a map $\omega\colon G \times G \to \T$ such that $\omega(\cdot, g)$ and $\omega(g, \cdot)$ are homomorphisms from $G \to \T$ for each $g \in G$. Every bicharacter is a $\T$-valued $2$-cocycle. If $G$ is a discrete abelian group, then $\sigma \mapsto \sigma \sigma^*$ is a homomorphism from $Z^2(G, \T)$ to the group of antisymmetric bicharacters of $G$, which descends to an isomorphism of $H^2(G, \T)$ onto the same group \cite[Proposition~3.2]{OPT1980}.

In this paper, the \hl{centre} $Z_\sigma$ of a $2$-cocycle $\sigma$ on $G$ is the joint kernel of the associated antisymmetric bicharacter: $Z_\sigma \coloneqq \{ g \in G : (\sigma\sigma^*)(g,h) = 1\text{ for all } h \in G \}$. If $\sigma(g,h) = 1 = \sigma(h,g)$ for all $g \in Z_\sigma$ and $h \in G$, then we say that $\sigma$ \hl{vanishes on its centre}. An adaptation of the argument of \cite[Proposition~3.2]{OPT1980} (see \cite[Theorem~2.2.8]{Armstrong2019} for details) shows that every $2$-cocycle on a finitely generated discrete abelian group is cohomologous to a bicharacter that vanishes on its centre.

\subsection{Hausdorff \texorpdfstring{\'etale}{etale} groupoids}

We refer to a topological groupoid $\GG$ with a locally compact Hausdorff topology under which multiplication and inversion are continuous as a \hl{Hausdorff groupoid}. We write $\GGo$ for the unit space of $\GG$, and $\GGc$ for the set of composable pairs in $\GG$. Given subsets $A, B \subseteq \GG$, we write $AB \coloneqq \{ \alpha\beta : (\alpha, \beta) \in (A \times B) \cap \GGc \}$ and $A^{-1} \coloneqq \{ \alpha^{-1} : \alpha \in A \}$, and for $\gamma \in \GG$, we write $\gamma A \coloneqq \{\gamma\} A$ and $A \gamma \coloneqq A \{\gamma\}$. We say that $\GG$ is \hl{\'etale} if the range and source maps $r,s\colon \GG \to \GGo$ are local homeomorphisms. We call a subset $B$ of $\GG$ a \hl{bisection} if $B$ is contained in an open subset $U$ of $\GG$ such that $r\restr{U}$ and $s\restr{U}$ are homeomorphisms onto open subsets of $\GGo$. Every second-countable Hausdorff \'etale groupoid has a countable basis of open bisections. For each $x \in \GGo$, we define $\GG_x \coloneqq s^{-1}(x)$ and $\GG^x \coloneqq r^{-1}(x)$; and $\GG_x^x \coloneqq \GG_x \cap \GG^x$. We say that $\GG$ is \hl{minimal} if $r(\GG_x)$ is dense in $\GGo$ for every $x \in \GGo$. The \hl{isotropy subgroupoid} of $\GG$ is the groupoid $\Iso(\GG) \coloneqq \bigcup_{x \in \GGo} \, \GG_x^x = \{ \gamma \in \GG : r(\gamma) = s(\gamma) \}$. The interior $\II$ of the isotropy of a Hausdorff \'etale groupoid $\GG$ is itself a Hausdorff \'etale groupoid with unit space $\II^{(0)} = \GGo$. We say that $\GG$ is \hl{effective} if $\II = \GGo$, and we say that $\GG$ is \hl{topologically principal} if $\{ x \in \GGo : \GG_x^x = \{x\} \}$ is dense in $\GGo$. By \cite[Lemma~3.1]{BCFS2014}, every topologically principal Hausdorff \'etale groupoid is effective, and every effective second-countable Hausdorff \'etale groupoid is topologically principal.

The following definition of a groupoid action comes from \cite[Definition~1.60]{Goehle2009}.

\begin{definition} \label{def: groupoid action}
Suppose that $\GG$ is a topological groupoid and $X$ is a topological space. We say that \hl{$\GG$ acts continuously on (the left of) $X$}, and that $X$ is a \hl{continuous (left) $\GG$-space}, if there is a continuous surjective map $R\colon X \to \GGo$ and a continuous map $\theta\colon (\gamma,x) \mapsto \gamma \cdot x$ from $\GG \star X \coloneqq \{ (\gamma,x) \in \GG \times X : s(\gamma) = R(x) \}$ to $X$, satisfying
\begin{enumerate}[label=(A\arabic*)]
\item \label{item: composing groupoid actions} if $(\alpha,\beta) \in \GGc$ and $(\beta,x) \in \GG \star X$, then $(\alpha\beta,x),\, (\alpha,\, \beta \cdot x) \in \GG \star X$, and we have $\alpha \cdot (\beta \cdot x) = (\alpha\beta) \cdot x$; and
\item \label{item: action of groupoid unit} for all $x \in X$, we have $(R(x),x) \in \GG \star X$, and $R(x) \cdot x = x$.
\end{enumerate}
We refer to the map $\theta$ as a \hl{continuous (left) action} of $\GG$ on $X$. For each $x \in X$, the \hl{orbit} of $x$ under $\theta$ is the set
\[
[x]_{\theta} \coloneqq \{ \gamma \cdot x : (\gamma, x) \in \GG \star X \}.
\]
\end{definition}

\subsection{Cohomology of groupoids}

We now recall the relevant cohomology theory for groupoids from \cite[Section~I.1]{Renault1980}.

\begin{definition} \label{def: groupoid cohomology}
Let $\GG$ be a topological groupoid, and let $A$ be a topological abelian group with identity $e_A$.
\begin{enumerate}[label=(\roman*), ref={\cref{def: groupoid cohomology}(\roman*)}]
\item A \hl{continuous $A$-valued $1$-cochain} on $\GG$ is a continuous map $b\colon \GG \to A$. We say that $b$ is \hl{normalised} if $b(r(\gamma)) = b(s(\gamma)) = e_A$ for all $\gamma \in \GG$.
\item A \hl{continuous $A$-valued $1$-cocycle} on $\GG$ is a continuous $1$-cochain $c\colon \GG \to A$ satisfying $c(\alpha\beta) = c(\alpha) c(\beta)$ for all $(\alpha,\beta) \in \GGc$.
\item A \hl{continuous $A$-valued $2$-cocycle} on $\GG$ is a continuous map $\sigma\colon \GGc \to A$ that satisfies the \hl{$2$-cocycle identity}: $\sigma(\alpha,\beta) \, \sigma(\alpha\beta,\gamma) = \sigma(\alpha,\beta\gamma) \, \sigma(\beta,\gamma)$ for all $\alpha, \beta, \gamma \in \GG$ such that $s(\alpha) = r(\beta)$ and $s(\beta) = r(\gamma)$, and is \hl{normalised}, in the sense that $\sigma(r(\gamma),\gamma) = \sigma(\gamma,s(\gamma)) = e_A$ for all $\gamma \in \GG$. We write $Z^2(\GG,A)$ for the group of continuous $A$-valued $2$-cocycles on $\GG$.
\item \label{item: 2-coboundary} The \hl{continuous $2$-coboundary} associated to a continuous normalised $A$-valued $1$\nobreakdash-cochain $b\colon \GG \to A$ is the map $\delta^1 b \colon \GGc \to A$ given by
\[
\delta^1 b(\alpha,\beta) \coloneqq b(\alpha) \, b(\beta) \, b(\alpha\beta)^{-1}.
\]
\item We say that two continuous $2$-cocycles $\sigma,\tau\colon \GGc \to A$ are \hl{cohomologous} if there exists a continuous normalised $1$-cochain such that $\delta^1 b(\alpha,\beta) = \sigma(\alpha,\beta)^{-1} \, \tau(\alpha,\beta)$ for all $(\alpha,\beta) \in \GGc$.
\end{enumerate}
\end{definition}

We write $\T$ for the multiplicative group of complex numbers of modulus $1$. Suppose that $\GG$ is a Hausdorff groupoid and take $\sigma \in Z^2(\GG,\T)$. Let $\GG \times_\sigma \T$ be the set $\GG \times \T$ endowed with the product topology, and equipped with the multiplication operation
\begin{equation} \label{eqn: twisted groupoid multiplication}
(\alpha,w)(\beta,z) \coloneqq \big(\alpha\beta, \, \sigma(\alpha,\beta) wz\big),
\end{equation}
defined for all $(\alpha,\beta) \in \GGc$ and $w, z \in \T$, and the inversion operation \begin{equation} \label{eqn: twisted groupoid inversion}
(\alpha,w)^{-1} \coloneqq \big(\alpha^{-1}, \, \overline{\sigma(\alpha,\alpha^{-1})} \overline{w}\big),
\end{equation}
defined for all $(\alpha,w) \in \GG \times \T$. Then $\GG \times_\sigma \T$ is a Hausdorff groupoid.

\subsection{Twisted groupoid C*-algebras}

We now recall Renault's construction of the full twisted groupoid C*-algebra $C^*(\GG,\sigma)$ associated to a Hausdorff \'etale groupoid $\GG$ and a continuous $\T$-valued $2$-cocycle $\sigma$ on $\GG$. Note that Renault gives this construction for groupoids that are not necessarily \'etale, but we specialise to the \'etale case since we will primarily be dealing with Deaconu--Renault groupoids, which are \'etale. Renault also defines reduced twisted groupoid C*-algebras, but we will only be working with amenable groupoids, and in this setting, the full and reduced C*-algebras coincide. Let $C_c(\GG,\sigma)$ denote the complex vector space of continuous compactly supported complex-valued functions on $\GG$, equipped with multiplication given by the \hl{twisted convolution} formula
\[
(f * g)(\gamma) \coloneqq \sum_{\substack{(\alpha,\beta) \in \GGc, \\ \alpha\beta = \gamma}} \sigma(\alpha,\beta) \, f(\alpha) \, g(\beta) = \sum_{\zeta \in \GG^{r(\gamma)}} \sigma(\zeta, \zeta^{-1}\gamma) \, f(\zeta) \, g(\zeta^{-1}\gamma),
\]
and involution given by
\[
f^*(\gamma) \coloneqq \overline{\sigma(\gamma,\gamma^{-1})} \, \overline{f(\gamma^{-1})}.
\]
Then $C_c(\GG,\sigma)$ is a $*$-algebra. We write $fg$ for the twisted convolution product $f * g$ when the intended meaning is clear. The \hl{full twisted groupoid C*-algebra} $C^*(\GG,\sigma)$ is the completion of $C_c(\GG,\sigma)$ with respect to the \hl{full C*-norm}, which is given by
\[
\lv f \rv \coloneqq \sup\big\{ \lv \pi(f) \rv : \pi \text{ is a $*$-representation of } C_c(\GG,\sigma) \big\}.
\]

Given a locally compact Hausdorff space $Y$ and a function $f \in C_c(Y)$, we define the \hl{open support} of $f$ to be the set $\osupp(f) \coloneqq f^{-1}(\C {\setminus} \{0\})$, and the \hl{support} of $f$ to be the set $\supp(f) \coloneqq \overline{\osupp(f)}$.

\subsection{Deaconu--Renault groupoids}

We recall the definition of the Deaconu--Renault groupoid associated to an action of $\N^k$ by
local homeomorphisms. Details appear in \cite[Proposition~3.1]{SW2016}.

Fix $k \in \N {\setminus} \{0\}$. Let $T\colon n \mapsto T^n$ be an action of $\N^k$ on a locally compact Hausdorff space $X$ by local homeomorphisms. We call the pair $(X,T)$ a \hl{rank-$k$ Deaconu--Renault system}. Define
\[
\GT \coloneqq \{ (x,m-n,y) \in X \times \Z^k \times X \,:\, m, n \in \N^k, \, T^m(x) = T^n(y) \},
\]
and
\[
\GTc \coloneqq \{ ((x,m,y), (w,n,z)) \in \GT \times \GT \,:\, y = w \}.
\]
If $((x,m,y), (y,n,z)) \in \GTc$, then $(x,m+n,z), (y,-m,x) \in \GT$. We define multiplication from $\GTc$ to $\GT$ by $(x,m,y)(y,n,z) \coloneqq (x,m+n,z)$, and inversion on $\GT$ by $(x,m,y)^{-1} \coloneqq (y,-m,x)$. Then $\GT$ is a groupoid, called a \hl{Deaconu--Renault groupoid}. The unit space of $\GT$ is $\GTo = \{ (x,0,x) : x \in X \}$, and we identify it with $X$. The range and source maps of $\GT$ are given by $r(x,m,y) \coloneqq x$ and $s(x,m,y) \coloneqq y$. For open sets $U, V \subseteq X$ and for $m, n \in \N^k$, we define
\[
Z(U,m,n,V) \coloneqq \{ (x,m-n,y) \,:\, x \in U, \, y \in V,\, \text{and } T^m(x) = T^n(y) \}.
\]
The collection $\{ Z(U,m,n,V) : U, V \subseteq X \text{ are open, and } m, n \in \N^k \}$ is a basis for a locally compact Hausdorff topology on $\GT$. The sets $Z(U,m,n,V)$ such that $T^m\restr{U}$ and $T^n\restr{V}$ are homeomorphisms onto their ranges and $T^m(U) = T^n(V)$ form a basis for the same topology. Under this topology, $\GT$ is a locally compact Hausdorff \'etale groupoid. If $X$ is second-countable, then $\GT$ is also second-countable.

\begin{remark}
The action of $\N^k$ in the above definition of a Deaconu--Renault system can be replaced with an action of a more general monoid $P$ contained in a group $G$, and this gives rise to a $G$-graded Deaconu--Renault groupoid. Such groupoids are studied in \cite{ER2007}, but we do not investigate them here.
\end{remark}

\begin{lemma} \label{lemma: 1-cocycle c}
Let $(X,T)$ be a rank-$k$ Deaconu--Renault system. The map $c\colon (x,n,y) \mapsto n$ is a continuous $\Z^k$-valued $1$-cocycle on $\GT$, and for each $x \in X$, the restriction of $c$ to $(\GT)_x^x$ is injective.
\end{lemma}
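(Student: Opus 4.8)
The plan is to verify the three claims---continuity, the cocycle identity, and injectivity of $c\restr{(\GT)_x^x}$---directly from the explicit description of $\GT$ and its basis of open sets. Each follows quickly, so I do not expect a serious obstacle; the only point meriting attention is choosing the right open cover to establish continuity.

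For continuity, I would observe that $c$ is locally constant. Each basic open set $Z(U,m,n,V)$ consists of triples whose middle coordinate is the fixed vector $m-n$, so $c$ restricts to the constant function $m-n$ on $Z(U,m,n,V)$. Since the sets $Z(U,m,n,V)$ form a basis for the topology on $\GT$, this shows that $c$ is locally constant, and hence a continuous $\Z^k$-valued $1$-cochain.

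For the cocycle identity---written additively, since $\Z^k$ is abelian---I would take a composable pair $\big((x,m,y),(y,n,z)\big) \in \GTc$ and apply the multiplication formula $(x,m,y)(y,n,z) = (x,m+n,z)$ to compute $c\big((x,m,y)(y,n,z)\big) = m+n = c(x,m,y)+c(y,n,z)$. This is exactly the required identity, so $c$ is a continuous $\Z^k$-valued $1$-cocycle on $\GT$.

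Finally, for injectivity on each isotropy group, I would use that every element of $(\GT)_x^x$ has the form $(x,p,x)$, sharing its first and third coordinates with the unit $x$. Hence if $\gamma_1, \gamma_2 \in (\GT)_x^x$ satisfy $c(\gamma_1) = c(\gamma_2)$, then $\gamma_1 = (x, c(\gamma_1), x) = (x, c(\gamma_2), x) = \gamma_2$, which gives injectivity of $c\restr{(\GT)_x^x}$.
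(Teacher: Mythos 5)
Your proposal is correct and follows essentially the same route as the paper: the cocycle identity is the same direct computation from the multiplication formula, and continuity is established by the same observation that $c$ is constant on each basic open set $Z(U,m,n,V)$ and hence locally constant. The only difference is that you spell out the (immediate) injectivity of $c\restr{(\GT)_x^x}$, which the paper's proof leaves implicit.
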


\begin{proof}
Fix $\alpha = (x, p, y)$ and $\beta = (y,q,z) \in \GT$. Then $c(\alpha\beta) = p+q = c(\alpha) + c(\beta)$, and so $c$ is a $1$-cocycle. Since each $c\restr{Z(U, m, n, V)}$ is constant, $c$ is locally constant and hence continuous.
\end{proof}

\begin{definition}
Let $(X,T)$ be a rank-$k$ Deaconu--Renault system. The \hl{orbit} under $T$ of $x \in X$ is
\[
\Orb[T]{x} \coloneqq \bigcup_{m,n \in \N^k} (T^m)^{-1}\big(T^n(x)\big) = \{ y \in X : T^m(y) = T^n(x) \text{ for some } m, n \in \N^k \}.
\]
We say that $(X,T)$ is \hl{minimal} if $\Orb[T]{x}$ is dense in $X$ for each $x \in X$. We frequently just write $\Orb{x}$ for $\Orb[T]{x}$.
\end{definition}

\begin{remark} \label{rem: minimality}
We have $\Orb{x} = r(s^{-1}(x)) \subseteq \GTo$, and so $\GT$ is minimal if and only if $(X,T)$ is minimal.
\end{remark}

\begin{remark}
By \cite[Lemma~3.5]{SW2016}, every Deaconu--Renault groupoid is amenable, and so we can discuss the twisted C*-algebra associated to a Deaconu--Renault groupoid and a continuous $2$-cocycle without any ambiguity as to whether we mean the full or reduced C*-algebra.
\end{remark}

\begin{remark}
The C*-algebras studied here are related to previous work. Suppose that $\Lambda$ is a proper, source-free topological $k$-graph with infinite-path space $\Lambda^\infty$ (as defined in \cite[Section~3]{AB2018}). For each $n \in \N^k$, let $T^n\colon \Lambda^\infty \to \Lambda^\infty$ be the shift map. Then $(\Lambda^\infty,T)$ is a rank-$k$ Deaconu--Renault system, and the associated Deaconu--Renault groupoid $\GG_\Lambda \coloneqq \GT$ is called the \hl{boundary-path groupoid} of the topological $k$-graph. The twisted C*\nobreakdash-algebras $C^*(\GG_\Lambda,\sigma)$ associated to continuous $2$-cocycles $\sigma \in Z^2(\GG_\Lambda, \T)$ on proper, source-free topological $k$-graphs generalise the twisted C*-algebras of discrete $k$-graphs studied in \cite{KPS2012JFA, KPS2013JMAA, KPS2015TAMS, KPS2016JNG}, and are studied in the first-named author's PhD thesis \cite{Armstrong2019}. In \cite{AB2018}, the first- and second-named authors study an alternative notion of a twisted C*\nobreakdash-algebra of a topological $k$-graph associated to a continuous $2$-cocycle on the topological $k$-graph itself, which is constructed using a product system of Hilbert bimodules. In the case where $\Lambda$ is a discrete $k$-graph, it is known (see \cite[Theorem~7.2.2]{Armstrong2014}) that these two constructions give the same C*-algebra, but in the more general topological setting, the relationship is unknown.
\end{remark}

\section{The interior of the isotropy of a Deaconu--Renault groupoid}
\label{sec: isotropy}

In this section we introduce the periodicity group $\PT$ of a minimal Deaconu--Renault groupoid $(X,T)$, and we show that the interior $\IT$ of the isotropy of $\GT$ can be identified with $X \times \PT$.

\begin{definition} \label{def: PT}
Let $(X,T)$ be a rank-$k$ Deaconu--Renault system. For each nonempty precompact open set $U \subseteq X$, we define
\[
\PT(U) \coloneqq \{ m - n \,:\, m, n \in \N^k, \text{ and } T^m\restr{U} = T^n\restr{U} \text{ is injective} \}.
\]
We define
\[
\PT \coloneqq \bigcup_{\varnothing \ne U \subseteq X \text{ precompact open}} \PT(U).
\]
\end{definition}

\begin{remark}
When $k = 1$, the set $\PT(U)$ is related to the group $\Stabess{x}$ from \cite[Page~29]{CRST2021}. Specifically, $\Stabess{x}$ contains $\PT(U)$ for any precompact open set $U$ containing $x$; but also, since $\Stabess{x}$ is a subgroup of $\Z^k$, and hence finitely generated, it is not too hard to check that there is an open cover of $X$ by sets $U$ such that $\PT(U) = \Stabess{x}$ for each $x$ in $U$.
\end{remark}

In addition to being needed for our own arguments, our next result, \cref{prop: IT characterisation}, plugs a gap in the literature---it is mentioned without proof in \cite[Page~30]{CRST2021}.

\begin{prop} \label{prop: PT characterisation}
Let $(X,T)$ be a minimal rank-$k$ Deaconu--Renault system. Then
\[
\PT = \{ p \in \Z^k : (x,p,x) \in \GT \text{ for all } x \in X \},
\]
and $\PT$ is a subgroup of $\Z^k$.
\end{prop}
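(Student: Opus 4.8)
The plan is to write the right-hand set as $S \coloneqq \{ p \in \Z^k : (x,p,x) \in \GT \text{ for all } x \in X \}$ and to prove the two inclusions $\PT \subseteq S$ and $S \subseteq \PT$ separately, handling the subgroup assertion first. Indeed, $S$ is visibly a subgroup of $\Z^k$: it contains $0$ because each $(x,0,x)$ is a unit of $\GT$; it is closed under negation because $(x,p,x)^{-1} = (x,-p,x)$; and it is closed under addition because, for fixed $x$, the elements $(x,p,x)$ and $(x,q,x)$ are composable with product $(x,p+q,x)$. Thus once the set equality $\PT = S$ is in hand, the fact that $\PT$ is a subgroup follows immediately.

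For $\PT \subseteq S$, I would take $p \in \PT(U)$ for some nonempty precompact open $U$, so that $p = m-n$ with $T^m\restr{\overline U} = T^n\restr{\overline U}$; in particular $T^m(z) = T^n(z)$ for every $z \in U$. Fixing an arbitrary $y \in X$, minimality supplies a point $z \in U \cap \Orb{y}$, hence $a, b \in \N^k$ with $T^a(z) = T^b(y)$. Applying $T^a$ to the identity $T^m(z) = T^n(z)$ to get $T^{m+a}(z) = T^{n+a}(z)$, and then rewriting $T^a(z)$ as $T^b(y)$, yields $T^{m+b}(y) = T^{n+b}(y)$, which shows $(y,p,y) \in \GT$. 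As $y$ was arbitrary, $p \in S$. The one genuine input here is minimality, used to intersect the orbit of an arbitrary point with $U$; the injectivity built into the definition of $\PT(U)$ plays no role in this direction.

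The reverse inclusion $S \subseteq \PT$ is where the work lies. Given $p \in S$, for each $x$ there is a pair $(m,n)$ with $m - n = p$ and $T^m(x) = T^n(x)$; writing $p = p^+ - p^-$ for the positive and negative parts, every such pair has the form $(m,n) = (p^+ + t, \, p^- + t)$ with $t \in \N^k$. Hence $X$ is covered by the \emph{countable} family of equaliser sets $E_t \coloneqq \{ x \in X : T^{p^+ + t}(x) = T^{p^- + t}(x) \}$, indexed by $t \in \N^k$, each of which is closed because $X$ is Hausdorff. Since a locally compact Hausdorff space is a Baire space, some $E_t$ has nonempty interior $W$; on $W$ the single pair $m = p^+ + t$, $n = p^- + t$ satisfies $T^m = T^n$. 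Shrinking to a precompact open $U$ with $\overline U \subseteq W$ and $\overline U$ contained in a neighbourhood on which the local homeomorphism $T^m$ is injective then gives $T^m\restr{\overline U} = T^n\restr{\overline U}$ injective with $m - n = p$, so $p \in \PT(U) \subseteq \PT$.

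The main obstacle is exactly this last passage: moving, for a fixed $p \in S$, from the pointwise data ``for each $x$ some pair $(m,n)$ works'' to a single pair $(m,n)$ that works on an entire open set. The Baire category argument applied to the countable closed cover by the equaliser sets $E_t$ is the crucial device, and the local homeomorphism property of $T^m$ then upgrades the resulting open set to a precompact one on which $T^m$ is injective, matching the definition of $\PT(U)$ precisely.
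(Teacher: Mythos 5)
Your proof is correct, and while the easy inclusion $\PT \subseteq S$ and the subgroup claim are handled essentially as in the paper (minimality enters only there, to intersect $\Orb{y}$ with $U$), your argument for $S \subseteq \PT$ takes a genuinely different route. The paper proves the contrapositive: for $p \notin \PT$ it enumerates the pairs $(m_i,n_i)$ with $m_i - n_i = p$ and, using \cref{lemma: open nbhd when T^m(y) ne T^n(y)}, builds a nested sequence of nonempty precompact open sets $V_i$ with $T^{m_i}(\overline{V_i}) \cap T^{n_i}(\overline{V_i}) = \varnothing$, so that any point of $\bigcap_i \overline{V_i}$ witnesses $(x,p,x) \notin \GT$ --- in effect a hand-rolled Baire/compactness argument. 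You instead argue directly: the parametrisation $(m,n) = (p^+ + t,\, p^- + t)$, $t \in \N^k$, of all pairs with $m - n = p$ is correct, so the closed equaliser sets $E_t$ form a countable cover of $X$, and the Baire category theorem for locally compact Hausdorff spaces gives one with nonempty interior; shrinking to a precompact open $U$ with $\overline{U}$ inside a neighbourhood on which the local homeomorphism $T^m$ is injective lands you exactly in the definition of $\PT(U)$. Your version is shorter and dispenses with \cref{lemma: open nbhd when T^m(y) ne T^n(y)} entirely (though that lemma is still needed elsewhere in the paper); the paper's version is self-contained and avoids invoking Baire. Both use only local compactness and the Hausdorff property, with no second countability.
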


In order to prove \cref{prop: PT characterisation}, we need the following lemma.

\begin{lemma} \label{lemma: open nbhd when T^m(y) ne T^n(y)}
Let $(X,T)$ be a minimal rank-$k$ Deaconu--Renault system. Suppose that $m, n \in \N^k$ and $y \in X$ satisfy $T^m(y) \ne T^n(y)$. Then there exists an open neighbourhood $W \subseteq X$ of $y$ such that $T^m\restr{W}$ and $T^n\restr{W}$ are injective and $T^m(W) \medcap T^n(W) = \varnothing$.
\end{lemma}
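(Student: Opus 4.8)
The plan is to exploit the fact that $X$ is Hausdorff together with the continuity of the local homeomorphisms $T^m$ and $T^n$, producing the separating open set by combining two standard constructions: separation of the distinct image points, and restriction to neighbourhoods on which each map is injective.

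First I would use the hypothesis that $T^m$ and $T^n$ are local homeomorphisms to choose an open neighbourhood $V$ of $y$ on which both $T^m\restr{V}$ and $T^n\restr{V}$ are injective (intersecting the two neighbourhoods guaranteed by the local-homeomorphism property gives a single such $V$). Next, since $X$ is Hausdorff and $T^m(y) \ne T^n(y)$ by assumption, I would separate these two points by disjoint open sets: pick open $A \ni T^m(y)$ and $B \ni T^n(y)$ with $A \medcap B = \varnothing$. Pulling these back along the continuous maps, the sets $(T^m)^{-1}(A)$ and $(T^n)^{-1}(B)$ are open neighbourhoods of $y$.

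Then I would define $W \coloneqq V \medcap (T^m)^{-1}(A) \medcap (T^n)^{-1}(B)$. This is an open neighbourhood of $y$ as a finite intersection of such. On $W$, both $T^m$ and $T^n$ remain injective because $W \subseteq V$. Finally, the key disjointness follows from the construction: for any $w \in W$ we have $T^m(w) \in A$ and for any $w' \in W$ we have $T^n(w') \in B$, so $T^m(W) \subseteq A$ and $T^n(W) \subseteq B$; since $A \medcap B = \varnothing$, we conclude $T^m(W) \medcap T^n(W) = \varnothing$, as required.

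I do not expect a genuine obstacle here, since this is essentially a continuity-and-Hausdorff argument; the only point requiring care is bookkeeping to ensure the single neighbourhood $W$ simultaneously satisfies all three conditions (injectivity of $T^m\restr{W}$, injectivity of $T^n\restr{W}$, and the disjointness of the images), which the finite intersection handles cleanly. Notably, this lemma does not actually use minimality of $(X,T)$—the hypothesis is available but the separation argument is purely local—so I would flag that minimality is not needed for this particular step, even though it is part of the ambient setting in which the lemma is applied.
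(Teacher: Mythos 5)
Your proposal is correct and follows essentially the same argument as the paper: separate $T^m(y)$ and $T^n(y)$ by disjoint open sets using the Hausdorff property, pull back, and intersect with a neighbourhood on which both maps are injective (the paper merely performs these two steps in the opposite order). Your observation that minimality is not used is also accurate---the paper's proof likewise never invokes it.
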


\begin{proof}
Since $X$ is Hausdorff, we can choose open neighbourhoods $U \subseteq X$ of $T^m(y)$ and $V \subseteq X$ of $T^n(y)$ such that $U \cap V = \varnothing$. Define $A \coloneqq (T^m)^{-1}(U) \medcap (T^n)^{-1}(V)$. Then $y \in A$. Since $T^m$ and $T^n$ are local homeomorphisms, there is an open neighbourhood $W \subseteq A$ of $y$ such that $T^m\restr{W}$ and $T^n\restr{W}$ are injective, and we have $T^m(W) \medcap T^n(W) \subseteq U \cap V = \varnothing$.
\end{proof}

\begin{proof}[Proof of \cref{prop: PT characterisation}]
Fix $p \in \PT$. Then there exist $m, n \in \N^k$ and a nonempty open set $U \subseteq X$ such that $p = m - n$, and $T^m\restr{U} = T^n\restr{U}$ is injective. Fix $z \in X$. We claim that $(z,p,z) \in \GT$. Since $\Orb{z}$ is dense in $X$, we have $U \cap \Orb{z} \ne \varnothing$, and so there exist $y \in U$ and $a,b \in \N^k$ such that $T^a(y) = T^b(z)$. Thus,
\[
T^{b+m}(z) = T^m(T^b(z)) = T^m(T^a(y)) = T^a(T^m(y)),
\]
and
\[
T^{b+n}(z) = T^n(T^b(z)) = T^n(T^a(y)) = T^a(T^n(y)).
\]
Since $y \in U$, we have $T^m(y) = T^n(y)$, and hence $T^{b+m}(z) = T^{b+n}(z)$. Therefore, $(z,p,z) = \big(z, (b+m) - (b+n), z\big) \in \GT$, and so
\[
\PT \subseteq \{ p \in \Z^k : (x,p,x) \in \GT \text{ for all } x \in X \}.
\]

We now show that $\Z^k \!\setminus\! \PT \subseteq \{ p \in \Z^k : (x, p, x) \notin \GT \text{ for some } x \in X \}$. To see this, fix $p \in \Z^k \!\setminus\! \PT$. Let $(m_i,n_i)_{i=1}^\infty$ be an enumeration of $\{(m,n) \in \N^k \times \N^k \,:\, m - n = p\}$. We must find $x \in X$ such that $T^{m_i}(x) \ne T^{n_i}(x)$ for all $i \ge 1$. We claim that there exist nonempty precompact open subsets $V_0, V_1, V_2, \dotsc$ of $X$ satisfying
\begin{enumerate}[label=(\arabic*)]
\item \label{item: Vthing1} $\overline{V_i} \subseteq V_{i-1}$ for all $i \ge 1$,
\item \label{item: Vthing2} $T^{m_i}\restr{\overline{V_{i-1}}}$ and $T^{n_i}\restr{\overline{V_{i-1}}}$ are injective for all $i \ge 1$, and
\item \label{item: Vthing3} $T^{m_i}\big(\overline{V_i}\big) \medcap T^{n_i}\big(\overline{V_i}\big) = \varnothing$ for all $i \ge 1$.
\end{enumerate}
To start, let $V_0$ be a nonempty precompact open subset of $X$ such that $T^{m_1}\restr{\overline{V_0}}$ and $T^{n_1}\restr{\overline{V_0}}$ are injective. Now fix $i \ge 1$ and suppose that $V_0, \dotsc, V_{i-1}$ satisfy \cref{item: Vthing1}--\cref{item: Vthing3}. Since $m_i - n_i = p \notin \PT$, we have $T^{m_i}\restr{V_{i-1}} \ne T^{n_i}\restr{V_{i-1}}$, and so there exists $y \in V_{i-1}$ such that $T^{m_i}(y) \ne T^{n_i}(y)$. Thus, by \cref{lemma: open nbhd when T^m(y) ne T^n(y)}, there exists an open neighbourhood $W \subseteq V_{i-1}$ of $y$ such that $T^{m_i}(W) \medcap T^{n_i}(W) = \varnothing$. Since $X$ is locally compact and Hausdorff and $T^{m_{i+1}}$ and $T^{n_{i+1}}$ are local homeomorphisms, there is an open neighbourhood $V_i$ of $y$ such that $\overline{V_i} \subseteq W$ and $T^{m_{i+1}}\restr{\overline{V_i}}$ and $T^{n_{i+1}}\restr{\overline{V_i}}$ are injective. So induction gives the desired sets $V_i$. Each $V_i$ is contained in the compact set $\overline{V_0}$, and so the descending intersection $\bigcap_{i=1}^\infty \overline{V_i}$ is nonempty. Any $x \in \bigcap_{i=1}^\infty \overline{V_i}$ satisfies $T^{m_i}(x) \ne T^{n_i}(x)$ for all $i \ge 1$.

We conclude by showing that $\PT$ is a subgroup of $\Z^k$. For all $x \in X$, we have $(x,0,x) \in \GTo \subseteq \GT$, and so $0 \in \PT$. Suppose that $p, q \in \PT$. For all $x \in X$, we have $(x,p,x), (x,q,x) \in \GT$, and hence $(x,p-q,x) = (x,p,x)(x,q,x)^{-1} \in \GT$. Thus $p - q \in \PT$, and so $\PT$ is a subgroup of $\Z^k$.
\end{proof}

Given a rank-$k$ Deaconu--Renault system $(X,T)$, we write $\IT$ for the topological interior of $\Iso(\GT)$. Since $\GT$ is a locally compact Hausdorff \'etale groupoid, so is $\IT$. From this point forward, we will assume that $X$ is second-countable (and hence so are $\GT$ and $\IT$). We know from \cite[Lemma~3.5]{SW2016} that $\GT$ is amenable, and hence \cite[Proposition~5.1.1]{ADR2000} implies that $\IT$ is amenable.

\begin{prop} \label{prop: IT characterisation}
Let $(X,T)$ be a minimal rank-$k$ Deaconu--Renault system such that $X$ is second-countable. Let $\PT$ be as in \cref{def: PT}. Then
\[
\IT = \{ (x,p,x) : x \in X, \, p \in \PT \} \cong X \times \PT.
\]
\end{prop}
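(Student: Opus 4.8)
The plan is to prove the set equality $\IT = E$, where $E \coloneqq \{(x,p,x) : x \in X,\, p \in \PT\}$, by establishing the two inclusions separately, and then to exhibit an explicit groupoid isomorphism $\IT \to X \times \PT$. Throughout I use \cref{prop: PT characterisation}, which guarantees that for $p \in \PT$ the element $(x,p,x)$ lies in $\GT$ for \emph{every} $x \in X$, so that $E \subseteq \Iso(\GT)$ makes sense; the real content is to pin down which isotropy elements are interior. It is convenient to work with the refined basis of sets $Z(U,m,n,V)$ for which $T^m\restr{U}$ and $T^n\restr{V}$ are homeomorphisms onto their ranges with $T^m(U) = T^n(V)$: such a set is the graph $\{(u, m-n, h(u)) : u \in U\}$ of the homeomorphism $h \coloneqq (T^n\restr V)^{-1} \circ T^m\restr U \colon U \to V$, and it is contained in $\Iso(\GT)$ precisely when $h = \id_U$, i.e.\ when $U = V$ and $T^m\restr U = T^n\restr U$.

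For the inclusion $\IT \subseteq E$, I would take $(x,p,x) \in \IT$ and choose a refined basic open set $Z(U,m,n,V) \subseteq \IT \subseteq \Iso(\GT)$ containing it, with $p = m - n$. By the observation above, $T^m\restr U = T^n\restr U$ and this map is injective; shrinking to a precompact open $W \ni x$ with $\overline W \subseteq U$, continuity extends the coincidence to $\overline{W}$ while $T^m\restr{\overline W}$ stays injective, so $p \in \PT(W) \subseteq \PT$ and $(x,p,x) \in E$.

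The reverse inclusion $E \subseteq \IT$ is the main obstacle, because membership $(x,p,x) \in \GT$ only encodes the pointwise coincidence $T^m(x) = T^n(x)$, whereas interiority requires $T^m$ and $T^n$ to agree on a whole neighbourhood of $x$ --- and local homeomorphisms agreeing at a point need not agree nearby. The extra input is that $p \in \PT$ furnishes \emph{some} nonempty precompact open $U_0$ and a decomposition $p = m_0 - n_0$ with $T^{m_0}\restr{\overline{U_0}} = T^{n_0}\restr{\overline{U_0}}$ injective, so that $Z(U_0, m_0, n_0, U_0) = \{(u,p,u) : u \in U_0\}$ witnesses $(u,p,u) \in \IT$ for every $u \in U_0$. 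To reach an arbitrary $x \in X$, I would use minimality (\cref{rem: minimality}): density of $\Orb{x}$ gives $y_0 \in U_0 \cap \Orb{x}$ and hence $a,b \in \N^k$ with $T^a(y_0) = T^b(x)$, i.e.\ an element $\gamma = (x, b-a, y_0) \in \GT$ with $\gamma(y_0,p,y_0)\gamma^{-1} = (x,p,x)$. The key step is then to transport interiority along $\gamma$: choosing an open bisection $B \ni \gamma$ yields a continuous section $w \mapsto \gamma_w$ of $s$ (with $\gamma_{y_0} = \gamma$), and the conjugation map $\Psi(\beta) \coloneqq \gamma_{r(\beta)}\, \beta\, \gamma_{s(\beta)}^{-1}$ is a homeomorphism between the open sets $s^{-1}(s(B)) \cap r^{-1}(s(B))$ and $s^{-1}(r(B)) \cap r^{-1}(r(B))$ which sends isotropy elements to isotropy elements (as does its inverse), hence sends interior points of $\Iso(\GT)$ to interior points. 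Since $(y_0,p,y_0) \in \IT$ and $\Psi(y_0,p,y_0) = (x,p,x)$, we conclude $(x,p,x) \in \IT$. (If one prefers to avoid the abstract conjugation, the same neighbourhood coincidence can be produced by hand: with $h = (T^b)^{-1}\circ T^a$ defined near $y_0$, abelianness of $\N^k$ gives $T^{b+m_0}(w) = T^a\big(T^{m_0}(h^{-1}(w))\big)$ and $T^{b+n_0}(w) = T^a\big(T^{n_0}(h^{-1}(w))\big)$ for $w$ near $x$, and these agree because $h^{-1}(w)$ lies near $y_0 \in U_0$; thus $p \in \PT(W)$ for a small precompact $W \ni x$.)

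Finally, for the isomorphism $\IT \cong X \times \PT$ with $\PT \subseteq \Z^k$ discrete, I would take $\Phi \colon (x,p,x) \mapsto (x,p)$. It is a bijection onto $X \times \PT$ by the set equality just proved, and a groupoid homomorphism since $(x,p,x)(x,q,x) = (x,p+q,x)$ and inverses and units match the trivial group-bundle structure on $X \times \PT$. Continuity is immediate from continuity of $r$ and of the cocycle $c$ of \cref{lemma: 1-cocycle c}, which is locally constant and hence continuous into the discrete group $\PT$. For openness I would again invoke the refined basis: every basic open subset of $\IT$ has the form $\{(u,p,u) : u \in U\}$ and maps under $\Phi$ to the open set $U \times \{p\}$, so $\Phi$ is a continuous open bijection, hence a homeomorphism and a groupoid isomorphism.
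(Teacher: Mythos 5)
Your proof is correct, but it takes a genuinely different route from the paper's. The paper proves both inclusions by invoking \cite[Proposition~2.1]{KPS2016JNG}, which (for a minimal groupoid carrying a continuous cocycle $c$ that is injective on each isotropy group) says that $c\big(\IT \medcap (\GT)_x^x\big)$ is independent of $x$; combined with \cref{prop: PT characterisation} this immediately yields both containments. You instead reprove the relevant transport of interiority by hand: conjugation along a continuous section of an open bisection through $\gamma = (x,b-a,y_0)$ is a homeomorphism between open subsets of $\GT$ preserving $\Iso(\GT)$ in both directions, hence carries the interior point $(y_0,p,y_0)$ of the witness set $Z(U_0,m_0,n_0,U_0)$ to an interior point $(x,p,x)$ --- this is essentially the content of the cited KPS proposition, specialised to $\GT$. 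Your argument for the forward inclusion $\IT \subseteq \{(x,p,x) : p \in \PT\}$ is also more direct than the paper's: reading off $U=V$ and $T^m\restr{U}=T^n\restr{U}$ from a refined basic neighbourhood inside $\Iso(\GT)$ shows $p \in \PT(W)$ without using minimality at all, whereas the paper routes this direction through minimality and \cref{prop: PT characterisation}. Finally, you make explicit the homeomorphism $(x,p,x)\mapsto(x,p)$ underlying the identification with $X\times\PT$, which the paper leaves implicit. The trade-off is the usual one: the paper's proof is shorter given the external citation, while yours is self-contained and isolates exactly where minimality is needed (only in the reverse inclusion, to reach an arbitrary $x$ from the witness set $U_0$).
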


\begin{proof}
For $\subseteq$, fix $\gamma \in \IT$. Let $c\colon \GT \to \Z^k$ be the continuous $1$-cocycle defined in \cref{lemma: 1-cocycle c}. Let $p \coloneqq c(\gamma)$ so that $\gamma = (x,p,x)$ for some $x \in X$. We claim that $p \in \PT$. By \cref{rem: minimality}, $\GT$ is minimal, and hence \cite[Proposition~2.1]{KPS2016JNG} implies that for all $y \in X$,
\[
c\big(\IT \medcap (\GT)_y^y\big) = c\big(\IT \medcap (\GT)_x^x\big),
\]
and thus
\[
p = c(x,p,x) \in c\big(\IT \medcap (\GT)_x^x\big) = c\big(\IT \medcap (\GT)_y^y\big).
\]
So \cref{prop: PT characterisation} gives $p \in \PT$.

For $\supseteq$, fix $x \in X$ and $p \in \PT$. By the definition of $\PT$, there exist $m, n \in \N^k$ and a nonempty precompact open set $U \subseteq X$ such that $p = m-n$ and $T^m\restr{U} = T^n\restr{U}$ is injective. This injectivity forces $Z(U,m,n,U) = \{ (y,p,y) : y \in U \} \subseteq \IT$. Fix $y \in U$. Then $(y,p,y) \in \IT$, and so \cite[Proposition~2.1]{KPS2016JNG} implies that
\[
p = c(y,p,y) \in c\big(\IT \medcap (\GT)_y^y\big) = c\big(\IT \medcap (\GT)_x^x\big),
\]
and hence $(x,p,x) \in \IT$.
\end{proof}

\begin{remark}
\Cref{prop: IT characterisation} is related to the sets $\Sigma_X$ and $H(T)$ of \cite{SW2016} as follows. Let $(X,T)$ be a minimal rank-$k$ Deaconu--Renault system such that $X$ is second-countable. In the notation of \cite[Section~3]{SW2016}, suppose that $\Sigma = \Sigma_X$. Then $T$ is an irreducible action of $\N^k$ on $X$, and \cite[Proposition~3.10]{SW2016} implies that
\[
\IT = \{ (x,p,x) : x \in X, \, p \in H(T) \} \cong X \times H(T).
\]
Thus \cref{prop: IT characterisation} implies that $\PT = H(T)$.
\end{remark}

We now present two corollaries of \cref{prop: IT characterisation}.

\begin{cor} \label{cor: GT effective iff PT trivial}
Let $(X,T)$ be a minimal rank-$k$ Deaconu--Renault system such that $X$ is second-countable. Then $\GT$ is effective if and only if $\PT = \{0\}$.
\end{cor}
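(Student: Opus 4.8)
The plan is to read this off directly from \cref{prop: IT characterisation}, which has already done all the structural work. Recall that by definition $\GT$ is effective precisely when $\IT = \GTo$, and that under our standing identification of the unit space with $X$ we have $\GTo = \{(x,0,x) : x \in X\}$. On the other hand, \cref{prop: IT characterisation} identifies $\IT = \{(x,p,x) : x \in X,\, p \in \PT\}$. So the entire statement reduces to comparing these two subsets of $\GT$, and the equivalence becomes almost tautological.

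For the easy direction I would simply substitute: if $\PT = \{0\}$, then \cref{prop: IT characterisation} gives $\IT = \{(x,0,x) : x \in X\} = \GTo$, so $\GT$ is effective.

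For the converse I would argue by contraposition, exhibiting an element of $\IT \setminus \GTo$. Choose a nonzero $p \in \PT$ and any point $x \in X$; here I note that $X \ne \varnothing$, since otherwise the union defining $\PT$ in \cref{def: PT} would range over no sets, contradicting the fact that $\PT$ is a (nontrivial) subgroup of $\Z^k$ established in \cref{prop: PT characterisation}. Then \cref{prop: IT characterisation} (equivalently \cref{prop: PT characterisation}) yields $(x,p,x) \in \IT$, while $p \ne 0$ forces $(x,p,x) \notin \GTo$. Hence $\IT \ne \GTo$, so $\GT$ is not effective.

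I expect essentially no obstacle here: all the substantive content lives in \cref{prop: IT characterisation}, and this corollary is a purely formal consequence of that identification together with the definitions of effectiveness and of $\GTo$. The only point meriting a line of care is the nonemptiness of $X$ needed to produce the witness $(x,p,x)$ in the converse direction, which I would dispose of with the remark above.
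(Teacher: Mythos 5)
Your proposal is correct and follows exactly the paper's argument: both read the equivalence off from \cref{prop: IT characterisation} by comparing $\IT = \{(x,p,x) : x \in X,\, p \in \PT\}$ with $\GTo = \{(x,0,x) : x \in X\}$. The only difference is your added remark about nonemptiness of $X$, which the paper leaves implicit.
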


\begin{proof}
By \cref{prop: IT characterisation}, $\IT = \{ (x,p,x) : x \in X, \, p \in \PT \}$. Hence
\[
\GT \text{ is effective} \iff \IT = \GTo \iff \PT = \{0\}. \qedhere
\]
\end{proof}

\begin{cor}
Let $(X,T)$ be a minimal rank-$k$ Deaconu--Renault system such that $X$ is second-countable. Let $c\colon \GT \to \Z^k$ be as in \cref{lemma: 1-cocycle c}. Then for each $p \in \PT$, we have $(c\restr{\IT})^{-1}(p) = \{(x, p, x) : x \in X\}$, and $\left\{ c\restr{\IT}^{-1}(p) : p \in \PT \right\}$ is a collection of mutually disjoint clopen bisections whose union is $\IT$.
\end{cor}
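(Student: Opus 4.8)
The plan is to read everything off from the two structural results already established: the description $\IT = \{(x,p,x) : x \in X,\, p \in \PT\} \cong X \times \PT$ of \cref{prop: IT characterisation}, together with the fact from \cref{lemma: 1-cocycle c} that $c\colon (x,n,y) \mapsto n$ is a continuous (hence, since $\Z^k$ is discrete, locally constant) $1$-cocycle.

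First I would establish the fibre description. Fix $p \in \PT$ and write $B_p \coloneqq \{(x,p,x) : x \in X\}$. By \cref{prop: IT characterisation} every element of $\IT$ has the form $(x,q,x)$ for some $x \in X$ and some $q \in \PT$, and $c(x,q,x) = q$; hence $c\restr{\IT}(x,q,x) = p$ forces $q = p$, giving $(c\restr{\IT})^{-1}(p) \subseteq B_p$. Conversely, \cref{prop: IT characterisation} gives $B_p \subseteq \IT$, and $c(x,p,x) = p$ for every $x$, so $B_p \subseteq (c\restr{\IT})^{-1}(p)$. This proves the first assertion.

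The disjointness and covering claims are then immediate: distinct values $p \ne q$ yield disjoint preimages under the function $c\restr{\IT}$, and since by \cref{prop: IT characterisation} every element of $\IT$ lies in some $B_p$, the union $\bigcup_{p \in \PT} B_p$ is all of $\IT$. For the clopen property I would use that $c$ is continuous while $\{p\}$ is clopen in the discrete group $\Z^k$, so that $(c\restr{\IT})^{-1}(p)$ is clopen in $\IT$.

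Finally, to see that each $B_p$ is a bisection I would pass through the homeomorphism $\IT \cong X \times \PT$ of \cref{prop: IT characterisation}. Under this identification $B_p$ corresponds to $X \times \{p\}$, which is open precisely because $\PT \subseteq \Z^k$ carries the discrete topology, while both $r\restr{B_p}$ and $s\restr{B_p}$ are the single map $(x,p,x) \mapsto x$, corresponding to the projection $X \times \{p\} \to X$; this projection is a homeomorphism onto $\GTo = X$. Thus $B_p$ is contained in the open set $\IT \subseteq \GT$, with $r$ and $s$ restricting to homeomorphisms onto the open subset $X$ of $\GTo$, so $B_p$ is an open bisection. There is no serious obstacle here, since every ingredient is already in place; the only point requiring a little care is to remember that discreteness of $\PT$ is exactly what makes the fibres simultaneously clopen and bisections.
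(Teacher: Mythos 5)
Your proof is correct and follows essentially the same route as the paper: both arguments combine the description of $\IT$ from \cref{prop: IT characterisation} with continuity of $c$ into the discrete group $\Z^k$ to get the fibre description and the clopen property. The only cosmetic difference is that the paper verifies the bisection property by checking directly that $r$ and $s$ are injective on $c\restr{\IT}^{-1}(p)$ (using that its elements lie in $\Iso(\GT)$ and all have degree $p$), whereas you route this through the identification $\IT \cong X \times \PT$; these amount to the same computation.
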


\begin{proof}
Fix $p \in \PT$. Since $c$ is continuous and $\Z^k$ is discrete, $c\restr{\IT}^{-1}(p)$ is clopen. Fix $x \in X$. If $\alpha, \beta \in c\restr{\IT}^{-1}(p)$ and $r(\alpha) = x = r(\beta)$, then $s(\alpha) = x = s(\beta)$ because $\alpha, \beta \in \Iso(\GT)$, and hence $\alpha = (x,p,x) = \beta$. So $r\restr{c\restr{\IT}^{-1}(p)}$ is injective, and a similar argument shows that $s\restr{c\restr{\IT}^{-1}(p)}$ is also injective. Hence $c\restr{\IT}^{-1}(p)$ is a bisection. By \cref{prop: IT characterisation}, we have $c(\IT) = \PT$, and the result follows.
\end{proof}

We now prove that when $\GT$ is minimal, we can form the quotient groupoid $\GT/\IT$. As the anonymous referee correctly points out, a more general result is possible---the salient point is that $\IT$ is a closed normal subgroupoid of the isotropy---but our application is to simplicity of twisted C*-algebras associated to $\GT$, for which minimality of $\GT$ is a necessary condition (see \cref{thm: simplicity characterisation}\cref{item: main simple implies minimal}).

\begin{prop} \label{prop: HT quotient groupoid}
Let $(X,T)$ be a minimal rank-$k$ Deaconu--Renault system such that $X$ is second-countable. Then
$\IT$ is a closed subgroupoid of $\GT$ and acts freely and properly on $\GT$ by
right-multiplication. The set $\HT \coloneqq \GT / \IT$ is a locally compact Hausdorff \'etale
groupoid, with multiplication given by $[\alpha][\beta] \coloneqq [\alpha\beta]$ for
$(\alpha,\beta) \in \GTc$, inversion given by $[\gamma]^{-1} \coloneqq [\gamma^{-1}]$ for $\gamma
\in \GT$, and range and source maps given by $r([\gamma]) = [r(\gamma)]$ and $s([\gamma]) =
[s(\gamma)]$.
\end{prop}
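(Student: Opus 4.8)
The plan is to work throughout with the concrete description $\IT = \{(x,p,x) : x \in X,\, p \in \PT\}$ supplied by \cref{prop: IT characterisation}, which makes the entire argument explicit. First I would record that $\IT$ is both open and closed in $\GT$. It is open by definition, being the interior of $\Iso(\GT)$. For closedness, observe that $\IT = c^{-1}(\PT) \cap \Iso(\GT)$, where $c$ is the $1$-cocycle of \cref{lemma: 1-cocycle c}: any $\gamma \in \Iso(\GT)$ with $c(\gamma) \in \PT$ has the form $(x,p,x)$ with $p \in \PT$, and conversely \cref{prop: PT characterisation} guarantees $(x,p,x) \in \GT$ for every $x$ when $p \in \PT$. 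Since $\GT$ is Hausdorff, $\Iso(\GT)$ is closed (it is the preimage of the diagonal under $\gamma \mapsto (r(\gamma),s(\gamma))$), while $c^{-1}(\PT)$ is clopen because $c$ is continuous and $\PT \subseteq \Z^k$ sits in a discrete group; hence $\IT$ is closed. That $\IT$ is a subgroupoid is immediate from its explicit form (or the general fact recalled in \cref{sec: background}). The right action is $\gamma \cdot \iota \coloneqq \gamma\iota$ on $\GT \stimesr \IT \coloneqq \{(\gamma,\iota) \in \GT \times \IT : s(\gamma) = r(\iota)\}$, and freeness is immediate: writing $\gamma = (x,m,y)$ and $\iota = (y,p,y)$, we have $\gamma\iota = (x,m+p,y)$, so $\gamma\iota = \gamma$ forces $p = 0$.

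The key technical step is properness, which I would establish by showing that $\Phi \colon \GT \stimesr \IT \to \GT \times \GT$, $(\gamma,\iota) \mapsto (\gamma\iota,\gamma)$, is proper. Given compact sets $K_1, K_2 \subseteq \GT$, any $(\gamma,\iota) \in \Phi^{-1}(K_1 \times K_2)$ satisfies $\gamma \in K_2$ and $\iota = \gamma^{-1}(\gamma\iota) \in (K_2^{-1}K_1) \cap \IT$. Now $K_2^{-1}K_1$ is the image of the compact set $(K_2^{-1} \times K_1) \cap \GTc$ under the continuous multiplication map, hence compact (here $\GTc$ is closed because $\GT$ is Hausdorff), so $L \coloneqq (K_2^{-1}K_1) \cap \IT$ is compact since $\IT$ is closed. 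Thus $\Phi^{-1}(K_1 \times K_2)$ is a closed subset of the compact set $(K_2 \times L) \cap (\GT \stimesr \IT)$, hence compact; as every compact subset of $\GT \times \GT$ lies in such a product, $\Phi$ is proper. A proper map into a locally compact Hausdorff space is closed, so the orbit equivalence relation $R = \Phi(\GT \stimesr \IT)$ is closed in $\GT \times \GT$.

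Next I would analyse the quotient map $q \colon \GT \to \HT$. It is open because $q^{-1}(q(U)) = U\IT$ is open for open $U$ (in an étale groupoid the product of two open sets is open), and it is a local homeomorphism because it is injective on every open bisection $B$: if $\gamma_1,\gamma_2 \in B$ lie in the same orbit then $r(\gamma_1) = r(\gamma_2)$, forcing $\gamma_1 = \gamma_2$ since $r\restr{B}$ is injective. From these two facts the topological conclusions follow by standard arguments: $q$ open together with $R$ closed gives that $\HT$ is Hausdorff; $q$ an open continuous surjection from the locally compact space $\GT$ gives that $\HT$ is locally compact; and since $q$ restricts to a homeomorphism on each bisection while $\GTo$ is open with $q\restr{\GTo}$ identifying $\HTo$ with $X$, the range and source maps descend to local homeomorphisms, so $\HT$ is étale.

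Finally, well-definedness of the algebraic operations rests on $\IT$ being normal with trivial conjugation on fibres, which holds precisely because $\Z^k$ is abelian: for $\gamma = (x,m,y)$ and $\iota = (y,p,y)$ one computes $\gamma\iota\gamma^{-1} = (x,p,x) \in \IT$. Consequently $\iota\beta = \beta\iota'$ and $\iota^{-1}\gamma^{-1} = \gamma^{-1}\kappa$ for suitable $\iota',\kappa \in \IT$, whence $[\alpha][\beta] \coloneqq [\alpha\beta]$ and $[\gamma]^{-1} \coloneqq [\gamma^{-1}]$ are independent of the chosen representatives; the formulas $r([\gamma]) = [r(\gamma)]$ and $s([\gamma]) = [s(\gamma)]$ are well-defined because right-multiplication by isotropy preserves both range and source. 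The groupoid axioms for $\HT$ then descend from those of $\GT$. I expect the main obstacle to be the properness argument of the second paragraph, together with the (routine but essential) deduction that openness of $q$ and closedness of $R$ jointly yield Hausdorffness of the quotient.
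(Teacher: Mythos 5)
Your proof is correct, but it takes a genuinely different route from the paper's. The paper disposes of the proposition in two sentences by citation: closedness of $\IT$ follows from \cite[Proposition~2.1]{KPS2016JNG} (applied via \cref{rem: minimality} and the cocycle $c$ of \cref{lemma: 1-cocycle c}), and the remaining assertions --- the free and proper right action and the locally compact Hausdorff \'etale structure on $\GT/\IT$ --- are delegated wholesale to \cite[Proposition~2.5(d)]{SW2016}. You instead give a self-contained argument: you obtain closedness from the identity $\IT = c^{-1}(\PT) \cap \Iso(\GT)$ (valid given \cref{prop: IT characterisation}, which is proved beforehand, so there is no circularity), verify freeness by the explicit computation $(x,m,y)(y,p,y) = (x,m+p,y)$, prove properness by bounding fibres of $(\gamma,\iota) \mapsto (\gamma\iota,\gamma)$ inside $K_2 \times \big((K_2^{-1}K_1) \cap \IT\big)$, and deduce Hausdorffness of the quotient from openness of $q$ plus closedness of the orbit relation. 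All of these steps are sound, including the normality computation $\gamma\iota\gamma^{-1} = (x,p,x)$ that underpins well-definedness of the operations. What your approach buys is transparency and independence from the two external references; what the paper's approach buys is brevity and the fact that \cite[Proposition~2.5]{SW2016} packages the free-and-proper-action-implies-\'etale-quotient argument once and for all. The only mild caveat is that several of your "standard facts" (openness of products of open sets in \'etale groupoids, properness implying closedness, the open-quotient criterion for Hausdorffness) would need explicit references or short proofs in a fully written-up version, but none of them hides a gap.
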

\begin{proof}
Together, \cref{rem: minimality,lemma: 1-cocycle c} allow us to apply \cite[Proposition~2.1]{KPS2016JNG} to see that $\IT$ is a closed subgroupoid of $\GT$. Therefore, \cite[Proposition~2.5(d)]{SW2016} implies that $\HT$ is a locally compact Hausdorff \'etale groupoid under the given operations.
\end{proof}

We conclude this section with two technical lemmas that we use in the proof of our characterisation of simplicity of $C^*(\GT,\sigma)$ in \cref{sec: simplicity}.

\begin{lemma} \label{lemma: IT U/V bar}
Let $(X,T)$ be a minimal rank-$k$ Deaconu--Renault system such that $X$ is second-countable. Let $c \colon \GT \to \Z^k$ be as in \cref{lemma: 1-cocycle c}. Fix $m, n \in \Z^k$, and let $U$ and $V$ be precompact open bisections of $\GT$ such that $U \subseteq c^{-1}(m)$ and $V \subseteq c^{-1}(n)$. Then $\overline{\IT U \medcap V} \subseteq \IT \overline{U}$ and $\overline{\IT V \medcap U} \subseteq \IT \overline{V}$.
\end{lemma}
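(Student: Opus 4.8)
The plan is to prove the first inclusion $\overline{\IT U \medcap V} \subseteq \IT \overline{U}$; the second then follows by interchanging the roles of $(U,m)$ and $(V,n)$. If $\IT U \medcap V = \varnothing$ there is nothing to prove, so I assume otherwise. The key observation is that the cocycle $c$ of \cref{lemma: 1-cocycle c} rigidly pins down the isotropy factor of any element of $\IT U \medcap V$. Indeed, such an element has the form $\iota\alpha$ with $\iota \in \IT$ and $\alpha \in U \subseteq c^{-1}(m)$, while also lying in $V \subseteq c^{-1}(n)$; since $c(\iota\alpha) = c(\iota) + c(\alpha)$ and $c(\alpha) = m$, we obtain $c(\iota) = n - m$. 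Setting $p_0 \coloneqq n - m$, every such $\iota$ therefore lies in the single level set $S \coloneqq (c\restr{\IT})^{-1}(p_0)$, and $p_0 \in c(\IT) = \PT$ by \cref{prop: IT characterisation}. Moreover, \cref{prop: IT characterisation} gives $S = \{(x,p_0,x) : x \in X\}$, and since $S$ is an open bisection on which $r$ is injective with image all of $X$, the range map restricts to a homeomorphism $r\restr{S}\colon S \to X$.

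Next I fix $\gamma \in \overline{\IT U \medcap V}$. As $X$, and hence $\GT$, is second-countable, locally compact, and Hausdorff, it is first-countable, so I may choose a sequence $\gamma_i = \iota_i\alpha_i \to \gamma$ with $\iota_i \in S$ and $\alpha_i \in U$. Applying the continuous range map gives $r(\iota_i) = r(\gamma_i) \to r(\gamma)$, and since $r\restr{S}$ is a homeomorphism this forces $\iota_i \to \iota \coloneqq (r(\gamma),p_0,r(\gamma)) \in \IT$. Because inversion and multiplication are continuous, and $\GTc$ is closed (it is the preimage of the diagonal of $X \times X$ under the continuous map $(\beta,\delta) \mapsto (s(\beta),r(\delta))$, and $X$ is Hausdorff), I may pass to the limit in $\alpha_i = \iota_i^{-1}\gamma_i$ to obtain $\alpha_i \to \alpha \coloneqq \iota^{-1}\gamma \in \GT$. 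As each $\alpha_i \in U$, this yields $\alpha \in \overline{U}$; and since $\iota\alpha = (\iota\iota^{-1})\gamma = r(\iota)\gamma = \gamma$, I conclude $\gamma = \iota\alpha \in \IT\overline{U}$, as required.

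The crux of the argument, and the step I expect to require the most care, is the convergence of the isotropy factors $\iota_i$: a priori there is no reason for a factorisation $\gamma_i = \iota_i\alpha_i$ to have convergent factors, and $\IT$ is not compact. What rescues the argument is precisely the cocycle computation, which confines all the $\iota_i$ to the single fibre $S$ over which $r$ is a homeomorphism, so that convergence of $\iota_i$ is inherited from the automatic convergence of $r(\gamma_i)$. Once this is secured, the continuity arguments for $\alpha_i \to \alpha$ are routine, and the symmetric inclusion $\overline{\IT V \medcap U} \subseteq \IT \overline{V}$ is obtained verbatim with the substitutions $U \leftrightarrow V$ and $m \leftrightarrow n$ (so that now the isotropy factors carry cocycle value $m - n$).
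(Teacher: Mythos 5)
Your proof is correct, and it shares the paper's key observation---that the cocycle identity forces every isotropy factor $\xi$ of an element of $\IT U \medcap V$ to satisfy $c(\xi) = n - m$---but it closes the argument by a genuinely different mechanism. The paper intersects the level set with $r(\overline{U}) \times \{n-m\} \times r(\overline{U})$ to produce a \emph{compact} subset $W$ of $\IT$ (using that $\IT$ is closed, via \cref{prop: HT quotient groupoid}), notes $\IT U \medcap V \subseteq W\overline{U}$, and concludes because a product of compact sets is compact, hence closed; no sequences and no explicit description of $\IT$ are needed. You instead invoke the concrete form $\IT \cong X \times \PT$ from \cref{prop: IT characterisation} to see that the level set $S = \{(x, n-m, x) : x \in X\}$ is a clopen bisection with $r(S) = X$, so that convergence of the isotropy factors $\iota_i$ is forced by convergence of $r(\gamma_i)$; this trades the compactness argument for a sequential one (legitimate here, since second-countability of $X$ gives first-countability of $\GT$). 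Your route buys a slightly more transparent picture of \emph{why} the factorisation converges, at the cost of leaning on \cref{prop: IT characterisation} and on first-countability; the paper's route is marginally more economical and would survive without second-countability. Both are complete proofs of the statement.
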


\begin{proof}
Define $K \coloneqq r(\overline{U}) \times \{n - m\} \times r(\overline{U})$ and $W \coloneqq K \medcap \IT$. Since $r$ is continuous, $K$ is compact, and hence closed. Since $\IT$ is closed by \cref{prop: HT quotient groupoid}, $W$ is closed, and hence is a compact subset of $K$ and of $\IT$. We claim that $\IT U \medcap V \subseteq W \overline{U}$. For this, suppose that $\gamma \in \IT U \medcap V$. Then there exist $\xi \in \IT$ and $\eta \in U \subseteq c^{-1}(m)$ such that $\gamma = \xi \eta \in V \subseteq c^{-1}(n)$. Hence $\xi = \gamma \eta^{-1} \subseteq c^{-1}(n - m)$. We also have $r(\xi) = s(\xi) = r(\eta) \in r(\overline{U})$, and so $\xi \in K \medcap \IT = W$. Hence $\gamma = \xi \eta \in W \overline{U}$, and so $\IT U \medcap V \subseteq W \overline{U}$. Since $W$ and $\overline{U}$ are compact, $W \overline{U}$ is compact, and hence closed. Thus $\overline{\IT U \medcap V} \subseteq \IT \overline{U}$. A symmetric argument shows that $\overline{\IT V \medcap U} \subseteq \IT \overline{V}$.
\end{proof}

\begin{lemma} \label{lemma: s(gamma) in IT U cap V}
Let $(X,T)$ be a minimal rank-$k$ Deaconu--Renault system such that $X$ is second-countable. Let $c \colon \GT \to \Z^k$ be as in \cref{lemma: 1-cocycle c}. Fix $m, n \in \Z^k$, and let $U$ and $V$ be precompact open bisections of $\GT$ such that $U \subseteq c^{-1}(m)$ and $V \subseteq c^{-1}(n)$. Then $s(\IT U \medcap V) = s(\IT V \medcap U)$. Moreover, if $(V^{-1} \IT U) \medcap \IT \ne \varnothing$, then $n - m \in \PT$, and for each $\gamma \in (V^{-1} \IT U) \medcap \IT$, we have $s(\gamma) \in s(\IT U \medcap V)$.
\end{lemma}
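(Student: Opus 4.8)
The plan is to prove both statements by direct manipulation in the groupoid, using the description $\IT = \{(x,p,x) : x \in X,\, p \in \PT\}$ from \cref{prop: IT characterisation}, the fact that $\PT$ is a subgroup of $\Z^k$ (\cref{prop: PT characterisation}), and the observation that $\IT = \IT^{-1}$ since $\IT$ is a groupoid. Throughout I will use that $U \subseteq c^{-1}(m)$ and $V \subseteq c^{-1}(n)$ force every element of $U$ to have the form $(x,m,y)$ and every element of $V$ the form $(x,n,y)$.

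For the first equality, I would take $\gamma \in \IT U \medcap V$ and write $\gamma = \xi\eta$ with $\xi \in \IT$ and $\eta \in U$. Then $\eta = \xi^{-1}\gamma$; since $\xi^{-1} \in \IT^{-1} = \IT$ and $\gamma \in V$, the composable product $\xi^{-1}\gamma$ shows $\eta \in \IT V$, whence $\eta \in \IT V \medcap U$, and $\gamma = \xi\eta$ gives $s(\eta) = s(\gamma)$. Hence $s(\IT U \medcap V) \subseteq s(\IT V \medcap U)$, and the reverse inclusion follows by the symmetric argument, giving the claimed equality.

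For the \emph{moreover} statement, suppose $(V^{-1}\IT U)\medcap\IT \ne \varnothing$ and fix $\gamma$ in this set, writing $\gamma = v^{-1}\xi u$ with $v \in V$, $\xi \in \IT$, $u \in U$. Composability of $v^{-1}\xi$ and of $\xi u$, together with $r(\xi) = s(\xi)$, forces $r(u) = s(\xi) = r(\xi) = r(v) =: x$, while $r(\gamma) = s(\gamma)$ forces $s(u) = s(v) = s(\gamma) =: y$. Thus $v = (x,n,y)$, $u = (x,m,y)$, and $\xi = (x, c(\xi), x)$ with $c(\xi) \in \PT$ by \cref{prop: IT characterisation}. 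Computing cocycle values gives $\gamma = (y,\, m - n + c(\xi),\, y)$, and since $\gamma \in \IT$, \cref{prop: IT characterisation} yields $m - n + c(\xi) \in \PT$; as $c(\xi) \in \PT$ and $\PT$ is a group, this gives $n - m \in \PT$.

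Finally, for each such $\gamma$ I would exhibit an explicit element of $\IT U \medcap V$ with source $s(\gamma)$. Since $r(v) = r(u) = x$ and $s(v) = s(u) = y$, the product $vu^{-1} = (x,\, n - m,\, x)$ is defined, and because $n - m \in \PT$ it lies in $\IT$ by \cref{prop: IT characterisation}. Hence $v = (vu^{-1})u \in \IT U$, so $v \in \IT U \medcap V$ with $s(v) = y = s(\gamma)$, giving $s(\gamma) \in s(\IT U \medcap V)$ as required. The only care needed anywhere is in tracking the source and range conditions and the cocycle values; once the two algebraic identities $\eta = \xi^{-1}\gamma$ and $v = (vu^{-1})u$ are spotted, there is no genuine obstacle.
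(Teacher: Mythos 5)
Your proof is correct and follows essentially the same route as the paper's: the identity $\eta = \xi^{-1}\gamma$ for the set equality, and the factorisation $v = (vu^{-1})u$ with $vu^{-1} = (x,n-m,x) \in \IT$ for the \emph{moreover} part, with the paper phrasing the cocycle bookkeeping via $c$ rather than explicit coordinates. No gaps.
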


\begin{proof}
We first show that $s(\IT U \medcap V) = s(\IT V \medcap U)$. By symmetry, it suffices to show that $s(\IT U \medcap V) \subseteq s(\IT V \medcap U)$. Suppose that $x \in s(\IT U \medcap V)$. Then there exist $\zeta \in \IT$ and $\eta \in U$ such that $\zeta \eta \in V$ and $x = s(\zeta \eta) = s(\eta)$. Since $\zeta^{-1} \in \IT$, we have $\eta = \zeta^{-1} (\zeta \eta) \in \IT V \medcap U$, and hence $x = s(\eta) \in s(\IT V \medcap U)$. Thus $s(\IT U \medcap V) \subseteq s(\IT V \medcap U)$, as required.

For the second statement, suppose that $\gamma \in (V^{-1} \IT U) \medcap \IT$. Then there exist $\alpha \in U$, $\beta \in V$, and $\xi \in \IT$ such that $\gamma = \beta^{-1} \xi \alpha$, and hence $c(\gamma) = -n + c(\xi) + m$. Since $\gamma, \xi \in \IT$, we have $c(\gamma), c(\xi) \in \PT$ by \cref{prop: IT characterisation}, and hence $n - m = c(\xi) - c(\gamma) \in \PT$, because $\PT$ is a group by \cref{prop: PT characterisation}. Since $\gamma \in \IT$, we have $s(\beta) = r(\gamma) = s(\gamma) = r(\alpha^{-1})$, and hence $(\beta,\alpha^{-1}) \in \GTc$. Since $\xi \in \IT$, we have $r(\beta\alpha^{-1}) = s(\beta^{-1}) = r(\xi) = s(\xi) = r(\alpha) = s(\beta\alpha^{-1})$. We also have $c(\beta\alpha^{-1}) = n - m \in \PT$, and thus \cref{prop: IT characterisation} implies that $\beta\alpha^{-1} \in \IT$. Hence $\beta = (\beta\alpha^{-1})\alpha \in \IT U \medcap V$, and so $s(\gamma) = r(\gamma) = s(\beta) \in s(\IT U \medcap V)$.
\end{proof}

\section{Cohomology of Deaconu--Renault groupoids}
\label{sec: cohomology}

In this section we show that every continuous $\T$-valued $2$-cocycle on a minimal Deaconu--Renault groupoid $\GT$ is cohomologous to a continuous $\T$-valued $2$-cocycle $\sigma$ on $\GT$ that is constant on $\IT$ (in the sense of \cref{def: constant on IT}). We also introduce the spectral action $\theta$ of $\HT \coloneqq \GT / \IT$, analogous to \cite[Lemma~3.6]{KPS2016JNG}.

\begin{definition} \label{def: constant on IT}
Let $(X,T)$ be a minimal rank-$k$ Deaconu--Renault system such that $X$ is second-countable. Suppose that $\sigma \in Z^2(\GT,\T)$. We say that $\sigma$ is \hl{constant on $\IT$} if
\[
\sigma\big((x,m,x), (x,n,x)\big) = \sigma\big((y,m,y),(y,n,y)\big) \quad \text{ for all } x, y \in X \text{ and } m, n \in \PT.
\]
If $\omega \in Z^2(\PT,\T)$ is the $2$-cocycle satisfying $\sigma\big((x,m,x), (x,n,x)\big) = \omega(m,n)$ for all $x \in X$ and $m, n \in \PT$, then we say that $\sigma$ is \hl{$\omega$-constant on $\IT$}, and we write $\sigma\bigrestr{\ITc} = 1_X \times \omega$.
\end{definition}

The following proposition and the lemmas used in its proof are extensions of cohomological results from \cite[Section~3]{KPS2016JNG} about boundary-path groupoids of cofinal, row-finite, source-free $k$-graphs to the more general setting of Deaconu--Renault groupoids. \Cref{prop: DR cohomologous vanishing bicharacter} is a generalisation of \cite[Proposition~3.1]{KPS2016JNG}, but we have adapted it slightly to prove that the bicharacter $\omega \in Z^2(\PT,\T)$ can be chosen in such a way that it vanishes on its centre, and hence descends to a bicharacter $\tilde{\omega} \in Z^2(\PT/Z_\omega,\T)$.

\begin{prop} \label{prop: DR cohomologous vanishing bicharacter}
Let $(X,T)$ be a minimal rank-$k$ Deaconu--Renault system such that $X$ is second-countable. Suppose that $\rho \in Z^2(\GT,\T)$. For each $x \in X$, define $\rho_x\colon \PT \times \PT \to \T$ by
\[
\rho_x(m,n) \coloneqq \rho\big((x,m,x),(x,n,x)\big).
\]
Then $\rho_x \in Z^2(\PT,\T)$. There exists a bicharacter $\omega \in Z^2(\PT,\T)$ such that $\omega$ vanishes on $Z_\omega$ in each coordinate, and $\omega$ is cohomologous to $\rho_x$ for every $x \in X$. For any such bicharacter $\omega$, there exists $\sigma \in Z^2(\GT,\T)$ such that $\sigma$ is cohomologous to $\rho$ and is $\omega$-constant on $\IT$ (in the sense of \cref{def: constant on IT}), and there exists a bicharacter $\tilde{\omega} \in Z^2(\PT/Z_\omega,\T)$ such that
\[
\tilde{\omega}(p+Z_\omega,q+Z_\omega) = \omega(p,q) \ \text{ for all } p, q \in \PT.
\]
\end{prop}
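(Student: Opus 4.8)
The plan is to handle the four assertions in turn, reducing the analytic content to facts about the cohomology of the free abelian group $\PT$ collected in \cref{sec_appendix: group chohomology}.

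First I would verify that each $\rho_x$ lies in $Z^2(\PT,\T)$. By \cref{prop: PT characterisation} the element $(x,m,x)$ lies in $\GT$ for every $m \in \PT$, and the triples $(x,m,x)$ with $m \in \PT$ form a subgroup of $(\GT)_x^x$ carrying the identity $(x,0,x)$. The group $2$-cocycle identity and normalisation for $\rho_x$ then follow at once by restricting the groupoid $2$-cocycle identity and normalisation of $\rho$ to composable triples of the form $(x,m,x),(x,n,x),(x,p,x)$.

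Next, to produce $\omega$, the key step is to show that the classes of the $\rho_x$ in $H^2(\PT,\T)$ all coincide. For each $x$ I would consider the antisymmetrisation $\beta_x(m,n) \coloneqq \rho_x(m,n)\,\overline{\rho_x(n,m)}$. Since a coboundary on the abelian group $\PT$ is symmetric, $\beta_x$ depends only on the class of $\rho_x$, and since $\PT$ is free abelian of finite rank, the results of \cref{sec_appendix: group chohomology} show that conversely the class of $\rho_x$ is determined by $\beta_x$. Using the identification $\IT \cong X \times \PT$ of \cref{prop: IT characterisation}, the map $x \mapsto (x,m,x)$ is continuous, so for fixed $m,n$ the map $x \mapsto \beta_x(m,n)$ is a continuous function $X \to \T$. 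Now fix $x$ and take $y \in \Orb{x}$; choosing $\eta = (y,p,x) \in \GT$ we have $\eta(x,m,x)\eta^{-1} = (y,m,y)$, so a standard conjugation computation shows that $\rho_y$ is cohomologous to $\rho_x$ as cocycles on $\PT$, whence $\beta_y = \beta_x$. Thus $x \mapsto \beta_x(m,n)$ is constant on the dense set $\Orb{x}$ and, being continuous, is globally constant; hence all the $\beta_x$, and so all the classes $[\rho_x]$, agree. With this common class in hand I would invoke the group-cohomology result of \cref{sec_appendix: group chohomology} producing a bicharacter representative $\omega$ of it that vanishes on its centre $Z_\omega$ in each coordinate (see \cref{def: centre of cocycle}); since $Z_\omega$ is read off from the antisymmetrisation, it is common to all representatives.

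To construct $\sigma$ from such an $\omega$, I would first choose, continuously in $x$, a normalised $1$-cochain $b_x \colon \PT \to \T$ with $\delta^1 b_x = \rho_x\,\overline\omega$; this difference is symmetric, since $\omega$ and $\rho_x$ share the antisymmetrisation $\beta_x$, hence is a coboundary, and because $\PT$ is free abelian the trivialising cochain can be selected to depend continuously on $x$ by the relevant lemma of \cref{sec_appendix: group chohomology}. Crucially, $\IT$ is open in $\GT$, being the interior of the isotropy, and is closed by \cref{prop: HT quotient groupoid}; hence $\IT$ is clopen, so defining $\tilde b(x,m,x) \coloneqq b_x(m)$ on $\IT \cong X \times \PT$ and $\tilde b \coloneqq 1$ off $\IT$ yields a continuous normalised $1$-cochain on $\GT$ (normalisation on units holds since $b_x(0)=1$). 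Setting $\sigma \coloneqq \rho\,\overline{\delta^1 \tilde b}$ gives a cocycle cohomologous to $\rho$, and restricting to $\ITc$ gives $\sigma((x,m,x),(x,n,x)) = \rho_x(m,n)\,\overline{\delta^1 b_x(m,n)} = \omega(m,n)$, so $\sigma$ is $\omega$-constant on $\IT$. Finally, for $\tilde\omega$ I would check that $(p+Z_\omega,q+Z_\omega)\mapsto \omega(p,q)$ is well defined: for $z,w \in Z_\omega$, bilinearity of $\omega$ together with $\omega(Z_\omega,\PT)=\omega(\PT,Z_\omega)=\{1\}$ gives $\omega(p+z,q+w)=\omega(p,q)$, and bilinearity and the $2$-cocycle identity then descend to the quotient. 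The main obstacle is the construction of $\sigma$: one must simultaneously trivialise the family $\rho_x\overline\omega$ continuously in $x$ over $\PT$ and assemble the result into a globally continuous cochain on $\GT$. The clopenness of $\IT$ disposes of any gluing or extension difficulty, so the real work is the continuous-selection lemma for trivialising a continuous family of symmetric coboundaries, which is exactly where freeness of $\PT$ is essential.
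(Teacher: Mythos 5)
Your overall strategy matches the paper's: you establish that the class $[\rho_x]$ is independent of $x$ by combining a conjugation argument along the orbit with continuity of $x \mapsto (\rho_x\rho_x^*)(m,n)$ and minimality (this is exactly \cref{lemma: [sigma_x]=[sigma_y]}, where the conjugation is packaged as the map $\twistchar_\gamma^\rho$ of \cref{lemma: twistchar_gamma^sigma} inducing a congruence of central extensions); you then invoke \cref{thm: cohomologous vanishing bicharacter} for $\omega$ and $\tilde\omega$; and you build $\sigma$ by trivialising $\rho_x\overline{\omega}$ fibrewise and gluing via the clopenness of $\IT$. All of that is sound and is the route the paper takes.

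There is, however, one genuine gap, and it sits exactly where you locate ``the real work.'' You assert that, because $\PT$ is free abelian, the normalised $1$-cochains $b_x$ with $\delta^1 b_x = \rho_x\overline{\omega}$ ``can be selected to depend continuously on $x$ by the relevant lemma of \cref{sec_appendix: group chohomology}.'' No such lemma exists: the appendix contains only \cref{thm: H^2 and Ext}, \cref{thm: when cohomologous}, \cref{cor: cohomologous to bicharacter}, and \cref{thm: cohomologous vanishing bicharacter}, none of which addresses continuous (in a parameter) trivialisation of a family of coboundaries. The paper has to prove this selection statement inside the proof of the proposition itself, and it is not a one-liner: one fixes free generators $g_1,\dotsc,g_l$ of $\PT$ and defines $b_x$ by a double induction via the recursion $b_x(m)\,\overline{b_x(m+g_i)} = \tilde c_x(g_i,m)$ for $m \in \langle g_j : j \le i\rangle$, where $\tilde c_x = \overline{\omega}\,\rho_x$; continuity in $x$ then follows because each $x \mapsto \tilde c_x(p,q)$ is continuous. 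Crucially, this recursion by itself only controls $\delta^1 b_x$ on pairs of the form $(g_i,m)$, so a further comparison argument is needed to conclude that $\delta^1 b_x = \tilde c_x$ on all of $\PT \times \PT$: the paper takes an abstract trivialising cochain $\tilde b_x$, corrects it by the $1$-cocycle $a_x(m) = \prod_i \overline{\tilde b_x(g_i)^{m_i}}$ so that it agrees with $b_x$ on $0$ and the generators, and then argues that two normalised cochains satisfying the same recursion and agreeing on generators coincide. Without supplying this construction and verification, your proof of the existence of $\sigma$ is incomplete; everything else in your argument (the clopen gluing, the computation $\sigma\restr{\ITc} = 1_X\times\omega$, and the well-definedness of $\tilde\omega$) is fine.
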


In order to prove \cref{prop: DR cohomologous vanishing bicharacter}, we need the following two results. The first of these results is an extension of \cite[Lemma~3.2]{KPS2016JNG} to the setting of Deaconu--Renault groupoids.

\begin{lemma} \label{lemma: twistchar_gamma^sigma}
Let $(X,T)$ be a minimal rank-$k$ Deaconu--Renault system such that $X$ is second-countable. Fix $\sigma \in Z^2(\GT,\T)$. For each $x \in X$, define $\sigma_x\colon
\PT \times \PT \to \T$ by
\[
\sigma_x(m,n) \coloneqq \sigma\big((x,m,x),(x,n,x)\big).
\]
Then $\sigma_x \in Z^2(\PT,\T)$. For $\gamma \in \GT$ and $y \coloneqq s(\gamma) \in X$, define $\twistchar_\gamma^\sigma\colon \PT \to \T$ by
\begin{equation} \label{eqn: twistchar_gamma^sigma(p)}
\twistchar_\gamma^\sigma(p) \coloneqq \sigma\big(\gamma,(y,p,y)\big) \, \sigma\big(\gamma(y,p,y),\gamma^{-1}\big) \, \overline{\sigma(\gamma,\gamma^{-1})}.
\end{equation}
\begin{enumerate}[label=(\alph*)]
\item \label{item: x to eval sigma_x cts} For all $m, n \in \PT$, the map $x \mapsto \sigma_x(m,n)$ from $X$ to $\T$ is continuous.
\item \label{item: twistchar_.^sigma cts} For each $p \in \PT$, the map $\gamma \mapsto \twistchar_\gamma^\sigma(p)$ from $\GT$ to $\T$ is continuous.
\item \label{item: twistchar_.^sigma in conjugation} Fix $\gamma = (x,m,y) \in \GT$, $p \in \PT$, and $w, z \in \T$. Under the multiplication and inversion operations on $\GT \times_\sigma \T$ (as defined in \cref{eqn: twisted groupoid multiplication,eqn: twisted groupoid inversion}), we have
\begin{equation} \label{eqn: chi conjugation}
(\gamma, w) \big((y,p,y), z\big) (\gamma, w)^{-1} = \big((x,p,x), \, \twistchar_\gamma^\sigma(p) z\big).
\end{equation}
\item \label{item: twistchar_gamma^sigma(p+q)} For all $\gamma \in \GT$ and $p, q \in \PT$, we have
\begin{equation} \label{eqn: twistchar_gamma^sigma(p+q)}
\twistchar_\gamma^\sigma(p+q) = \sigma_{r(\gamma)}(p,q)\, \overline{\sigma_{s(\gamma)}(p,q)}\, \twistchar_\gamma^\sigma(p) \, \twistchar_\gamma^\sigma(q).
\end{equation}
\item \label{item: twistchar_.^sigma 1-cocycle} If $\omega$ is a bicharacter of $\PT$ such that $\sigma$ is $\omega$-constant on $\IT$, then $\sigma_x = \sigma_y$ for all $x, y \in X$, and $\gamma \mapsto \twistchar_\gamma^\sigma$ is a continuous $\PTHat$-valued $1$-cocycle on $\GT$.
\end{enumerate}
\end{lemma}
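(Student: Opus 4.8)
The plan is to read every statement through the identification $\IT \cong X \times \PT$ of \cref{prop: IT characterisation}, under which $p \mapsto (x,p,x)$ is a group isomorphism of $\PT$ onto the isotropy group $\IT \medcap (\GT)_x^x$. Granting this, $\sigma_x$ is just the pullback along this isomorphism of the restriction of $\sigma$ to composable triples inside the isotropy at $x$: the $2$-cocycle identity for $\sigma_x$ is the groupoid $2$-cocycle identity for $\sigma$ evaluated at $\big((x,m,x),(x,n,x),(x,p,x)\big)$ (all composable, since $m,n,p \in \PT$ put these elements in $\GT$ by \cref{prop: PT characterisation}), and normalisation of $\sigma_x$ follows from normalisation of $\sigma$ because $(x,0,x)$ is the unit at $x$. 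Hence $\sigma_x \in Z^2(\PT,\T)$.

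For parts (a) and (b) the key point is that for each fixed $p \in \PT$ the map $x \mapsto (x,p,x)$ is continuous from $X$ into $\GT$; this is immediate from \cref{prop: IT characterisation}, since the homeomorphism $X \times \PT \to \IT$ realising the identification restricts on the slice $X \times \{p\}$ to $x \mapsto (x,p,x)$. Part (a) then follows because $x \mapsto \big((x,m,x),(x,n,x)\big)$ is continuous into $\GTc$ and $\sigma$ is continuous. For part (b), I would combine continuity of $s$, of $y \mapsto (y,p,y)$, and of multiplication and inversion in $\GT$ to see that each of the three factors $\gamma \mapsto \sigma(\gamma,(y,p,y))$, $\gamma \mapsto \sigma(\gamma(y,p,y),\gamma^{-1})$ and $\gamma \mapsto \overline{\sigma(\gamma,\gamma^{-1})}$ appearing in \eqref{eqn: twistchar_gamma^sigma(p)} is continuous, using continuity of $\sigma$ on $\GTc$.

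Part (c) is the computational heart, and everything after it is formal. Writing $\gamma = (x,m,y)$ and expanding the left-hand side of \eqref{eqn: chi conjugation} using the twisted multiplication \eqref{eqn: twisted groupoid multiplication} and inversion \eqref{eqn: twisted groupoid inversion}, the underlying groupoid product telescopes to $(x,m+p,y)(y,-m,x) = (x,p,x)$, while the accumulated scalar is $\sigma\big(\gamma,(y,p,y)\big)\,\sigma\big((x,m+p,y),(y,-m,x)\big)\,\overline{\sigma(\gamma,\gamma^{-1})}\, z$, the factor $w\overline{w}$ cancelling. Since $\gamma(y,p,y) = (x,m+p,y)$ and $\gamma^{-1} = (y,-m,x)$, this scalar is precisely $\twistchar_\gamma^\sigma(p)\,z$; note in particular that conjugation by $(\gamma,w)$ is independent of $w$.

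Finally, parts (d) and (e) follow from (c) by exploiting that conjugation by a fixed twisted element is a homomorphism on composable isotropy elements. For (d) I would conjugate the product $\big((y,p,y),1\big)\big((y,q,y),1\big) = \big((y,p+q,y),\sigma_{s(\gamma)}(p,q)\big)$ by $(\gamma,w)$ in two ways and compare $\T$-coordinates using \eqref{eqn: chi conjugation}, which yields \eqref{eqn: twistchar_gamma^sigma(p+q)} after cancelling. For (e), $\omega$-constancy gives $\sigma_x = \omega = \sigma_y$ for all $x,y$ directly from \cref{def: constant on IT}, so the correction factor $\sigma_{r(\gamma)}(p,q)\overline{\sigma_{s(\gamma)}(p,q)}$ in \eqref{eqn: twistchar_gamma^sigma(p+q)} equals $1$ and $\twistchar_\gamma^\sigma$ is a homomorphism, hence a character in $\PTHat$ as $\PT$ is discrete; normalisation on units is automatic from this multiplicative property. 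Continuity of $\gamma \mapsto \twistchar_\gamma^\sigma$ into $\PTHat$ (with the topology of pointwise convergence) is then immediate from part (b), and the $1$-cocycle identity $\twistchar_{\alpha\beta}^\sigma = \twistchar_\alpha^\sigma\,\twistchar_\beta^\sigma$ follows by conjugating an isotropy element at $s(\beta)$ by $(\alpha,1)(\beta,1) = (\alpha\beta,\sigma(\alpha,\beta))$ and applying \eqref{eqn: chi conjugation} three times. I expect the only delicate points to be the bookkeeping in part (c), together with ensuring that the continuity of $x \mapsto (x,p,x)$ used in (a) and (b) is genuinely supplied by \cref{prop: IT characterisation} rather than tacitly assumed.
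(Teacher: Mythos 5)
Your proposal is correct and follows essentially the same route as the paper's proof: the same direct computation for the conjugation formula \labelcref{eqn: chi conjugation}, the same trick of conjugating the product of two isotropy elements to obtain \labelcref{eqn: twistchar_gamma^sigma(p+q)}, and the same derivation of the $1$-cocycle property and its continuity via the (pointwise/compact-open) topology on $\PTHat$. Your extra care about why $x \mapsto (x,p,x)$ is continuous is a reasonable point the paper glosses over, and it is indeed supplied by the topological identification $\IT \cong X \times \PT$ (equivalently, by $r$ restricting to a homeomorphism on a basic open bisection of $\IT$ through $(x,p,x)$).
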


\begin{proof}
Routine calculations show that since $\sigma$ is normalised and satisfies the $2$-cocycle identity, we have $\sigma_x \in Z^2(\PT,\T)$ for each $x \in X$.

For part~\cref{item: x to eval sigma_x cts}, note that for each $m, n \in \PT$, the map $x \mapsto \sigma_x(m,n)$ is the composition of the continuous maps $x \mapsto \!\big((x,m,x),(x,n,x)\big)$ and $\sigma$.

For part~\cref{item: twistchar_.^sigma cts}, fix $p \in \PT$. For $\gamma \in \GT$,
\[
\twistchar_\gamma^\sigma(p) \,=\, \sigma\big(\gamma,(s(\gamma),p,s(\gamma))\big) \, \sigma\big(\gamma \, (s(\gamma),p,s(\gamma)),\gamma^{-1}\big) \, \overline{\sigma\big(\gamma,\gamma^{-1}\big)}.
\]
Thus the map $\gamma \mapsto \twistchar_\gamma^\sigma(p)$ from $\GT$ to $\T$ is continuous because it is a product of continuous functions.

For part~\cref{item: twistchar_.^sigma in conjugation}, fix $\gamma = (x,m,y) \in \GT$, $p \in \PT$, and $w, z \in \T$. We have
\[
\gamma (y,p,y) \gamma^{-1} = (x,m,y)(y,p,y)(y,-m,x) = (x,p,x),
\]
and hence
\begin{align*}
(\gamma, w) \big((y,p,y), z\big) (\gamma, w)^{-1} &= \big(\gamma(y,p,y),\, \sigma(\gamma,(y,p,y)) \, wz\big) \big(\gamma^{-1}, \, \overline{\sigma(\gamma,\gamma^{-1})} \, \overline{w}\big) \\
&= \big(\gamma(y,p,y)\gamma^{-1}, \, \sigma(\gamma(y,p,y), \gamma^{-1}) \, \sigma(\gamma,(y,p,y)) \, \overline{\sigma(\gamma,\gamma^{-1})} \, z\big) \\
&= \big((x,p,x), \, \twistchar_\gamma^\sigma(p) z\big).
\end{align*}

For part~\cref{item: twistchar_gamma^sigma(p+q)}, fix $\gamma = (x,m,y) \in \GT$ and $p, q \in \PT$. For all $z \in \T$, we have
\begin{align*}
\big((y,p,y),1\big) \big((y,q,y),z\big) &= \big((y,p,y)(y,q,y), \, \sigma\big((y,p,y),(y,q,y)\big) z\big) \\
&= \big((y,p+q,y), \, \sigma_y(p,q) z\big),
\end{align*}
and so, taking $z = \overline{\sigma_y(p,q)}$, we see that
\begin{equation} \label{eqn: y p plus q y}
\big((y,p,y), 1\big) \big((y,q,y),\, \overline{\sigma_y(p,q)}\big) = \big((y,p+q,y), 1\big).
\end{equation}
Together, \cref{eqn: chi conjugation,eqn: y p plus q y} imply that
\begin{align*}
\big((x,p+q,x), \, \twistchar_\gamma^\sigma(p+q)\big) &= (\gamma,1) \, \big((y,p+q,y), 1\big) \, (\gamma,1)^{-1} \\
&= (\gamma,1) \, \big((y,p,y), 1\big) \big((y,q,y), \, \overline{\sigma_y(p,q)}\big) \, (\gamma,1)^{-1} \\
&= (\gamma,1) \, \big((y,p,y), 1\big) \, (\gamma,1)^{-1} (\gamma,1) \, \big((y,q,y), \, \overline{\sigma_y(p,q)}\big) \, (\gamma,1)^{-1} \\
&= \big((x,p,x), \, \twistchar_\gamma^\sigma(p)\big) \, \big((x,q,x), \, \twistchar_\gamma^\sigma(q) \, \overline{\sigma_y(p,q)}\big) \\
&= \big((x,p+q,x), \, \sigma_x(p,q) \, \overline{\sigma_y(p,q)} \, \twistchar_\gamma^\sigma(p) \, \twistchar_\gamma^\sigma(q)\big),
\end{align*}
and hence
\[
\twistchar_\gamma^\sigma(p+q) = \sigma_{r(\gamma)}(p,q) \, \overline{\sigma_{s(\gamma)}(p,q)} \, \twistchar_\gamma^\sigma(p) \, \twistchar_\gamma^\sigma(q).
\]

For part~\cref{item: twistchar_.^sigma 1-cocycle}, since $\sigma$ is $\omega$-constant on $\IT$, for all $x, y \in X$ and $p, q \in \PT$, we have
\[
\sigma_x(p,q) = \sigma\big((x,p,x),(x,q,x)\big) = \omega(p,q) = \sigma\big((y,p,y),(y,q,y)\big) = \sigma_y(p,q).
\]
So $\sigma_x = \sigma_y$, and for each $\gamma \in \GT$, \cref{eqn: twistchar_gamma^sigma(p+q)} reduces to
\[
\twistchar_\gamma^\sigma(p+q) = \twistchar_\gamma^\sigma(p)\, \twistchar_\gamma^\sigma(q).
\]
Thus $\twistchar_\gamma^\sigma\colon \PT \to \T$ is a homomorphism, and so
$\twistchar_\gamma^\sigma \in \PTHat$ for each $\gamma \in \GT$.

We now show that the map $\gamma \mapsto \twistchar_\gamma^\sigma$ is multiplicative. Fix $\alpha = (x,m,u),\, \beta = (u,n,y) \in \GT$, and $p \in \PT$. Using \cref{eqn: chi conjugation}, we compute
\begin{align*}
\big((x,p,x), \, \twistchar_\alpha^\sigma(p) \, \twistchar_{\beta}^\sigma(p)\big) &= (\alpha,1) \, \big((u,p,u),\, \twistchar_{\beta}^\sigma(p)\big) \, (\alpha,1)^{-1} \\
&= (\alpha,1) (\beta,1) \, \big((y,p,y), 1\big) \, (\beta,1)^{-1} (\alpha,1)^{-1} \\
&= \big(\alpha\beta, \, \sigma(\alpha,\beta)\big) \big((y,p,y), 1\big) \big(\alpha\beta, \, \sigma(\alpha,\beta)\big)^{-1} \\
&= \big((x,p,x), \, \twistchar_{\alpha\beta}^\sigma(p)\big).
\end{align*}
Hence $\twistchar_\alpha^\sigma\, \twistchar_{\beta}^\sigma = \twistchar_{\alpha\beta}^\sigma$, and so $\gamma \mapsto \twistchar_\gamma^\sigma$ is a $\PTHat$-valued $1$-cocycle on $\GT$.

We conclude by showing that the map $\gamma \mapsto \twistchar_\gamma^\sigma$ is continuous. Fix a finite subset $F \subseteq \PT$ and an open subset $U \subseteq \T$. The set
\[
S_{\PTHat}(F,U) = \{ \phi \in \PTHat \,:\, \phi(F) \subseteq U \}
\]
is a typical subbasis element for the compact-open topology on $\PTHat$, and so it suffices to show that $\big\{ \gamma \in \GT : \twistchar_\gamma^\sigma \in S_{\PTHat}(F,U) \big\}$ is an open subset of $\GT$. We have
\[
\big\{ \gamma \in \GT : \twistchar_\gamma^\sigma(F) \subseteq U \big\} = \bigcap_{p \in F} \big\{\gamma \in \GT : \twistchar_\gamma^\sigma(p) \in U \big\},
\]
which is open by part~\cref{item: twistchar_.^sigma cts}.
\end{proof}

The following lemma is an extension of \cite[Lemma~3.3]{KPS2016JNG} to the setting of Deaconu--Renault groupoids.

\begin{lemma} \label{lemma: [sigma_x]=[sigma_y]}
Let $(X,T)$ be a minimal rank-$k$ Deaconu--Renault system such that $X$ is second-countable. Fix $\sigma \in Z^2(\GT,\T)$. As in \cref{lemma: twistchar_gamma^sigma}, for each $x \in X$, define $\sigma_x \in Z^2(\PT,\T)$ by
\[
\sigma_x(m,n) \coloneqq \sigma\big((x,m,x),(x,n,x)\big).
\]
Then the cohomology class of $\sigma_x$ does not depend on $x$.
\end{lemma}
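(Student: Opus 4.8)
The plan is to use the $\PTHat$-valued quantity $\twistchar_\gamma^\sigma$ from \cref{lemma: twistchar_gamma^sigma} as an explicit transgression between $\sigma_{r(\gamma)}$ and $\sigma_{s(\gamma)}$. Fix $\gamma \in \GT$ and write $b \coloneqq \twistchar_\gamma^\sigma \colon \PT \to \T$. Rearranging the identity in \cref{lemma: twistchar_gamma^sigma}\,\cref{item: twistchar_gamma^sigma(p+q)} gives, for all $p, q \in \PT$,
\[
b(p)\,b(q)\,\overline{b(p+q)} = \sigma_{r(\gamma)}(p,q)^{-1}\,\sigma_{s(\gamma)}(p,q);
\]
that is, $\delta^1 b = \sigma_{r(\gamma)}^{-1}\,\sigma_{s(\gamma)}$ in the sense of the group-cohomology coboundary. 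A direct check using normalisation of $\sigma$ (and $\gamma\,(s(\gamma),0,s(\gamma)) = \gamma$) shows $b(0) = 1$, so $b$ is a normalised $1$-cochain. Hence $\sigma_{r(\gamma)}$ and $\sigma_{s(\gamma)}$ are cohomologous, so $[\sigma_{r(\gamma)}] = [\sigma_{s(\gamma)}]$. Since $\GT$ is minimal (\cref{rem: minimality}) and $\Orb{x_0} = r(s^{-1}(x_0))$, for every $y \in \Orb{x_0}$ there is $\gamma$ with $s(\gamma) = x_0$ and $r(\gamma) = y$; applying the above shows $[\sigma_y] = [\sigma_{x_0}]$ for all $y$ in the dense orbit of a fixed basepoint $x_0$.

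It remains to upgrade ``constant on a dense orbit'' to ``constant on $X$'', and this is where continuity enters. By \cref{prop: PT characterisation}, $\PT$ is a subgroup of $\Z^k$ and hence a finitely generated free abelian group. For such groups $\Ext(\PT,\T) = 0$, so the cohomology class of a $2$-cocycle $\tau \in Z^2(\PT,\T)$ is determined by its antisymmetrisation $(p,q) \mapsto \tau(p,q)\,\overline{\tau(q,p)}$: two cocycles are cohomologous if and only if these skew bicharacters agree (see \cref{sec_appendix: group chohomology}). I would therefore replace the invariant $[\sigma_x]$ by the $\T$-valued pairing $\beta_x(p,q) \coloneqq \sigma_x(p,q)\,\overline{\sigma_x(q,p)}$.

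Finally, for each fixed pair $p,q \in \PT$ the map $x \mapsto \beta_x(p,q)$ is continuous, being a product of the continuous maps furnished by \cref{lemma: twistchar_gamma^sigma}\,\cref{item: x to eval sigma_x cts}. By the first paragraph, $\beta_x(p,q) = \beta_{x_0}(p,q)$ for all $x$ in the dense set $\Orb{x_0}$, so by continuity $\beta_x(p,q) = \beta_{x_0}(p,q)$ for all $x \in X$. Thus every $\sigma_x$ has the same antisymmetrisation as $\sigma_{x_0}$, and by the free-abelian classification each $\sigma_x$ is cohomologous to $\sigma_{x_0}$; hence $[\sigma_x]$ is independent of $x$.

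I expect the one genuine subtlety---rather than the routine coboundary computation---to be the reduction to the antisymmetrised pairing. The naive hope that $[\sigma_x]$ is ``locally constant'' in $x$ fails because $H^2(\PT,\T)$ is typically a nondiscrete torus, so one cannot argue directly at the level of cohomology classes. Passing to the commutator bicharacter converts the cohomological statement into equality of genuine continuous $\T$-valued functions, for which ``constant on a dense set $\Rightarrow$ constant'' is immediate; the key input making this legitimate is the freeness of $\PT$.
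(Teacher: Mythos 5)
Your proof is correct and follows essentially the same route as the paper's: both use the identity of \cref{lemma: twistchar_gamma^sigma}\cref{item: twistchar_gamma^sigma(p+q)} to show that $\sigma_{r(\gamma)}$ and $\sigma_{s(\gamma)}$ are cohomologous, reduce the comparison of cohomology classes to equality of the antisymmetrised bicharacters $\sigma_x\sigma_x^*$ via \cref{thm: when cohomologous}, and then combine minimality with the continuity of $x \mapsto \sigma_x(p,q)$ to pass from a dense orbit to all of $X$. The only differences are cosmetic---you exhibit $\twistchar_\gamma^\sigma$ directly as the normalised $1$-cochain trivialising $\sigma_{r(\gamma)}^{-1}\sigma_{s(\gamma)}$, whereas the paper routes the identical computation through the $H^2 \cong \Ext$ correspondence by building a congruence of central extensions; and note that the classification of cohomology classes by antisymmetric bicharacters (\cref{thm: when cohomologous}) holds for every countable discrete abelian group, so the freeness of $\PT$ is not actually the key input you suggest it is.
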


\begin{proof}
By \cite[Proposition~3.2]{OPT1980}, it suffices to show that $\sigma_x\sigma_x^* =
\sigma_y\sigma_y^*$ for all $x,y \in X$. By \cref{prop: PT characterisation}, $\PT$ is a subgroup of the finitely generated free abelian group $\Z^k$, and so $\PT \cong \Z^l$ for some $l \le k$. Fix free abelian generators $g_1, \dotsc, g_l$ of $\PT$. Since each $\sigma_x\sigma_x^*$ is a bicharacter (by \cite[Proposition~3.2]{OPT1980}), it suffices to show that $(\sigma_x\sigma_x^*)(g_i,g_j) = (\sigma_y\sigma_y^*)(g_i,g_j)$ for all $i,j \in \{1,\dotsc,l\}$ and $x,y \in X$. To see this, we first show that $\sigma_{r(\gamma)}\sigma_{r(\gamma)}^*(g_i,g_j) = \sigma_{s(\gamma)}\sigma_{s(\gamma)}^*(g_i,g_j)$, for all $\gamma \in \GT$ and $i,j \in \{1,\dotsc,l\}$.

Let $\PT \times_\sigma \T \coloneqq \PT \times \T$ be the semidirect product group, which is equal to $\PT \times \T$ as a set, but has group operation
\[
(p,w)(q,z) = \big(p + q, \, \sigma(p,q) \, w \, z\big).
\]
Define $i_\sigma\colon \T \to \PT \times_\sigma \T$ by $i_\sigma(z) = (0, z)$ and $q_\sigma\colon \PT \times_\sigma \T \to \PT$ by $q_\sigma(p,z) = p$. Consider the bijection $M\colon H^2(\PT,\T) \to \Ext(\PT,\T)$ that maps the cohomology class of a $2$-cocycle $\sigma \in Z^2(\PT,\T)$ to the congruence class of the central extension
\[
1 \to \T \xrightarrow{i_\sigma} \PT \times_\sigma \T \xrightarrow{q_\sigma} \PT \to 0
\]
(see \cite[Theorem~IV.3.12]{Brown1982}). Fix $\gamma \in \GT$. We aim to prove that $\sigma_{s(\gamma)}$ and $\sigma_{r(\gamma)}$ are cohomologous by showing that their cohomology classes have the same image under $M$. So we must find a homomorphism
\[
\varphi_\gamma\colon \PT \times_{\sigma_{s(\gamma)}} \T \to \PT \times_{\sigma_{r(\gamma)}} \T
\]
that makes the diagram
\begin{center}
\vspace{-3ex}
\begin{equation} \label{diagram: r and s class extensions}
\begin{tikzcd}
{} & {\PT \times_{\sigma_{s(\gamma)}} \T} \arrow{dd}{\varphi_\gamma} \arrow{dr}{q_{\sigma_{s(\gamma)}}} & \\
\T \arrow{ur}{i_{\sigma_{s(\gamma)}}} \arrow{dr}{i_{\sigma_{r(\gamma)}}} && {\PT} \\
& {\PT \times_{\sigma_{r(\gamma)}} \T} \arrow{ur}{q_{\sigma_{r(\gamma)}}} & {}
\end{tikzcd}
\end{equation}
\end{center}
commute. Let $\twistchar_\gamma^\sigma\colon \PT \to \T$ be the map of \cref{lemma: twistchar_gamma^sigma}, and define $\varphi_\gamma \colon \PT \times_{\sigma_{s(\gamma)}} \T \to \PT \times_{\sigma_{r(\gamma)}} \T$ by $\varphi_\gamma(m,z) \coloneqq \left(m, \, \twistchar_\gamma^\sigma(m) z\right)$. Fix $(m,z), (n,w) \in \PT \times_{\sigma_{s(\gamma)}} \T$. Recalling from \cref{lemma: twistchar_gamma^sigma}\cref{item: twistchar_gamma^sigma(p+q)} that
\[
\twistchar_\gamma^\sigma(m+n) = \sigma_{r(\gamma)}(m,n)\, \overline{\sigma_{s(\gamma)}(m,n)}\, \twistchar_\gamma^\sigma(m)\, \twistchar_\gamma^\sigma(n),
\]
we obtain
\begin{align*}
\varphi_\gamma\big((m,z)(n,w)\big) &= \varphi_\gamma\big(m+n,\,\sigma_{s(\gamma)}(m,n) \, zw\big) \\
&= \big(m+n, \, \twistchar_\gamma^\sigma(m+n) \, \sigma_{s(\gamma)}(m,n) \, zw\big) \\
&= \big(m+n, \, \sigma_{r(\gamma)}(m,n) \, \twistchar_\gamma^\sigma(m) \, \twistchar_\gamma^\sigma(n) \, zw\big) \\
&= \big(m, \, \twistchar_\gamma^\sigma(m) z\big) \big(n, \, \twistchar_\gamma^\sigma(n) w\big) \\
&= \varphi_\gamma(m,z) \, \varphi_\gamma(n,w),
\end{align*}
and thus $\varphi_\gamma$ is a homomorphism. Since $\sigma$ is normalised, the formula~\labelcref{eqn: twistchar_gamma^sigma(p)} from \cref{lemma: twistchar_gamma^sigma} gives $\twistchar_\gamma^\sigma(0) = 1$, and it follows that the diagram~\labelcref{diagram: r and s class extensions} commutes. Therefore, $\sigma_{r(\gamma)}$ is cohomologous to $\sigma_{s(\gamma)}$, and so \cite[Proposition~3.2]{OPT1980} implies that
\begin{equation} \label{eqn: cocycles equivalent on generators}
\big(\sigma_{r(\gamma)}\sigma_{r(\gamma)}^*\big)(g_i,g_j) = \big(\sigma_{s(\gamma)}\sigma_{s(\gamma)}^*\big)(g_i,g_j) \quad \text{for all } i, j \in \{1,\dotsc,l\},
\end{equation}
as claimed.

Now fix $x,y \in X$. Since $(X,T)$ is minimal, there is a sequence $(\gamma_n)_{n\in\N}$ in $\GT$ such that $s(\gamma_n) = x$ for all $n \in \N$, and $r(\gamma_n) \to y$ as $n \to \infty$. Fix $i,j \in \{1,\dotsc,l\}$. By \cref{lemma: twistchar_gamma^sigma}\cref{item: x to eval sigma_x cts}, the map $u \mapsto \sigma_u(g_i,g_j)$ is continuous, and hence the map $u \mapsto \big(\sigma_u\sigma_u^*\big)(g_i,g_j)$ is continuous. So $(\sigma_y \sigma^*_y)(g_i, g_j) = \lim_{n \to \infty} (\sigma_{r(\gamma_n)} \sigma^*_{r(\gamma_n)})(g_i, g_j)$. \Cref{eqn: cocycles equivalent on generators} gives $(\sigma_{r(\gamma_n)} \sigma^*_{r(\gamma_n)})(g_i, g_j) = (\sigma_x \sigma^*_x)(g_i, g_j)$ for each $n \in \N$, and so $\big(\sigma_y\sigma_y^*\big)(g_i,g_j) = \big(\sigma_x\sigma_x^*\big)(g_i,g_j)$.
\end{proof}

\begin{proof}[Proof of \cref{prop: DR cohomologous vanishing bicharacter}]
\Cref{lemma: [sigma_x]=[sigma_y]} shows that $\rho_x$ is a $\T$-valued $2$-cocycle on $\PT$ whose cohomology class is independent of $x$. So there exists a $2$-cocycle $\omega \in Z^2(\PT,\T)$ whose cohomology class agrees with that of each $\rho_x$. As discussed in \cref{sec: group cohomology} (see \cite[Theorem~2.2.8]{Armstrong2019}), we may assume that $\omega$ is a bicharacter that vanishes on $Z_\omega$ in each coordinate, and that there is a bicharacter
$\tilde{\omega} \in Z^2(\PT/Z_\omega,\T)$ such that
\[
\tilde{\omega}(p+Z_\omega,q+Z_\omega) = \omega(p,q) \quad \text{for all } p, q \in \PT.
\]

We now construct $\sigma \in Z^2(\GT,\T)$ such that $\sigma$ is cohomologous to $\rho$, and $\sigma$ is $\omega$-constant on $\IT$. For each $x \in X$, the $2$-cocycles $\rho_x$ and $\omega$ are cohomologous, and so the map $\tilde{c}_x\colon \PT \times \PT \to \T$ defined by
\[
\tilde{c}_x(p,q) \coloneqq \overline{\omega(p,q)} \rho_x(p,q)
\]
is a $2$-coboundary on $\PT$. Since $\PT$ is a subgroup of $\Z^k$ (by \cref{prop: PT characterisation}), there is an integer $l \in \{1,\dotsc,k\}$ such that $\PT \cong \Z^l$. Fix free abelian generators $g_1, \dotsc, g_l$ for $\PT$. For $m \in \PT$, let $m_1, \dotsc, m_l$ be the unique integers such that $m = \sum_{i=1}^l m_i g_i$. For each $i \in \{1, \dotsc, l\}$, we write $\langle g_j : j \le i \rangle$ for the group generated by the set $\{ g_j : 1 \le j \le i \}$. We claim that there are maps $b_x\colon \PT \to \T$, indexed by $x \in X$, such that $x \mapsto b_x(m)$ is continuous for each $m \in \PT$, and for each $i \in \{1,\dotsc,l\}$, we have
\begin{equation} \label{eqn: b_x inductive}
b_x(m) \, \overline{b_x(m+g_i)} = \tilde{c}_x(g_i,m), \quad \text{whenever } m \in \langle g_j : j \le i \rangle.
\end{equation}
To see this, for each $x \in X$ define $b_x(0) \coloneqq 1 \in \T$. The map $x \mapsto b_x(0)$ is trivially continuous. Fix $i \in \{1,\dotsc,l\}$. Suppose inductively that the maps $b_x$ have been defined on $\langle g_j : j < i \rangle$, and that $x \mapsto b_x(m)$ is continuous for each $m \in \langle g_j : j < i \rangle$. To extend $b_x$ to $\langle g_j : j \le i \rangle$, first observe that $b_x(m)$ is already defined when $m = \sum_{j=1}^i m_j g_j$ and $m_i = 0$. Now suppose inductively that $b_x(m)$ is defined and $x \mapsto b_x(m)$ is continuous whenever $\lav m_i \rav \le a$ for some $a \in \N$, and that $b_x$ satisfies \cref{eqn: b_x inductive} whenever $\lav m_i \rav, \, \lav m_i + 1 \rav \le a$. Fix $m \in \langle g_j : j \le i \rangle$ such that $\lav m_i \rav = a + 1$. Define
\[
b_x(m) \coloneqq
\begin{cases}
b_x(m-g_i) \, \overline{\tilde{c}_x(g_i,m-g_i)} & \text{if } m_i > 0 \\
b_x(m+g_i) \, \tilde{c}_x(g_i,m) & \text{if } m_i < 0.
\end{cases}
\]
Since \cref{lemma: twistchar_gamma^sigma}\cref{item: x to eval sigma_x cts} implies that the maps $x \mapsto \tilde{c}_x(p,q)$ are continuous for all $p, q \in \PT$, the inductive hypothesis guarantees that $x \mapsto b_x(m)$ is continuous. Moreover, rearranging each of the cases in the definition of $b_x(m)$ shows that \cref{eqn: b_x inductive} is satisfied. So the claim follows by induction.

Recall the coboundary map $\delta^1$ of \cref{item: 2-coboundary}. We claim that $\delta^1 b_x = \tilde{c}_x$. To see this, first choose a normalised $1$-cochain $\tilde{b}_x\colon \PT \to \T$ such that $\delta^1 \tilde{b}_x = \tilde{c}_x$. (This is possible because $\tilde{c}_x$ is a $2$-coboundary on $\PT$.) Define $a_x\colon \PT \to \T$ by
\[
a_x(m) \coloneqq \prod_{i=1}^l \overline{\tilde{b}_x(g_i)^{m_i}}.
\]
A straightforward calculation shows that $a_x$ is a $1$-cocycle, and so $\delta^1 a_x$ is trivial. Hence $\delta^1(a_x\tilde{b}_x) = \delta^1 \tilde{b}_x = \tilde{c}_x$. Putting $m = 0$ in \cref{eqn: b_x inductive}, we see that for each $i \in \{1,\dotsc,l\}$, $b_x(g_i) = 1$. Hence
\[
(a_x \tilde{b}_x)(0) = \Bigg(\prod_{i=1}^l \overline{\tilde{b}_x(g_i)^0}\Bigg) \tilde{b}_x(0) = 1 = b_x(0),
\]
and for each $i \in \{1, \dotsc, l\}$,
\[
(a_x \tilde{b}_x)(g_i) = \Bigg(\prod_{\substack{j=1, \\ j \ne i}}^l \overline{\tilde{b}_x(g_j)^0}\Bigg)\!\left(\overline{\tilde{b}_x(g_i)^1}\right)\tilde{b}_x(g_i) = 1 = b_x(g_i).
\]
Thus, for all $i \in \{1,\dotsc,l\}$ and $m \in \langle g_j : j \le i \rangle$, we have
\begin{align*}
(a_x \tilde{b}_x)(m) \, \overline{(a_x \tilde{b}_x)(m+g_i)} \,&=\, (a_x \tilde{b}_x)(g_i) \, (a_x \tilde{b}_x)(m) \, \overline{(a_x \tilde{b}_x)(g_i+m)} \\
&=\, \delta^1(a_x \tilde{b}_x)(g_i,m) \\
&=\, \tilde{c}_x(g_i,m) \\
&=\, b_x(m) \, \overline{b_x(m+g_i)}.
\end{align*}
So $b_x$ and $a_x \tilde{b}_x$ both map $0$ and each generator $g_i$ to $1$, and they also both satisfy \cref{eqn: b_x inductive}. Hence $a_x \tilde{b}_x = b_x$, and thus $\delta^1 b_x = \delta^1(a_x \tilde{b}_x) = \tilde{c}_x$, as claimed.

Since the maps $(x,p,x) \mapsto x$ and $x \mapsto b_x(p)$ are both continuous for each fixed $p \in \PT$, the map $\tilde{b}\colon \IT \to \T$ given by $\tilde{b}(x,p,x) \coloneqq b_x(p)$ is a continuous $1$-cochain on $\IT$. We extend $\tilde{b}$ to a map $b\colon \GT \to \T$ by setting $b(\gamma) \coloneqq 1$ for all $\gamma \in \GT {\setminus} \IT$. Since $\IT$ is a clopen subset of $\GT$ (by \cref{prop: HT quotient groupoid}), this map $b$ is a continuous $1$-cochain on $\GT$. We have $b(x,0,x) = b_x(0) = 1$ for all $x \in X$, and so $b$ is normalised. Thus the map $\delta^1 b\colon \GTc \to \T$ given by $\delta^1 b(\alpha,\beta) \coloneqq b(\alpha) \, b(\beta) \, b(\alpha\beta)^{-1}$ is a continuous $2$-coboundary on $\GT$. Define $\sigma \in Z^2(\GT,\T)$ by $\sigma(\alpha,\beta) \coloneqq \rho(\alpha,\beta) \, \overline{\delta^1 b(\alpha,\beta)}$. Since $\sigma$ and $\rho$ differ by the $2$-coboundary $\delta^1 b$, they are cohomologous, and so \cite[Proposition~II.1.2]{Renault1980} implies that $C^*(\GT,\rho) \cong C^*(\GT,\sigma)$. Finally, fix $x \in X$ and $p, q \in \PT$. Since $\delta^1 b_x = \tilde{c}_x = \rho_x \, \omega$, we have
\[
\sigma\big((x,p,x),(x,q,x)\big) = \rho_x(p,q) \, \overline{\delta^1 b_x(p,q)} = \omega(p,q),
\]
and so $\sigma$ is $\omega$-constant on $\IT$.
\end{proof}

The following result is an extension of \cite[Lemma~3.6]{KPS2016JNG} to the setting of Deaconu--Renault groupoids.

\begin{prop} \label{prop: spectral action}
Let $(X,T)$ be a minimal rank-$k$ Deaconu--Renault system such that $X$ is second-countable. Suppose that $\sigma \in Z^2(\GT,\T)$, and that $\omega \in Z^2(\PT,\T)$ is a bicharacter that vanishes on $Z_\omega$ in each coordinate such that $\sigma$ is $\omega$-constant on $\IT$, as in \cref{prop: DR cohomologous vanishing bicharacter}. Let $\gamma \mapsto \twistchar_\gamma^\sigma$ be the continuous $\PTHat$-valued $1$-cocycle on $\GT$ defined in \cref{lemma: twistchar_gamma^sigma}\cref{item: twistchar_.^sigma 1-cocycle}. For all $\gamma \in \IT$ and $p \in Z_\omega$, we have $\twistchar_\gamma^\sigma(p) = 1$. Let $\HT = \GT / \IT$ be the quotient groupoid of \cref{prop: HT quotient groupoid}. There is a continuous $\widehat{Z}_\omega$-valued $1$-cocycle $[\gamma] \mapsto \tilde{\twistchar}_{[\gamma]}^\sigma$ on $\HT$ such that $\tilde{\twistchar}^\sigma_{[\gamma]}(p) = \twistchar_\gamma^\sigma(p)$ for all $\gamma \in \GT$ and $p \in Z_\omega$. There is a continuous action $\theta$ of $\HT$ on $X \times \widehat{Z}_\omega$ such that
\[
\theta_{[\gamma]}\big(s(\gamma),\, \chi\big) = \big(r(\gamma),\, \tilde{\twistchar}_{[\gamma]}^\sigma \, \chi\big) \ \text{ for all $\gamma \in \GT$ and $\chi \in \widehat{Z}_\omega$}.
\]
\end{prop}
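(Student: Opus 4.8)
The plan is to verify the three assertions in order, each feeding into the next, with essentially all of the groupoid-theoretic content reduced to the cocycle identity for $\gamma \mapsto \twistchar_\gamma^\sigma$ established in \cref{lemma: twistchar_gamma^sigma}\cref{item: twistchar_.^sigma 1-cocycle}.

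First I would prove the vanishing claim by direct computation. Fix $\gamma \in \IT$ and $p \in Z_\omega$. By \cref{prop: IT characterisation} we may write $\gamma = (x,q,x)$ with $x \in X$ and $q \in \PT$, so that $s(\gamma) = x$, $\gamma(x,p,x) = (x,q+p,x)$, and $\gamma^{-1} = (x,-q,x)$. Substituting into~\eqref{eqn: twistchar_gamma^sigma(p)} and using that $\sigma$ is $\omega$-constant on $\IT$ gives $\twistchar_\gamma^\sigma(p) = \omega(q,p)\,\omega(q+p,-q)\,\overline{\omega(q,-q)}$. Since $\omega$ is a bicharacter, $\omega(q+p,-q) = \omega(q,-q)\,\omega(p,-q)$, and the factors $\omega(q,-q)$ and $\overline{\omega(q,-q)}$ cancel, leaving $\twistchar_\gamma^\sigma(p) = \omega(q,p)\,\omega(p,-q)$. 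Because $\omega$ vanishes on $Z_\omega$ in each coordinate and $p \in Z_\omega$, both factors equal $1$, so $\twistchar_\gamma^\sigma(p) = 1$.

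Next I would descend the cocycle to $\HT$ by defining $\tilde{\twistchar}_{[\gamma]}^\sigma \in \widehat{Z}_\omega$ to be the restriction of $\twistchar_\gamma^\sigma$ to $Z_\omega$. For well-definedness, suppose $[\gamma] = [\gamma']$, so $\gamma' = \gamma\xi$ for some $\xi \in \IT$ with $s(\gamma) = r(\xi)$; the cocycle identity gives $\twistchar_{\gamma'}^\sigma = \twistchar_\gamma^\sigma\,\twistchar_\xi^\sigma$, and the first step shows $\twistchar_\xi^\sigma$ restricts to the trivial character on $Z_\omega$, whence $\tilde{\twistchar}_{[\gamma']}^\sigma = \tilde{\twistchar}_{[\gamma]}^\sigma$. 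Composability in $\HT$ forces $s(\alpha) = r(\beta)$ for suitable representatives (identifying the unit space $\HTo$ with $X$, composable classes have composable representatives with $[\alpha][\beta] = [\alpha\beta]$), so the cocycle identity on $\GT$ descends directly to $\tilde{\twistchar}_{[\alpha][\beta]}^\sigma = \tilde{\twistchar}_{[\alpha]}^\sigma\,\tilde{\twistchar}_{[\beta]}^\sigma$. For continuity I would use that the quotient map $q\colon \GT \to \HT$ is a continuous open surjection (the orbit map of the free, proper $\IT$-action of \cref{prop: HT quotient groupoid}), hence a topological quotient map; since $\gamma \mapsto \twistchar_\gamma^\sigma\restr{Z_\omega}$ is continuous into $\widehat{Z}_\omega$ (by the subbasic-set argument of \cref{lemma: twistchar_gamma^sigma}, using part~\cref{item: twistchar_.^sigma cts}) and factors through $q$ as $\tilde{\twistchar}^\sigma \circ q$, the factored map $\tilde{\twistchar}^\sigma$ is continuous.

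Finally I would build the action. Identifying $\HTo$ with $X$, let $R\colon X \times \widehat{Z}_\omega \to X$ be the first-coordinate projection, so that $\HT \star (X \times \widehat{Z}_\omega)$ consists of the pairs $\big([\gamma],(s(\gamma),\chi)\big)$, and set $\theta_{[\gamma]}(s(\gamma),\chi) \coloneqq \big(r(\gamma),\, \tilde{\twistchar}_{[\gamma]}^\sigma\,\chi\big)$. This is well defined and continuous because $[\gamma] \mapsto r([\gamma])$, $[\gamma] \mapsto \tilde{\twistchar}_{[\gamma]}^\sigma$, and the multiplication of $\widehat{Z}_\omega$ are all continuous. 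Axiom~\cref{item: action of groupoid unit} holds because a $1$-cocycle sends units to the trivial character, and axiom~\cref{item: composing groupoid actions} is exactly the descended cocycle identity $\tilde{\twistchar}_{[\alpha\beta]}^\sigma = \tilde{\twistchar}_{[\alpha]}^\sigma\,\tilde{\twistchar}_{[\beta]}^\sigma$ from the previous step. I expect the main obstacle to be the middle step---verifying that $\tilde{\twistchar}^\sigma$ is simultaneously well defined (which needs the vanishing of Step~1) and continuous (which needs the openness of $q$); once these are in hand, the action axioms follow formally from the cocycle identity.
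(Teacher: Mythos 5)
Your proposal is correct and follows essentially the same three-step route as the paper's proof: the same direct computation of $\twistchar_\gamma^\sigma(p)$ for $\gamma\in\IT$ (the paper concludes via $\overline{(\omega\omega^*)(p,m)}=1$ from the definition of $Z_\omega$, whereas you invoke the stronger vanishing-in-each-coordinate hypothesis, but both are available), the same descent of the $1$-cocycle to $\HT$ using the quotient topology, and the same formal verification of the action axioms from the descended cocycle identity.
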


We call the action $\theta$ of \cref{prop: spectral action} the \hl{spectral action} associated to $(T,\sigma)$. We denote the orbit of $(x,\chi) \in X \times \widehat{Z}_\omega$ under $\theta$ by $[x, \chi]_\theta$.

\begin{proof}[{Proof of \cref{prop: spectral action}}]
Fix $\gamma \in \IT$. \Cref{prop: IT characterisation} implies that there exist $y \in X$ and $m \in \PT$ such that $\gamma = (y,m,y)$. We claim that $\twistchar_\gamma^\sigma(Z_\omega) = \{1\}$. Fix $p \in Z_\omega$. Using the formula~\labelcref{eqn: twistchar_gamma^sigma(p)} from \cref{lemma: twistchar_gamma^sigma}, and that $\omega$ is a bicharacter satisfying $\sigma\bigrestr{\ITc} = 1_X \times \omega$, and that $\omega\omega^*$ is an antisymmetric bicharacter, we see that
\begin{align*}
\twistchar_\gamma^\sigma(p) &= \sigma\big(\gamma,(y,p,y)\big) \, \sigma\big(\gamma(y,p,y),\gamma^{-1}\big) \, \overline{\sigma\big(\gamma,\gamma^{-1}\big)} \\
&= \sigma\big((y,m,y),(y,p,y)\big) \, \sigma\big((y,m+p,y),(y,-m,y)\big) \, \overline{\sigma\big((y,m,y),(y,-m,y)\big)} \\
&= \omega(m,p)\, \omega(m+p,-m)\, \overline{\omega(m,-m)} \\
&= \omega(m,p)\, \overline{\omega(p,m)} \\
&= \overline{(\omega\omega^*)(p,m)},
\end{align*}
which is $1$ because $p \in Z_\omega$. Thus $\twistchar_\gamma^\sigma(Z_\omega) = \{1\}$, as claimed.

For any $\gamma \in \GT$, we have $\twistchar_\gamma^\sigma \in \PTHat$ by \cref{lemma: twistchar_gamma^sigma}\cref{item: twistchar_.^sigma 1-cocycle}, and so $\twistchar_\gamma^\sigma\restr{Z_\omega} \in \widehat{Z}_\omega$. Suppose that $\alpha,\beta \in \GT$ satisfy $[\alpha] = [\beta]$. Then $\eta = \beta^{-1}\alpha \in \IT$ satisfies $\alpha = \beta\eta$. For $p \in Z_\omega$, we have $\twistchar_\eta^\sigma(p) = 1$, and thus, since $\gamma \mapsto \twistchar_\gamma^\sigma$ is a $1$-cocycle,
\[
\twistchar_\alpha^\sigma(p) = \twistchar_{\beta\eta}^\sigma(p) = \twistchar_\beta^\sigma(p) \, \twistchar_\eta^\sigma(p) = \twistchar_\beta^\sigma(p).
\]
Therefore, there is a map $[\gamma] \mapsto \tilde{\twistchar}^\sigma_{[\gamma]}$ from $\HT$ to $\widehat{Z}_\omega$ such that $\tilde{\twistchar}_{[\gamma]}^\sigma(p) = \twistchar_\gamma^\sigma(p)$ for all $\gamma \in \GT$ and $p \in Z_\omega$. For $\alpha, \beta \in \GT$ and $p \in Z_\omega$,
\[
\tilde{\twistchar}_{[\alpha][\beta]}^\sigma(p) = \tilde{\twistchar}_{[\alpha\beta]}^\sigma(p) = \twistchar_{\alpha\beta}^\sigma(p) = \twistchar_\alpha^\sigma(p) \, \twistchar_\beta^\sigma(p) = \tilde{\twistchar}_{[\alpha]}^\sigma(p) \, \tilde{\twistchar}_{[\beta]}^\sigma(p),
\]
and so $\tilde{\twistchar}_{[\alpha][\beta]}^\sigma = \tilde{\twistchar}_{[\alpha]}^\sigma \, \tilde{\twistchar}_{[\beta]}^\sigma$. Thus $[\gamma] \mapsto \tilde{\twistchar}_{[\gamma]}^\sigma$ is a $\widehat{Z}_\omega$-valued $1$-cocycle on $\HT$.

We claim that $[\gamma] \mapsto \tilde{\twistchar}_{[\gamma]}^\sigma$ is continuous on $\HT$. Fix a finite subset $F \subseteq Z_\omega$ and an open subset $U \subseteq \T$, so that $S_{\widehat{Z}_\omega}(F,U) \coloneqq \{ \chi \in \widehat{Z}_\omega : \chi(F) \subseteq U \}$ is a typical subbasis element for the topology on $\widehat{Z}_\omega$. It suffices to show that $\big\{ [\gamma] \in \HT : \tilde{\twistchar}_{[\gamma]}^\sigma \in S_{\widehat{Z}_\omega}(F,U) \big\}$ is open in $\HT$. Since $F$ is finite,
\[
S_{\PTHat}(F,U) \coloneqq \{ \chi \in \PTHat \,:\, \chi(F) \subseteq U \}
\]
is open in $\PTHat$. By \cref{lemma: twistchar_gamma^sigma}\cref{item: twistchar_.^sigma 1-cocycle}, the map $\gamma \mapsto \twistchar_\gamma^\sigma$ is continuous on $\GT$, and hence $\{\gamma \in \GT : \twistchar_\gamma^\sigma \in S_{\PTHat}(F,U)\}$ is open in $\GT$. Let $\pi_T\colon \GT \to \HT$ denote the quotient map $\gamma \mapsto [\gamma]$. Then $\pi_T^{-1}\big(\big\{ [\gamma] : \tilde{\twistchar}_{[\gamma]}^\sigma \in S_{\widehat{Z}_\omega}(F,U) \big\}\big) = \{ \gamma \in \GT : \twistchar_\gamma^\sigma(F) \subseteq U \}$ is open. Thus, by the definition of the quotient topology, $\big\{ [\gamma] : \tilde{\twistchar}_{[\gamma]}^\sigma \in S_{\widehat{Z}_\omega}(F,U) \big\}$ is open in $\HT$.

It remains to show that $\theta$ is a continuous action of $\HT$ on $X \times \widehat{Z}_\omega$. For $\alpha, \beta \in \GT$ such that $[\alpha] = [\beta]$, we have $\alpha\beta^{-1} \in \IT$, and hence $r(\alpha) = r(\beta)$ and $s(\alpha) = s(\beta)$. Define $R\colon X \times \widehat{Z}_\omega\, \to\, \HTo$ by $R(x,\chi) \coloneqq [x]$. Then $R$ is continuous and surjective. Recall from \cref{def: groupoid action} that the fibred product $\HT \star (X \times \widehat{Z}_\omega)$ is defined by
\[
\HT \star (X \times \widehat{Z}_\omega) = \big\{ \big([\gamma],\, (x,\chi)\big) \,:\, x \in X, \, \chi \in \widehat{Z}_\omega,\, \gamma \in (\GT)_x \big\}.
\]
Since $[\gamma] \mapsto \tilde{\twistchar}_{[\gamma]}^\sigma$ is a continuous map from $\HT$ to $\widehat{Z}_\omega$ and $r\colon \HT \to X$ is continuous, the map $\big([\gamma],\, (s(\gamma),\, \chi)\big) \mapsto \theta_{[\gamma]}\big(s(\gamma),\, \chi\big) = \big(r(\gamma),\, \tilde{\twistchar}_{[\gamma]}^\sigma \, \chi\big)$ from $\HT \star (X \times \widehat{Z}_\omega)$ to $X \times \widehat{Z}_\omega$ is continuous. To see that $\theta$ is an action, we must show that conditions \cref{item: composing groupoid actions,item: action of groupoid unit} of \cref{def: groupoid action} are satisfied.

For \cref{item: composing groupoid actions}, fix $(x,\chi) \in X \times \widehat{Z}_\omega$ and $\big([\alpha],[\beta]\big) \in \HTc$ such that $\big([\beta],\, (x,\chi)\big) \in \HT \star (X \times \widehat{Z}_\omega)$. Then $s([\alpha]) = r([\beta])$, and $s([\beta]) = R(x,\chi) = [x]$. Hence $s([\alpha][\beta]) = s([\beta]) = R(x,\chi)$, and so $\big([\alpha][\beta],\, (x,\chi)\big) \in \HT \star (X \times \widehat{Z}_\omega)$. Since $s(\beta) = x$,
\[
\theta_{[\beta]}(x,\chi) = \theta_{[\beta]}\big(s(\beta),\chi\big) = \big(r(\beta),\, \tilde{\twistchar}_{[\beta]}^\sigma \, \chi\big).
\]
Thus
\[
R\big(\theta_{[\beta]}(x,\chi)\big) = R\big(r(\beta),\, \tilde{\twistchar}_{[\beta]}^\sigma \, \chi\big) = [r(\beta)] = r([\beta]) = s([\alpha]),
\]
and so $\big([\alpha], \, \theta_{[\beta]}(x,\chi)\big) \in \HT \star (X \times \widehat{Z}_\omega)$. Finally, since $[\gamma] \mapsto \tilde{\twistchar}_{[\gamma]}^\sigma$ is a $\widehat{Z}_\omega$-valued $1$-cocycle on $\HT$, we have $\tilde{\twistchar}_{[\alpha]}^\sigma \, \tilde{\twistchar}_{[\beta]}^\sigma = \tilde{\twistchar}_{[\alpha][\beta]}^\sigma = \tilde{\twistchar}_{[\alpha\beta]}^\sigma$, and hence
\begin{align*}
\theta_{[\alpha]}\big(\theta_{[\beta]}(x,\chi)\big) &= \theta_{[\alpha]}\big(\theta_{[\beta]}(s(\beta),\, \chi)\big) = \theta_{[\alpha]}\big(r(\beta), \, \tilde{\twistchar}_{[\beta]}^\sigma \, \chi\big) = \theta_{[\alpha]}\big(s(\alpha), \, \tilde{\twistchar}_{[\beta]}^\sigma \, \chi\big) \\
&= \big(r(\alpha),\, \tilde{\twistchar}_{[\alpha]}^\sigma \, (\tilde{\twistchar}_{[\beta]}^\sigma \, \chi)\big) = \big(r(\alpha\beta), \, \tilde{\twistchar}_{[\alpha\beta]}^\sigma \, \chi\big) = \theta_{[\alpha\beta]}\big(s(\alpha\beta), \, \chi\big) = \theta_{[\alpha][\beta]}(x,\chi).
\end{align*}
Thus, \cref{item: composing groupoid actions} is satisfied.

For \cref{item: action of groupoid unit}, fix $(x,\chi) \in X \times \widehat{Z}_\omega$. Then $s(R(x,\chi)) = s([x]) = [x] = R(x,\chi)$, and so $\big(R(x,\chi), (x,\chi)\big) \in \HT \star (X \times \widehat{Z}_\omega)$. Since $x \in \IT$, we have $\twistchar_x^\sigma(Z_\omega) = \{1\}$. Thus, for all $p \in Z_\omega$, we have $\tilde{\twistchar}_{[x]}^\sigma(p) = \twistchar_x^\sigma(p) = 1$, and so $\tilde{\twistchar}_{[x]}^\sigma \, \chi = \chi$. Hence
\[
\theta_{R(x,\chi)}(x,\chi) = \theta_{[x]}\big(s(x), \, \chi\big) = \big(r(x), \, \tilde{\twistchar}_{[x]}^\sigma \, \chi\big) = (x,\chi). \qedhere
\]
\end{proof}

\section{Realising \texorpdfstring{$C^*(\IT,\sigma)$}{C*(IT,sigma)} as an induced algebra}
\label{sec: induced algebra}

In this section we realise the twisted C*-algebra associated to the interior $\IT$ of the isotropy of a Deaconu--Renault groupoid $\GT$ and a continuous $2$-cocycle $\sigma \in Z^2(\GT,\T)$ as an induced algebra. We then describe the ideals of this induced algebra. We begin by introducing a spanning set $\BT$ for $C_c(\GT)$ and then giving a tensor-product decomposition of $C^*(\IT,\sigma)$.

\begin{lemma} \label{lemma: spanning set BT}
Let $(X,T)$ be a minimal rank-$k$ Deaconu--Renault system such that $X$ is second-countable. Let $c\colon \GT \to \Z^k$ be as in \cref{lemma: 1-cocycle c}. Let
\[
\BT \coloneqq \left\{ f \in C_c(\GT) \,:\, \supp(f) \text{ is a bisection contained in } c^{-1}(n), \text{ for some } n \in \Z^k \right\}.
\]
Then $C_c(\GT) = \vecspan \BT$.
\end{lemma}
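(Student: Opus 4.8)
The plan is to prove the nontrivial inclusion $C_c(\GT) \subseteq \vecspan \BT$ (the reverse inclusion is immediate, since each element of $\BT$ lies in $C_c(\GT)$) by a partition-of-unity argument. The idea is that $\supp(f)$ is compact and can be covered by finitely many open bisections on each of which $c$ is constant, so multiplying $f$ by a subordinate partition of unity decomposes it into finitely many functions whose supports are bisections lying in single level sets of $c$.

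First I would record the relevant facts about the topology of $\GT$. By \cref{lemma: 1-cocycle c} the cocycle $c$ is continuous, and since $\Z^k$ is discrete each level set $c^{-1}(n)$ is clopen. Recall from the background section that the sets $Z(U,m,n,V)$ for which $T^m\restr{U}$ and $T^n\restr{V}$ are homeomorphisms onto their ranges with $T^m(U) = T^n(V)$ form a basis for the topology of $\GT$; each such set is an open bisection, and $c$ takes the constant value $m-n$ on it, so it is contained in $c^{-1}(m-n)$. Hence $\GT$ has a basis of open bisections, each contained in a single level set of $c$. Now fix $f \in C_c(\GT)$. Then $\supp(f)$ is compact, so covering it by basic open bisections of the above form and extracting a finite subcover gives open bisections $B_1, \dots, B_N$ covering $\supp(f)$, with $B_i \subseteq c^{-1}(n_i)$ for some $n_i \in \Z^k$.

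Next, since $\GT$ is locally compact and Hausdorff, I would choose a partition of unity $\phi_1, \dots, \phi_N$ subordinate to the finite open cover $\{B_i\}$ of the compact set $\supp(f)$: that is, $\phi_i \in C_c(\GT)$ with $0 \le \phi_i \le 1$, $\supp(\phi_i) \subseteq B_i$, and $\sum_{i=1}^N \phi_i = 1$ on $\supp(f)$. Setting $f_i \coloneqq f\phi_i$, one checks that $f = \sum_{i=1}^N f_i$: on $\supp(f)$ this holds because $\sum_i \phi_i = 1$ there, and off $\supp(f)$ both sides vanish. Each $f_i$ lies in $C_c(\GT)$ with $\supp(f_i) \subseteq \supp(\phi_i) \subseteq B_i$. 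Since $B_i$ is an open bisection and any subset of a bisection is again a bisection (it lies in the same open set on which $r$ and $s$ restrict to homeomorphisms), $\supp(f_i)$ is a bisection contained in $c^{-1}(n_i)$. Thus each $f_i \in \BT$, giving $f \in \vecspan \BT$.

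I expect no serious obstacle here: the argument is a routine application of a subordinate partition of unity. The only points requiring care are the two structural observations underpinning it, namely that the ``good'' basic sets $Z(U,m,n,V)$ really are open bisections contained in a single level set of $c$, and that a subset of a bisection is itself a bisection---both of which follow directly from the definitions and the local constancy of $c$. The existence of a continuous partition of unity subordinate to a finite open cover of a compact subset of a locally compact Hausdorff space is standard.
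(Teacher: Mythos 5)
Your proposal is correct and follows essentially the same route as the paper: cover $\supp(f)$ by finitely many open bisections on which $c$ is constant, then split $f$ using a subordinate partition of unity. The only cosmetic difference is that you obtain the cover directly from the basic sets $Z(U,m,n,V)$ (on which $c$ is already constant), whereas the paper starts from arbitrary precompact open bisections and refines by intersecting with the level sets of $c$; both variants are equally valid.
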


\begin{proof}
Fix $f \in C_c(\GT)$. Since $\supp(f)$ is compact, there is a finite set $\FF$ of precompact open bisections that cover $\supp(f)$. Since each $U \in \FF$ is precompact, there are only finitely many $n \in \Z^k$ such that $U \cap c^{-1}(n) \ne \varnothing$. Since each $c^{-1}(n) \cap U$ is open, it is a precompact open bisection, so we can assume that $c$ is constant on each $U \in \FF$. Now, as in the proof of \cite[Lemma~9.1.3]{Sims2020}, fix a partition of unity $\{ g_U : U \in \FF \}$ on $\supp(f)$ subordinate to $\FF$. By the Tietze extension theorem, each $g_U$ extends to an element $\tilde{g}_U$ of $C_c(\GT)$. Now the pointwise products $f_U \coloneqq \tilde{g}_U \cdot f$ satisfy $\supp(f_U) \subseteq U$, and $\sum_{U \in \FF} f_U = f$.
\end{proof}

\begin{lemma} \label{lemma: h cdot 1_p span}
Let $(X,T)$ be a minimal rank-$k$ Deaconu--Renault system such that $X$ is second-countable. For each $h \in C_c(X)$ and $p \in \PT$, define $h \cdot 1_p\colon \IT \to \C$ by
\[
(h \cdot 1_p)(x,m,x) \coloneqq \delta_{p,m} \, h(x).
\]
Then $h \cdot 1_p \in C_c(\IT)$ for each $p \in \PT$, and $C_c(\IT) = \vecspan\{ h \cdot 1_p : h \in C_c(X), \, p \in \PT \}$.
\end{lemma}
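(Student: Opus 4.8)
The plan is to exploit the identification $\IT \cong X \times \PT$ from \cref{prop: IT characterisation}, under which $\PT$ carries the discrete topology it inherits as a subgroup of $\Z^k$ (\cref{prop: PT characterisation}). Writing $c$ for the $1$-cocycle of \cref{lemma: 1-cocycle c}, the restriction $c\restr{\IT}$ is continuous with values in the discrete group $\PT$, hence locally constant; consequently each ``slice'' $c\restr{\IT}^{-1}(p) = \{(x,p,x) : x \in X\}$ is a clopen subset of $\IT$ on which $r\colon (x,p,x) \mapsto x$ is a homeomorphism onto $X$, and $\IT$ is the disjoint union of these slices over $p \in \PT$. Every assertion below reduces to this slice decomposition.

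For the first claim, I would note that $h \cdot 1_p$ is supported on the single slice indexed by $p$: there it agrees with $h \circ r$, so it is continuous on that clopen slice, and it vanishes on every other slice, so $h \cdot 1_p$ is continuous on all of $\IT$. Its support is the image of $\supp(h)$ under the homeomorphism $x \mapsto (x,p,x)$, hence compact, giving $h \cdot 1_p \in C_c(\IT)$.

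For the spanning statement, the one point needing care is to confine an arbitrary $f \in C_c(\IT)$ to finitely many slices. Since $\supp(f)$ is compact and $c\restr{\IT}$ is continuous into the discrete space $\PT$, the image $F \coloneqq c(\supp(f))$ is a compact subset of a discrete space, hence finite; moreover $f(x,m,x) = 0$ whenever $m \notin F$. For each $p \in F$ I would set $h_p(x) \coloneqq f(x,p,x)$. Under the homeomorphism $r\colon c\restr{\IT}^{-1}(p) \to X$ this is simply the restriction of $f$ to the clopen slice, so $h_p$ is continuous; and its support is a closed subset of the compact set $r\big(\supp(f) \cap c\restr{\IT}^{-1}(p)\big)$, whence $h_p \in C_c(X)$. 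It then remains to verify the pointwise identity $f = \sum_{p \in F} h_p \cdot 1_p$: evaluating the right-hand side at $(x,m,x) \in \IT$ yields $h_m(x) = f(x,m,x)$ when $m \in F$ and $0$ otherwise, which matches $f$ because $f$ vanishes off the slices indexed by $F$.

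Thus the argument is essentially bookkeeping once the decomposition $\IT \cong X \times \PT$ with $\PT$ discrete is in hand. The main (and only mildly delicate) obstacle is the finiteness of $F$, which is precisely where the discreteness of $\PT$ (from $\PT \le \Z^k$), the continuity of $c$, and the compactness of $\supp(f)$ combine.
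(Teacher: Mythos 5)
Your proposal is correct and follows essentially the same route as the paper's proof: decompose $\IT$ into the clopen slices $c\restr{\IT}^{-1}(p)$, use compactness of $\supp(f)$ together with discreteness of $\PT$ to restrict to finitely many slices, and set $h_p(x) \coloneqq f(x,p,x)$ so that $f = \sum_{p \in F} h_p \cdot 1_p$. The only difference is that you spell out the routine continuity and compactness checks in more detail than the paper does.
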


\begin{proof}
For each $h \in C_c(X)$ and $p \in \PT$, we have
\[
\osupp(h \cdot 1_p) = \big(\!\osupp(h) \times \{p\} \times \osupp(h)\big) \medcap \IT,
\]
and hence $h \cdot 1_p \in C_c(\IT)$. Fix $f \in C_c(\IT)$. Since $\supp(f)$ is compact, there is a finite set $F \subseteq \PT$ such that $\supp(f) \subseteq \bigcup_{p \in F} \, c\restr{\IT}^{-1}(p)$. For $p \in F$, define $h_p\colon X \to \C$ by $h_p(x) \coloneqq f(x,p,x)$. Then $\osupp(h_p) = r\big(c\restr{\IT}^{-1}(p) \medcap \osupp(f)\big)$, and hence $h_p \in C_c(X)$. Moreover, $f = \sum_{p \in F} h_p \cdot 1_p$.
\end{proof}

\begin{prop} \label{prop: tensor prod decomp}
Let $(X,T)$ be a minimal rank-$k$ Deaconu--Renault system such that $X$ is second-countable. Suppose that $\sigma \in Z^2(\GT,\T)$, and that $\omega \in Z^2(\PT,\T)$ is a bicharacter that vanishes on $Z_\omega$ in each coordinate and satisfies $\sigma\bigrestr{\ITc} = 1_X \times \omega$, as in \cref{prop: DR cohomologous vanishing bicharacter}. Let $\{ u_p : p \in \PT \}$ be the canonical family of generating unitaries for the twisted group C*-algebra $C^*(\PT,\omega)$. There is an isomorphism $\Upsilon\colon C^*(\IT,\sigma) \to C_0(X) \otimes C^*(\PT,\omega)$ such that $\Upsilon(h \cdot 1_p) = h \otimes u_p$ for all $h \in C_c(X)$ and $p \in \PT$.
\end{prop}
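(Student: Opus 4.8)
The plan is to exploit the identification $\IT \cong X \times \PT$ from \cref{prop: IT characterisation}, under which $\PT \cong \Z^l$ is discrete and the sets $c\restr{\IT}^{-1}(p) = \{(x,p,x):x\in X\} \cong X$ form a clopen partition of $\IT$. Since $\sigma$ is $\omega$-constant on $\IT$, the twisted $*$-algebra structure on $C_c(\IT,\sigma)$ should be precisely that of the algebraic tensor product $C_c(X) \otimes_{\mathrm{alg}} \vecspan\{u_p : p \in \PT\}$ sitting densely inside $C_0(X) \otimes C^*(\PT,\omega)$. So I would first build an algebraic $*$-isomorphism between these dense $*$-subalgebras, and then promote it to an isomorphism of the completions using universal properties.

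For the algebraic $*$-isomorphism, \cref{lemma: h cdot 1_p span} tells us the elements $h \cdot 1_p$ span $C_c(\IT)$, so I define $\Upsilon_0(h\cdot 1_p) \coloneqq h \otimes u_p$ and extend linearly. Well-definedness follows because the functions $h \cdot 1_p$ for distinct $p$ are supported on the disjoint clopen bisections $c\restr{\IT}^{-1}(p)$, while the $u_p$ are linearly independent in $C^*(\PT,\omega)$; hence the decomposition $C_c(\IT) = \bigoplus_p \{h \cdot 1_p : h \in C_c(X)\}$ maps cleanly onto $\bigoplus_p C_c(X)\otimes u_p$. That $\Upsilon_0$ is a $*$-homomorphism reduces to two direct computations in $C_c(\IT,\sigma)$ using that $\sigma$ is $\omega$-constant on $\IT$ and normalised, namely
\[
(h\cdot 1_p)*(g\cdot 1_q) = \omega(p,q)\,(hg)\cdot 1_{p+q}, \qquad (h\cdot 1_p)^* = \overline{\omega(-p,p)}\,\overline{h}\cdot 1_{-p},
\]
which match $(h\otimes u_p)(g\otimes u_q)=\omega(p,q)\,(hg)\otimes u_{p+q}$ and $(h\otimes u_p)^* = \overline{\omega(-p,p)}\,\overline h \otimes u_{-p}$ exactly. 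Thus $\Upsilon_0$ is a bijective $*$-homomorphism onto the dense $*$-subalgebra $\vecspan\{h\otimes u_p\}$ of $C_0(X)\otimes C^*(\PT,\omega)$.

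To promote this to the C*-level I would first extend $\Upsilon_0$: composing with a faithful representation $\rho$ of $C_0(X)\otimes C^*(\PT,\omega)$ yields a $*$-representation $\rho\circ\Upsilon_0$ of $C_c(\IT,\sigma)$, so the definition of the full norm gives $\lv\Upsilon_0(f)\rv \le \lv f\rv_{C^*(\IT,\sigma)}$, and $\Upsilon_0$ extends to a $*$-homomorphism $\Upsilon$ with dense range. For the inverse I would build $\Psi\colon C_0(X)\otimes C^*(\PT,\omega)\to C^*(\IT,\sigma)$ from two commuting $*$-homomorphisms into the multiplier algebra: $\phi\colon C_0(X)\to M(C^*(\IT,\sigma))$ given by convolution by $h\cdot 1_0$ (which is just fibrewise pointwise multiplication by $h$, since $\sigma$ is normalised), and $\psi\colon C^*(\PT,\omega)\to M(C^*(\IT,\sigma))$ obtained from the universal property of $C^*(\PT,\omega)$ applied to the family of unitary multipliers implementing ``convolution by $1_p$''. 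Because $C_0(X)$ is nuclear, $C_0(X)\otimes C^*(\PT,\omega)$ carries a unique C*-norm and the universal property of the maximal tensor product yields $\Psi$ with $\Psi(h\otimes u_p)=\phi(h)\psi(u_p)=h\cdot 1_p$. Then $\Upsilon$ and $\Psi$ agree with $\Upsilon_0$ and its inverse on the dense subalgebras, so they are mutually inverse, both are isomorphisms, and $\Upsilon(h\cdot 1_p)=h\otimes u_p$ as required.

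I expect the convolution and involution identities of the second paragraph to be routine, and the main obstacle to be the construction of $\Psi$: one must check that the $\psi(u_p)$ are genuine unitary multipliers of $C^*(\IT,\sigma)$ satisfying the $\omega$-relations $\psi(u_p)\psi(u_q)=\omega(p,q)\,\psi(u_{p+q})$, that the ranges of $\phi$ and $\psi$ commute (using that the base point is fixed by the isotropy and that $\PT$ is abelian), and that $\Psi$ lands in $C^*(\IT,\sigma)$ rather than merely its multiplier algebra. An alternative that sidesteps multipliers would invoke a general identification of the C*-algebra of a trivial group bundle $X\times\PT$ twisted by a pulled-back cocycle with the corresponding tensor product, but making the dense $*$-isomorphism explicit as above seems the most transparent route.
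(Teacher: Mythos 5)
Your argument is correct, and it takes the route the paper itself delegates to its references: the paper's proof of this proposition is simply a citation to \cite[Lemma~4.1]{KPS2016JNG} (with \cite[Proposition~8.1.3]{Armstrong2019} for an alternative treatment of injectivity), and the argument there is exactly your two-step scheme of matching the twisted convolution and involution on $\vecspan\{h\cdot 1_p\}$ against the relations $u_pu_q=\omega(p,q)u_{p+q}$ and then extending via the universal norm in one direction and a pair of commuting multiplier representations (plus nuclearity of $C_0(X)$) in the other. The computations you flag as the key checks --- that the $V_p$ are unitary multipliers satisfying the $\omega$-relations (which is \cref{lemma: bisection multipliers} applied to the clopen bisections $c\restr{\IT}^{-1}(p)$), that their range commutes with $C_0(X)$, and that $\Psi$ lands in $C^*(\IT,\sigma)$ because the spanning elements do and $C^*(\IT,\sigma)$ is closed in its multiplier algebra --- all go through as you describe.
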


\begin{proof}
The argument used to prove \cite[Lemma~4.1]{KPS2016JNG} works here---for more detail and an alternative approach to proving injectivity, see \cite[Proposition~8.1.3]{Armstrong2019}.
\end{proof}

Before stating the next theorem, we recall the following facts relating to twisted group C*-algebras. Define $B \coloneqq \PT / Z_\omega$. There is a right action of $\widehat{B}$ on $\PTHat$ such that
\[
(\phi \cdot \chi)(p) = \phi(p) \, \chi(p + Z_\omega) \quad \text{for all } \phi \in \PTHat, \, \chi \in \widehat{B}, \text{ and } p \in \PT.
\]
This action induces a continuous, free, proper, right action of $\widehat{B}$ on $X \times \PTHat$ given by $(x,\phi) \cdot \chi \coloneqq (x, \phi \cdot \chi)$. By \cite[Theorem~4.40]{Folland2016}, the map $\phi \cdot \widehat{B} \mapsto \phi\restr{Z_\omega}$ is an isomorphism $\PTHat / \widehat{B} \cong \widehat{Z}_\omega$. Thus $\phi \mapsto \phi\restr{Z_\omega}$ is a quotient map from $\PTHat$ to $\widehat{Z}_\omega$, and so \cite[Theorem~3.3.17]{Engelking1989} implies that $Q\colon (x,\phi) \mapsto (x,\phi\restr{Z_\omega})$ is a quotient map from $X \times \PTHat$ to $X \times \widehat{Z}_\omega$.

Let $\{ U_{p+Z_\omega} : p+Z_\omega \in B \}$ be the canonical family of generating unitaries for the twisted group C*-algebra $C^*(B,\tilde{\omega})$. By the universal property of $C^*(B,\tilde{\omega})$, there is a strongly continuous action $\beta^B$ of $\widehat{B}$ on $C^*(B,\tilde{\omega})$ such that
\[
\beta_\chi^B(U_{p+Z_\omega}) = \chi(p+Z_\omega) \, U_{p+Z_\omega} \quad \text{for all } \chi \in \widehat{B} \text{ and } p \in \PT.
\]
(See \cite[Theorem~4.3.1]{Armstrong2019} for proofs of the existence of these two actions of $\widehat{B}$.) Recall from \cref{def: induced algebra} the definition of the induced algebra $ \Ind_{\widehat{B}}^{X \times \PTHat}\!\big(C^*(B,\tilde{\omega}), \, \beta^B\big)$ associated to the dynamical system $\big(C^*(B,\tilde{\omega}), \widehat{B}, \beta^B\big)$.

\begin{thm} \label{thm: twisted isotropy induced algebra}
Let $(X,T)$ be a minimal rank-$k$ Deaconu--Renault system such that $X$ is second-countable. Fix $\sigma \in Z^2(\GT,\T)$, and let $\omega \in Z^2(\PT,\T)$ and $\tilde{\omega} \in Z^2(\PT/Z_\omega,\T)$ be bicharacters chosen as in \cref{prop: DR cohomologous vanishing bicharacter}. Define
\begin{align*}
\XX_T^\omega \coloneqq& \Ind_{\widehat{B}}^{X \times \PTHat}\!\big(C^*(B,\tilde{\omega}), \, \beta^B\big) \\
=&\, \Bigg\{ f \in C_0\big(X \times \PTHat, \, C^*(B,\tilde{\omega})\big) \ : \ \begin{tabular}{@{}p{\textwidth-23.5em}@{}}\setstretch{1.35}$f(x, \phi \cdot \chi) = \big(\beta_\chi^B\big)^{-1}\big(f(x,\phi)\big)$ for all $(x,\phi) \in X \times \PTHat$ and $\chi \in \widehat{B}$\end{tabular} \Bigg\}.
\end{align*}
There is an isomorphism $\psi_T\colon C^*(\IT,\sigma) \to \XX_T^\omega$ such that
\[
\psi_T(h \cdot 1_p)(x,\phi) = h(x) \, \overline{\phi(p)} \, U_{p+Z_\omega}
\]
for all $h \in C_c(X)$, $p \in \PT$, and $(x,\phi) \in X \times \PTHat$.
\end{thm}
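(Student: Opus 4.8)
The plan is to assemble $\psi_T$ from the tensor-product decomposition $\Upsilon\colon C^*(\IT,\sigma) \to C_0(X) \otimes C^*(\PT,\omega)$ of \cref{prop: tensor prod decomp}. Since $\widehat{B}$ acts trivially on the $X$-coordinate of $X \times \PTHat$, the induced algebra factors as $\XX_T^\omega \cong C_0(X) \otimes \YY$, where $\YY \coloneqq \Ind_{\widehat{B}}^{\PTHat}\big(C^*(B,\tilde{\omega}),\beta^B\big)$: under this identification a $C_0$-function $X \to \YY$ is reread as a $C_0$-function $X \times \PTHat \to C^*(B,\tilde{\omega})$, and the two invariance conditions match. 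It therefore suffices to construct an isomorphism $\Phi\colon C^*(\PT,\omega) \to \YY$ satisfying $\Phi(u_p)(\phi) = \overline{\phi(p)}\,U_{p+Z_\omega}$, and to set $\psi_T \coloneqq (\id_{C_0(X)} \otimes \Phi) \circ \Upsilon$ (the tensor products being spatial, as $C_0(X)$ is nuclear). Tracing $h \cdot 1_p$ through $\Upsilon$, then $\id \otimes \Phi$, and finally the identification $C_0(X) \otimes \YY \cong \XX_T^\omega$, recovers the stated formula.

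To build $\Phi$ I would invoke the universal property of $C^*(\PT,\omega)$. For each $p \in \PT$ define $V_p \in C_0(\PTHat, C^*(B,\tilde{\omega}))$ by $V_p(\phi) \coloneqq \overline{\phi(p)}\,U_{p+Z_\omega}$. Since $Z_\omega$ is a subgroup of $\PT \cong \Z^l$, it is free abelian of finite rank, so $\widehat{Z}_\omega$ is compact; as $\lv V_p(\phi) \rv$ is constant in $\phi$ and $\YY$ is a unital $C(\widehat{Z}_\omega)$-algebra, each $V_p$ is a genuine unitary element of $\YY$. A direct check—using $(\phi\cdot\chi)(p) = \phi(p)\,\chi(p+Z_\omega)$ and $\beta_\chi^B(U_{p+Z_\omega}) = \chi(p+Z_\omega)\,U_{p+Z_\omega}$—shows $V_p(\phi\cdot\chi) = (\beta_\chi^B)^{-1}(V_p(\phi))$, so indeed $V_p \in \YY$. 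Using $U_{p+Z_\omega}U_{q+Z_\omega} = \tilde{\omega}(p+Z_\omega,q+Z_\omega)\,U_{(p+q)+Z_\omega} = \omega(p,q)\,U_{(p+q)+Z_\omega}$ together with $\phi(p)\phi(q) = \phi(p+q)$ gives $V_pV_q = \omega(p,q)V_{p+q}$ (and $V_0 = 1$), so $\{V_p\}$ is an $\omega$-representation and the universal property produces the desired unital $*$-homomorphism $\Phi$.

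Finally I would show $\Phi$ is bijective by a fibrewise argument. The central unitaries $\{u_z : z \in Z_\omega\}$ generate an (untwisted, since $\omega$ vanishes on $Z_\omega$) copy of $C^*(Z_\omega) \cong C(\widehat{Z}_\omega)$, making $C^*(\PT,\omega)$ a $C(\widehat{Z}_\omega)$-algebra; as $\Phi(u_z)$ is the function $\chi \mapsto \overline{\chi(z)}$ times the unit, $\Phi$ carries this onto the intrinsic $C(\widehat{Z}_\omega)$-structure of $\YY$ (whose base is $\PTHat/\widehat{B} \cong \widehat{Z}_\omega$), so $\Phi$ is $C(\widehat{Z}_\omega)$-linear and it suffices to check that $\Phi$ is an isomorphism on each fibre over $\chi \in \widehat{Z}_\omega$. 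The fibre of $\YY$ over $\chi$ is $C^*(B,\tilde{\omega})$, while the fibre of $C^*(\PT,\omega)$ is obtained by imposing $u_z = \chi(z)$ for $z \in Z_\omega$; because $\omega(p,z) = 1$ for $z \in Z_\omega$, this collapses to a twisted group algebra of $B$ whose twist differs from $\tilde{\omega}$ by the $\chi$-dependent coboundary $(b,b') \mapsto \chi\big(s(b)+s(b')-s(b+b')\big)$ arising from a section $s\colon B \to \PT$. Reconciling this $\chi$-perturbation with the twisting implemented by $\beta^B_\chi$ in the induced algebra is the \emph{crux} of the proof; once this is done, each fibre map is an isomorphism, and a $C(\widehat{Z}_\omega)$-linear map that is fibrewise bijective is an isomorphism. (Alternatively, injectivity can be obtained by comparing the canonical faithful conditional expectations of $C^*(\PT,\omega)$ and $\YY$ onto their central copies of $C(\widehat{Z}_\omega)$, as in the proof of \cite[Lemma~4.1]{KPS2016JNG}.)
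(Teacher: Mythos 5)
Your overall architecture is the same as the paper's: decompose $C^*(\IT,\sigma) \cong C_0(X) \otimes C^*(\PT,\omega)$ via \cref{prop: tensor prod decomp}, identify $\XX_T^\omega$ with $C_0(X) \otimes \Ind_{\widehat{B}}^{\PTHat}\!\big(C^*(B,\tilde{\omega}),\beta^B\big)$, and reduce everything to the isomorphism $\Phi\colon C^*(\PT,\omega) \to \Ind_{\widehat{B}}^{\PTHat}\!\big(C^*(B,\tilde{\omega}),\beta^B\big)$ determined by $u_p \mapsto \big(\phi \mapsto \overline{\phi(p)}\,U_{p+Z_\omega}\big)$. The paper does exactly this, outsourcing the existence of $\Phi$ (there called $\Omega$) to \cref{thm: twisted group C* induced algebra}. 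Your construction of $\Phi$ via the universal property, and your verification that the $V_p$ lie in the induced algebra and satisfy $V_pV_q = \omega(p,q)V_{p+q}$, match the appendix argument.

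The gap is in your bijectivity argument, and you have flagged it yourself: you never reconcile the $\chi$-dependent twist on the fibres of $C^*(\PT,\omega)$ with the fibres of the induced algebra. That step is in fact fillable---the perturbing $2$-cocycle $(b,b') \mapsto \overline{\chi\big(s(b)+s(b')-s(b+b')\big)}$ is precisely the coboundary $\delta^1\beta$ of $\beta(b) \coloneqq \overline{\phi(s(b))}$ for any $\phi \in \PTHat$ extending $\chi$, which is exactly the scalar appearing in $V_{s(b)}(\phi)$---but a second, unacknowledged issue remains: to run a fibrewise argument you must know that the fibre of $C^*(\PT,\omega)$ over $\chi$ (the quotient by the ideal generated by the central elements $u_z - \overline{\chi(z)}1$, $z \in Z_\omega$) is \emph{exactly} the twisted group algebra of $B$ you describe and not a proper quotient of it. Establishing this requires a faithfulness argument, e.g.\ via the gauge action of $\PTHat$ and the faithful conditional expectation onto $\C 1$ as in \cref{lemma: action details}; once you invoke that, you have essentially reproduced the paper's proof of \cref{thm: twisted group C* induced algebra}, which avoids fibres altogether by proving injectivity directly from \cref{lemma: action details} (intertwining the gauge actions on the two sides) and surjectivity from the section-algebra Stone--Weierstrass theorem \cite[Proposition~C.24]{Williams2007}. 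Your parenthetical alternative (comparing conditional expectations onto the central copies of $C(\widehat{Z}_\omega)$, as in \cite[Lemma~4.1]{KPS2016JNG}) is the right instinct, but as written neither route is carried out.
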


\begin{proof}
Define $\YY_T^\omega\coloneqq \Ind_{\widehat{B}}^{\PTHat}\!\big(C^*(B,\tilde{\omega}), \, \beta^B\big)$. Recall from \cref{prop: tensor prod decomp,thm: twisted group C* induced algebra} the definitions of the isomorphisms
\[
\Upsilon\colon C^*(\IT,\sigma) \to C_0(X) \otimes C^*(\PT,\omega) \quad \text{ and } \quad \Omega\colon C^*(\PT,\omega) \to \YY_T^\omega.
\]
By \cite[Propositions~B.13~and~B.16]{RW1998}, there is an isomorphism
\[
\Gamma\colon C_0(X) \otimes C^*(\PT,\omega) \to C_0\big(X, \, \YY_T^\omega\big)
\]
such that $\Gamma(f \otimes a)(x) = f(x) \, \Omega(a)$ for all $f \in C_0(X)$, $a \in C^*(\PT,\omega)$, and $x \in X$. Hence
\begin{equation} \label{eqn: Gamma of Upsilon}
\Gamma\big(\Upsilon(h \cdot 1_p)\big)(x) = \Gamma(h \otimes u_p)(x) = h(x) \, \Omega(u_p),
\end{equation}
for all $h \in C_0(X)$, $p \in \PT$, and $x \in X$. Applications of \cite[Propositions~B.13, B.15(b), and B.16, and Corollary~B.17]{RW1998} show that there is an isomorphism
\[
\Lambda\colon C_0\big(X, \, C\big(\PTHat, \, C^*(B,\tilde{\omega})\big)\big) \to C_0\big(X \times \PTHat, \, C^*(B,\tilde{\omega})\big)
\]
given by $\Lambda(g)(x,\phi) = g(x)(\phi)$. (See the proof of \cite[Proposition~8.2.2]{Armstrong2019} for details.) We claim that for each $g \in C_0\big(X, \, C\big(\PTHat, \, C^*(B,\tilde{\omega})\big)\big)$,
\begin{equation} \label[claim]{claim: induced algebras coincide}
\Lambda(g) \in \XX_{T,\omega} \quad \text{ if and only if } \quad g(x) \in \YY_{T,\omega} \text{ for all } x \in X.
\end{equation}
To see this, fix $g \in C_0\big(X, \, C\big(\PTHat, \, C^*(B,\tilde{\omega})\big)\big)$. For all $x \in X$, $\phi \in \PTHat$, and $\chi \in \widehat{B}$, we have
\[
\Lambda(g)(x, \phi \cdot \chi) = g(x)(\phi \cdot \chi) \quad \text{ and } \quad \big(\beta_\chi^B\big)^{-1}\big(\Lambda(g)(x,\phi)\big) = \big(\beta_\chi^B\big)^{-1}\big(g(x)(\phi)\big),
\]
and hence
\[
\Lambda(g)(x, \phi \cdot \chi) = \big(\beta_\chi^B\big)^{-1}\big(\Lambda(g)(x,\phi)\big) \quad \text{ if and only if } \quad g(x)(\phi \cdot \chi) = \big(\beta_\chi^B\big)^{-1}\big(g(x)(\phi)\big).
\]
It is now clear from the definitions of $\XX_{T,\omega}$ and $\YY_{T,\omega}$ that \cref{claim: induced algebras coincide} holds. Therefore, $\Lambda$ restricts to an isomorphism $\widetilde{\Lambda}\colon C_0\big(X, \YY_{T,\omega}\big) \to \XX_{T,\omega}$, and so
\[
\psi_T \coloneqq \widetilde{\Lambda} \circ \Gamma \circ \Upsilon \colon C^*(\IT,\sigma) \to \XX_{T,\omega}
\]
is an isomorphism. Using \cref{eqn: Gamma of Upsilon} and the definitions of $\widetilde{\Lambda}$ and $\Omega$, we see that for all $h \in C_c(X)$, $p \in \PT$, and $(x,\phi) \in X \times \PTHat$,
\[
\psi_T(h \cdot 1_p)(x,\phi) = \Gamma\big(\Upsilon(h \cdot 1_p)\big)(x)(\phi) = h(x) \, \Omega(u_p)(\phi) = h(x) \, \overline{\phi(p)} \, U_{p+Z_\omega}. \qedhere
\]
\end{proof}

We now give a useful description of the ideals of the induced algebra $\XX_T^\omega$.

\begin{prop} \label{prop: ideals of twisted isotropy induced algebra}
Let $(X,T)$ be a minimal rank-$k$ Deaconu--Renault system such that $X$ is second-countable. Fix $\sigma \in Z^2(\GT,\T)$, and let $\omega \in Z^2(\PT,\T)$ and $\tilde{\omega} \in Z^2(\PT/Z_\omega,\T)$ be bicharacters chosen as in \cref{prop: DR cohomologous vanishing bicharacter}. Define $\XX_T^\omega \coloneqq \Ind_{\widehat{B}}^{X \times \PTHat}\!\big(C^*(B,\tilde{\omega}), \, \beta^B\big)$. If $I$ is an ideal of $\XX_T^\omega$, then
\[
K_I \coloneqq \{ (x,\phi) \in X \times \PTHat : f(x,\phi) = 0 \text{ for all } f \in I \}
\]
is a closed subset of $X \times \PTHat$, and we have
\[
I = \{ f \in \XX_T^\omega : f\restr{K_I} \equiv 0 \}.
\]
\end{prop}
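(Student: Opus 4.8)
The two straightforward inclusions can be dispatched immediately. For each $f \in \XX_T^\omega$ the map $(x,\phi) \mapsto \lv f(x,\phi) \rv$ is continuous, so its zero set is closed in $X \times \PTHat$; since $K_I = \bigcap_{f \in I} \{(x,\phi) : f(x,\phi) = 0\}$ is an intersection of such sets, it is closed. Moreover $I \subseteq \{ f \in \XX_T^\omega : f\restr{K_I} \equiv 0 \}$ holds by the very definition of $K_I$. All of the content therefore lies in the reverse inclusion, and the plan is to obtain it by exhibiting $\XX_T^\omega$ as a $C_0(X \times \widehat{Z}_\omega)$-algebra whose fibres are \emph{simple}, and then running the standard ideal correspondence for $C_0(Y)$-algebras with simple fibres.

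First I would assemble the structural facts. The quotient map $Q\colon (x,\phi) \mapsto (x, \phi\restr{Z_\omega})$ identifies $(X \times \PTHat)/\widehat{B}$ with $X \times \widehat{Z}_\omega$, and $C_0(X \times \widehat{Z}_\omega)$ acts on $\XX_T^\omega$ by $(g \cdot f)(x,\phi) = g(Q(x,\phi)) \, f(x,\phi)$, making $\XX_T^\omega$ a $C_0(X \times \widehat{Z}_\omega)$-algebra. Since $\XX_T^\omega$ is by definition a C*-subalgebra of $C_0\big(X \times \PTHat,\, C^*(B,\tilde{\omega})\big)$, its norm is the supremum of the fibre norms, so the point evaluations $\ev_{(x,\phi)}\colon f \mapsto f(x,\phi)$ are jointly faithful; and standard induced-algebra theory (cf.\ \cite{RW1998}) shows each $\ev_{(x,\phi)}$ is a surjection onto $C^*(B,\tilde{\omega})$. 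The defining equivariance $f(x, \phi \cdot \chi) = (\beta_\chi^B)^{-1}(f(x,\phi))$ shows that $\ev_{(x,\phi)}$ and $\ev_{(x, \phi\cdot\chi)}$ have the same kernel, so that the fibre of the $C_0(X \times \widehat{Z}_\omega)$-algebra over $Q(x,\phi)$ is a copy of $C^*(B,\tilde{\omega})$. Finally, because $\omega$ vanishes on its centre $Z_\omega$, no nonzero coset of $\PT/Z_\omega$ lies in the centre of $\tilde{\omega}$; thus $\tilde{\omega}$ has trivial centre, and the simplicity criterion for twisted group C*-algebras of discrete abelian groups (recalled in the appendix on twisted group C*-algebras) shows that the fibre $C^*(B,\tilde{\omega})$ is simple.

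With simple fibres available, the reverse inclusion is routine. By the previous paragraph $K_I$ is $\widehat{B}$-invariant, so writing $J \coloneqq \{ f \in \XX_T^\omega : f\restr{K_I} \equiv 0 \}$ we have $J = C_0(V) \cdot \XX_T^\omega$ for the open set $V \coloneqq (X \times \widehat{Z}_\omega) \setminus Q(K_I)$. Now fix $(x,\phi) \notin K_I$; then some $f \in I$ has $f(x,\phi) \ne 0$, so $\ev_{(x,\phi)}(I)$ is a nonzero ideal of the simple algebra $C^*(B,\tilde{\omega})$ and hence equals $C^*(B,\tilde{\omega})$. Passing to the quotient $q\colon \XX_T^\omega \to \XX_T^\omega/I$, which is again a $C_0(X \times \widehat{Z}_\omega)$-algebra, its fibre over $Q(x,\phi)$ is $C^*(B,\tilde{\omega})/\ev_{(x,\phi)}(I)$; this is $0$ when $(x,\phi) \notin K_I$ and is $C^*(B,\tilde{\omega})$ when $(x,\phi) \in K_I$. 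Given $f \in J$, every fibre evaluation of $q(f)$ therefore vanishes---trivially over $V$, and over $Q(K_I)$ because $f(x,\phi) = 0$ for $(x,\phi) \in K_I$. Joint faithfulness of the fibre evaluations of the quotient forces $q(f) = 0$, i.e.\ $f \in I$. Hence $J \subseteq I$, and combined with the opening inclusions this yields $I = J$.

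I expect the main obstacle to be not the ideal bookkeeping of the final paragraph but the assembly of the two structural inputs: verifying that $\XX_T^\omega$ is a $C_0(X \times \widehat{Z}_\omega)$-algebra with surjective, jointly faithful point evaluations (the induced-algebra theory), and establishing simplicity of the fibre $C^*(B,\tilde{\omega})$ via the trivial-centre property of $\tilde{\omega}$ guaranteed by \cref{prop: DR cohomologous vanishing bicharacter}. Once these are in hand, the simple-fibre ideal correspondence proceeds exactly as above.
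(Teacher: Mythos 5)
Your proposal is correct and follows essentially the same route as the paper: the paper likewise reduces the reverse inclusion to simplicity of the fibre $C^*(B,\tilde{\omega})$ (citing \cite[Proposition~4.2.1]{Armstrong2019} for the standard ideal correspondence that you spell out via the $C_0(X \times \widehat{Z}_\omega)$-algebra structure), and establishes that simplicity exactly as you do, from the identity $(\tilde{\omega}\tilde{\omega}^*)(p+Z_\omega,q+Z_\omega) = (\omega\omega^*)(p,q)$ together with the definition of $Z_\omega$ and \cref{lemma: simple twisted group C*-algebra}. The only differences are cosmetic (e.g.\ the simplicity criterion lives in \cref{sec: induced algebra}, not the appendix).
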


In order to prove \cref{prop: ideals of twisted isotropy induced algebra}, we need the following special case of \cite[Proposition~32]{Green1978}, which Green in turn attributes to a preprint of Kleppner.

\begin{lemma} \label{lemma: simple twisted group C*-algebra}
Let $G$ be a countable discrete abelian group with identity $e$, and let $\varsigma \in Z^2(G,\T)$ be a bicharacter. Let $\{ u_g : g \in G \}$ be the canonical family of generating unitaries for the twisted group C*-algebra $C^*(G,\varsigma)$. Suppose that for all $g \in G$, we have $(\varsigma\varsigma^*)\big(\{g\} \times G\big) = \{1\}$ if and only if $g = e$. Then $C^*(G,\varsigma)$ is a simple C*-algebra with a unique trace $\tau_e\colon C^*(G,\varsigma) \to \C$, which satisfies $\tau_e(u_g) = \delta_{g,e}$ for all $g \in G$.
\end{lemma}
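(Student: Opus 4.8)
The plan is to realise $C^*(G,\varsigma)$ through its (faithful) left regular representation and to run a dual-action averaging argument, converting the nondegeneracy hypothesis on $\varsigma\varsigma^*$ into a density statement via Pontryagin duality. First I would set up the canonical trace. Since $G$ is abelian it is amenable, so the full and reduced twisted group C*-algebras coincide, and I would work in the left regular representation on $\ell^2(G)$, where $u_g\delta_k = \varsigma(g,k)\delta_{g+k}$. Define $\tau_e(a)\coloneqq\langle a\delta_e,\delta_e\rangle$; then $\tau_e(u_g)=\delta_{g,e}$, and $\tau_e$ is faithful because $\tau_e(b^*b)=\lv b\delta_e\rv^2$ while $\delta_e$ generates $\ell^2(G)$ under the faithful action. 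To see that $\tau_e$ is a trace it suffices, by bilinearity and continuity on the dense span of the $u_g$, to check $\tau_e(u_gu_h)=\tau_e(u_hu_g)$; both sides vanish unless $h=-g$, and when $h=-g$ they equal $\varsigma(g,-g)$ and $\varsigma(-g,g)$, which agree because $\varsigma$ is a bicharacter, so that $\varsigma(g,-g)=\varsigma(g,g)^{-1}=\varsigma(-g,g)$.

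Next I would record the key commutation relation. A direct computation using the bicharacter property gives $u_h u_g u_h^* = (\varsigma\varsigma^*)(h,g)\,u_g$ for all $g,h\in G$. Writing $\rho(h)\in\widehat{G}$ for the character $g\mapsto(\varsigma\varsigma^*)(h,g)$, conjugation by $u_h$ therefore coincides with the dual automorphism $\lambda_{\rho(h)}$, where $\lambda_\chi(u_g)=\chi(g)\,u_g$ defines the point-norm continuous dual action of the compact group $\widehat{G}$. The hypothesis that $(\varsigma\varsigma^*)(\{g\}\times G)=\{1\}$ forces $g=e$ says exactly that no nonzero $g$ is annihilated by every $\rho(h)$; by Pontryagin duality this is equivalent to $\rho(G)$ being dense in $\widehat{G}$. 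Uniqueness of the trace is then immediate: if $\tau$ is any tracial state and $g\neq e$, choose $h$ with $\chi\coloneqq(\varsigma\varsigma^*)(h,g)\neq 1$; traciality gives $\tau(u_g)=\tau(u_hu_gu_h^*)=\chi\,\tau(u_g)$, so $\tau(u_g)=0$, whereas $\tau(u_e)=1$. Hence $\tau$ and $\tau_e$ agree on the generators, and so everywhere.

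For simplicity, let $I$ be a nonzero closed ideal. As conjugation by each $u_h$ is inner, $I$ is invariant under every $\lambda_{\rho(h)}$; since $I$ is closed, $\rho(G)$ is dense in $\widehat{G}$, and $\chi\mapsto\lambda_\chi(a)$ is continuous, $I$ is invariant under $\lambda_\chi$ for every $\chi\in\widehat{G}$. The averaged map $E(a)\coloneqq\int_{\widehat{G}}\lambda_\chi(a)\,\d\chi$ (with normalised Haar measure) is the conditional expectation onto the fixed-point algebra $\C u_e=\C 1$, and orthogonality of characters identifies it as $E(a)=\tau_e(a)\,1$; since the integrand remains in the closed convex set $I$, we get $E(a)\in I$. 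Choosing $0\neq a=b^*b\in I$, faithfulness of $\tau_e$ gives $\tau_e(a)>0$, so $1\in I$ and therefore $I=C^*(G,\varsigma)$.

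I expect the main obstacle to be the simplicity half, and specifically the passage from invariance of $I$ under the inner automorphisms $\lambda_{\rho(h)}$ to invariance under the entire dual action of $\widehat{G}$: this is precisely where the nondegeneracy hypothesis must be deployed, in the guise of the density of $\rho(G)\subseteq\widehat{G}$, together with point-norm continuity of $\lambda$ and norm-closedness of $I$. The remaining analytic points---faithfulness of $\tau_e$, compactness of $\widehat{G}$, and the identification $E=\tau_e(\cdot)\,1$ via character orthogonality---are routine once the regular representation is in place.
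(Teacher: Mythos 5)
Your argument is correct, and the trace half is essentially the paper's: your uniqueness step via $\tau(u_g)=\tau(u_hu_gu_h^*)=(\varsigma\varsigma^*)(h,g)\,\tau(u_g)$ is the same computation the paper carries out by comparing $\tau(u_{g-h}u_h)$ with $\tau(u_hu_{g-h})$. The simplicity half, however, runs along a genuinely different track. The paper simply invokes \cite[Corollary~2.3]{BO2016} (for an abelian group, a unique trace forces the twisted group C*-algebra to be simple), whereas you prove simplicity by hand: conjugation by the $u_h$ realises the characters $\rho(h)=(\varsigma\varsigma^*)(h,\cdot)$, the nondegeneracy hypothesis translates via Pontryagin duality into density of $\rho(G)$ in $\widehat{G}$, and averaging a nonzero closed ideal over the full dual action lands $\tau_e(a)1$ inside it. This buys a self-contained, elementary argument, at the cost of needing faithfulness of $\tau_e$ --- a point the paper never has to touch, and the one place your justification is too thin. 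From $\tau_e(b^*b)=\lv b\delta_e\rv^2=0$ you get $b\delta_e=0$, but cyclicity of $\delta_e$ for the left regular representation does not by itself yield $b=0$: you would need $ba\delta_e=0$ for all $a$, and $ba\delta_e\ne ab\delta_e$ in general. The standard fix is to note that $\delta_e$ is also cyclic for the commuting right twisted regular representation, so that $b\delta_g$ is a scalar multiple of $\rho_g(b\delta_e)=0$ for every $g$, whence $b=0$; that is, $\delta_e$ is separating, not merely cyclic. With that one line added, your proof is complete, and arguably more transparent than an appeal to the external unique-trace-implies-simple result.
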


\begin{proof}
This is a special case of \cite[Proposition~32]{Green1978}, but is also proved directly in \cite[Proposition~8.2.4]{Armstrong2019}.
\end{proof}

\begin{proof}[Proof of \cref{prop: ideals of twisted isotropy induced algebra}]
We have $K_I = \medcap_{f \in I} \, f^{-1}(0)$, which is closed because each $f \in I$ is continuous. It is well known that if $C^*(B,\tilde{\omega})$ is simple, then $I = \{ f \in \XX_T^\omega : f\restr{K_I} \equiv 0 \}$ (see \cite[Proposition~4.2.1]{Armstrong2019} for a proof). We will use \cref{lemma: simple twisted group C*-algebra} to show that $C^*(B,\tilde{\omega})$ is simple. Fix $p \in \PT$. \Cref{prop: DR cohomologous vanishing bicharacter} implies that for all $q \in \PT$, we have
\begin{equation} \label{eqn: omega omega star}
(\tilde{\omega}\tilde{\omega}^*)(p + Z_\omega, q + Z_\omega) = \omega(p,q) \, \overline{\omega(q,p)} = (\omega\omega^*)(p,q).
\end{equation}
By the definition of $Z_\omega$, we have $p \in Z_\omega$ if and only if $(\omega\omega^*)(p,q) = 1$ for all $q \in \PT$. Thus, \cref{eqn: omega omega star} implies that $p + Z_\omega$ is the identity element of $B$ if and only if $(\tilde{\omega}\tilde{\omega}^*)\big(\{p + Z_\omega\} \times B\big) = \{1\}$, and so \cref{lemma: simple twisted group C*-algebra} implies that $C^*(B,\tilde{\omega})$ is simple.
\end{proof}

\section{Simplicity of twisted C*-algebras of \texorpdfstring{Deaconu--Renault}{Deaconu-Renault} groupoids}
\label{sec: simplicity}

In this section we characterise simplicity of twisted C*-algebras of Deaconu--Renault groupoids in terms of the underlying data, using the spectral action defined in \cref{prop: spectral action}.

\begin{thm} \label{thm: simplicity characterisation}
Let $(X,T)$ be a rank-$k$ Deaconu--Renault system such that $X$ is second-countable. Fix $\sigma \in Z^2(\GT,\T)$.
\begin{enumerate}[label=(\alph*)]
\item \label{item: main simple implies minimal} If $(X,T)$ is not minimal, then $C^*(\GT,\sigma)$ is not simple.
\item \label{item: main minimal implies simple} Suppose that $(X,T)$ is minimal. Let $\omega \in Z^2(\PT,\T)$ and $\tilde{\omega} \in Z^2(\PT/Z_\omega,\T)$ be bicharacters chosen as in \cref{prop: DR cohomologous vanishing bicharacter}. Let $\theta$ be the spectral action associated to $(T,\sigma)$ as in \cref{prop: spectral action}. Then $C^*(\GT,\sigma)$ is simple if and only if $\theta$ is minimal.
\end{enumerate}
\end{thm}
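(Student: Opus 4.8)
The plan is to dispose of part~(a) by the standard construction of an ideal from an invariant open set, and to prove part~(b) by matching the ideals of $C^*(\GT,\sigma)$ with the $\theta$-invariant closed subsets of $X\times\widehat{Z}_\omega$, exploiting the induced-algebra picture of the isotropy C*-algebra. For part~(a), \cref{rem: minimality} shows that if $(X,T)$ is not minimal then $\GT$ is not minimal, so some orbit closure is a proper nonempty closed invariant subset of $\GTo=X$, with complement a proper nonempty open invariant set $U$; the functions in $C_c(\GT,\sigma)$ vanishing off the reduction of $\GT$ to $U$ then generate a proper nonzero ideal, so $C^*(\GT,\sigma)$ is not simple.

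For the forward implication of part~(b) I would argue contrapositively. If $\theta$ is not minimal, choose a proper nonempty closed $\theta$-invariant set $K\subseteq X\times\widehat{Z}_\omega$ and pull it back along the quotient map $Q$ to the closed, $\widehat{B}$-invariant set $Q^{-1}(K)\subseteq X\times\PTHat$. Transporting \cref{prop: ideals of twisted isotropy induced algebra} through the isomorphism $\psi_T$ of \cref{thm: twisted isotropy induced algebra}, this set is the vanishing locus of a proper nonzero ideal $J\trianglelefteq C^*(\IT,\sigma)$; and since $K$ is $\theta$-invariant, $J$ is invariant under the conjugation action of $\GT$ on $C^*(\IT,\sigma)$ recorded in \cref{lemma: twistchar_gamma^sigma}. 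Applying the conditional expectation $\Phi$ below shows that the ideal of $C^*(\GT,\sigma)$ generated by $J$ meets $C^*(\IT,\sigma)$ again in $J$, so it is proper and nonzero, and $C^*(\GT,\sigma)$ is not simple.

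The substance is the reverse implication, which I would organise in three steps. First, show that $\HT=\GT/\IT$ is effective. Second, given $0\ne I\trianglelefteq C^*(\GT,\sigma)$, use the canonical conditional expectation $\Phi\colon C^*(\GT,\sigma)\to C^*(\IT,\sigma)$ obtained by restricting functions to the clopen subgroupoid $\IT$ (faithful because $\GT$ is amenable, so the full and reduced norms agree): effectiveness of $\HT$ yields $\Phi(I)\subseteq I$, and then faithfulness gives $J:=I\cap C^*(\IT,\sigma)=\Phi(I)\ne 0$. Third, observe that $J$ is invariant under the conjugation action of $\GT$, so—via \cref{prop: ideals of twisted isotropy induced algebra,thm: twisted isotropy induced algebra} and the identification of the spectrum of $C^*(\IT,\sigma)$ with $X\times\widehat{Z}_\omega$, on which $\HT$ acts by $\theta$—it corresponds to a $\theta$-invariant closed subset of $X\times\widehat{Z}_\omega$; minimality of $\theta$ forces this subset to be empty, whence $J=C^*(\IT,\sigma)$. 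Finally, $C^*(\IT,\sigma)\supseteq C_0(X)=C_0(\GTo)$ contains an approximate identity for $C^*(\GT,\sigma)$, so $C^*(\IT,\sigma)\subseteq I$ gives $I=C^*(\GT,\sigma)$, and $C^*(\GT,\sigma)$ is simple.

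I expect the effectiveness-and-detection step to be the main obstacle. Establishing that $\HT$ is effective and, more pointedly, that a nonzero positive $a\in I$ satisfies $\Phi(a)\in I\setminus\{0\}$, requires controlling the off-isotropy part $a-\Phi(a)$: one cuts $a$ down by elements of $C_0(X)$ supported near a suitable unit, and the bisection estimates of \cref{lemma: IT U/V bar,lemma: s(gamma) in IT U cap V}—which bound $\overline{\IT U\medcap V}$ and describe $s(\IT U\medcap V)$—are precisely what let one force the norm of the off-isotropy remainder below a given $\varepsilon$ while keeping each cutdown inside $I$, so that $\Phi(a)$ lands in the closed ideal $I$.
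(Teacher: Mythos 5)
Your architecture for part~(b) is essentially the paper's: pass to $J = I \cap C^*(\IT,\sigma)$, identify $\psi_T(J)$ with a vanishing locus $K_J \subseteq X \times \PTHat$ via \cref{prop: ideals of twisted isotropy induced algebra}, show $Q(K_J)$ is closed and $\theta$-invariant (\cref{prop: Q(K_J) is closed and invariant under theta}), and finish with $\iota(C_0(X)) \subseteq I$; part~(a) is the same standard argument. You diverge in two places. First, for ``$\theta$ not minimal $\Rightarrow$ not simple'' the paper does not work with the ideal generated by $J$; it builds the state $\kappa_{(x,\phi)} = \trace \circ \ev_{(x,\phi)} \circ \psi_T \circ \iota^{-1} \circ \Phi$ and shows its GNS representation annihilates a nonzero $f \in M_T^\sigma$ supported off $Q^{-1}\big(\overline{[x,\phi\restr{Z_\omega}]_\theta}\big)$. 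Both routes rest on the same technical core, namely the identity $\Phi(b^*fa) = \Xi_g(gq^*fpg^*)$ of \cref{lemma: conditional expectation and conjugation} (which is where \cref{lemma: IT U/V bar,lemma: s(gamma) in IT U cap V} are actually consumed --- not in the detection step, as you guessed), so your packaging is a legitimate variant of \cref{prop: conjugation supp outside orbit}. Second, for the detection step the paper simply cites \cite[Theorem~6.4]{Armstrong2021} rather than reproving it. Your claim that $\HT$ is effective is correct and in fact automatic: any open subset of $\Iso(\HT)$ pulls back under the open quotient map to an open subset of $\Iso(\GT)$, hence lands in $\IT$, hence maps into $\HTo$.

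The one genuine gap is the assertion ``effectiveness of $\HT$ yields $\Phi(I) \subseteq I$, and then faithfulness gives $J = \Phi(I) \neq 0$.'' The cutting-down argument you sketch does not deliver $\Phi(a) \in I$ for $a \in I$: choosing $h \in C_0(X)$ supported near a unit with trivial $\HT$-isotropy produces $hah \in I$ with $\lv hah - h\Phi(a)h \rv < \varepsilon$ and $\lv h\Phi(a)h \rv \geq \lv \Phi(a) \rv - \varepsilon$, which shows the quotient map by $I$ cannot be isometric on $C^*(\IT,\sigma)$ when $I \neq 0$ --- i.e.\ $I \cap C^*(\IT,\sigma) \neq 0$ --- but it does not place $\Phi(a)$ itself in the closed ideal $I$. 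A partition-of-unity refinement fails precisely on $\Iso(\GT) \setminus \IT$, where $r(\gamma) = s(\gamma)$ and cutdowns by functions on the unit space cannot separate range from source; and $\Phi$-invariance of arbitrary ideals is known to fail for general effective \'etale groupoids. Fortunately only the weaker detection statement $I \cap C^*(\IT,\sigma) \neq 0$ is used downstream (together with the $\Xi_g$-invariance of $I \cap M_T^\sigma$, which holds because $I$ is an ideal), so replacing your step~2 with that statement --- proved as above or quoted from \cite[Theorem~6.4]{Armstrong2021} --- repairs the argument with no further changes.
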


\begin{proof}[Proof of \cref{thm: simplicity characterisation}\cref{item: main simple implies minimal}]
This follows from \cite[Corollary~4.9]{Renault1991} applied to the groupoid dynamical system $\big(\GT, \GT \times_\sigma \T, C_0(\GTo)\big)$, but it is easy to provide a short direct proof. Since $(X,T)$ is not minimal, there exists $x \in X$ such that $\overline{\Orb{x}}$ is a proper closed invariant set. Let $\HH \coloneqq \GT\restr{\overline{\Orb{x}}} = \{ \gamma \in \GT : s(\gamma) \in \overline{\Orb{x}} \}$, and let $\tau$ be the restriction of $\sigma$ to $\HH^{(2)}$. Then the restriction map $f \mapsto f\restr{\HH}$ is a $*$-homomorphism from $C_c(\GT,\sigma)$ to $C^*(\HH,\tau)$, and so it extends to a homomorphism $R\colon C^*(\GT,\sigma) \to C^*(\HH,\tau)$. Since $\ker(R) \medcap C_0(\GTo) = C_0(X {\setminus} \overline{\Orb{x}})$ is neither $\{0\}$ nor all of $C_0(\GTo)$, we see that $\ker(R)$ is a nonzero proper ideal of $C^*(\GT,\sigma)$.
\end{proof}

In order to prove part~\cref{item: main minimal implies simple} of \cref{thm: simplicity characterisation}, we need several preliminary results. Let $\omega \in Z^2(\PT,\T)$ and $\tilde{\omega} \in Z^2(\PT/Z_\omega,\T)$ be bicharacters chosen as in \cref{prop: DR cohomologous vanishing bicharacter}. Define $B \coloneqq \PT/Z_\omega$, and recall from \cref{thm: twisted isotropy induced algebra} the definition of the isomorphism
\[
\psi_T\colon C^*(\IT,\sigma) \to \XX_T^\omega = \Ind_{\widehat{B}}^{X \times \PTHat}\!\big(C^*(B,\tilde{\omega}), \, \beta^B\big).
\]
Let $\iota\colon C^*(\IT,\sigma) \to C^*(\GT,\sigma)$ be the homomorphism of \cite[Proposition~6.1]{Armstrong2022}, so
\[
\iota(f)(\gamma) = \begin{cases}
f(\gamma) & \ \text{if } \gamma \in \IT \\
0 & \ \text{if } \gamma \notin \IT
\end{cases} \quad \text{ for all } f \in C_c(\IT,\sigma) \text{ and } \gamma \in \GT.
\]
Since $\IT$ is amenable (by \cite[Lemma~3.5]{SW2016} and \cite[Proposition~5.1.1]{ADR2000}), $\iota$ is injective by \cite[Proposition~6.1]{Armstrong2022}. Define $M_T^\sigma \coloneqq \iota\big(C^*(\IT,\sigma)\big) \subseteq C^*(\GT,\sigma)$.

We begin by showing that there is a bounded linear map on $M_T^\sigma$ given by conjugation in $C^*(\GT,\sigma)$ by a fixed element of $C_c(\GT,[0,1])$ that is supported on a bisection.

\begin{lemma} \label{lemma: conjugation map}
Let $(X,T)$ be a minimal rank-$k$ Deaconu--Renault system such that $X$ is second-countable, and fix $\sigma \in Z^2(\GT,\T)$. Let $U$ be an open bisection of $\GT$. Suppose that $g \in C_c(\GT,[0,1])$ satisfies $\supp(g) \subseteq U$. For all $f \in C_c(\IT,\sigma)$, we have $g^*\iota(f)g \in \iota\big(C_c(\IT,\sigma)\big)$. There is a linear contraction $\Xi_g \colon M_T^\sigma \to M_T^\sigma$ given by $\Xi_g(a) \coloneqq g^*ag$.
\end{lemma}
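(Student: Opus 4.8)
The plan is to establish the pointwise (first) statement by a support computation in the convolution algebra $C_c(\GT,\sigma)$, and then to obtain the bounded map $\Xi_g$ by observing that $a \mapsto g^* a g$ is a contraction on all of $C^*(\GT,\sigma)$ that carries the dense subspace $\iota\big(C_c(\IT,\sigma)\big)$ of $M_T^\sigma$ into itself.

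First I would fix $f \in C_c(\IT,\sigma)$ and consider the convolution product $g^*\iota(f)g \in C_c(\GT,\sigma)$. Since open supports of twisted convolutions satisfy $\osupp(a*b) \subseteq \osupp(a)\,\osupp(b)$, and since $\osupp(g) \subseteq U$, $\osupp(g^*) \subseteq U^{-1}$, and $\osupp(\iota(f)) \subseteq \IT$, we get $\osupp(g^*\iota(f)g) \subseteq U^{-1}\,\IT\,U$. I would then show $U^{-1}\,\IT\,U \subseteq \IT$ by a direct calculation: a composable product $u_1^{-1}\,\eta\,u_2$ with $u_1, u_2 \in U$ and $\eta \in \IT$ requires $r(u_1) = r(\eta) = s(\eta) = r(u_2)$, and since $U$ is a bisection this forces $u_1 = u_2 =: u$; writing $\eta = (r(u),p,r(u))$ with $p \in \PT$ (via \cref{prop: IT characterisation}) gives $u^{-1}\eta u = (s(u),p,s(u)) \in \IT$. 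Because $\IT$ is closed (\cref{prop: HT quotient groupoid}), it follows that $\supp(g^*\iota(f)g) = \overline{\osupp(g^*\iota(f)g)} \subseteq \IT$. Setting $h \coloneqq (g^*\iota(f)g)\restr{\IT} \in C_c(\IT)$, the function $g^*\iota(f)g$ vanishes off the clopen set $\IT$ and agrees with $h$ on $\IT$, whence $\iota(h) = g^*\iota(f)g$; this gives $g^*\iota(f)g \in \iota\big(C_c(\IT,\sigma)\big)$.

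For the second statement, the map $a \mapsto g^* a g$ is linear on $C^*(\GT,\sigma)$ with $\lv g^* a g \rv \le \lv g \rv^2 \lv a \rv$, so it suffices to show $\lv g \rv \le 1$. I would compute $g^* g$: it is supported on $U^{-1}U \subseteq \GTo$, and a short calculation shows $(g^* g)(s(u)) = g(u)^2$ for $u \in U$ and $(g^* g)(x) = 0$ for $x \notin s(U)$. Thus $g^* g$ lies in the C*-subalgebra $C_0(\GTo) \subseteq C^*(\GT,\sigma)$, on which the norm is the uniform norm, and takes values in $[0,1]$; hence $\lv g \rv^2 = \lv g^* g \rv = \lv g^* g \rv_\infty \le 1$. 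Therefore $a \mapsto g^* a g$ is a linear contraction of $C^*(\GT,\sigma)$. By the first part it maps the dense subspace $\iota\big(C_c(\IT,\sigma)\big)$ of $M_T^\sigma$ into $M_T^\sigma$, and since $M_T^\sigma$ is closed and the map is continuous, it restricts to a linear contraction $\Xi_g \colon M_T^\sigma \to M_T^\sigma$ with $\Xi_g(a) = g^* a g$.

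The norm computation and the continuity-and-extension argument are routine; the crux is the support inclusion $U^{-1}\,\IT\,U \subseteq \IT$, where the bisection hypothesis on $U$ is exactly what collapses the two factors contributed by $g^*$ and $g$ to a single conjugating element $u$ and produces an isotropy element whose cocycle value lies in $\PT$, keeping the product inside $\IT$ via \cref{prop: IT characterisation}.
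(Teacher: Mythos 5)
Your proposal is correct and follows essentially the same route as the paper's proof: the support inclusion $\supp(g^*\iota(f)g) \subseteq U^{-1}\,\IT\,U \subseteq \IT$, the norm bound $\lv g \rv \le 1$ coming from $g$ being $[0,1]$-valued and supported on a bisection, and extension of $a \mapsto g^*ag$ by continuity. You simply supply details the paper treats as routine (the verification that $U^{-1}\,\IT\,U \subseteq \IT$ via injectivity of $r\restr{U}$ and \cref{prop: IT characterisation}, and the computation of $g^*g$ in $C_0(\GTo)$), and these are all sound.
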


\begin{proof}
Fix $f \in C_c(\IT,\sigma)$. Since $U$ is a bisection containing $\supp(g)$, we have
\[
\supp\!\big(g^*\iota(f)g\big) \subseteq U^{-1} \, \IT \, U \subseteq \IT,
\]
and hence $g^*\iota(f)g \in \iota\big(C_c(\IT,\sigma)\big)$. Since $g$ has range in $[0,1]$ and is supported on a bisection, $\lv g \rv = \lv g \rv_\infty \le 1$, and thus
\[
\lv g^* \iota(f) g \rv \le \lv g^* \rv \, \lv \iota(f) \rv \, \lv g \rv \le \lv \iota(f) \rv.
\]
Therefore, $\iota(f) \mapsto g^* \iota(f) g$ extends to a linear contraction $\Xi_g\colon M_T^\sigma \to M_T^\sigma$.
\end{proof}

In the next lemma we introduce a bounded linear map $\Theta_{U,g}$ on the induced algebra $\XX_{T,\omega}$ that is reminiscent of the spectral action $\theta$ associated to the pair $(T,\sigma)$. This map $\Theta_{U,g}$ is defined in terms of a fixed element $g$ of $C_c(\GT,[0,1])$ that is supported on an open bisection $U$ of $\GT$, and as we show in \cref{prop: properties of Theta}\cref{item: Theta is conjugation}, it simply amounts to conjugation of elements of $M_T^\sigma \cong \XX_{T,\omega}$ by $g$.

\begin{lemma} \label{lemma: Theta is a BLM}
Let $(X,T)$ be a minimal rank-$k$ Deaconu--Renault system such that $X$ is second-countable, and fix $\sigma \in Z^2(\GT,\T)$. Let $U$ be an open bisection of $\GT$. Suppose that $g \in C_c(\GT,\sigma)$ satisfies $\supp(g) \subseteq U$ and that $g(U) \subseteq [0,1]$. For each $x \in s(U)$, let $\alpha_{U,x}$ denote the unique element of $U$ with source $x$. Let $\gamma \mapsto \twistchar_\gamma^\sigma$ be the continuous $\PTHat$-valued $1$-cocycle of \cref{lemma: twistchar_gamma^sigma}\cref{item: twistchar_.^sigma 1-cocycle}. For $f \in \XX_T^\omega$, define $\Theta_{U,g}(f)\colon X \times \PTHat \to C^*(B,\tilde{\omega})$ by
\[
\Theta_{U,g}(f)(x,\phi) \coloneqq
\begin{cases}
\lav g(\alpha_{U,x}) \rav^2 \, f\big(r(\alpha_{U,x}),\,\twistchar_{\alpha_{U,x}}^\sigma\,\phi\big) &\quad \text{if } x \in s(U) \\
0 &\quad \text{if } x \notin s(U).
\end{cases}
\]
Then $\Theta_{U,g}(f) \in \XX_T^\omega$, and $\Theta_{U,g}\colon \XX_T^\omega \to \XX_T^\omega$ is a bounded linear map.
\end{lemma}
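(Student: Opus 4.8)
The plan is to check directly that $\Theta_{U,g}(f)$ lies in $\XX_T^\omega$ and that $\Theta_{U,g}$ is a bounded linear map. Linearity is immediate, since for each fixed $(x,\phi)$ the assignment $f \mapsto \Theta_{U,g}(f)(x,\phi)$ is linear. Membership in $\XX_T^\omega$ amounts to three things: continuity of $\Theta_{U,g}(f)$, vanishing at infinity, and the $\widehat{B}$-equivariance condition from \cref{thm: twisted isotropy induced algebra}.

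First I would verify the equivariance. Fix $\chi \in \widehat{B}$ and $(x,\phi)$ with $x \in s(U)$. Because $\widehat{B}$ acts on $\PTHat$ by pointwise multiplication of characters, multiplication by the fixed character $\twistchar_{\alpha_{U,x}}^\sigma \in \PTHat$ commutes with this action: $\twistchar_{\alpha_{U,x}}^\sigma(\phi \cdot \chi) = (\twistchar_{\alpha_{U,x}}^\sigma \phi)\cdot\chi$. Substituting this into the defining formula, applying the equivariance of $f$ with the character $\twistchar_{\alpha_{U,x}}^\sigma\phi$ in place of $\phi$, and using that $(\beta_\chi^B)^{-1}$ is linear (so it absorbs the scalar $\lav g(\alpha_{U,x})\rav^2$), we obtain $\Theta_{U,g}(f)(x,\phi\cdot\chi) = (\beta_\chi^B)^{-1}\big(\Theta_{U,g}(f)(x,\phi)\big)$. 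Since membership in $s(U)$ is independent of $\phi$, the remaining case $x \notin s(U)$ is trivial as both sides vanish.

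Next, continuity. On the open set $s(U)\times\PTHat$, the map $x \mapsto \alpha_{U,x} = (s\restr{U})^{-1}(x)$ is continuous because $U$ is an open bisection, and composing it with $g$, with $r$, with the continuous cocycle $\gamma\mapsto\twistchar_\gamma^\sigma$ of \cref{lemma: twistchar_gamma^sigma}\cref{item: twistchar_.^sigma 1-cocycle}, with multiplication in $\PTHat$, and with $f$ shows that $\Theta_{U,g}(f)$ is continuous there. The one delicate point is continuity across the boundary of $s(U)$, and this is where the support hypothesis on $g$ is essential: writing $K \coloneqq \supp(g)$, a compact subset of $U$, the image $s(K)$ is a compact (hence closed) subset of the open set $s(U)$, and for every $x \notin s(K)$ we have $g(\alpha_{U,x}) = 0$ (either because $x \notin s(U)$, or because $\alpha_{U,x}\notin\supp(g)$). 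Hence $\Theta_{U,g}(f)$ vanishes identically on the open set $(X\setminus s(K))\times\PTHat$. As $s(U)\times\PTHat$ and $(X\setminus s(K))\times\PTHat$ are open and cover $X\times\PTHat$ (because $s(K)\subseteq s(U)$), and $\Theta_{U,g}(f)$ is continuous on each, continuity on all of $X\times\PTHat$ follows.

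Finally, I would settle the $C_0$-condition and the norm bound. Since $\PT$ is a finitely generated subgroup of $\Z^k$ by \cref{prop: PT characterisation}, its dual $\PTHat$ is compact, so the inclusion $\supp\big(\Theta_{U,g}(f)\big)\subseteq s(K)\times\PTHat$ shows that $\Theta_{U,g}(f)$ is compactly supported and therefore lies in $C_0\big(X\times\PTHat,C^*(B,\tilde{\omega})\big)$; together with the equivariance this gives $\Theta_{U,g}(f)\in\XX_T^\omega$. For boundedness, the norm on $\XX_T^\omega$ is the supremum norm, and the estimates $\lav g(\alpha_{U,x})\rav^2 \le 1$ (as $g$ takes values in $[0,1]$ on $U$ and vanishes off $U$) and $\lv f(r(\alpha_{U,x}),\twistchar_{\alpha_{U,x}}^\sigma\phi)\rv \le \lv f \rv$ give $\lv \Theta_{U,g}(f)\rv \le \lv f \rv$, so $\Theta_{U,g}$ is in fact a linear contraction. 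I expect the boundary-continuity argument to be the only genuinely delicate step; the equivariance is a short algebraic identity, and the $C_0$-property and norm bound follow once compactness of $\PTHat$ is in hand.
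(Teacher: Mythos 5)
Your proposal is correct and follows essentially the same route as the paper's proof: verify the $\widehat{B}$-equivariance by commuting multiplication by the fixed character $\twistchar_{\alpha_{U,x}}^\sigma$ past the $\widehat{B}$-action and using linearity of $(\beta_\chi^B)^{-1}$, observe that the support lies in $s(\supp(g))\times\PTHat$ (compact since $\PTHat$ is compact), and bound the norm using $g(U)\subseteq[0,1]$. Your patching argument for continuity across the boundary of $s(U)$ is in fact more careful than the paper's one-line assertion that continuity follows from continuity of $x\mapsto\alpha_{U,x}$, and it correctly identifies why the support hypothesis on $g$ is needed there.
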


\begin{proof}
Fix $f \in \XX_T^\omega$. Then
\begin{equation} \label{eqn: induced algebra equation}
f(x, \phi \cdot \chi) = \big(\beta_\chi^B\big)^{-1}\big(f(x,\phi)\big) \ \text{ for all } (x,\phi) \in X \times \PTHat \text{ and } \chi \in \widehat{B}.
\end{equation}
We first show that $\Theta_{U,g}(f) \in C_0\big(X \times \PTHat, \, C^*(B,\tilde{\omega})\big)$. The map $\Theta_{U,g}(f)$ is continuous because $x \mapsto \alpha_{U, x}$ is continuous. We have $\supp(\Theta_{U,g}(f)) \subseteq s(\supp(g)) \times \PTHat$, and so $\Theta_{U,g}(f)$ has compact support. Hence $\Theta_{U,g}(f) \in C_0\big(X \times \PTHat, \, C^*(B,\tilde{\omega})\big)$. We must show that $\Theta_{U,g}(f)$ satisfies \cref{eqn: induced algebra equation}. Fix $(x,\phi) \in X \times \PTHat$ and $\chi \in \widehat{B}$. If $x \notin s(U)$, then
\[
\Theta_{U,g}(f)(x, \phi \cdot \chi) = 0 = (\beta_\chi^B)^{-1}(0) = \big(\beta_\chi^B\big)^{-1}\big(\Theta_{U,g}(f)(x,\phi)\big).
\]
Suppose that $x \in s(U)$. Since $f \in \XX_T^\omega$, \cref{eqn: induced algebra equation} implies that
\begin{align*}
\Theta_{U,g}(f)(x, \phi \cdot \chi) &= \lav g(\alpha_{U,x}) \rav^2 \, f\big(r(\alpha_{U,x}),\,(\twistchar_{\alpha_{U,x}}^\sigma\,\phi) \cdot \chi\big) \\
&= \lav g(\alpha_{U,x}) \rav^2 \, \big(\beta_\chi^B\big)^{-1}\big(f\big(r(\alpha_{U,x}),\,\twistchar_{\alpha_{U,x}}^\sigma\,\phi\big)\big)
= \big(\beta_\chi^B\big)^{-1}\big(\Theta_{U,g}(f)(x,\phi)\big).
\end{align*}
Therefore, $\Theta_{U,g}(f) \in \XX_T^\omega$. Since the range of $g$ is contained in $[0,1]$, routine calculations show that $\Theta_{U,g}\colon \XX_T^\omega \to \XX_T^\omega$ is a bounded linear map.
\end{proof}

In the next lemma we show that the set of functions of the form $\iota(h \cdot 1_p)$ (as defined in \cref{lemma: h cdot 1_p span}) is invariant under conjugation in $C^*(\GT,\sigma)$ by a fixed element of $C_c(\GT,[0,1])$ that is supported on a bisection.

\begin{lemma} \label{lemma: conjugating h.1_p}
Let $(X,T)$ be a minimal rank-$k$ Deaconu--Renault system such that $X$ is second-countable, and fix $\sigma \in Z^2(\GT,\T)$. Let $U$ be an open bisection of $\GT$. Suppose that $g \in C_c(\GT,\sigma)$ satisfies $\supp(g) \subseteq U$ and that $g(U) \subseteq [0,1]$. For each $x \in s(U)$, let $\alpha_{U,x}$ denote the unique element of $U$ with source $x$. Let $\Xi_g\colon M_T^\sigma \to M_T^\sigma$ and $\gamma \mapsto \twistchar_\gamma^\sigma$ be as in \cref{lemma: conjugation map,lemma: twistchar_gamma^sigma}\cref{item: twistchar_.^sigma 1-cocycle}. For each $x \in s(U)$, let $\alpha_{U,x}$ denote the unique element of $U$ with source $x$. Fix $h \in C_c(X)$ and $p \in \PT$, and define $H_{g,p}\colon X \to \C$ by
\[
H_{g,p}(x) \coloneqq
\begin{cases}
\lav g(\alpha_{U,x}) \rav^2 \, \overline{\twistchar_{\alpha_{U,x}}^\sigma(p)} \, h(r(\alpha_{U,x})) &\quad \text{if } x \in s(U) \\
0 &\quad \text{if } x \notin s(U).
\end{cases}
\]
Then $H_{g,p} \in C_c(X)$, and we have $\,\Xi_g(\iota(h \cdot 1_p)) = \iota(H_{g,p} \cdot 1_p)$.
\end{lemma}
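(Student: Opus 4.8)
The plan is to establish the two assertions separately: that $H_{g,p}$ lies in $C_c(X)$, and that the conjugated element $\Xi_g(\iota(h\cdot 1_p))=g^*\iota(h\cdot 1_p)g$ coincides, as a function on $\GT$, with $\iota(H_{g,p}\cdot 1_p)$. The first is a continuity-and-support check, while the second is a direct twisted-convolution computation whose only delicate point is the bookkeeping of the cocycle factors.

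For the membership $H_{g,p}\in C_c(X)$, I would use that $U$ is a bisection, so $s\restr{U}$ is a homeomorphism onto the open set $s(U)$ and $x\mapsto\alpha_{U,x}=(s\restr{U})^{-1}(x)$ is continuous on $s(U)$. On $s(U)$ the function $H_{g,p}$ is then a product of the continuous maps $x\mapsto\lav g(\alpha_{U,x})\rav^2$, $x\mapsto\overline{\twistchar_{\alpha_{U,x}}^\sigma(p)}$ (continuous by \cref{lemma: twistchar_gamma^sigma}\cref{item: twistchar_.^sigma cts}), and $x\mapsto h(r(\alpha_{U,x}))$. Since $\supp(g)$ is a compact subset of $U$, the set $s(\supp(g))$ is a compact subset of $s(U)$ off which $H_{g,p}$ vanishes, so extending by zero yields $H_{g,p}\in C_c(X)$ with $\supp(H_{g,p})\subseteq s(\supp(g))$.

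For the main claim, write $f\coloneqq\iota(h\cdot 1_p)$, whose open support lies in $\{(w,p,w):w\in X\}\medcap\IT$, and evaluate $(g^* * f * g)(\gamma)$ at $\gamma=(x,m,y)$ using associativity of the twisted convolution. Because $f$ is supported on isotropy with $c$-value $p$, the inner sum for $f*g$ collapses to the single term $\zeta=(r(\eta),p,r(\eta))$; and because $g$ is supported on the bisection $U$, the outer sum for $g^* * (f*g)$ collapses to the single $\xi$ with $\xi^{-1}=\alpha_{U,x}$ (which also forces $x\in s(U)$). Tracking ranges and sources through these two terms and invoking injectivity of $r\restr{U}$ shows that a nonzero value forces $x=y$ (so $\gamma\in\Iso(\GT)$) and then $m=p$. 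Hence $g^*fg$ is supported on $\{(x,p,x)\}\medcap\IT$, exactly as $\iota(H_{g,p}\cdot 1_p)$ is, and writing $\alpha\coloneqq\alpha_{U,x}$ and $\pi_p\coloneqq(r(\alpha),p,r(\alpha))$ the surviving value is
\[
(g^* * f * g)(x,p,x)=\lav g(\alpha)\rav^2\,h(r(\alpha))\,\kappa,\qquad \kappa=\sigma(\alpha^{-1},\pi_p\alpha)\,\overline{\sigma(\alpha^{-1},\alpha)}\,\sigma(\pi_p,\alpha),
\]
where $\pi_p\alpha=\alpha\,(x,p,x)$.

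The remaining and most delicate step is to identify the cocycle factor $\kappa$ with $\overline{\twistchar_\alpha^\sigma(p)}$; this is where I expect the main obstacle to lie, since it is pure cocycle bookkeeping. I would apply the $2$-cocycle identity to the triple $(\alpha^{-1},\pi_p,\alpha)$ to see that $\kappa$ equals the $\T$-coordinate of the conjugate $(\alpha^{-1},1)\,(\pi_p,1)\,(\alpha^{-1},1)^{-1}$ in $\GT\times_\sigma\T$, which by the conjugation formula \cref{eqn: chi conjugation} of \cref{lemma: twistchar_gamma^sigma}\cref{item: twistchar_.^sigma in conjugation} applied to $\alpha^{-1}$ (whose source is $r(\alpha)$ and range is $x$) is exactly $\twistchar_{\alpha^{-1}}^\sigma(p)$. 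Next I would note that for each fixed $p$ the map $\gamma\mapsto\twistchar_\gamma^\sigma(p)$ is multiplicative — this follows directly from \cref{eqn: chi conjugation} and does not require $\sigma$ to be $\omega$-constant on $\IT$ — and that $\twistchar_u^\sigma(p)=1$ for every unit $u$ by normalisation of $\sigma$; applying this to $\alpha^{-1}\alpha=s(\alpha)$ gives $\twistchar_{\alpha^{-1}}^\sigma(p)=\overline{\twistchar_\alpha^\sigma(p)}$, whence $\kappa=\overline{\twistchar_\alpha^\sigma(p)}$. Thus $(g^* * f * g)(x,p,x)=H_{g,p}(x)=(H_{g,p}\cdot 1_p)(x,p,x)$, and since both $g^*fg$ and $\iota(H_{g,p}\cdot 1_p)$ are supported on $\{(x,p,x)\}\medcap\IT$ they agree everywhere. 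Finally, $\Xi_g(\iota(h\cdot 1_p))=g^*\iota(h\cdot 1_p)g=\iota(H_{g,p}\cdot 1_p)$ by \cref{lemma: conjugation map}.
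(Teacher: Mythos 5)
Your proposal is correct and follows essentially the same route as the paper: both arguments verify $H_{g,p}\in C_c(X)$ by continuity of $x\mapsto\alpha_{U,x}$ and of $\gamma\mapsto\twistchar_\gamma^\sigma(p)$, then compute the twisted convolution $g^*\iota(h\cdot 1_p)g$ directly, using the bisection support of $g$ to collapse the sums to a single term and identifying the resulting cocycle factor with $\twistchar_{\alpha_{U,x}^{-1}}^\sigma(p)=\overline{\twistchar_{\alpha_{U,x}}^\sigma(p)}$. Your explicit bookkeeping of $\kappa$ via the $2$-cocycle identity and the conjugation formula, and your observation that multiplicativity of $\gamma\mapsto\twistchar_\gamma^\sigma(p)$ needs no $\omega$-constancy, simply fill in steps the paper leaves implicit.
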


\begin{proof}
Since $x \mapsto \alpha_{U,x}$ is continuous on $s(U)$, and since $\gamma \mapsto \twistchar_\gamma^\sigma(p)$ is continuous by \cref{lemma: twistchar_gamma^sigma}\cref{item: twistchar_.^sigma cts}, the map $H_{g,p}$ is continuous. Since $\supp(H_{g,p}) \subseteq s(\supp(g))$, we have $H_{g,p} \in C_c(X)$.

By \cref{lemma: conjugation map}, we have $\Xi_g(\iota(h \cdot 1_p)) \in \iota\big(C_c(\IT,\sigma)\big)$. Thus, for all $\gamma \in \GT {\setminus} \IT$, we have
\[
\Xi_g(\iota(h \cdot 1_p))(\gamma) = 0 = \iota(H_{g,p} \cdot 1_p)(\gamma).
\]
Suppose that $\gamma \in \IT$. Then by \cref{prop: IT characterisation}, there exist $x \in X$ and $m \in \PT$ such that $\gamma = (x,m,x)$. We have
\[
\supp\!\big(\Xi_g(\iota(h \cdot 1_p))\big) \subseteq \supp(g^*) \, \supp(\iota(h \cdot 1_p)) \, \supp(g) \subseteq U^{-1} \, \IT \, U.
\]
Thus, if $x \notin s(U)$, then $\gamma \notin \supp\!\big(\Xi_g(\iota(h \cdot 1_p))\big)$ and $H_{g,p}(x) = 0$, and hence
\[
\Xi_g(\iota(h \cdot 1_p))(x,m,x) = 0 = \iota(H_{g,p} \cdot 1_p)(x,m,x).
\]
Suppose that $x \in s(U)$. Since $g$ is supported on the bisection $U$,
\begin{align*}
\Xi_g(\iota(h \cdot 1_p))(x,m,x) \,&=\, \twistchar_{(\alpha_{U,x})^{-1}}^\sigma(m) \ g^*(\alpha_{U,x}) \ \iota(h \cdot 1_p)\big(r(\alpha_{U,x}),m,r(\alpha_{U,x})\big) \ g(\alpha_{U,x}) \\
&=\, \lav g(\alpha_{U,x}) \rav^2 \, \overline{\twistchar_{\alpha_{U,x}}^\sigma(m)} \, \delta_{p,m} \, h(r(\alpha_{U,x})) \\
&=\, \delta_{p,m} \, H_{g,p}(x) \\
&=\, \iota(H_{g,p} \cdot 1_p)(x,m,x).
\end{align*}
Therefore, $\,\Xi_g(\iota(h \cdot 1_p)) = \iota(H_{g,p} \cdot 1_p)$.
\end{proof}

In the following proposition we describe exactly how the map $\Theta_{U,g}$ defined in \cref{lemma: Theta is a BLM}. relates to the conjugation map $\Xi_g$ defined in \cref{lemma: conjugation map}. We also show that ideals of $\XX_{T,\omega}$ induced by ideals of $C^*(\GT,\sigma)$ are invariant under $\Theta_{U,g}$, which is a key result used in the proof of \cref{thm: simplicity characterisation}\cref{item: main minimal implies simple}.

\begin{prop} \label{prop: properties of Theta}
Let $(X,T)$ be a minimal rank-$k$ Deaconu--Renault system such that $X$ is second-countable, and fix $\sigma \in Z^2(\GT,\T)$. Let $U$ be an open bisection of $\GT$. Suppose that $g \in C_c(\GT,[0,1])$ satisfies $\supp(g) \subseteq U$. Recall the definitions of the bounded linear maps $\Xi_g\colon M_T^\sigma \to M_T^\sigma$ from \cref{lemma: conjugation map} and $\Theta_{U,g}\colon \XX_T^\omega \to \XX_T^\omega$ from \cref{lemma: Theta is a BLM}.
\begin{enumerate}[label=(\alph*)]
\item \label{item: Theta is conjugation} For all $a \in C^*(\IT,\sigma)$, we have
\[
\Theta_{U,g}(\psi_T(a)) = \psi_T\big(\iota^{-1}\big(\Xi_g(\iota(a))\big)\big).
\]
\item \label{item: induced algebra ideals are invariant under Theta} Suppose that $I$ is an ideal of $C^*(\GT,\sigma)$, and that $J$ is an ideal of $C^*(\IT,\sigma)$ such that $\iota(J) = I \medcap M_T^\sigma$. Then the ideal $\psi_T(J)$ is invariant under $\Theta_{U,g}$.
\end{enumerate}
\end{prop}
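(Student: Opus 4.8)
The plan is to establish part~\cref{item: Theta is conjugation} by reducing to the spanning elements $h \cdot 1_p$ and then to deduce part~\cref{item: induced algebra ideals are invariant under Theta} as a short consequence of part~\cref{item: Theta is conjugation} together with the ideal property of $I$. For part~\cref{item: Theta is conjugation}, note that both $\Theta_{U,g} \circ \psi_T$ and $\psi_T \circ \iota^{-1} \circ \Xi_g \circ \iota$ are bounded linear maps on $C^*(\IT,\sigma)$: the first by \cref{lemma: Theta is a BLM} and the fact that $\psi_T$ is an isomorphism, and the second because $\Xi_g$ is a linear contraction on $M_T^\sigma$ by \cref{lemma: conjugation map} and $\iota$ is an isomorphism onto $M_T^\sigma$. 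By \cref{lemma: h cdot 1_p span}, the elements $h \cdot 1_p$ with $h \in C_c(X)$ and $p \in \PT$ span a dense subspace of $C^*(\IT,\sigma)$, so it suffices to verify the identity on $a = h \cdot 1_p$.

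The computation then proceeds on both sides. On the left, applying $\psi_T$ via \cref{thm: twisted isotropy induced algebra} gives $\psi_T(h \cdot 1_p)(x,\phi) = h(x)\,\overline{\phi(p)}\,U_{p+Z_\omega}$, and then applying $\Theta_{U,g}$ via \cref{lemma: Theta is a BLM} yields, for $x \in s(U)$,
\[
\Theta_{U,g}\big(\psi_T(h \cdot 1_p)\big)(x,\phi) = \lav g(\alpha_{U,x}) \rav^2 \, h\big(r(\alpha_{U,x})\big) \, \overline{\big(\twistchar_{\alpha_{U,x}}^\sigma \phi\big)(p)} \, U_{p+Z_\omega}.
\]
The crucial point is that $\twistchar_{\alpha_{U,x}}^\sigma$ and $\phi$ both lie in $\PTHat$, so $\big(\twistchar_{\alpha_{U,x}}^\sigma \phi\big)(p) = \twistchar_{\alpha_{U,x}}^\sigma(p)\,\phi(p)$; separating the conjugate and recognising the factor $\lav g(\alpha_{U,x}) \rav^2 \, \overline{\twistchar_{\alpha_{U,x}}^\sigma(p)} \, h(r(\alpha_{U,x}))$ as $H_{g,p}(x)$ from \cref{lemma: conjugating h.1_p} shows the left side equals $H_{g,p}(x)\,\overline{\phi(p)}\,U_{p+Z_\omega}$. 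On the right, \cref{lemma: conjugating h.1_p} gives $\Xi_g(\iota(h \cdot 1_p)) = \iota(H_{g,p} \cdot 1_p)$, hence $\iota^{-1}(\Xi_g(\iota(h \cdot 1_p))) = H_{g,p} \cdot 1_p$, and applying $\psi_T$ via \cref{thm: twisted isotropy induced algebra} again produces $H_{g,p}(x)\,\overline{\phi(p)}\,U_{p+Z_\omega}$. For $x \notin s(U)$ both sides vanish, so the two maps agree on the spanning set and therefore coincide everywhere by continuity.

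For part~\cref{item: induced algebra ideals are invariant under Theta}, fix $a \in J$. By part~\cref{item: Theta is conjugation} we have $\Theta_{U,g}(\psi_T(a)) = \psi_T\big(\iota^{-1}(\Xi_g(\iota(a)))\big)$, so it suffices to show $\Xi_g(\iota(a)) \in \iota(J)$. Since $\iota(a) \in \iota(J) = I \medcap M_T^\sigma \subseteq I$ and $\Xi_g(\iota(a)) = g^* \iota(a) g$ is a product in $C^*(\GT,\sigma)$ with the middle factor in the ideal $I$, we get $\Xi_g(\iota(a)) \in I$. By \cref{lemma: conjugation map} we also have $\Xi_g(\iota(a)) \in M_T^\sigma$, so $\Xi_g(\iota(a)) \in I \medcap M_T^\sigma = \iota(J)$. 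Hence $\iota^{-1}(\Xi_g(\iota(a))) \in J$, which gives $\Theta_{U,g}(\psi_T(a)) \in \psi_T(J)$, as required.

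The only genuine content lies in the character bookkeeping of part~\cref{item: Theta is conjugation}: correctly expanding $\big(\twistchar_{\alpha_{U,x}}^\sigma \phi\big)(p)$ and matching the $\lav g(\alpha_{U,x}) \rav^2$ and $\overline{\twistchar_{\alpha_{U,x}}^\sigma(p)}$ factors against the definition of $H_{g,p}$ in \cref{lemma: conjugating h.1_p}. Once part~\cref{item: Theta is conjugation} is in hand, part~\cref{item: induced algebra ideals are invariant under Theta} is purely formal, relying only on the two-sidedness of $I$ and the fact that $\Xi_g$ preserves $M_T^\sigma$.
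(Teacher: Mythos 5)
Your proposal is correct and follows essentially the same route as the paper: reduce part~(a) to the spanning elements $h \cdot 1_p$ via \cref{lemma: h cdot 1_p span} and boundedness, match both sides against $H_{g,p}$ from \cref{lemma: conjugating h.1_p}, and then deduce part~(b) formally from the two-sidedness of $I$ and the fact that $\Xi_g$ maps into $M_T^\sigma$. No gaps.
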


\begin{proof}
For part~\cref{item: Theta is conjugation}, fix $h \in C_c(X)$ and $p \in \PT$. Since all the maps involved are bounded and linear, \cref{lemma: h cdot 1_p span} implies that it suffices to show that
\[
\Theta_{U,g}(\psi_T(h \cdot 1_p)) = \psi_T\big(\iota^{-1}\big(\Xi_g(\iota(h \cdot 1_p))\big)\big).
\]
Recall from \cref{lemma: conjugating h.1_p} that there is a function $H_{g,p} \in C_c(X)$ given by
\[
H_{g,p}(x) =
\begin{cases}
\lav g(\alpha_{U,x}) \rav^2 \, \overline{\twistchar_{\alpha_{U,x}}^\sigma(p)} \, h(r(\alpha_{U,x})) &\quad \text{if } x \in s(U) \\
0 &\quad \text{if } x \notin s(U),
\end{cases}
\]
which satisfies $\,\Xi_g(\iota(h \cdot 1_p)) = \iota(H_{g,p} \cdot 1_p)$. Thus, for all $(x,\phi) \in X \times \PT$, we have
\begin{align*}
\Theta_{U,g}(\psi_T(h \cdot 1_p))(x,\phi) \,&=\, \begin{cases}
\lav g(\alpha_{U,x}) \rav^2 \, \psi_T(h \cdot 1_p)\big(r(\alpha_{U,x}),\,\twistchar_{\alpha_{U,x}}^\sigma\,\phi\big) &\quad \text{if } x \in s(U) \\
0 &\quad \text{if } x \notin s(U)
\end{cases} \\
&= \, \begin{cases}
\lav g(\alpha_{U,x}) \rav^2 \, h(r(\alpha_{U,x})) \, \overline{\twistchar_{\alpha_{U,x}}^\sigma(p)} \, \overline{\phi(p)} \, U_{p+Z_\omega} &\quad \text{if } x \in s(U) \\
0 &\quad \text{if } x \notin s(U)
\end{cases} \\
&=\, H_{g,p}(x) \, \overline{\phi(p)} \, U_{p+Z_\omega} \\
&=\, \psi_T(H_{g,p} \cdot 1_p)(x,\phi) \\
&=\, \psi_T\big(\iota^{-1}\big(\Xi_g(\iota(h \cdot 1_p))\big)\big)(x,\phi).
\end{align*}

For part~\cref{item: induced algebra ideals are invariant under Theta}, fix $a \in J$. Then $\iota(a) \in I \medcap M_T^\sigma$. Since $I$ is an ideal of $C^*(\GT,\sigma)$ and the range of $\Xi_g$ is contained in $M_T^\sigma$, we have $\Xi_g(\iota(a)) = g^* \iota(a) g \in I \medcap M_T^\sigma = \iota(J)$, and so $\iota^{-1}\big(\Xi_g(\iota(a))\big) \in J$. Hence part~\cref{item: Theta is conjugation} implies that
\[
\Theta_{U,g}(\psi_T(a)) = \psi_T\big(\iota^{-1}\big(\Xi_g(\iota(a))\big)\big) \in \psi_T(J),
\]
and thus $\Theta_{U,g}(\psi_T(J)) \subseteq \psi_T(J)$.
\end{proof}

We now use \cref{prop: properties of Theta}\cref{item: induced algebra ideals are invariant under Theta} to show that the closed subsets of $X \times \PTHat$ characterising the ideals of the induced algebra $\XX_{T,\omega}$ are invariant under the spectral action $\theta$ associated to the pair $(T,\sigma)$.

\begin{prop} \label{prop: Q(K_J) is closed and invariant under theta}
Let $(X,T)$ be a minimal rank-$k$ Deaconu--Renault system such that $X$ is second-countable, and fix $\sigma \in Z^2(\GT,\T)$. Suppose that $I$ is an ideal of $C^*(\GT,\sigma)$, and that $J$ is an ideal of $C^*(\IT,\sigma)$ such that $\iota(J) = I \medcap M_T^\sigma$. Define
\[
K_J \coloneqq \{ (x,\phi) \in X \times \PTHat \,:\, f(x,\phi) = 0 \text{ for all } f \in \psi_T(J) \}.
\]
Let $Q\colon X \times \PTHat \to X \times \widehat{Z}_\omega$ be the quotient map $(x,\phi) \mapsto (x,\phi\restr{Z_\omega})$. Then $Q^{-1}(Q(K_J)) = K_J$, and $Q(K_J)$ is closed and invariant under the spectral action $\theta$ of \cref{prop: spectral action}.
\end{prop}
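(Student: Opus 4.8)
The plan is to establish the three assertions in turn, using throughout that $\psi_T(J)$ is an ideal of $\XX_T^\omega$ (as $\psi_T$ is an isomorphism), so that by \cref{prop: ideals of twisted isotropy induced algebra} the set $K_J$ is closed and $\psi_T(J) = \{ f \in \XX_T^\omega : f\restr{K_J} \equiv 0 \}$.

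First I would show that $K_J$ is invariant under the right action of $\widehat{B}$ on $X \times \PTHat$. Indeed, if $(x,\phi) \in K_J$ and $\chi \in \widehat{B}$, then for every $f \in \psi_T(J)$ the defining relation of the induced algebra gives $f(x, \phi \cdot \chi) = \big(\beta_\chi^B\big)^{-1}\big(f(x,\phi)\big) = \big(\beta_\chi^B\big)^{-1}(0) = 0$, so $(x, \phi\cdot\chi) \in K_J$. Recalling that $Q(x,\phi) = Q(x',\phi')$ exactly when $x = x'$ and $\phi,\phi'$ lie in the same $\widehat{B}$-orbit (via the isomorphism $\PTHat/\widehat{B} \cong \widehat{Z}_\omega$), the inclusion $K_J \subseteq Q^{-1}(Q(K_J))$ is automatic, while the reverse inclusion follows from $\widehat{B}$-invariance: any $(x,\phi) \in Q^{-1}(Q(K_J))$ equals $(x', \phi' \cdot \chi)$ for some $(x',\phi') \in K_J$ and $\chi \in \widehat{B}$, hence lies in $K_J$. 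This gives $Q^{-1}(Q(K_J)) = K_J$. Since $K_J$ is closed and $Q$ is a quotient map, the definition of the quotient topology then yields immediately that $Q(K_J)$ is closed.

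The substantive step is $\theta$-invariance of $Q(K_J)$, and the key is the compatibility between $\theta$ and the map $(x,\phi) \mapsto (r(\gamma), \twistchar_\gamma^\sigma\phi)$: since $\tilde{\twistchar}_{[\gamma]}^\sigma = \twistchar_\gamma^\sigma\restr{Z_\omega}$ by \cref{prop: spectral action}, for any $\phi \in \PTHat$ with $\phi\restr{Z_\omega} = \chi$ one has $Q\big(r(\gamma),\, \twistchar_\gamma^\sigma\phi\big) = \big(r(\gamma),\, \tilde{\twistchar}_{[\gamma]}^\sigma\chi\big) = \theta_{[\gamma]}\big(s(\gamma),\chi\big)$. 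So, given $(x,\chi) \in Q(K_J)$ and $[\gamma] \in \HT$ represented by $\gamma$ with $s(\gamma) = x$, it suffices to show $(r(\gamma), \twistchar_\gamma^\sigma\phi) \in K_J$ for some $\phi$ with $\phi\restr{Z_\omega} = \chi$; by the identity $Q^{-1}(Q(K_J)) = K_J$ already proved, any such $\phi$ also satisfies $(x,\phi) \in K_J$. To obtain $(r(\gamma),\twistchar_\gamma^\sigma\phi) \in K_J$, I would fix $f \in \psi_T(J)$, choose a precompact open bisection $U \ni \gamma$, and use Urysohn's lemma to pick $g \in C_c(\GT,[0,1])$ with $\supp(g) \subseteq U$ and $g(\gamma) = 1$. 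Then $\alpha_{U,x} = \gamma$, and \cref{prop: properties of Theta}\cref{item: induced algebra ideals are invariant under Theta} gives $\Theta_{U,g}(f) \in \psi_T(J)$, so $\Theta_{U,g}(f)$ vanishes on $K_J$. Evaluating at $(x,\phi) \in K_J$ and using the formula for $\Theta_{U,g}$ from \cref{lemma: Theta is a BLM} yields
\[
0 = \Theta_{U,g}(f)(x,\phi) = \lav g(\gamma) \rav^2 \, f\big(r(\gamma),\, \twistchar_\gamma^\sigma\phi\big) = f\big(r(\gamma),\, \twistchar_\gamma^\sigma\phi\big).
\]
As $f \in \psi_T(J)$ was arbitrary, $(r(\gamma), \twistchar_\gamma^\sigma\phi) \in K_J$, and applying $Q$ gives $\theta_{[\gamma]}(x,\chi) \in Q(K_J)$; quantifying over all composable $[\gamma]$ (including inverses) then gives full orbit-invariance.

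I expect the main obstacle to be the bookkeeping across three distinct levels---the space $X \times \PTHat$ carrying the $\widehat{B}$-action, the quotient $X \times \widehat{Z}_\omega$ carrying the $\theta$-action, and the conjugation operators $\Xi_g$ and $\Theta_{U,g}$ acting on the algebra---and in particular the clean verification that conjugation by $g$ transports the vanishing condition exactly along the orbit map $(x,\phi) \mapsto (r(\gamma),\twistchar_\gamma^\sigma\phi)$. Once the identifications $\alpha_{U,x} = \gamma$ and $Q \circ (\text{orbit map}) = \theta$ are in place, the invariance falls out of the $\Theta_{U,g}$-invariance established in \cref{prop: properties of Theta}.
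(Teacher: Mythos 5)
Your proposal is correct and follows essentially the same route as the paper: the identity $Q^{-1}(Q(K_J)) = K_J$ via the $\widehat{B}$-invariance of $K_J$ coming from the induced-algebra relation, closedness from the quotient-map characterisation, and $\theta$-invariance by lifting along $Q \circ (x,\phi) \mapsto (r(\gamma),\twistchar_\gamma^\sigma\phi) = \theta_{[\gamma]} \circ Q$ and applying the $\Theta_{U,g}$-invariance of $\psi_T(J)$ from \cref{prop: properties of Theta} together with a Urysohn function $g$ with $g(\gamma)=1$. No gaps.
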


\begin{proof}
We trivially have $K_J \subseteq Q^{-1}\!\left(Q(K_J)\right)$. We must show that $Q^{-1}\!\left(Q(K_J)\right) \subseteq K_J$. Fix $(x,\phi) \in Q^{-1}\!\left(Q(K_J)\right)$. Then $(x,\phi\restr{Z_\omega}) = Q(x,\phi) \in Q(K_J)$, and so there exists $(y,\rho) \in K_J$ such that $(x,\phi\restr{Z_\omega}) = Q(y,\rho) = (y,\rho\restr{Z_\omega})$. We have $x = y$ and $\phi\restr{Z_\omega} = \rho\restr{Z_\omega}$, and hence \cite[Theorem~4.40]{Folland2016} implies that $\phi \cdot \widehat{B} = \rho \cdot \widehat{B}$. So there exists $\chi \in \widehat{B}$ such that $\phi \cdot \chi = \rho \cdot 1_{\widehat{B}} = \rho$. Since $(x, \phi \cdot \chi) = (y,\rho) \in K_J$, we have $f(x, \phi \cdot \chi) = 0$ for all $f \in \psi_T(J)$. Thus, since $\psi_T(J) \subseteq \XX_T^\omega$, we have $f(x,\phi) = \beta_\chi^B\!\left(f(x, \phi \cdot \chi)\right) = 0$ for all $f \in \psi_T(J)$. Hence $(x,\phi) \in K_J$, and so $Q^{-1}\!\left(Q(K_J)\right) = K_J$. Since $Q$ is a quotient map, \cite[Proposition~2.4.3]{Engelking1989} implies that $C \subseteq X \times \widehat{Z}_\omega$ is closed if and only if $Q^{-1}(C) \subseteq X \times \PTHat$ is closed. Since $Q^{-1}(Q(K_J)) = K_J$ is closed in $X \times \PTHat$ (by \cref{prop: ideals of twisted isotropy induced algebra}), we deduce that $Q(K_J)$ is closed.

We now show that $Q(K_J)$ is invariant under $\theta$. Fix $(x,\zeta) \in Q(K_J)$ and $\gamma \in (\GT)_x$. Then there exists $\phi \in \PTHat$ such that $(x,\phi) \in K_J$ and $\phi\restr{Z_\omega} = \zeta$. We must show that $\theta_{[\gamma]}(x,\zeta) \in Q(K_J)$. \Cref{prop: spectral action} implies that $\twistchar_\gamma^\sigma\restr{Z_\omega} = \tilde{\twistchar}_{[\gamma]}^\sigma$, and so
\[
\theta_{[\gamma]}(x,\zeta) = \big(r(\gamma),\, \tilde{\twistchar}_{[\gamma]}^\sigma \, \zeta \big) = Q\!\left(r(\gamma),\, \twistchar_\gamma^\sigma \, \phi\right)\!.
\]
Hence it suffices to show that $\left(r(\gamma),\, \twistchar_\gamma^\sigma \, \phi\right) \in K_J$. Fix $f \in \psi_T(J)$. We must show that $f\!\left(r(\gamma),\, \twistchar_\gamma^\sigma \, \phi\right) = 0$. Let $U \subseteq \GT$ be an open bisection containing $\gamma$. By Urysohn's lemma there exists $g \in C_c(\GT,[0,1])$ such that $\supp(g) \subseteq U$ and $g(\gamma) = 1$. Let $\Theta_{U,g}\colon \XX_T^\omega \to \XX_T^\omega$ be as in \cref{lemma: Theta is a BLM}. Since $s\restr{U}^{-1}(x) = \gamma$ and $g(\gamma) = 1$,
\begin{equation} \label{eqn: theta is Theta}
\Theta_{U,g}(f)(x,\phi) = f\!\left(r(\gamma), \, \twistchar_\gamma^\sigma \, \phi\right)\!.
\end{equation}
Since $f \in \psi_T(J)$, \cref{prop: properties of Theta}\cref{item: induced algebra ideals are invariant under Theta} implies that $\Theta_{U,g}(f) \in \psi_T(J)$. Since $\psi_T(J)$ is an ideal of $\XX_T^\omega$, \cref{prop: ideals of twisted isotropy induced algebra} implies that
\[
\psi_T(J) = \{ f \in \XX_T^\omega : f\restr{K_J} \equiv 0 \}.
\]
Thus, since $(x,\phi) \in K_J$ and $\Theta_{U,g}(f) \in \psi_T(J)$, we have
\begin{equation} \label{eqn: Theta x phi zero}
\Theta_{U,g}(f)(x,\phi) = 0.
\end{equation}
Together, \cref{eqn: theta is Theta,eqn: Theta x phi zero} imply that $f\!\left(r(\gamma), \, \twistchar_\gamma^\sigma \, \phi\right) = 0$, as required.
\end{proof}

We now prove several technical results that we use in the proof of \cref{thm: simplicity characterisation}\cref{item: main minimal implies simple} to show that when the spectral action $\theta$ is not minimal, the twisted groupoid C*-algebra $C^*(\GT,\sigma)$ is not simple. We first show that, given an element $(x,\phi) \in X \times \PTHat$ with non-dense orbit under $\theta$, there is a nonzero element of $\XX_{T,\omega} \cong M_T^\sigma$ that is supported off the orbit of $(x,\phi)$.

\begin{lemma} \label{lemma: supp outside orbit}
Let $(X,T)$ be a minimal rank-$k$ Deaconu--Renault system such that $X$ is second-countable, and fix $\sigma \in Z^2(\GT,\T)$. Let $Q\colon X \times \PTHat \to X \times \widehat{Z}_\omega$ be the quotient map $(x,\phi) \mapsto (x,\phi\restr{Z_\omega})$. Suppose that $(x,\phi) \in X \times \PTHat$ satisfies $\overline{[x, \phi\restr{Z_\omega}]_\theta} \ne X \times \widehat{Z}_\omega$. Then $Q^{-1}\big(\overline{[x, \phi\restr{Z_\omega}]_\theta}\big)$ is a proper closed subset of $X \times \PTHat$, and there exists $f \in M_T^\sigma {\setminus} \{0\}$ such that
\[
\supp\!\big((\psi_T \circ \iota^{-1})(f)\big) \subseteq (X \times \PTHat) \setminus Q^{-1}\big(\overline{[x, \phi\restr{Z_\omega}]_\theta}\big).
\]
\end{lemma}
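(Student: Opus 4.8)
The plan is to treat the two assertions in turn: the first is a soft statement about the quotient map $Q$, while the second is an explicit construction of an element of the induced algebra $\XX_T^\omega$ that is pulled back from a scalar bump function on the quotient $X \times \widehat{Z}_\omega$. For the first assertion, recall that $Q$ is a continuous surjection (indeed a quotient map, as established before \cref{thm: twisted isotropy induced algebra}). Since $\overline{[x, \phi\restr{Z_\omega}]_\theta}$ is closed in $X \times \widehat{Z}_\omega$, its preimage $Q^{-1}\big(\overline{[x, \phi\restr{Z_\omega}]_\theta}\big)$ is closed in $X \times \PTHat$; and it is proper because $Q$ is surjective, for if it were all of $X \times \PTHat$ then surjectivity of $Q$ would force $\overline{[x, \phi\restr{Z_\omega}]_\theta} = X \times \widehat{Z}_\omega$, contrary to hypothesis.

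For the second assertion, I would first produce the scalar bump function. Since $\overline{[x, \phi\restr{Z_\omega}]_\theta}$ is a proper closed subset of the locally compact Hausdorff space $X \times \widehat{Z}_\omega$, its complement is a nonempty open set, so local compactness (or Urysohn's lemma) yields a nonzero $F \in C_c(X \times \widehat{Z}_\omega)$ with $\supp(F) \subseteq (X \times \widehat{Z}_\omega) \setminus \overline{[x, \phi\restr{Z_\omega}]_\theta}$. Writing $1_{C^*(B,\tilde{\omega})}$ for the identity of the unital C*-algebra $C^*(B,\tilde{\omega})$ (unital since $B$ is discrete), I would define $\tilde{f}\colon X \times \PTHat \to C^*(B,\tilde{\omega})$ by $\tilde{f}(x,\phi) \coloneqq F\big(Q(x,\phi)\big)\, 1_{C^*(B,\tilde{\omega})} = F\big(x, \phi\restr{Z_\omega}\big)\, 1_{C^*(B,\tilde{\omega})}$.

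The next step is to verify that $\tilde{f} \in \XX_T^\omega$. Continuity is immediate since $\tilde{f} = (F \circ Q)\, 1_{C^*(B,\tilde{\omega})}$. For the equivariance condition defining the induced algebra, I would observe that for $p \in Z_\omega$ and $\chi \in \widehat{B}$ we have $\chi(p + Z_\omega) = 1$, so $(\phi \cdot \chi)\restr{Z_\omega} = \phi\restr{Z_\omega}$; since each $\beta_\chi^B$ is an automorphism fixing the identity, this gives $\tilde{f}(x, \phi \cdot \chi) = F(x, \phi\restr{Z_\omega})\, 1_{C^*(B,\tilde{\omega})} = \big(\beta_\chi^B\big)^{-1}\big(\tilde{f}(x,\phi)\big)$. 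The only remaining, and the only genuinely technical, point is the $C_0$-condition: here $\lv \tilde{f}(x,\phi) \rv = \lav F(Q(x,\phi)) \rav$, so I must know that $Q$ is proper in order to conclude that $\{(x,\phi) : \lv \tilde{f}(x,\phi) \rv \ge \varepsilon\} = Q^{-1}\big(\{ \lav F \rav \ge \varepsilon\}\big)$ is compact. This is the main obstacle, and I would resolve it by noting that $B = \PT/Z_\omega$ is a quotient of $\Z^l$ and hence discrete, so its dual $\widehat{B}$ is compact; as $Q$ is the orbit map of the free, proper $\widehat{B}$-action described before \cref{thm: twisted isotropy induced algebra}, and a compact group action has a closed orbit map with compact fibres, $Q$ is proper.

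With $\tilde{f} \in \XX_T^\omega$ in hand, the support condition is routine: $\osupp(\tilde{f}) = Q^{-1}(\osupp F)$, so that $\supp(\tilde{f}) \subseteq Q^{-1}(\supp F) \subseteq (X \times \PTHat) \setminus Q^{-1}\big(\overline{[x, \phi\restr{Z_\omega}]_\theta}\big)$. Finally, since $\psi_T \circ \iota^{-1}\colon M_T^\sigma \to \XX_T^\omega$ is an isomorphism and $\tilde{f} \ne 0$, setting $f \coloneqq (\psi_T \circ \iota^{-1})^{-1}(\tilde{f})$ yields the required element of $M_T^\sigma {\setminus} \{0\}$ with $\supp\!\big((\psi_T \circ \iota^{-1})(f)\big) = \supp(\tilde{f})$ contained in the complement, completing the argument.
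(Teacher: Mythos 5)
Your proof is correct, and both halves are sound: the first part (closedness and properness of the preimage) is exactly the paper's argument. For the second part you take a mildly different route to the same element. The paper starts with a bump function $h \in C_c\big(X \times \PTHat, [0,1]\big)$ \emph{upstairs}, supported off $C_{(x,\phi)}$, and averages it over $\widehat{B}$ against $U_{0+Z_\omega}$, citing \cite[Lemma~6.17]{RW1998} for membership of the resulting function in the induced algebra; the vanishing on $C_{(x,\phi)}$ then uses that this set is saturated under the $\widehat{B}$-action. You instead choose the bump $F$ \emph{downstairs} on $X \times \widehat{Z}_\omega$ and pull it back through $Q$, so equivariance is automatic (the pullback is $\widehat{B}$-invariant and valued in scalar multiples of the identity $U_{0+Z_\omega}$, which every $\beta^B_\chi$ fixes), at the cost of having to check that $F \circ Q$ is still $C_0$. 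Your properness argument for $Q$ is correct but heavier than necessary: since $\PT$ is discrete, $\PTHat$ is compact, so $Q^{-1}(K \times L) \subseteq K \times \PTHat$ is automatically compact for compact $K$; there is no need to invoke the structure of $Q$ as an orbit map. The two constructions in fact produce the same kind of element --- the paper's $g$ equals $\big(\int_{\widehat{B}} h(y,\rho\cdot\chi)\,\d\chi\big)\, U_{0+Z_\omega}$, and the averaged scalar function is constant on $\widehat{B}$-orbits, hence descends to the quotient --- so the difference is only in whether one verifies membership in $\XX_T^\omega$ by averaging (outsourced to \cite{RW1998}) or by direct inspection of the pullback; your version is marginally more self-contained.
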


\begin{proof}
Let $C_{(x,\phi)} \coloneqq Q^{-1}\big(\overline{[x, \phi\restr{Z_\omega}]_\theta}\big)$. Since $\overline{[x, \phi\restr{Z_\omega}]_\theta} \ne X \times \widehat{Z}_\omega$ and $Q$ is surjective, $C_{(x,\phi)} \ne X \times \PTHat$. Since $Q$ is continuous, $C_{(x,\phi)}$ is closed. By Urysohn's lemma there exists $h \in C_c\big(X \times \PTHat, [0,1]\big) \setminus \{0\}$ such that $\supp(h) \subseteq (X \times \PTHat) \setminus C_{(x,\phi)}$. Define $g\colon X \times \PTHat \to C^*(B,\tilde{\omega})$ by
\[
g(y,\rho) \,\coloneqq\, \int_{\widehat{B}} \, h(y,\rho\cdot\chi) \, \beta^B_\chi(U_{0+Z_\omega}) \, \d\chi \,=\, \int_{\widehat{B}} \, h(y,\rho\cdot\chi) \, U_{0+Z_\omega} \, \d\chi.
\]
By \cite[Lemma~6.17]{RW1998}, we have $g \in \XX_T^\omega$. Since $h \ne 0$ and $h(y,\rho) \ge 0$ for all $(y,\rho) \in \supp(h)$, we have $g \ne 0$. We claim that $\supp(g) \subseteq (X \times \PTHat) \setminus C_{(x,\phi)}$. Fix $(y,\rho) \in C_{(x,\phi)}$. Then $Q(y,\rho) \in \overline{[x, \phi\restr{Z_\omega}]_\theta}$. It suffices to show that $g(y,\rho) = 0$. Fix $\chi \in \widehat{B}$. For all $m \in Z_\omega$, we have $\chi(m + Z_\omega) = 1$, and hence $(\rho \cdot \chi)(m) = \rho(m) \, \chi(m+Z_\omega) = \rho(m)$. Thus
\[
Q(y, \rho \cdot \chi) = \big(y, \, (\rho \cdot \chi)\restr{Z_\omega}\big) = \big(y, \rho\restr{Z_\omega}\big) = Q(y,\rho) \in \overline{[x, \phi\restr{Z_\omega}]_\theta},
\]
and hence $(y, \rho \cdot \chi) \in C_{(x,\phi)}$. Since $\supp(h) \subseteq (X \times \PTHat) \setminus C_{(x,\phi)}$, we have $h(y, \rho \cdot \chi) = 0$ for all $\chi \in \widehat{B}$, and therefore,
\[
g(y,\rho) = \int_{\widehat{B}} h(y,\rho\cdot\chi) \, U_{0+Z_\omega} \, d\chi = 0.
\]
Define $f \coloneqq (\iota \circ \psi_T^{-1})(g) \in M_T^\sigma$. Since $g \ne 0$ and $\iota \circ \psi_T^{-1}$ is injective, we have $f \ne 0$. Since $(\psi_T \circ \iota^{-1})(f) = g$, we have
\[
\supp\!\big((\psi_T \circ \iota^{-1})(f)\big) = \supp(g) \subseteq (X \times \PTHat) \setminus C_{(x,\phi)}. \qedhere
\]
\end{proof}

Recall from \cite[Lemma~6.2(b)]{Armstrong2022} that since $\IT$ is closed in $\GT$ (by \cref{prop: HT quotient groupoid}) and amenable, there is a conditional expectation $\Phi\colon C^*(\GT,\sigma) \to M_T^\sigma$ satisfying $\Phi \circ \iota = \iota$ and $\Phi(f) = \iota(f\restr{\IT})$ for all $f \in C_c(\GT,\sigma)$.

\begin{lemma} \label{lemma: conditional expectation and conjugation}
Let $(X,T)$ be a minimal rank-$k$ Deaconu--Renault system such that $X$ is second-countable, and fix $\sigma \in Z^2(\GT,\T)$. Recall from \cref{lemma: spanning set BT} the definition of the spanning set $\BT$ for $C_c(\GT,\sigma)$. Given $a, b \in \BT$ and $f \in M_T^\sigma$, there exist $p, q, g \in \BT$ such that $gq^*, pg^* \in \iota\big(C_c(\IT,\sigma)\big)$, the range of $g$ is contained in $[0,1]$, and the map $\Xi_g$ of \cref{lemma: conjugation map} satisfies $\Phi(b^*fa) = \Xi_g(gq^*fpg^*)$.
\end{lemma}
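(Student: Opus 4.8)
The plan is to make the choices $p := a$ and $q := b$, and to build $g$ as a $[0,1]$-valued cutoff supported on a sub-bisection of $\supp(a)$, so that conjugation by $g$ reproduces $\Phi(b^*fa)$ exactly. Write $A := \supp(a)$ and $B := \supp(b)$; since $a,b \in \BT$ these are bisections with $A \subseteq c^{-1}(m_a)$ and $B \subseteq c^{-1}(m_b)$ for some $m_a, m_b \in \Z^k$. Because $\Phi$ is the restriction map to $\IT$ and $b^*fa$ is supported on $\supp(b)^{-1}\,\supp(f)\,\supp(a)$, the open support of $\Phi(b^*fa)$ lies in $(B^{-1}\IT A)\medcap\IT$. \Cref{lemma: s(gamma) in IT U cap V} then shows this set is empty unless $m_b - m_a \in \PT$, in which case $s(\gamma) \in E := s(\IT A \medcap B)$ for every $\gamma$ in it. When $E = \varnothing$ we have $\Phi(b^*fa) = 0$ and may take $g = p = q = 0$, so assume $E \ne \varnothing$.

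The key geometric observation is that over $E$ the bisections $A$ and $B$ are aligned through the isotropy: if $x \in E$ then the unique elements $\alpha_{A,x} \in A$ and $\beta_{B,x} \in B$ with source $x$ satisfy $r(\alpha_{A,x}) = r(\beta_{B,x})$, so $\alpha_{A,x}\,\beta_{B,x}^{-1} \in \Iso(\GT)$ has $c$-value $m_a - m_b \in \PT$ and hence lies in $\IT$ by \cref{prop: IT characterisation}. Using this, I would set $p := a$ and $q := b$ and choose, via Urysohn's lemma, a function $g \in C_c(\GT,[0,1])$ supported on the open sub-bisection $A_E := \{\eta \in A : s(\eta) \in E\}$ of $A$ and equal to $1$ on the lift $\{\alpha_{A,x} : x \in s(\supp(\Phi(b^*fa)))\}$. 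Crucially, although $f$ need not be compactly supported, this lift is a \emph{compact} subset of $A_E$: its $X$-projection is contained in the compact set $s(\overline{A}) \medcap s(\overline{B})$ and, by the previous paragraph, in $E$. Thus such a $g$ exists and lies in $\BT$ (its support is a bisection inside $c^{-1}(m_a)$).

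It then remains to verify the three required properties. First, $pg^* = ag^*$ is supported on $A\,A_E^{-1} \subseteq AA^{-1} \subseteq \GTo \subseteq \IT$, so $pg^* \in \iota\big(C_c(\IT,\sigma)\big)$. Second, $gq^* = gb^*$ is supported on $\supp(g)\,\supp(b)^{-1}$; the alignment over $E$ forces each product $\eta\beta'^{-1}$ with $s(\eta) = s(\beta') \in E$ into $\Iso(\GT)$ with $c$-value $m_a - m_b \in \PT$, so $gb^* \in \iota\big(C_c(\IT,\sigma)\big)$ as well. Consequently $gq^*fpg^* = gb^*fag^* \in M_T^\sigma$, and since $g^*g$ is supported on units, equal to the function $x \mapsto \lav g(\alpha_{A,x})\rav^2$, one computes for $\gamma \in \IT$ with $r(\gamma) = s(\gamma) = x$ that
\[
\Xi_g(gq^*fpg^*)(\gamma) = \big((g^*g)(b^*fa)(g^*g)\big)(\gamma) = \lav g(\alpha_{A,x})\rav^4 \,(b^*fa)(\gamma).
\]
Since $g \equiv 1$ on the $X$-projection of $\supp(\Phi(b^*fa))$ and $\Phi(b^*fa) = (b^*fa)\restr{\IT}$, the scalar $\lav g(\alpha_{A,x})\rav^4$ equals $1$ wherever the right-hand side is nonzero, giving $\Xi_g(gq^*fpg^*) = \Phi(b^*fa)$.

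The main obstacle is the support bookkeeping behind the second verification: one must guarantee that the compact set $\supp(g)\,\supp(b)^{-1}$ does not escape $\IT$ near the boundary of the alignment region $E$, while simultaneously keeping $g \equiv 1$ on the required compact $X$-set. This is exactly what the precompactness of $\overline{A}, \overline{B}$ together with the closure estimates of \cref{lemma: IT U/V bar,lemma: s(gamma) in IT U cap V} supply: they confine the relevant closures to $\IT$ and to a compact subset of $E$, so that $\supp(g)$ may be taken as a precompact open sub-bisection of $A_E$ on which all of the containments above survive passage to supports. The remaining steps are the routine (if unpleasant) verification that the cocycle factors in $b^*fa$ and in $g^*g\,b^*fa\,g^*g$ combine to give precisely the scalar $\lav g(\alpha_{A,x})\rav^4$.
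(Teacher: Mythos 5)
Your overall architecture is close to the paper's: both proofs hinge on \cref{lemma: IT U/V bar,lemma: s(gamma) in IT U cap V} to align $\supp(a)$ and $\supp(b)$ through $\IT$, use Urysohn's lemma to produce the cutoff $g$, and reduce the identity to the observation that conjugating by a unit-supported function $g^*g$ multiplies $(b^*fa)(\gamma)$ by $\lav g(\cdot)\rav$-factors at $r(\gamma)$ and $s(\gamma)$. The structural difference is that the paper truncates $a$ and $b$ (setting $p \coloneqq a\restr{\overline{\IT V \medcap U}}$ and $q \coloneqq b\restr{\overline{\IT U \medcap V}}$) and takes $g \equiv 1$ on \emph{all} of $\supp(b)$, whereas you keep $p = a$, $q = b$ and shrink $g$ instead. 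That swap is not fatal in itself, but it is what creates the two genuine gaps below.

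First, your $g$ is required to equal $1$ on the lift of $s(\supp(\Phi(b^*fa)))$, a set that depends on $f$ --- and $f$ is an arbitrary element of $M_T^\sigma$, i.e.\ a norm limit of compactly supported functions. Within the paper's framework, $\supp(\Phi(b^*fa))$ and the pointwise evaluations $(b^*fa)(\gamma)$ are only defined for $f \in \iota\big(C_c(\IT,\sigma)\big)$. The paper's proof handles general $f$ precisely because $p$, $q$, $g$ are chosen from $a$ and $b$ alone, so that both sides of $\Phi(b^*fa) = \Xi_g(gq^*fpg^*)$ are bounded linear maps in $f$ and the identity need only be checked on the spanning set $\{\iota(h\cdot 1_p)\}$ of \cref{lemma: h cdot 1_p span}. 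With an $f$-dependent $g$ you lose this continuity argument, and you would have to develop a notion of support for general elements of $C^*(\GT,\sigma)$ to make your computation meaningful. The fix is to make $g$ equal to $1$ on an $f$-independent compact set --- e.g.\ on $\overline{\IT\, \osupp(b) \medcap \osupp(a)}$, which \cref{lemma: IT U/V bar} places inside $\IT\,\overline{\osupp(b)}$ --- exactly as the paper does with $g\equiv 1$ on $\supp(b)$. Second, the Urysohn step is asserted rather than carried out: as written, $A = \supp(a)$ is closed, so $A_E$ is not an open sub-bisection, and you never exhibit an \emph{open} set containing the compact set where $g$ must be $1$ on which the containment $\osupp(g)\,\osupp(b)^{-1}\subseteq\IT$ survives; boundary points of $E$ are exactly where this can fail. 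This is repairable (take $\osupp(g)$ inside $\IT W \cap W_a$ for open bisections $W \supseteq \supp(b)$ and $W_a \supseteq \supp(a)$ contained in single $c$-fibres, and use \cref{lemma: IT U/V bar} to see that the relevant compact set sits inside this open set), but the repair is essentially the paper's truncation device, so it needs to be written down rather than deferred to ``support bookkeeping.''
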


\begin{proof}
Define $U \coloneqq \osupp(a)$ and $V \coloneqq \osupp(b)$. Since $a, b \in \BT$, both $\overline{U}$ and $\overline{V}$ are compact bisections, and there exist $m, n \in \Z^k$ such that $\overline{U} \subseteq c^{-1}(m)$ and $\overline{V} \subseteq c^{-1}(n)$. Define $X
\coloneqq \IT V \medcap U$ and $Y \coloneqq \IT U \medcap V$. Define $p,q\colon \GT \to \C$ by
\[
p(\gamma) \coloneqq \begin{cases}
a(\gamma) & \text{if } \gamma \in \overline{X} \\
0 & \text{if } \gamma \notin \overline{X},
\end{cases}
\qquad \text{and} \qquad
q(\gamma) \coloneqq \begin{cases}
b(\gamma) & \text{if } \gamma \in \overline{Y} \\
0 & \text{if } \gamma \notin \overline{Y}.
\end{cases}
\]
Since $\supp(p) \subseteq \overline{X} \subseteq \overline{U}$ and $\supp(q) \subseteq \overline{Y} \subseteq \overline{V}$, we have $p, q \in \BT$. Let $W$ be an open bisection of $\GT$ such that $\overline{V} \subseteq W \subseteq c^{-1}(n)$. By Urysohn's lemma there exists $g \in C_c(\GT,[0,1])$ such that $\supp(g) \subseteq W$ and $g\restr{\overline{V}} \equiv 1$. Then $g \in \BT$, and $gq^*, pg^* \in C_c(\GT,\sigma)$. We claim that $gq^*, pg^* \in \iota\big(C_c(\IT,\sigma)\big)$. To see this, it suffices to show that $\osupp(gq^*) \medcup \osupp(pg^*) \subseteq \IT$. Since $q$ and $g$ are
supported on bisections, we have
\[
\osupp(gq^*) = \osupp(g) (\osupp(q))^{-1} \subseteq W V^{-1} \subseteq W W^{-1} = r(W) \subseteq \IT.
\]
By \cref{lemma: IT U/V bar}, $\overline{X} = \overline{\IT V \medcap U} \subseteq \IT \overline{V} \subseteq \IT W$, and since $p$ and $g$ are supported on bisections, we deduce that
\[
\osupp(pg^*) = \osupp(p) (\osupp(g))^{-1} \subseteq \overline{X} W^{-1} \subseteq \IT W W^{-1} = \IT \, r(W) \subseteq \IT.
\]
Therefore, $gq^*, pg^* \in \iota\big(C_c(\IT,\sigma)\big)$, and \cref{lemma: conjugation map} implies that
\begin{equation} \label{eqn: osupp Xi_g IT}
\osupp\!\big(\Xi_g(g q^* f p g^*)\big) \subseteq \IT.
\end{equation}
We conclude by showing that $\Phi(b^*fa) = \Xi_g(gq^*fpg^*)$. Since $\iota$, $\Phi$, and $\Xi_g$ are bounded linear maps, \cref{lemma: h cdot 1_p span} implies that it suffices to consider $f = \iota(h \cdot 1_p)$ for some $h \in C_c(X)$ and $p \in \PT$. Define $D \coloneqq \osupp(f) \subseteq c\restr{\IT}^{-1}(p)$. Then $\osupp(g^*g) \subseteq s(W)$, and so
\begin{equation} \label{eqn: osupp(Xi_g(gq^*fpg^*))}
\osupp\!\big(\Xi_g(g q^* f p g^*)\big) = \osupp(g^* g q^* f p g^* g) \subseteq s(W) (V^{-1} D U) s(W) \subseteq V^{-1} D U.
\end{equation}
Together, \cref{eqn: osupp Xi_g IT,eqn: osupp(Xi_g(gq^*fpg^*))} imply that
\[
\osupp\!\big(\Xi_g(g q^* f p g^*)\big) \subseteq (V^{-1} D U) \medcap \IT = \osupp(b^*fa) \medcap \IT = \osupp\!\big(\Phi(b^*fa)\big).
\]
Thus, if $\Phi(b^*fa)(\gamma) = 0$ for some $\gamma \in \GT$, then $\Xi_g(gq^*fpg^*)(\gamma) = 0$. Suppose that $\gamma \in \GT$ satisfies $\Phi(b^*fa)(\gamma) \ne 0$. Then $\gamma \in \IT$, and \cref{eqn: osupp(Xi_g(gq^*fpg^*))} implies that there exist $\alpha \in U$, $\beta \in V$, and $\xi \in D \subseteq \IT$ such that $\gamma = \beta^{-1} \xi \alpha \in \IT$. A routine calculation gives
\begin{equation} \label{eqn: Phi(b^*fa) expression}
\Phi(b^*fa)(\gamma) = (b^*fa)(\beta^{-1}\xi\alpha) = \sigma(\beta^{-1}\xi,\alpha) \, \sigma(\beta^{-1},\xi) \, \overline{\sigma(\beta^{-1},\beta)} \, \overline{b(\beta)} \, f(\xi) \, a(\alpha).
\end{equation}
Define $y \coloneqq s(\gamma)$. Since $\gamma \in \IT$, we have $s(\beta) = r(\gamma) = y$. Since $\beta \in V$ and $g\restr{\overline{V}} \equiv 1$,
\begin{equation} \label{eqn: g^*g is 1}
(g^*g)(y) = (g^*g)(s(\beta)) = \lav g(\beta) \rav^2 = 1.
\end{equation}
A routine calculation using \cref{eqn: g^*g is 1} and that $\sigma$ is normalised gives
\begin{align*}
\Xi_g(gq^*fpg^*)(\gamma) &= (g^* g q^* f p g^* g)(y \gamma y) \\
&= \sigma(y, \gamma y) \, \sigma(\gamma, y) \, (g^*g)(y) \, (q^* f p)(\gamma) \, (g^*g)(y) \\
&= (q^*fp)(\beta^{-1} \xi \alpha) \\
&= \sigma(\beta^{-1}\xi,\alpha) \, \sigma(\beta^{-1},\xi) \, \overline{\sigma(\beta^{-1},\beta)} \, \overline{q(\beta)} \, f(\xi) \, p(\alpha). \numberthis \label{eqn: Xi_g(g q^* f p g^*) expression}
\end{align*}
We claim that $p(\alpha) = a(\alpha)$ and $q(\beta) = b(\beta)$. Since $\gamma \in (V^{-1} \IT U) \medcap \IT$, \cref{lemma: s(gamma) in IT U cap V} implies that
\[
y = s(\gamma) \in s(\IT U \medcap V) = s(\IT V \medcap U).
\]
So there exist $\eta \in \IT V \medcap U = X \subseteq U$ and $\zeta \in \IT U \medcap V = Y \subseteq V$ such that $s(\eta) = y = s(\zeta)$. Since $s\restr{U}$ and $s\restr{V}$ are homeomorphisms onto their ranges and
\[
s(\eta) = s(\alpha) = y = s(\beta) = s(\zeta),
\]
we deduce that $\alpha = \eta \in X$ and $\beta = \zeta \in Y$. Hence $p(\alpha) = a(\alpha)$ and $q(\beta) = b(\beta)$. Together, \cref{eqn: Phi(b^*fa) expression,eqn: Xi_g(g q^* f p g^*) expression} now give
\[
\Phi(b^*fa)(\gamma) = \sigma(\beta^{-1}\xi,\alpha) \, \sigma(\beta^{-1},\xi) \, \overline{\sigma(\beta^{-1},\beta)} \, \overline{b(\beta)} \, f(\xi) \, a(\alpha) = \Xi_g(gq^*fpg^*)(\gamma). \qedhere
\]
\end{proof}

\begin{prop} \label{prop: conjugation supp outside orbit}
Let $(X,T)$ be a minimal rank-$k$ Deaconu--Renault system such that $X$ is second-countable, and fix $\sigma \in Z^2(\GT,\T)$. Let $Q\colon X \times \PTHat \to X \times \widehat{Z}_\omega$ be the quotient map $(x,\phi) \mapsto (x,\phi\restr{Z_\omega})$, and let $\Phi\colon C^*(\GT,\sigma) \to M_T^\sigma$ be the conditional expectation of \cite[Lemma~6.2(b)]{Armstrong2022} that extends restriction of functions to $\IT$. Fix $(x,\phi) \in X \times \PTHat$. Suppose that $f \in M_T^\sigma$ satisfies
\[
\supp\!\big((\psi_T \circ \iota^{-1})(f)\big) \subseteq (X \times \PTHat) \setminus Q^{-1}\big(\overline{[x, \phi\restr{Z_\omega}]_\theta}\big).
\]
Then for all $a, b \in C^*(\GT,\sigma)$, we have
\[
\big(\psi_T \circ \iota^{-1} \circ \Phi \big)(b^*fa)(x,\phi) = 0.
\]
\end{prop}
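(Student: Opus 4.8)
The plan is to reduce to the dense spanning set $\BT$, rewrite $\Phi(b^*fa)$ as a conjugate using \cref{lemma: conditional expectation and conjugation}, convert this via \cref{prop: properties of Theta}\cref{item: Theta is conjugation} into a single application of the operator $\Theta_{W,g}$, and then read off the value at $(x,\phi)$ from the spectral orbit of $(x,\phi\restr{Z_\omega})$. First I would observe that the assignment $(a,b) \mapsto (\psi_T \circ \iota^{-1} \circ \Phi)(b^*fa)(x,\phi)$ is bounded, linear in $a$, and conjugate-linear in $b$: each of $b \mapsto b^*$, multiplication, the conditional expectation $\Phi$, the inverse of the isometry $\iota$ on $M_T^\sigma$, the isomorphism $\psi_T$, and evaluation at $(x,\phi)$ is bounded. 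Since $\vecspan \BT = C_c(\GT,\sigma)$ is dense in $C^*(\GT,\sigma)$ by \cref{lemma: spanning set BT}, it therefore suffices to verify the identity for $a,b \in \BT$.

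So I would fix $a,b \in \BT$ and invoke \cref{lemma: conditional expectation and conjugation} to obtain $p,q,g \in \BT$, where $g$ is supported on an open bisection $W$ with range in $[0,1]$, $gq^*,pg^* \in \iota\big(C_c(\IT,\sigma)\big)$, and $\Phi(b^*fa) = \Xi_g(gq^*fpg^*)$. Setting $e \coloneqq \iota^{-1}(gq^*fpg^*) \in C^*(\IT,\sigma)$ and applying \cref{prop: properties of Theta}\cref{item: Theta is conjugation} to $e$ (with the bisection $W$) would give
\[
(\psi_T \circ \iota^{-1} \circ \Phi)(b^*fa) = \psi_T\big(\iota^{-1}\big(\Xi_g(\iota(e))\big)\big) = \Theta_{W,g}\big(\psi_T(e)\big).
\]
If $x \notin s(W)$, the right-hand side vanishes at $(x,\phi)$ by the definition of $\Theta_{W,g}$ in \cref{lemma: Theta is a BLM}. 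Otherwise, writing $\gamma \coloneqq \alpha_{W,x}$ for the unique element of $W$ with $s(\gamma) = x$, the same definition gives $\Theta_{W,g}(\psi_T(e))(x,\phi) = \lav g(\gamma)\rav^2 \, \psi_T(e)\big(r(\gamma), \twistchar_\gamma^\sigma \phi\big)$, so everything reduces to showing this evaluation is $0$.

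For the final step I would use that $\iota$ and $\psi_T$ are $*$-isomorphisms onto their images and that $\XX_T^\omega$ inherits pointwise multiplication from $C_0\big(X \times \PTHat, C^*(B,\tilde{\omega})\big)$, so evaluation at a point is a homomorphism. Factorising $\psi_T(e) = \psi_T(\iota^{-1}(gq^*)) \, F \, \psi_T(\iota^{-1}(pg^*))$ with $F \coloneqq (\psi_T \circ \iota^{-1})(f)$ and evaluating at $(r(\gamma),\twistchar_\gamma^\sigma\phi)$, it suffices that the middle factor $F(r(\gamma),\twistchar_\gamma^\sigma\phi)$ vanishes. By \cref{prop: spectral action} we have $\twistchar_\gamma^\sigma\restr{Z_\omega} = \tilde{\twistchar}_{[\gamma]}^\sigma$, and hence
\[
Q\big(r(\gamma),\, \twistchar_\gamma^\sigma \phi\big) = \big(r(\gamma),\, \tilde{\twistchar}_{[\gamma]}^\sigma \, \phi\restr{Z_\omega}\big) = \theta_{[\gamma]}\big(x,\, \phi\restr{Z_\omega}\big) \in [x,\phi\restr{Z_\omega}]_\theta,
\]
so that $(r(\gamma),\twistchar_\gamma^\sigma\phi) \in Q^{-1}\big(\overline{[x,\phi\restr{Z_\omega}]_\theta}\big)$. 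Since $\supp(F)$ avoids this set by hypothesis, $F(r(\gamma),\twistchar_\gamma^\sigma\phi) = 0$, the whole product collapses, and the claim follows. I expect the genuine content to sit entirely in the cited lemmas; the main obstacle here is bookkeeping, namely recognising that the conjugating factors $\psi_T(\iota^{-1}(gq^*))$ and $\psi_T(\iota^{-1}(pg^*))$ are irrelevant because the middle factor already vanishes, and justifying the factorisation by confirming that pointwise evaluation on the induced algebra $\XX_T^\omega$ is multiplicative.
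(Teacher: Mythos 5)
Your proposal is correct and follows essentially the same route as the paper: reduce to $a,b \in \BT$ by boundedness and linearity, apply \cref{lemma: conditional expectation and conjugation} to write $\Phi(b^*fa) = \Xi_g(gq^*fpg^*)$, convert to $\Theta_{U,g}$ via \cref{prop: properties of Theta}\cref{item: Theta is conjugation}, factor $\psi_T(\iota^{-1}(gq^*fpg^*))$ as $h_q\,(\psi_T\circ\iota^{-1})(f)\,h_p$ using that $\psi_T\circ\iota^{-1}$ is a homomorphism and evaluation on $\XX_T^\omega$ is pointwise, and observe that the middle factor vanishes at $\big(r(\alpha_{U,x}),\twistchar_{\alpha_{U,x}}^\sigma\phi\big)$ because that point maps under $Q$ into $[x,\phi\restr{Z_\omega}]_\theta$. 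No gaps.
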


\begin{proof}
Let $\ev_{(x,\phi)}\colon \XX_T^\omega \to C^*(B,\tilde{\omega})$ denote the evaluation map $f \mapsto f(x,\phi)$. Recall from \cref{lemma: spanning set BT} the definition of the spanning set $\BT$ for $C_c(\GT,\sigma)$. Let $C_{(x,\phi)} \coloneqq Q^{-1}\big(\overline{[x, \phi\restr{Z_\omega}]_\theta}\big)$. Fix $a, b \in \BT$, and suppose that $f \in M_T^\sigma$ satisfies
\[
\supp\!\big((\psi_T \circ \iota^{-1})(f)\big) \subseteq (X \times \PTHat) \setminus C_{(x,\phi)}.
\]
Since $\ev_{(x,\phi)}$, $\psi_T$, $\iota^{-1}$, and $\Phi$ are all bounded linear maps, it suffices to show that
\[
\big(\ev_{(x,\phi)} \circ \, \psi_T \circ \iota^{-1} \circ \Phi \big)(b^*fa) = 0.
\]
Let $\Xi_g$ be the bounded linear map defined in \cref{lemma: conjugation map}. By \cref{lemma: conditional expectation and conjugation} there exist $p, q, g \in \BT$ such that $gq^*, pg^* \in \iota\big(C_c(\IT,\sigma)\big)$, the range of $g$ is contained in $[0,1]$, and
\begin{equation} \label{eqn: Phi(b^*fa) = Xi_g(g q^* f p g^*)}
\Phi(b^*fa) = \Xi_g(gq^*fpg^*).
\end{equation}
Let $U$ be an open bisection of $\GT$ containing $\supp(g)$. For $y \in s(U)$, let $\alpha_{U,y}$ denote the unique element of $U$ with source $y$. Define $h_q \coloneqq \psi_T\big(\iota^{-1}(gq^*)\big)$ and $h_p \coloneqq \psi_T\big(\iota^{-1}(pg^*)\big)$. Then
\begin{equation} \label{eqn: psi_T iota inverse homomorphism}
\psi_T\big(\iota^{-1}(g q^* f p g^*)\big) = h_q \, \psi_T\big(\iota^{-1}(f)\big) \, h_p.
\end{equation}
By \cref{prop: properties of Theta}\cref{item: Theta is conjugation},
\begin{equation} \label{eqn: theta is conjugation of g q^* f p g^*}
\psi_T\big(\iota^{-1}\big(\Xi_g(g q^* f p g^*)\big)\big) = \Theta_{U,g}\big(\psi_T\big(\iota^{-1}(g q^* f p g^*)\big)\big).
\end{equation}
Together, \cref{eqn: Phi(b^*fa) = Xi_g(g q^* f p g^*),eqn: theta is conjugation of g q^* f p g^*} imply that
\begin{align*}
\big(\!\ev_{(x,\phi)} \circ \, \psi_T \,\circ\, & \iota^{-1} \circ \Phi\big)(b^*fa) \\
&= \psi_T\big(\iota^{-1}\big(\Xi_g(g q^* f p g^*)\big)\big)(x,\phi) \\
&= \Theta_{U,g}\big(\psi_T\big(\iota^{-1}(g q^* f p g^*)\big)\big)(x,\phi) \\
&= \begin{cases}
\lav g(\alpha_{U,x}) \rav^2 \ \psi_T\big(\iota^{-1}(g q^* f p g^*)\big)\big(r(\alpha_{U,x}), \, \twistchar_{\alpha_{U,x}}^\sigma \, \phi\big) & \quad \text{if } x \in s(U) \\
0 & \quad \text{if } x \notin s(U). \\
\end{cases}
\end{align*}
Thus, to see that $\big(\!\ev_{(x,\phi)} \circ\, \psi_T \circ \iota^{-1} \circ \Phi\big)(b^*fa) = 0$, it suffices to show that if $x \in s(U)$, then
\[
\psi_T\big(\iota^{-1}(g q^* f p g^*)\big) \big(r(\alpha_{U,x}), \, \twistchar_{\alpha_{U,x}}^\sigma \, \phi\big) = 0.
\]
If $x \in s(U)$, then
\[
Q\big(r(\alpha_{U,x}),\, \twistchar_{\alpha_{U,x}}^\sigma \, \phi\big) = \big(r(\alpha_{U,x}), \, \tilde{\twistchar}_{[\alpha_{U,x}]}^\sigma \, \phi\restr{Z_\omega}\big) = \theta_{[\alpha_{U,x}]}(x,\phi\restr{Z_\omega}),
\]
and hence
\[
\big(r(\alpha_{U,x}), \, \twistchar_{\alpha_{U,x}}^\sigma \, \phi\big) \in Q^{-1}\big({\theta_{[\alpha_{U,x}]}(x,\phi\restr{Z_\omega})}\big) \subseteq C_{(x,\phi)}.
\]
Since $\supp\!\big((\psi_T \circ \iota^{-1})(f)\big) \subseteq (X \times \PTHat) \setminus C_{(x,\phi)}$, we obtain $\psi_T\big(\iota^{-1}(f)\big) \big(r(\alpha_{U,x}), \, \twistchar_{\alpha_{U,x}}^\sigma \, \phi\big) = 0$. Combining this with \cref{eqn: psi_T iota inverse homomorphism} gives
\[
\psi_T\big(\iota^{-1}(g q^* f p g^*)\big) \big(r(\alpha_{U,x}), \, \twistchar_{\alpha_{U,x}}^\sigma \, \phi\big) = \big(h_q \, \psi_T\big(\iota^{-1}(f)\big) \, h_p\big) \big(r(\alpha_{U,x}), \, \twistchar_{\alpha_{U,x}}^\sigma \, \phi\big) = 0. \qedhere
\]
\end{proof}

We now construct a state $\kappa_{(x,\phi)}$ of $C^*(\GT,\sigma)$ defined in terms of a fixed element $(x,\phi) \in X \times \PTHat$. In the proof of \cref{thm: simplicity characterisation}\cref{item: main minimal implies simple}, we show that if some point $(x,\phi)$ has non-dense orbit under $\theta$ (so that $\theta$ is not minimal), then the GNS representation associated to $\kappa_{(x,\phi)}$ is nonzero and has nontrivial kernel, and thus $C^*(\GT,\sigma)$
is not simple.

\begin{lemma} \label{lemma: state kappa}
Let $(X,T)$ be a minimal rank-$k$ Deaconu--Renault system such that $X$ is second-countable, and fix $\sigma \in Z^2(\GT,\T)$. Fix $(x,\phi) \in X \times \PTHat$. Let $\Phi\colon C^*(\GT,\sigma) \to M_T^\sigma$ be the conditional expectation of \cite[Lemma~6.2(b)]{Armstrong2022} that extends restriction of functions to $\IT$, and let $\ev_{(x,\phi)}\colon \XX_T^\omega \to C^*(B,\tilde{\omega})$ be the evaluation map $f \mapsto f(x,\phi)$. Let $\trace$ denote the canonical trace on $C^*(B,\tilde{\omega})$ (as defined in \cref{lemma: simple twisted group C*-algebra}). Let
\[
\kappa_{(x,\phi)} \coloneqq \trace \circ \ev_{(x,\phi)} \circ\, \psi_T \circ \iota^{-1} \circ \Phi\colon C^*(\GT,\sigma) \to \C.
\]
For all $h \in C_c(X)$ such that $h(x) = 1$, we have $\kappa_{(x,\phi)}\big(\iota(h \cdot 1_0)\big) = 1$. Moreover, $\kappa_{(x, \phi)}$ is a state of $C^*(\GT,\sigma)$.
\end{lemma}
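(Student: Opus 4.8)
The plan is to exploit that $\kappa_{(x,\phi)}$ is a composition of five maps, each of which is simultaneously positive and norm-decreasing, and then to pin down its norm using the explicit value computed in the first assertion. First I would record that $\Phi$ is a norm-one positive projection (being a conditional expectation), that $\iota^{-1}$ is a $*$-isomorphism of $M_T^\sigma$ onto $C^*(\IT,\sigma)$ (since $\iota$ is an injective $*$-homomorphism by \cite[Proposition~6.1]{Armstrong2021}, hence isometric), that $\psi_T$ is a $*$-isomorphism by \cref{thm: twisted isotropy induced algebra}, that $\ev_{(x,\phi)}$ is a $*$-homomorphism because the operations on $\XX_T^\omega$ are pointwise, and that $\trace$ is a state by \cref{lemma: simple twisted group C*-algebra}. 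Since $*$-homomorphisms, conditional expectations, and states are all positive and contractive, composing them shows that $\kappa_{(x,\phi)}$ is positive with $\lv \kappa_{(x,\phi)} \rv \le 1$.

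For the first assertion I would compute $\kappa_{(x,\phi)}(\iota(h \cdot 1_0))$ directly by chasing it through the five maps. Using $\Phi \circ \iota = \iota$ we get $\Phi(\iota(h \cdot 1_0)) = \iota(h \cdot 1_0)$, so $\iota^{-1}$ returns $h \cdot 1_0$; the formula in \cref{thm: twisted isotropy induced algebra} then gives $\psi_T(h \cdot 1_0)(x,\phi) = h(x)\,\overline{\phi(0)}\,U_{0+Z_\omega} = h(x)\,U_{0+Z_\omega}$, since $\phi$ is a character and hence $\phi(0) = 1$; and finally $\trace(U_{0+Z_\omega}) = 1$ because $0 + Z_\omega$ is the identity of $B$ and the canonical trace satisfies $\trace(U_{p+Z_\omega}) = \delta_{p+Z_\omega,\,0+Z_\omega}$ by \cref{lemma: simple twisted group C*-algebra}. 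Hence $\kappa_{(x,\phi)}(\iota(h \cdot 1_0)) = h(x)$, which equals $1$ whenever $h(x) = 1$.

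To conclude that $\kappa_{(x,\phi)}$ is a state it remains to show $\lv \kappa_{(x,\phi)} \rv = 1$. By Urysohn's lemma I would choose $h \in C_c(X)$ with $0 \le h \le 1$ and $h(x) = 1$. Via the isomorphism $\Upsilon$ of \cref{prop: tensor prod decomp}, $h \cdot 1_0$ corresponds to $h \otimes u_0 = h \otimes 1$, which is a positive contraction in $C^*(\IT,\sigma)$; as $\iota$ is isometric, $\iota(h \cdot 1_0)$ is then a positive contraction in $C^*(\GT,\sigma)$. The first assertion gives $\kappa_{(x,\phi)}(\iota(h \cdot 1_0)) = 1$, whence $\lv \kappa_{(x,\phi)} \rv \ge 1$ and therefore $\lv \kappa_{(x,\phi)} \rv = 1$; a positive linear functional of norm one is a state. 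The only mildly delicate points are the bookkeeping that every factor in the composition is at once positive and contractive, and the observation that $\iota(h \cdot 1_0)$ is a genuine positive contraction; both become routine once the identification $\Upsilon$ and the defining formula for $\psi_T$ are in hand.
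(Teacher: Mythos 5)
Your proposal is correct and follows essentially the same route as the paper: compute $\kappa_{(x,\phi)}(\iota(h\cdot 1_0)) = h(x)\,\trace(U_{0+Z_\omega}) = h(x)$ via $\Phi\circ\iota=\iota$ and the formula for $\psi_T$, note that all five factors are positive contractions, and use a Urysohn function to force $\lv\kappa_{(x,\phi)}\rv \ge 1$. Your extra care in checking that $\iota(h\cdot 1_0)$ is a positive contraction (via $\Upsilon$ and isometry of $\iota$) is a detail the paper leaves implicit, but it is the same argument.
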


\begin{proof}
Suppose that $h \in C_c(X)$ satisfies $h(x) = 1$. Since $\Phi \circ \iota = \iota$, we have
\[
\big(\!\ev_{(x,\phi)} \circ\, \psi_T \circ \iota^{-1} \circ \Phi\big) \big(\iota(h \cdot 1_0)\big) = \psi_T(h \cdot 1_0)(x,\phi) = h(x) \, \overline{\phi(0)} \, U_{0+Z_\omega} = U_{0+Z_\omega},
\]
and hence
\begin{equation} \label{eqn: kappa h is 1}
\kappa_{(x,\phi)}\big(\iota(h \cdot 1_0)\big) = \trace(U_{0+Z_\omega}) = 1.
\end{equation}
Since $\Phi$, $\iota^{-1}$, $\psi_T$, $\ev_{(x,\phi)}$, and $\trace$ are all positive norm-decreasing linear maps, $\kappa_{(x,\phi)}$ is a positive linear functional, and $\lv \kappa_{(x,\phi)} \rv \le 1$. By Urysohn's lemma there exists $h \in C_c(X)$ such that $h(x) = 1$. Then \cref{eqn: kappa h is 1} implies that $\lv \kappa_{(x,\phi)} \rv \ge 1$, and thus $\kappa_{(x,\phi)}$ is a state of $C^*(\GT,\sigma)$.
\end{proof}

We conclude this section by proving \cref{thm: simplicity characterisation}\cref{item: main minimal implies simple}, which says that if $(X,T)$ is minimal, then $C^*(\GT,\sigma)$ is simple if and only if the spectral action $\theta$ is minimal.

\begin{proof}[Proof of \cref{thm: simplicity characterisation}\cref{item: main minimal implies simple}]
Suppose that $\theta$ is minimal. Let $I$ be a nontrivial ideal of $C^*(\GT,\sigma)$. Then there exists a non-injective homomorphism $\Psi$ of $C^*(\GT,\sigma)$ such that $I = \ker(\Psi)$. By \cite[Theorem~6.3]{Armstrong2022}, $J \coloneqq \ker(\Psi \circ \iota)$ is a nontrivial ideal of $C^*(\IT,\sigma)$. We have
\[
\iota(J) \,=\, \{ \iota(a) : a \in C^*(\IT,\sigma),\, \Psi(\iota(a)) = 0 \} \,=\, \{ b \in M_T^\sigma : \Psi(b) = 0 \} \,=\, I \medcap M_T^\sigma \,\subseteq\, I.
\]
Thus, to see that $C^*(\GT,\sigma)$ is simple, it suffices to show that $J = C^*(\IT,\sigma)$, because then $\iota(C_0(X)) \subseteq \iota(J) \subseteq I$, and (as argued in \cite[Theorem~5.3.13]{Armstrong2019}) \cite[Proposition~3.18]{Exel2008} implies that $I =
C^*(\GT,\sigma)$. Define
\[
K_J \coloneqq \{ (x,\phi) \in X \times \PTHat \,:\, f(x,\phi) = 0 \text{ for all } f \in \psi_T(J) \}.
\]
Since $\psi_T(J)$ is an ideal of $\XX_T^\omega$, \cref{prop: ideals of twisted isotropy induced algebra} implies that $K_J$ is a closed subset of $X \times \PTHat$, and
\[
\psi_T(J) = \{f \in \XX_T^\omega : f\restr{K_J} \equiv 0 \}.
\]
Let $Q_T\colon X \times \PTHat \to X \times \widehat{Z}_\omega$ be the quotient map $(x,\phi) \mapsto (x,\phi\restr{Z_\omega})$. Suppose that $Q(K_J)$ is nonempty, and fix $(x,\zeta) \in Q(K_J)$. By \cref{prop: Q(K_J) is closed and invariant under theta}, $Q(K_J)$ is closed and invariant under $\theta$, and hence
\[
\overline{[x,\zeta]_\theta} = \overline{\{ \theta_{[\gamma]}(x,\zeta) : \gamma \in (\GT)_x \}} \subseteq Q(K_J).
\]
Since $\theta$ is minimal by assumption, $Q(K_J) = X \times \widehat{Z}_\omega$. Thus, \cref{prop: Q(K_J) is closed and invariant under theta} implies that
\[
K_J = Q^{-1}\!\left(Q(K_J)\right) = Q^{-1}\big(X \times \widehat{Z}_\omega\big) = X \times \PTHat.
\]
Hence
\[
\psi_T(J) = \{f \in \XX_T^\omega : f\restr{K_J} \equiv 0 \} = \{0\},
\]
which contradicts that $J$ is nontrivial, because $\psi_T$ is injective. Therefore, $Q(K_J) = \varnothing$, forcing $K_J = \varnothing$, and hence $\psi_T(J) = \XX_T^\omega$. Since $\psi_T$ is an isomorphism, $J = C^*(\IT,\sigma)$, and hence $C^*(\GT,\sigma)$ is simple.

For the converse, we prove the contrapositive. Suppose that $\theta$ is not minimal. Then there exists $(x,\phi) \in X \times \PTHat$ such that
\[
\overline{[x, \phi\restr{Z_\omega}]_\theta} = \overline{\{ \theta_{[\gamma]}(x,\phi\restr{Z_\omega}) : \gamma \in (\GT)_x \}} \ne X \times \widehat{Z}_\omega.
\]
Let
\[
\kappa_{(x,\phi)} \coloneqq \trace \circ \ev_{(x,\phi)} \circ\, \psi_T \circ \iota^{-1} \circ \Phi\colon C^*(\GT,\sigma) \to \C
\]
be the state of $C^*(\GT,\sigma)$ defined in \cref{lemma: state kappa}. Let $\kappa \coloneqq \kappa_{(x, \phi)}$, let
\[
N_\kappa \coloneqq \{ f \in C^*(\GT,\sigma) : \kappa(f^*f) = 0 \}
\]
be the null space for $\kappa$, and let $\pi_\kappa\colon C^*(\GT,\sigma) \to B(\HH_\kappa)$ be the GNS representation associated to $\kappa$. To see that $C^*(\GT,\sigma)$ is not simple, it suffices to prove that
\[
\{0\} \ne \ker(\pi_\kappa) \ne C^*(\GT,\sigma).
\]
Since $\kappa \ne 0$, we have $\HH_\kappa \ne \{0\}$. So since $\pi_\kappa$ is nondegenerate, $\ker(\pi_\kappa) \ne C^*(\GT, \sigma)$. We now show that $\ker(\pi_\kappa) \ne \{0\}$. Define $C_{(x,\phi)} \coloneqq Q^{-1}\big(\overline{[x, \phi\restr{Z_\omega}]_\theta}\big)$. Since $\overline{[x, \phi\restr{Z_\omega}]_\theta} \ne X \times \widehat{Z}_\omega$, \cref{lemma: supp outside orbit} shows that $C_{(x,\phi)}$ is a proper closed subset of $X \times \PTHat$, and there exists $f \in M_T^\sigma {\setminus} \{0\}$ such that
\[
\supp\!\big((\psi_T \circ \iota^{-1})(f)\big) \subseteq (X \times \PTHat) \setminus C_{(x,\phi)}.
\]
Fix $a, b \in C^*(\GT,\sigma)$. To see that $\pi_\kappa(f) = 0$, it suffices to show that
\[
\big(\pi_\kappa(f)(a + N_k) \mid b + N_k\big) = 0.
\]
Since $\pi_\kappa$ is the GNS representation associated to $\kappa$, we have
\[
\big(\pi_\kappa(f)(a + N_k) \mid b + N_k\big) = \big(fa + N_k \mid b + N_k\big) = \kappa(b^*fa).
\]
By \cref{prop: conjugation supp outside orbit}, we have $(\psi_T \circ \iota^{-1} \circ \Phi)(b^*fa)(x,\phi) = 0$, and hence
\[
\kappa(b^*fa) = \big(\!\trace \circ \ev_{(x,\phi)} \circ\, \psi_T \circ \iota^{-1} \circ \Phi\big)(b^*fa) = \trace(0) = 0.
\]
Hence $\big(\pi_\kappa(f)(a + N_k) \mid b + N_k\big) = 0$, giving $\ker(\pi_\kappa) \ne \{0\}$.
\end{proof}

\begin{remark}
If $X$ is the infinite-path space of a cofinal, row-finite $k$-graph with no sources, and each $T^n$ is the degree-$n$ shift map, then \cref{thm: simplicity characterisation} coincides with the simplicity characterisation given in \cite[Corollary~4.8]{KPS2016JNG}.
\end{remark}

\begin{remark}
Theorem~5.1 of \cite{BCFS2014} shows that $C^*(\GT)$ is simple if and only if $\GT$ is minimal and effective. We claim that \cite[Theorem~5.1]{BCFS2014}, in the special case of Deaconu--Renault groupoids, is equivalent to \cref{thm: simplicity characterisation} when $\sigma$ is trivial. In this case, $\omega$ and each $\tilde{\twistchar}_\gamma^\sigma$ are also trivial, and $Z_\omega = \PT$. So \cref{thm: simplicity characterisation} says that $C^*(\GT,\sigma) = C^*(\GT)$ is simple if and only if the set
\[
[x,\phi]_\theta = \left\{ \big(r(\gamma),\, \tilde{\twistchar}_{[\gamma]}^\sigma \, \phi\big) : \gamma \in (\GT)_x \right\} = r\big((\GT)_x\big) \times \{\phi\}
\]
is dense in $X \times \PTHat$ for all $(x,\phi) \in X \times \PTHat$. Since $r\big((\GT)_x\big)$ is dense in $X$ and $X \times \{\phi\}$ is closed, we deduce that $\theta$ is minimal if and only if $\PT = \{0\}$. By \cref{cor: GT effective iff PT trivial}, this occurs precisely when $\GT$ is effective.
\end{remark}

\section{An application to some crossed products by \texorpdfstring{$\Z$}{the integers}}
\label{sec: application}

In this section we apply our theorem to characterise simplicity of crossed products of C*-algebras of rank-$1$ Deaconu--Renault groupoids arising from continuous $\T$-valued functions on the underlying spaces. We then specialise this to the analogue of quasi-free actions on topological-graph C*-algebras.

\subsection{Crossed products as twisted groupoid C*-algebras}

To apply our main theorem to understand crossed products of C*-algebras of Deaconu--Renault groupoids, we need to realise the latter as twisted groupoid C*-algebras. This follows from a more general result about crossed products of \'etale groupoids by actions of $\Z$ induced by $\T$-valued $1$-cocycles that may be of independent interest; so we record the general result first. We thank the referee for suggesting the more general formulation.

We will first need the following folklore result about multipliers of the C*-algebras of Hausdorff \'etale groupoids. We write $C_b(Y)$ for the set of continuous, bounded, complex-valued functions on a locally compact Hausdorff space $Y$.

\begin{lemma} \label{lemma: bisection multipliers}
Let $\GG$ be a Hausdorff \'etale groupoid, and fix $\sigma \in Z^2(\GG,\T)$. Suppose that $B \subseteq \GG$ is a clopen bisection of $\GG$ such that $s(B)$ and $r(B)$ are closed, and fix $f \in C_b(B)$. For $g \in C_c(\GG,\sigma)$, the convolution product $f * g$ given by
\[
(f * g)(\gamma) = \sum_{\alpha \in \GG^{r(\gamma)}} \sigma(\alpha,\alpha^{-1}\gamma) \, f(\alpha) \, g(\alpha^{-1}\gamma)
\]
belongs to $C_c(\GG,\sigma)$. There is a multiplier $M_f$ of $C^*(\GG,\sigma)$ such that for $g \in C_c(\GG,\sigma)$ we have $M_f(g) = f * g$. If $s(B) = r(B) = \GGo$ and $f(B) \subseteq \T$, then $M_f$ is a unitary multiplier of $C^*(\GG,\sigma)$. If $B_1$ and $B_2$ are two clopen bisections such that $r(B_i)$ and $s(B_i)$ are closed for each $i$, and $f_i \in C_b(B_i)$ for each $i$, then the convolution product $f_1 * f_2$ belongs to $C_b(B_1 B_2)$, and we have $M_{f_1} \circ M_{f_2} = M_{f_1 * f_2}$;
likewise, $f_1^* \in C_b(B_1^{-1})$ and $M^*_{f_1} = M_{f_1^*}$.
\end{lemma}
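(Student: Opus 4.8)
The plan is to extend $f$ by zero off the clopen set $B$, obtaining a bounded continuous function on $\GG$, and to realise $M_f$ as left convolution by this function, viewed as an adjointable operator on the Hilbert $C^*(\GG,\sigma)$-module $C^*(\GG,\sigma)$. Two structural facts will be used throughout. First, since $B$ is a bisection, for each $\gamma$ there is at most one $\alpha \in B$ with $r(\alpha) = r(\gamma)$, so every convolution sum involving $f$ collapses to a single term. Second, since $\GG$ is \'etale the maps $r$ and $s$ are open, so $r(B)$ and $s(B)$ are open and, by hypothesis, closed; hence $r^{-1}(r(B))$ and $s^{-1}(s(B))$ are clopen, and the section maps $(r\restr{B})^{-1}$ and $(s\restr{B})^{-1}$ are continuous on these closed images.

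First I would establish the $C_c$-claim. On the clopen set $r^{-1}(r(B))$ the map $\gamma \mapsto \alpha_\gamma \coloneqq (r\restr{B})^{-1}(r(\gamma))$ is continuous, so $(f*g)(\gamma) = \sigma(\alpha_\gamma,\alpha_\gamma^{-1}\gamma)\,f(\alpha_\gamma)\,g(\alpha_\gamma^{-1}\gamma)$ is continuous there and vanishes off it, giving continuity on $\GG$. For compact support, $\osupp(f*g) \subseteq B\,\supp(g)$; intersecting $B$ with the closed set $s^{-1}(r(\supp(g)))$ yields a subset $B'$ of $B$ mapped homeomorphically by $s\restr{B}$ onto the compact set $s(B) \cap r(\supp(g))$ (compact because $s(B)$ is closed), so $f*g$ is supported in the compact set $B'\,\supp(g)$. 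Thus $f*g \in C_c(\GG,\sigma)$.

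The crux, and the step I expect to be the main obstacle, is boundedness: left convolution $L_f$ by $f$ must extend from $C_c(\GG,\sigma)$ to a bounded operator, even though $f \notin C^*(\GG,\sigma)$ (it need not be compactly supported), so one cannot simply invoke multiplication in the algebra. Here I would exploit that $B^{-1}B = s(B) \subseteq \GGo$, so $f^* * f$ is supported on units; it lies in $C_b(\GGo)$, is nonnegative, and has norm $\lv f \rv_\infty^2$. Since $C_b(\GGo)$ embeds in $M(C^*(\GG,\sigma))$ with $h \le \lv h \rv_\infty 1$ for $h \ge 0$, the pointwise identity $(f*g)^* * (f*g) = g^* * (f^* * f) * g$ yields $\lv f*g \rv^2 = \lv g^*(f^**f)g \rv \le \lv f \rv_\infty^2\,\lv g \rv^2$, so $L_f$ extends to $M_f$ with $\lv M_f \rv \le \lv f \rv_\infty$. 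Applying this to $f^* \in C_b(B^{-1})$ bounds right convolution as well. To see $M_f$ is a genuine multiplier I would verify $(f*a)^* * b = a^* * (f^* * b)$ (a cocycle-identity computation), so that $M_f$ is adjointable with adjoint $M_{f^*}$ and hence $M_f \in \mathcal{L}(C^*(\GG,\sigma)) = M(C^*(\GG,\sigma))$; this simultaneously proves the adjoint formula $M_f^* = M_{f^*}$.

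Finally I would treat the composition and unitary claims. Before stating $M_{f_1 * f_2}$ I must check that $B_1 B_2$ is again a clopen bisection with closed range and source: the images $s(B_1 B_2) = s(B_2 \cap r^{-1}(s(B_1)))$ and $r(B_1 B_2) = r(B_1 \cap s^{-1}(r(B_2)))$ are closed, $B_1 B_2$ is open as a product of open bisections, and a net argument using continuity of $(s\restr{B_2})^{-1}$ and $(r\restr{B_1})^{-1}$ on these closed images shows $B_1 B_2$ is closed. Continuity and boundedness of $f_1 * f_2$ then follow as in the $C_c$ step. Since $M_{f_1}\circ M_{f_2}$ and $M_{f_1 * f_2}$ agree on the dense subspace $C_c(\GG,\sigma)$ by associativity of convolution, they agree. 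For the unitary claim I would apply this with $B_2 = B^{-1}$: when $r(B) = s(B) = \GGo$ and $\lav f \rav \equiv 1$ on $B$, normalisation of $\sigma$ forces $f^* * f = f * f^* = 1_{\GGo}$, the identity of $M(C^*(\GG,\sigma))$, whence $M_f^* M_f = M_f M_f^* = \id$.
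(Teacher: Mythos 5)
Your proof is correct, and its overall architecture---establish $f*g \in C_c(\GG,\sigma)$, bound $\lv f*g \rv$ by $\lv f \rv_\infty \lv g \rv$, identify the adjoint as $M_{f^*}$ via associativity, then deduce the unitary and composition statements---matches the paper's. The one step you handle genuinely differently is the norm estimate, which you rightly identify as the crux. The paper's device is a compactly supported cutoff: choosing $h \in C_c(\GGo,[0,1])$ with $h \equiv 1$ on $r(\supp(g))$ gives $f*g = (f*h)*g$ with $f*h \in C_c(\GG,\sigma)$ (this simultaneously disposes of the $C_c$-membership claim, which you instead prove via continuity of the section $(r\restr{B})^{-1}$ on the clopen set $r^{-1}(r(B))$---equally valid), after which the inequality $g^* * (h * f^* * f * h) * g \le \lv f \rv_\infty^2\, g^*g$ takes place entirely inside $C^*(\GG,\sigma)$. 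You instead invoke the embedding $C_b(\GGo) = \MM(C_0(\GGo)) \hookrightarrow \MM(C^*(\GG,\sigma))$ and the order inequality there. That is legitimate, but note two things: (i) the fact you cite is precisely the case $B = \GGo$ of the lemma being proved, so you are reducing to a standard special case rather than avoiding the issue, and its usual proof (nondegeneracy of $C_0(\GGo)$ in $C^*(\GG,\sigma)$) is exactly the paper's cutoff trick; and (ii) you still need to check that the multiplier attached to $f^* * f \in C_b(\GGo)$ by that embedding acts on $C_c(\GG,\sigma)$ by the convolution formula, so that your pointwise identity $(f*g)^* * (f*g) = g^* * (f^* * f) * g$ can be read as an identity in $C^*(\GG,\sigma)$. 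The paper's route sidesteps both points at no extra cost. The remaining steps (the net argument that $B_1B_2$ is clopen, the computation $f^* * f = 1_{\GGo}$ when $\lav f \rav \equiv 1$, and the density argument for multiplicativity) coincide with the paper's; your extra care in verifying that $r(B_1B_2)$ and $s(B_1B_2)$ are closed is a small improvement, since the paper leaves this implicit.
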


\begin{proof}
Fix $g \in C_c(\GG,\sigma)$. Since $\supp(g)$ is compact, its image $r(\supp(g))$ under the continuous range map is also compact. Use Urysohn's lemma to fix a compactly supported function $h \in C_c\big(\GGo,[0,1]\big)$ such that $h\restr{r(\supp(g))} \equiv 1$. Then $f * g = f * (h * g) = (f * h) * g$. The function $f * h$ is given by $(f * h)(\gamma) = f(\gamma) h(s(\gamma))$, and since $\supp(f * h) \subseteq s\restr{B}^{-1}\big(\!\supp(h) \cap s(B)\big) \subseteq B$, it follows that $f * h \in C_c(\GG,\sigma)$. So $f * g = (f * h) * g \in C_c(\GG,\sigma)$.

Using the same function $h$ as above, we see that
\[
\lv f * g \rv^2 = \lv ((f * h) * g)^* * ((f * h) * g) \rv = \lv g^* * (h * f^* * f * h) * g \rv.
\]
In $C^*(\GG,\sigma)$, we have
\[
g^* * (h * f^* * f * h) * g \le \lv h * f^* * f * h\rv_\infty \, g^*g \le \lv f \rv_\infty^2 \, g^*g,
\]
and so we deduce that $\lv f * g \rv \le \lv f \rv_\infty \lv g \rv$. So the map $g \mapsto f * g$ on $C_c(\GG,\sigma)$ extends to a bounded linear map $M_f$ of norm at most $\lv f \rv_\infty$ on $C^*(\GG,\sigma)$. Defining $f^*\colon B^{-1} \to \C$ by $f^*(\gamma) = \overline{\sigma(\gamma^{-1}, \gamma)} \overline{f(\gamma^{-1})}$ as usual, associativity of multiplication shows that for $g,h \in C_c(\GG,\sigma)$, we have $M_f(g)^* * h = g^* * f^* * h = g^* * M_{f^*}(h)$. Thus $M_f$ is adjointable with respect to the standard inner product on $C^*(\GG,\sigma)$, with adjoint $M_{f^*}$. So $M_f$ is a multiplier, as claimed. If $s(B) = r(B) = \GGo$ and $f(B) \subseteq \T$, then for $g \in C_c(\GG,\sigma)$ we have $M^*_f(M_f(g)) = f^* * f * g = 1_{\GGo} * g = g$, and similarly, $M_f(M^*_f(g)) = g$. So continuity gives $M^*_f M_f = M_f M^*_f = 1_{\MM(C^*(\GG,\sigma))}$, and thus $M_f$ is a unitary.

For the final statement, we already saw that $M_{f^*} = M^*_f$ for all $f$, so we just have to establish the multiplicativity. If $B_1$ and $B_2$ are clopen bisections and $f_i \in C_b(B_i)$, then $B_1B_2$ is an open bisection because multiplication is open. To see that $B_1B_2$ is also closed, suppose that $\gamma_i \to \gamma$ and each $\gamma_i \in B_1B_2$. Then each $\gamma_i$ can be written as $\alpha_i\beta_i$ with each $\alpha_i$ in $B_1$ and each $\beta_i$ in $B_2$. Since $\gamma_i \to \gamma$, we have $r(\alpha_i) = r(\gamma_i) \to r(\gamma)$, and then since $r\restr{B_1}$ is a homeomorphism, $\alpha_i \to \alpha$ for some $\alpha \in B_1$. Similarly (using $s$ in place of $r$), we have $\beta_i \to \beta$ for some $\beta \in B_2$. Since each $s(\alpha_i) = r(\beta_i)$, continuity gives $s(\alpha) = r(\beta)$, and since $\alpha_i\beta_i = \gamma_i \to \gamma$, continuity also gives $\alpha\beta = \gamma$. So $\gamma \in B_1B_2$. The convolution formula shows that $\supp(f_1 * f_2) \subseteq B_1 B_2$ and that $\lv f_1 * f_2 \rv_\infty \le \lv f_1 \rv_\infty \lv f_2 \rv_\infty$. For $g \in C_c(\GG,\sigma)$ we have $M_{f_1}(M_{f_2}(g)) = f_1 * (f_2 * g) = (f_1 * f_2) * g = M_{f_1 * f_2}(g)$, and then continuity gives $M_{f_1}M_{f_2} = M_{f_1*f_2}$.
\end{proof}

We can now discuss how to realise certain crossed products of \'etale-groupoid C*-algebras as twisted groupoid C*-algebras.

Let $\GG$ be a locally compact Hausdorff groupoid with a Haar system and let $c\colon \GG \to \T$ be a continuous $1$-cocycle. By \cite[Proposition~II.5.1]{Renault1980} there is an action $\alpha = \alpha^c$ of $\Z$ on $C^*(\GG)$ such that
\begin{equation} \label{eqn: induced action}
\alpha_n(f)(\gamma) = c(\gamma)^n f(\gamma) \quad \text{ for all } n \in \Z, \, f \in C_c(\GG), \text{ and } \gamma \in \GG.
\end{equation}

\begin{prop} \label{prop: CP gives twist}
Let $\GG$ be a second-countable Hausdorff \'etale groupoid. Suppose that $c\colon \GG \to \T$ is a continuous $1$-cocycle, and let $\alpha = \alpha^c$ be the corresponding action of $\Z$ on $C^*(\GG)$ as in \cref{eqn: induced action}. There is a continuous $\T$-valued $2$-cocycle $\omega = \omega_c$ on $\GG \times \Z$ given by $\omega((\beta,m),(\gamma,n)) \coloneqq c(\beta)^n$, and there is an isomorphism $\phi\colon C^*(\GG) \rtimes_\alpha \Z \to C^*(\GG \times \Z, \omega)$ such that
\[
\phi\big(i_{C^*(\GG)}(f)\,i_\Z(n)\big)(\gamma,p) = \delta_{-n,p} \, f(\gamma),
\]
for $f \in C_c(\GG)$, $n \in \Z$, and $(\gamma,p) \in \GG \times \Z$.
\end{prop}

\begin{proof}
First note that $\omega$ is normalised because $c(r(\gamma))^n = c(\gamma)^0 = 1$ for all $\gamma \in \GG$ and $n \in \Z$. To see that $\omega$ satisfies the $2$-cocycle identity, fix a composable triple $\big((\beta,m), (\gamma,n), (\lambda,p)\big)$ in $\GG \times \Z$. Then
\begin{align*}
\omega\big((\beta,m), (\gamma,n)\big) \, \omega\big((\beta,m)(\gamma,n), (\lambda, p)\big) &= c(\beta)^n \, c(\beta\gamma)^p = c(\beta)^{n+p} \, c(\gamma)^p \\
&= \omega\big((\beta,m), (\gamma,n)(\lambda,p)\big) \, \omega\big((\gamma,n),(\lambda,p)\big).
\end{align*}

For the final statement, first note that for $n \in \Z$, the set $\GGo \times \{n\}$ is a clopen bisection of $\GG \times \Z$ with range and source equal to $(\GG \times \Z)^{(0)}$. Hence \cref{lemma: bisection multipliers} shows that there is a unitary multiplier $U_n$ of $C^*(\GG \times \Z, \omega)$ that acts on $C_c(\GG \times \Z, \omega)$ by convolution with the characteristic function $1_{\GGo \times \{-n\}}$. Since $c$ vanishes on $\GGo$, the final statement of \cref{lemma: bisection multipliers} shows that $n \mapsto U_n$ is a unitary representation of $\Z$.

Since $\GG \times \{0\}$ is isomorphic to $\GG$ and $\omega$ is trivial on $\GG \times \{0\}$, the universal property of $C^*(\GG)$ yields a homomorphism $\pi\colon C^*(\GG) \to C^*(\GG \times \Z,\omega)$ such that $\pi(f)(\gamma,m) = \delta_{m,0} \, f(\gamma)$ for all $f \in C_c(\GG)$ and $(\gamma,m) \in \GG \times \Z$. We claim that $\pi$ is nondegenerate. To see this, fix $g \in C_c(\GG \times \Z, \omega)$, and use Urysohn's lemma to choose $f \in C_c(\GG)$ such that $\supp(f) \subseteq \GGo$ and $f\restr{\pi_1(r(\supp(g)))} \equiv 1$, where $\pi_1$ is the projection of $\GG \times \Z$ onto the first coordinate. A routine calculation shows that $\pi(f)g = g$, and hence $\pi$ is nondegenerate.

We claim that $(\pi,U)$ is a covariant representation of $(C^*(\GG), \Z, \alpha)$. To see this, fix $f \in C_c(\GG)$ and $n \in \Z$. For all $(\gamma, m) \in \GG \times \Z$, we have
\begin{align*}
(U_n \, \pi(f) \, &U^*_n)(\gamma,m) \\
&= (U_n \, \pi(f) \, U_{-n})(\gamma,m) \\
&= \sum_{(\eta,p)(\beta,q)(\lambda,l) = (\gamma,m)} c(\eta)^{q+l} \, c(\beta)^l \, 1_{\GGo \times \{-n\}}(\eta,p) \, \pi(f)(\beta,q) \, 1_{\GGo \times \{n\}}(\lambda,l).
\end{align*}
If $(\eta,p)(\beta,q)(\lambda,l) = (\gamma,m)$ contributes a nonzero term, then $(\eta, p) \in \GGo \times \{-n\}$ and $(\lambda, l) \in \GGo \times \{n\}$; thus $(\eta, p) = (r(\gamma), -n)$ and $(\lambda, l) = (s(\gamma), n)$, and hence $(\beta, q) = (\gamma, m)$. So we obtain
\[
(U_n \, \pi(f) \, U^*_n)(\gamma, m) = c(\gamma)^n \, \pi(f)(\gamma,m) = \delta_{m,0} \, c(\gamma)^n f(\gamma) = \pi(\alpha_n(f))(\gamma, m).
\]
Therefore, $(\pi,U)$ is a nondegenerate covariant representation of $(C^*(\GG), \Z, \alpha)$, and so the universal property of the crossed product gives a homomorphism $\phi\colon C^*(\GG) \rtimes_{\alpha} \Z \to C^*(\GG \times \Z, \omega)$ such that $\phi\big(i_{C^*(\GG)}(f) \, i_\Z(n)\big)(\gamma,p) = (\pi(f) \, U_n)(\gamma, p) = \delta_{-n,p} \, f(\gamma)$.

To see that $\phi$ is injective, it suffices by \cite[Proposition~4.5.1]{BO2008} to show that $\pi$ is injective and that there is a strongly continuous action $\beta$ of $\T$ on $C^*(\GG \times \Z, \omega)$ such that for each $z \in \T$, we have $\beta_z(\pi(f)) = \pi(f)$ for all $f \in C_c(\GG)$, and the extension $\overline{\beta}_z$ of $\beta_z$ to the multiplier algebra $\MM(C^*(\GG \times \Z, \omega))$ satisfies $\overline{\beta}_z(U_n) = z^n U_n$ for all $n \in \Z$.

We first show that $\pi$ is injective. Let $Y$ denote the right-$C^*(\GG)$-module direct sum $\bigoplus_{n \in \Z} C^*(\GG)$. For $f \in C_c(\GG \times \Z, \omega)$ and $n \in \Z$, we write $f_n \in C_c(\GG)$ for the function such that $f(\gamma,n) = f_n(\gamma)$ for all $\gamma \in \GG$. For $f \in C_c(\GG \times \Z)$ and $\xi \in C_c(\Z, C^*(\GG)) \subseteq Y$, define $f \cdot \xi \in C_c(\Z, C^*(\GG))$ by $(f \cdot \xi)(n) \coloneqq \sum_{p+q = n} \alpha_p(f_q) \xi(p)$, where the product $\alpha_p(f_q) \xi(p)$ is computed in $C^*(\GG)$.

We claim that $f \mapsto \big(\xi \mapsto f\cdot \xi\big)$ is a $*$-homomorphism from $C_c(\GG \times \Z, \omega)$ to $\LL(Y)$. To see that it is multiplicative, fix $f \in C_c(\GG \times \{a\}) \subseteq C_c(\GG \times \Z, \omega)$ and $g \in C_c(\GG \times \{b\}) \subseteq C_c(\GG \times \Z, \omega)$. Then $fg \in C_c(\GG \times \{a+b\})$, and for $\gamma \in \GG$, we have
\begin{align*}
(fg)_{a+b}(\gamma) &= \sum_{(\beta,p)(\lambda,q) = (\gamma, a+b)} \omega((\beta,p),(\lambda,q)) \, f(\beta,p) \, g(\lambda,q) \\
&= \sum_{\beta\lambda = \gamma} \omega((\beta,a),(\lambda,b)) \, f_a(\beta) \, g_b(\lambda) = \sum_{\beta\lambda = \gamma} c(\beta)^b \, f_a(\beta) \, g_b(\lambda) = (\alpha_b(f_a)g_b)(\gamma).
\end{align*}
Thus, for $\xi \in C_c(\Z, C^*(\GG))$ and $n \in \Z$, we have
\begin{align*}
(f \cdot (g \cdot \xi))(n) &= \alpha_{n-a}(f_a) \, (g \cdot \xi)(n-a) = \alpha_{n-a}(f_a) \, \alpha_{n-(a+b)}(g_b) \, \xi(n-(a+b)) \\
&= \alpha_{n-(a+b)}(\alpha_b(f_a)g_b) \, \xi(n-(a+b)) \\
&= \alpha_{n-(a+b)}((fg)_{a+b}) \, \xi(n-(a+b)) = \sum_{p+q = n} \alpha_p((fg)_q) \, \xi(p) = ((fg) \cdot \xi)(n).
\end{align*}
Hence $f \mapsto \big(\xi \mapsto f \cdot \xi\big)$ is multiplicative.

To see that it preserves adjoints, fix $f \in C_c(\GG \times \{m\}) \subseteq C_c(\GG \times \Z, \omega)$, and $\xi \in C_c(\{p\}, C^*(\GG)) \subseteq Y$ and $\eta \in C_c(\{q\}, C^*(\GG)) \subseteq Y$. Then
\[
\langle f \cdot \xi, \eta\rangle_{C^*(\GG)} = (f \cdot \xi)(q)^* \, \eta(q) = \delta_{p+m,q} \, \xi(p)^* \, \alpha_p(f_m)^* \, \eta(q).
\]
Since $\omega((\gamma,-m), (\gamma,-m)^{-1}) = c(\gamma)^m$ for $\gamma \in \GG$, a computation shows that $\alpha_m((f^*)_{-m}) = (f_m)^*$. Hence $f^* \in C_c(\GG \times \{-m\})$, and
\[
\langle \xi, f^* \cdot \eta \rangle_{C^*(\GG)} = \xi(p)^* \, (f^* \cdot \eta)(p) = \delta_{q-m,p} \, \xi(p)^* \, \alpha_{p+m}((f^*)_{-m}) \, \eta(q) = \langle f \cdot \xi, \eta \rangle_{C^*(\GG)}.
\]
So $f \mapsto \big(\xi \mapsto f \cdot \xi\big)$ preserves adjoints, and hence is a $*$-homomorphism.

The universal property of $C^*(\GG \times \Z, \omega)$ therefore implies that there is a homomorphism $\psi\colon C^*(\GG \times \Z, \omega) \to \LL(Y)$ such that $\psi(f)\xi = f \cdot \xi$ for $f \in C_c(\GG \times \Z, \omega)$ and $\xi \in Y$.

Let $i_0\colon C^*(\GG)_{C^*(\GG)} \to Y$ be the inclusion as the $0$-submodule. A quick calculation shows that for $a,b \in C_c(\GG)$, we have $\psi(\pi(a))i_0(b) = i_0(ab)$. Since $i_0$ is isometric and since the left action of $C^*(\GG)$ on itself by multiplication is isometric, we deduce that $\psi \circ \pi$ is injective, and hence $\pi$ is injective.

So we just need to construct the action $\beta$. For $z \in \T$, the map $\beta_z\colon C_c(\GG \times \Z, \omega) \to C^*(\GG \times \Z, \omega)$ given by $\beta_z(f)(\gamma,n) \coloneqq z^n f(\gamma,n)$ is a $*$-homomorphism, and hence the universal property of $C^*(\GG \times \Z, \omega)$ implies that it extends to an endomorphism $\beta_z$ of $C^*(\GG \times \Z, \omega)$. Since $\beta_{\overline{z}} \circ \beta_z$ is the identity map on $C_c(\GG \times \Z, \omega)$, each $\beta_z$ is an automorphism, and since $\beta_{z} \circ \beta_w$ agrees with $\beta_{zw}$ on $C_c(\GG \times \Z, \omega)$, we see that $z \mapsto \beta_z$ is a homomorphism. For $f \in C_c(\GG \times \{n\})$, the map $z \mapsto \beta_z(f)$ is clearly continuous, and an $\frac{\varepsilon}{3}$-argument then shows that $\beta$ is a strongly continuous action of $\T$.

We claim that the extension of each $\beta_z$ to the multiplier algebra $\MM(C^*(\GG \times \Z, \omega))$ satisfies $\overline{\beta}_z(U_n) = z^n U_n$ for each $n \in \Z$. To see this, fix $n \in \Z$ and an increasing sequence $K_i \subseteq \GGo$ of compact sets with $\bigcup_{i \in \N} K_i = \GGo$, and for each $i \in \N$, fix $h_i \in C_c\big(\GGo \times \{n\}, [0,1]\big)$ such that $h_i\restr{K_i \times \{n\}} \equiv 1$. For $f \in C_c(\GG \times \Z)$ there exists $N \in \N$ large enough so that $r(\supp(f)) \subseteq K_N \times \{0\}$, and then $h_i * f = U_n f$ for all $i \ge N$. So the sequence $(h_i)_{i \in \N}$ converges strictly to $U_n$, and since $\beta_z(h_i) = z^n h_i$ for all $z \in \T$, this establishes the claim. Thus $\phi$ is injective.

It remains only to prove that $\phi$ is surjective. For this, fix an open bisection $B$ of $\GG \times \Z$ and distinct points $\beta,\gamma \in B$. Then $B = B' \times \{n\}$ for some open bisection $B'$ of $\GG$ and some $n \in \Z$, and so $\beta = (\beta', n)$ and $\gamma = (\gamma',n)$ for distinct $\beta', \gamma' \in \GG$. Fix $f \in C_c(\GG)$ such that $\supp(f) \subseteq B'$, $f(\beta') = 1$, and $f(\gamma') = 0$. Then the support of $\phi\big(i_{C^*(\GG)}(f) \, i_\Z(-n)\big) = \pi(f) \, U_{-n}$ is contained in $B$, and we have $\phi\big(i_{C^*(\GG)}(f) \, i_\Z(-n)\big)(\beta',n) = f(\beta') = 1$ and $\phi\big(i_{C^*(\GG)}(f) \, i_\Z(-n)\big)(\gamma',n) = f(\gamma') = 0$. So \cite[Corollary~9.3.5]{Sims2020} shows that $\phi$ is surjective.
\end{proof}

\subsection{An application \texorpdfstring{of \cref{thm: simplicity characterisation}}{} to crossed products}

We now make use of \cref{thm: simplicity characterisation} and \cref{prop: CP gives twist} to study simplicity of certain crossed products of C*-algebras of Deaconu--Renault groupoids.

The following is an immediate corollary of results of Renault \cite[Section~4.1]{Renault2003} (see also \cite{FHKP2022}) together with \cref{prop: CP gives twist}; we have written it out primarily to establish our set-up for the rest of the section.

\begin{cor} \label{cor: the CP}
Let $(X,T)$ be a rank-$1$ Deaconu--Renault system such that $X$ is second-countable, and let $h\colon X \to \T$ be a continuous function.
\begin{enumerate}[label=(\alph*)]
\item \label{item: eventually constant} For each $(x, p, y) \in \GT \subseteq X \times \Z \times X$, the sequence
\[
\Big(\prod^N_{i=0} h(T^i(x)) \prod^{N-p}_{i=0} \overline{h(T^i(y))}\Big)^\infty_{N = \lav p \rav}
\]
is eventually constant.
\item \label{item: continuous 1-cocycle} There is a continuous $1$-cocycle $\widetilde{h}\colon \GT \to \T$ such that
\[
\widetilde{h}(x, p, y) = \prod^N_{i=0} h(T^i(x)) \prod^{N-p}_{i=0} \overline{h(T^i(y))}
\]
for large $N \in \N$.
\item \label{item: action exists} There is an action $\alpha^h\colon \Z \to \Aut(C^*(\GT))$ such that $\alpha^h_n(f)(\gamma) = \widetilde{h}(\gamma)^n f(\gamma)$ for all $f \in C_c(\GT)$ and $\gamma \in \GT$.
\item \label{item: 2-cocycle} There is a continuous $2$-cocycle $c_h\colon (\GT \times \Z)^{(2)} \to \T$ given by
\[
c_h((\alpha, m),(\beta, n)) \coloneqq \widetilde{h}(\alpha)^n,
\]
and there is an isomorphism $\phi\colon C^*(\GT) \rtimes_{\alpha^h} \Z \to C^*(\GT \times \Z, c_h)$ such that
\[
\phi\big(i_{C^*(\GT)}(f) \, i_\Z(n)\big)(\gamma,p) = \delta_{-n,p} \, f(\gamma),
\]
for $f \in C_c(\GT)$, $n \in \Z$, and $(\gamma,p) \in \GT \times \Z$.
\end{enumerate}
\end{cor}

\begin{proof}
Statements \cref{item: eventually constant}~and~\cref{item: continuous 1-cocycle} follow from the arguments of \cite[Section~4.1]{Renault2003} or \cite[Proposition~3.10]{FHKP2022}. The action \cref{item: action exists} is the one described in \cref{eqn: induced action}. The final statement is a special case of \cref{prop: CP gives twist}.
\end{proof}

\begin{thm} \label{thm: CP simple}
Let $(X,T)$ be a rank-$1$ Deaconu--Renault system such that $X$ is second-countable, and let $h\colon X \to \T$ be a continuous function. Let $\widetilde{h}\colon \GT \to \T$ be the $1$-cocycle of \cref{cor: the CP}\cref{item: continuous 1-cocycle}, and let $\alpha^h\colon \Z \to \Aut(C^*(\GT))$ be the action of \cref{cor: the CP}\cref{item: action exists}. Write $\rho$ for the action of $\GT$ on $X \times \T$ given by $\rho_\gamma(s(\gamma), z) \coloneqq (r(\gamma), \widetilde{h}(\gamma) z)$. Suppose that $X$ is an uncountable space. Then the crossed product $C^*(\GT) \rtimes_{\alpha^h} \Z$ is simple if and only if $\rho$ is minimal.
\end{thm}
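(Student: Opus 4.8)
The plan is to turn the crossed product into a twisted groupoid C*-algebra of a rank-$2$ Deaconu--Renault system and then apply \cref{thm: simplicity characterisation}. By \cref{lemma: the CP}\cref{item: 2-cocycle}, $C^*(\GT)\rtimes_{\alpha^h}\Z\cong C^*(\GT\times\Z, c_h)$. I would realise $\GT\times\Z$ as the Deaconu--Renault groupoid $\GG_S$ of the rank-$2$ system $(X,S)$ with $S^{(a,b)}:=T^a$ (the second generator acting trivially): the map $(x,(p,q),y)\mapsto\big((x,p,y),q\big)$ is an isomorphism of topological groupoids, and it carries $c_h$ to a continuous $2$-cocycle $\sigma\in Z^2(\GG_S,\T)$ with $\sigma\big((x,(p,m),y),(y,(q,n),z)\big)=\widetilde h(y,q,z)^{m}$. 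Since $\Orb[S]{x}=\Orb[T]{x}$ for all $x$, the system $(X,S)$ is minimal iff $(X,T)$ is. If $(X,T)$ is not minimal, then \cref{thm: simplicity characterisation}\cref{item: main simple implies minimal} gives non-simplicity, while for any $x$ with non-dense orbit the $\rho$-orbit of $(x,z)$ projects onto $\Orb[T]{x}$, so $\rho$ is not minimal; thus both sides of the equivalence fail and I may assume $(X,T)$ minimal.

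Next I would compute the spectral data for $(\GG_S,\sigma)$. Here $P_S=\PT\times\Z$ by \cref{prop: PT characterisation}, the $\Z$-coordinate being unconstrained. A direct check gives $\sigma_x\big((p,m),(q,n)\big)=\nu(q)^m$, where $\nu\colon\PT\to\T$ is the character $\nu(q):=\widetilde h(x,q,x)$; this is independent of $x$ because $\widetilde h(y,q,y)=\widetilde h(\gamma)\widetilde h(x,q,x)\overline{\widetilde h(\gamma)}=\widetilde h(x,q,x)$ for any $\gamma\colon x\to y$, together with minimality and continuity. One verifies that $\sigma_x$ is already a bicharacter vanishing on its own centre, so I may take $\omega=\sigma_x$ in \cref{prop: DR cohomologous vanishing bicharacter}; then $\omega\omega^*\big((p,m),(q,n)\big)=\nu(q)^m\overline{\nu(p)^n}$, whence $Z_\omega=\ker\nu\times d\Z$, where $d$ is the order of $\zeta:=\nu(g)$ for a generator $g$ of $\PT$ (with $d\Z:=\{0\}$ when $\zeta$ has infinite order). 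Computing $\twistchar^\sigma$ from \cref{lemma: twistchar_gamma^sigma} yields $\twistchar^\sigma_{\widehat\gamma}(p,m)=\nu(p)^{m_0}\,\widetilde h(\pi_1\widehat\gamma)^{-m}$, where $m_0$ is the $\Z$-degree of $\widehat\gamma$ and $\pi_1$ is the projection to $\GT$, so on $Z_\omega$ it reduces to $(p,m)\mapsto\widetilde h(\pi_1\widehat\gamma)^{-m}$. Feeding this into \cref{prop: spectral action} describes $\theta$ on $X\times\widehat{Z}_\omega$ explicitly.

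With $\theta$ in hand I would compare it to $\rho$ through the continuous map $\Phi\colon X\times\T\to X\times\widehat{Z}_\omega$, $\Phi(y,z):=(y,\psi_z)$, where $\psi_z(p,m):=z^{-m}$ on $Z_\omega$. The formula for $\twistchar^\sigma|_{Z_\omega}$ (which, note, does not depend on the degree $m_0$) shows that $\Phi$ carries the $\rho$-orbit of each point onto the $\theta$-orbit of its image, so $\Phi(\overline{O_\rho})\subseteq\overline{O_\theta}$. When $\ker\nu=\{0\}$ the map $\Phi$ is onto, so $\rho$ minimal $\Rightarrow\theta$ minimal; and then either $\PT=\{0\}$, in which case $\widehat{Z}_\omega\cong\T$ and $\Phi$ is an equivariant homeomorphism (this is exactly the effective case, by \cref{cor: GT effective iff PT trivial}), or $\PT\neq\{0\}$, in which case $\zeta$ has infinite order, $Z_\omega=\{0\}$, $\widehat{Z}_\omega$ is a point, $\theta$ is minimal because $(X,T)$ is, and $\rho$ is minimal because the interior isotropy already supplies the dense vertical holonomy $\overline{\langle\zeta\rangle}=\T$ over every base point. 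Either way $\theta$ is minimal iff $\rho$ is, settling \cref{thm: CP simple} when $\ker\nu=\{0\}$.

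The remaining, and hardest, case is $\ker\nu\neq\{0\}$. Here $\ker\nu\cong\Z$ is infinite, so $\widehat{\ker\nu}\cong\T$ is a nontrivial factor of $\widehat{Z}_\omega$ on which $\theta$ acts trivially; hence $\theta$ is not minimal, and \cref{thm: simplicity characterisation} shows the crossed product is not simple. To match this I must prove $\rho$ is not minimal, and this is where I expect to use that $X$ is uncountable. Since $\ker\nu\neq\{0\}$ forces $\zeta$ to be a root of unity of some order $d$, so that $\nu(\PT)=\mu_d$, I would show that $\widetilde h$ is cohomologous to a $\mu_d$-valued cocycle; equivalently, via the exact sequence $1\to\mu_d\to\T\xrightarrow{z\mapsto z^d}\T\to1$, that $\widetilde h^{\,d}$ is a coboundary on $\GT$. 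Granting this, $\rho$ is conjugate to a skew product by a $\mu_d$-valued cocycle, whose orbit closures are contained in the proper sets $X\times(\mu_d\text{-coset})$, so $\rho$ is not minimal. The main obstacle is precisely this triviality statement: $\widetilde h^{\,d}$ manifestly vanishes on the interior isotropy, since $\nu(q)^d=1$, but upgrading ``vanishing on isotropy'' to ``is a continuous coboundary over all of $X$'' is the delicate step. I anticipate that uncountability of $X$ enters exactly here, ruling out the degenerate atomic behaviour of a minimal action on a countable space and allowing $\widetilde h^{\,d}$ to be trivialised continuously along the (then necessarily perfect) orbit structure.
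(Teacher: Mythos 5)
Your reduction to a rank-$2$ Deaconu--Renault system and the dispatch of the non-minimal case both match the paper, but there is a genuine gap in the minimal case, and it stems from missing the one place where the paper actually uses the hypothesis that $X$ is uncountable. The paper proves (its \cref{lemma: X uncountable GT top principal}) that a \emph{minimal} rank-$1$ system on an uncountable second-countable space has $\IT = \GTo$, equivalently $\PT = \{0\}$ by \cref{cor: GT effective iff PT trivial}. Once this is known, the interior of the isotropy of the rank-$2$ groupoid is exactly $\{(x,(0,n),x)\}$, the cocycle $c_h$ restricts trivially to it, so $\omega = 1$, $Z_\omega = \{0\}\times\Z\cong\Z$, $\widehat{Z}_\omega\cong\T$, the quotient groupoid is $\GT$ itself, and the spectral action is literally $\rho$; \cref{thm: simplicity characterisation} then finishes the proof with no case analysis. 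Your argument instead keeps $\PT$ general, introduces the character $\nu(q)=\widetilde h(x,q,x)$, and branches on $\ker\nu$ --- but every branch with $\PT\neq\{0\}$ is vacuous under the standing hypotheses, and the branch you identify as the hardest ($\ker\nu\neq\{0\}$) is precisely one of these.

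Concretely, the step you leave open --- upgrading ``$\widetilde h^{\,d}$ vanishes on the interior isotropy'' to ``$\widetilde h^{\,d}$ is a continuous coboundary on $\GT$'' --- is not established, and your stated expectation that uncountability of $X$ enters at that point is not how the hypothesis is used: it enters much earlier, through \cref{lemma: X uncountable GT top principal}, to rule the whole situation out. (The proof of that lemma is itself a nontrivial point-set argument: a nontrivial $\IT$ produces an uncountable open set of $p$-periodic points for $T$, and minimality plus a diagonal/subsequence argument then forces every point of that set to lie on a single forward orbit of a periodic point, contradicting uncountability.) So to repair your proof you should either prove this lemma and collapse your case analysis to the single case $\PT=\{0\}$ --- where your computation of $\theta$ and its identification with $\rho$ via $\Phi$ is correct and agrees with the paper --- or supply a genuine proof of the coboundary claim, which the paper does not need and does not attempt.
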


In order to prove \cref{thm: CP simple}, we need the following lemma.

\begin{lemma} \label{lemma: X uncountable GT top principal}
Let $(X,T)$ be a minimal rank-$1$ Deaconu--Renault system such that $X$ is second-countable. If $X$ is uncountable, then $\GT$ is topologically principal.
\end{lemma}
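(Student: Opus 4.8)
The plan is to prove the contrapositive: if $\GT$ is not topologically principal, then $X$ is countable. Since $X$ is second-countable, so is $\GT$, and hence \cite[Lemma~3.1]{BCFS2014} shows that $\GT$ is topologically principal if and only if it is effective; moreover, because $(X,T)$ is minimal, \cref{cor: GT effective iff PT trivial} shows that $\GT$ is effective if and only if $\PT = \{0\}$. So I would begin by assuming $\PT \neq \{0\}$ and work to deduce that $X$ is countable.

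First I would extract a nonempty open set on which a power of $T$ acts as the identity. Choosing a nonzero $p \in \PT$ (which we may take to be positive, since $\PT$ is a subgroup of $\Z$ by \cref{prop: PT characterisation}) and unwinding \cref{def: PT}, there are a nonempty precompact open $U \subseteq X$ and $m > n \geq 0$ with $p = m - n$ and $T^m\restr{\overline U} = T^n\restr{\overline U}$. Setting $V \coloneqq T^n(U)$, which is open and nonempty because the local homeomorphism $T^n$ is an open map, a one-line computation gives $T^p\restr{V} = \id$. In particular every $v \in V$ is a forward-periodic point with period dividing $p$, so its forward orbit $\ForwardOrbit{v} = \{v, T(v), \dots, T^{p-1}(v)\}$ is finite and $T^a(v) \in \ForwardOrbit{v}$ for all $a \geq 0$.

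The heart of the argument is to show that $V$ is in fact finite, and I expect this to be the main obstacle, since it is the step where the interplay of the local-homeomorphism structure and periodicity genuinely rules out uncountability. Fix $y_0 \in V$. For an arbitrary $y' \in V$ and any open neighbourhood $N \subseteq V$ of $y'$, minimality makes $\Orb[T]{y_0}$ dense, so I can choose $z \in \Orb[T]{y_0} \medcap N$. As $z \in V$ it is periodic, so every forward iterate of $z$ lies in the finite set $\ForwardOrbit{z}$; and as $z \in \Orb[T]{y_0}$ there are $a, b \geq 0$ with $T^a(z) = T^b(y_0) \in \ForwardOrbit{y_0}$, which forces the finite forward orbits $\ForwardOrbit{z}$ and $\ForwardOrbit{y_0}$ to share a point, hence to coincide, and so $z \in \ForwardOrbit{y_0}$. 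Thus every neighbourhood of $y'$ contained in $V$ meets the finite (hence closed) set $\ForwardOrbit{y_0}$, giving $y' \in \ForwardOrbit{y_0}$. Therefore $V \subseteq \ForwardOrbit{y_0}$ is finite, $\{y_0\}$ is open, and $y_0$ is isolated in $X$.

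Finally I would upgrade an isolated point to a single orbit using minimality once more: for any $x \in X$ the dense orbit $\Orb[T]{x}$ meets the open set $\{y_0\}$, so $y_0 \in \Orb[T]{x}$ and hence $x \in \Orb[T]{y_0}$, giving $X = \Orb[T]{y_0}$. Writing $\Orb[T]{y_0} = \bigcup_{a \geq 0} (T^a)^{-1}\big(\ForwardOrbit{y_0}\big)$ and using that each $T^a$ is a local homeomorphism (so the preimage of a point is discrete, and therefore countable in the second-countable space $X$) exhibits $X$ as a countable union of countable sets. Hence $X$ is countable, which completes the contrapositive and proves the lemma.
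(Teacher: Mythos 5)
Your proof is correct, and although it rests on the same two pillars as the paper's argument---the reduction via \cite[Lemma~3.1]{BCFS2014} and \cref{cor: GT effective iff PT trivial} to showing $\PT = \{0\}$, and the construction of a nonempty open set $V = T^n(U)$ on which $T^p$ acts as the identity---it is organised quite differently. The paper argues by contradiction: it first arranges for $V$ to be \emph{uncountable} (by covering $X$ with countably many basic periodicity neighbourhoods and using that $X$ is uncountable while point-preimages under $T^n$ are countable), then picks $y \in V$ outside the countable forward orbit of a fixed $x \in V$ and shows that a sequence $(z_i, m_i, x)$ with $z_i \to y$, supplied by minimality, is eventually constant equal to a fixed point of the forward orbit of $x$, contradicting the choice of $y$. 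You instead prove the contrapositive and establish the stronger structural fact that $V$ is \emph{finite}: any point of the dense orbit $\Orb[T]{y_0}$ that lands in $V$ is periodic and hence trapped in the finite cycle $\ForwardOrbit{y_0}$ (two cycles meeting in a point coincide), so $V \subseteq \overline{\ForwardOrbit{y_0}} = \ForwardOrbit{y_0}$; this yields an isolated periodic point, whence minimality forces $X$ to be a single orbit, which is countable as a countable union of discrete (hence countable, by second-countability) fibres $(T^a)^{-1}(w)$. The two cores are close cousins---both force points of an orbit inside $V$ into a finite cycle---but your version avoids the covering argument needed to manufacture an uncountable $V$, replaces the sequential contradiction with a closure argument, and extracts more information: when $\PT \ne \{0\}$ and $(X,T)$ is minimal, $X$ is the countable orbit of an isolated periodic point.
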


\begin{proof}
Since $\GT$ is second-countable, it suffices by \cite[Lemma~3.1]{BCFS2014} to show that $\IT = \GTo$. To see this, we suppose that $\IT \ne \GTo$ and derive a contradiction. Recall from \cref{prop: IT characterisation} that $\IT = \{ (x, p, x) : p \in \PT \}$. Since $\IT$ is nontrivial, there exists $p \in \Z {\setminus} \{0\}$ such that $(x,p,x) \in \GT$ for all $x$. By definition of the topology on $\GT$, it follows that for each $x \in X$ there is an open neighbourhood $U$ of $x$ and a pair $m > n \in \N$ such that $T^m(x) = T^n(x)$ for all $x \in U$. Since the pairs $m > n \in \N$ are countable and $X$ is not countable, that $\GT$ is second-countable implies that there exist $x, U, m, n$ as above so that $U$ is not countable. Since $X$ is second-countable and $T^n$ is a local homeomorphism, $(T^n)^{-1}(x)$ is countable for every $x \in X$, and so $V = T^n(U)$ is an uncountable open set and $p = m - n > 0$ satisfies $T^p(x) = x$ for all $x \in V$. Fix $x \in V$. Since $V$ is uncountable, there exists $y \in V$ such that
\begin{equation} \label[condition]{cond: for contradiction}
T^q(x) \ne y \ \text{ for all } q \in \N.
\end{equation}
Since $\GT$ is minimal, there is a sequence $(z_i, m_i, x)^\infty_{i=1}$ in $\GT$ such that $z_i \to y$. Write each $m_i = a_i - b_i$ with $a_i, b_i \ge 0$ so that $T^{a_i}(z_i) = T^{b_i}(x)$. For each $i \in \N$, there exists $k > 0$ such that $kp > b_i$; and then $T^{a_i + (kp-b_i)}(z_i) = T^{kp}(x) = x$. So we can assume that each $m_i > 0$ and that $T^{m_i}(z_i) = x$ for all $i \in \N$. By passing to a subsequence, we may assume that each $m_i - m_1$ is divisible by $p$. Fix $l > 0$ such that $lp > m_1$, let $d \coloneqq lp - m_1$, let $z \coloneqq T^d(x)$, and let $n_i \coloneqq m_i + d$ for all $i \in \N$. Then $T^{n_i}(z_i) = z$ for all $i \in \N$, and each $n_i$ is divisible by $p$. Since $z_i \to y$, we eventually have $z_i \in V$, and so we eventually have $z = T^{n_i}(z_i) = z_i$. But this forces $y = z = T^d(x)$, which contradicts \cref{cond: for contradiction}. Thus $\IT$ is trivial, as claimed.
\end{proof}

\begin{proof}[Proof of \cref{thm: CP simple}]
By \cref{cor: the CP}\cref{item: 2-cocycle}, the crossed product $C^*(\GT) \rtimes_{\alpha^h} \Z$ is isomorphic to the twisted groupoid C*-algebra $C^*(\GT \times \Z, c_h)$, to which we aim to apply \cref{thm: simplicity characterisation}. For this, observe first that if $\overline{T}$ is the action of $\N^2$ given by $\overline{T}^{(m,n)} = T^m$, then $\GT \times \Z \cong \GG_{\overline{T}}$.

First suppose that $(X,T)$ is not minimal. Then $\GG_{\overline{T}}$ is also not minimal, and the action $\rho$ is not minimal. So $(X,\overline{T})$ is not minimal, and hence \cref{thm: simplicity characterisation}\cref{item: main simple implies minimal} implies that $C^*(\GT) \rtimes_{\alpha^h} \Z \cong C^*(\GG_{\overline{T}},c_h)$ is not simple. So it suffices to prove the result when $(X,T)$ is minimal.

Now suppose that $(X,T)$ is minimal. Since $X$ is uncountable, \cref{lemma: X uncountable GT top principal} implies that $\IT = \GTo$. The isomorphism $\GT \times \Z \to \GG_{\overline{T}}$ is given by $((x, m, y), n) \mapsto (x, (m,n), y)$. So the interior $\II_{\overline{T}}$ of the isotropy of $\GG_{\overline{T}}$ is precisely $\{ (x, (m,n), x) : (x,m,x) \in \IT, \, n \in \Z \}$. So the preceding paragraph implies that $\II_{\overline{T}} = \{ (x, (0,n), x) : x \in X, \, n \in \Z \}$. The isomorphism $\GT \times \Z \to \GG_{\overline{T}}$ intertwines $c_h$ with the $2$-cocycle $\sigma \in Z^2(\GG_{\overline{T}},\T)$ given by $\sigma((x, (m,n), y), (y, (p, q), z)) \coloneqq \widetilde{h}(x,m,y)^q$. The restriction of this $\sigma$ to $\II_{\overline{T}}^{(2)}$ satisfies
\[
\sigma\big((x, (0,m), x), (x, (0,n), x)\big) = \widetilde{h}(x,0,x)^n = 1.
\]
Hence $\sigma$ is $\omega$-constant on $\II_{\overline{T}}$ with $\omega = 1$. It follows that $Z_\omega = P_{\overline{T}} = \{0\} \times \Z \cong \Z$.

The $\widehat{\Z}$-valued $1$-cocycle $\twistchar^\sigma$ obtained from \cref{lemma: twistchar_gamma^sigma}\cref{item: twistchar_.^sigma 1-cocycle} satisfies
\begin{align*}
\twistchar_{(x, (m,n), y)}^\sigma(p) &= \sigma\big((x, (m,n), y), (y, (0,p), y)\big) \\
&\qquad \cdot \sigma\big((x, (m,n+p), y), (y, (-m,-n), x)\big) \\
&\qquad \cdot \overline{\sigma\big((x, (m,n), y), (y, (-m,-n), x)\big)} \\
&= \widetilde{h}(x,m,y)^p \, \widetilde{h}(x, m, y)^{-n} \, \overline{\widetilde{h}(x, m, y)^{-n}} \\
&= \widetilde{h}(x, m, y)^p.
\end{align*}
So the isomorphism $\chi \mapsto \chi(1)$ from $\widehat{\Z}$ to $\T$ carries $\twistchar_{(x,(m,n),y)}^\sigma$ to $\widetilde{h}(x,m,y) \in \T$.

We have
\[
\HH_{\overline{T}} \coloneqq \GG_{\overline{T}}/\II_{\overline{T}} \cong (\GT \times \Z)/(\GTo \times \Z) \cong \GT
\]
and the isomorphism is the map $[(x, (m,n), y)] \mapsto (x,m,y)$. So the spectral action $\theta$ of $\HH_{\overline{T}}$ on $X \times \widehat{Z}_\omega$ of \cref{prop: spectral action} is identified with the action of $\GT$ on $X \times \T$ given by $\theta_{(x,m,y)}(y, z) \coloneqq (x, \widetilde{h}(x,m,y) z)$, which is precisely the action $\rho$. So \cref{thm: simplicity characterisation} shows that $C^*(\GG_{\overline{T}}, c_h)$ is simple if and only if $\rho$ is minimal.
\end{proof}

In the following result, we write $t$ and $o$ for the terminus (range) and origin (source) map in a topological graph, so as to avoid confusion with the range and source maps $r$ and $s$ in the associated groupoid. We write $X(E)$ for the graph correspondence associated to a topological graph $E = (E^0, E^1, t, o)$, and we write $\OO_{X(E)}$ for the associated Cuntz--Pimsner algebra. We write $(j_{C_0(E^0)}, \, j_{X(E)})$ for the universal Cuntz--Pimsner-covariant representation of $X(E)$ that generates $\OO_{X(E)}$. See \cite{Katsura2004TAMS, LPS2014, Yeend2006CM} for background on topological graphs and their C*-algebras.

\begin{cor} \label{cor: topgraph exact}
Let $E = (E^0, E^1, t, o)$ be a second-countable topological graph such that the terminus map $t\colon E^1 \to E^0$ is proper and surjective, and the infinite-path space $E^\infty$ is uncountable. Suppose that $\ell\colon E^1 \to \T$ is a continuous function. There is an action $\beta^\ell\colon \Z \curvearrowright C^*(E)$ such that $\beta^\ell_n(j_{X(E)}(\xi)) = j_{X(E)}(\ell^n \cdot \xi)$ for all $\xi \in C_c(E^1)$. Extend $\ell$ to a continuous function $\ell\colon E^* \to \T$ by defining $\ell(e_1 \dotsb e_n) \coloneqq \prod^n_{i=1} \ell(e_i)$ and $\ell\restr{E^0} \equiv 1$, and let $T\colon E^\infty \to E^\infty$ be the shift map $T(x_1 x_2 x_3 \dotsb) = x_2 x_3 \dotsb$. Then $C^*(E) \rtimes_{\beta^\ell} \Z$ is simple if and only if for every infinite path $x \in E^\infty$, the set
\begin{equation} \label{eqn: topgraph orbit}
\big\{ \big(\lambda T^n(x), \, \ell(\lambda) \overline{\ell(x(0,n))}\big) : n \in \N, \, \lambda \in E^* t(T^n(x)) \big\}
\end{equation}
is dense in $E^\infty \times \T$.
\end{cor}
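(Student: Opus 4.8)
The plan is to transport the whole problem to the groupoid model of \cite{Yeend2007JOT, Katsura2004TAMS} and then invoke \cref{thm: CP simple}. First I would take $T\colon E^\infty \to E^\infty$ to be the shift map; the hypotheses that $t$ is proper and surjective guarantee that $(E^\infty, T)$ is a rank-$1$ Deaconu--Renault system with $E^\infty$ a second-countable locally compact Hausdorff space, and \cite{Yeend2007JOT} supplies an isomorphism $C^*(E) \cong C^*(\GT)$ under which $j_{X(E)}(\xi)$ is a function supported on the degree-$(+1)$ bisection $\{(x,1,T(x)) : x \in E^\infty\}$ with value at $(x,1,T(x))$ built from $\xi(x_1)$. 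I would then define the continuous function $h\colon E^\infty \to \T$ by $h(x) \coloneqq \ell(x_1)$ and let $\widetilde{h}$ and $\alpha^h$ be the $1$-cocycle and action produced by \cref{lemma: the CP}. Transporting $\alpha^h$ through Yeend's isomorphism yields the required action $\beta^\ell$ on $C^*(E)$; to confirm the asserted formula $\beta^\ell_n(j_{X(E)}(\xi)) = j_{X(E)}(\ell^n \cdot \xi)$ I would compute $\widetilde{h}(x,1,T(x)) = \ell(x_1)$ directly from \cref{lemma: the CP}\cref{item: continuous 1-cocycle} and note that $\alpha^h_n$ then multiplies the generator by $\ell(x_1)^n$, which is precisely multiplication of $\xi$ by $\ell^n$.

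Having identified $(C^*(E), \beta^\ell)$ with $(C^*(\GT), \alpha^h)$, the crossed products are equivariantly isomorphic, so $C^*(E) \rtimes_{\beta^\ell} \Z \cong C^*(\GT) \rtimes_{\alpha^h} \Z$. Since $E^\infty$ is uncountable, \cref{thm: CP simple} applies verbatim and reduces simplicity of this crossed product to minimality of the action $\rho$ of $\GT$ on $E^\infty \times \T$ given by $\rho_\gamma(s(\gamma), z) = (r(\gamma), \widetilde{h}(\gamma)\, z)$. Thus it remains only to translate minimality of $\rho$ into the density condition on the sets \labelcref{eqn: topgraph orbit}.

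For this last step I would compute the $\rho$-orbit of a point $(x, 1)$. A general $\gamma \in (\GT)_x$ has the form $(y, p, x)$ with $T^a(y) = T^b(x)$ and $p = a - b$; writing $\lambda \coloneqq y(0,a)$ gives $y = \lambda\, T^b(x)$ with $\lambda \in E^* t(T^b(x))$, and as $\gamma$ ranges over $(\GT)_x$ the pair $(b, \lambda)$ ranges over all of $\{(n,\lambda) : n \in \N,\ \lambda \in E^* t(T^n(x))\}$. Evaluating \cref{lemma: the CP}\cref{item: continuous 1-cocycle} for large $N$ and using $T^a(y) = T^b(x)$, the long products telescope: the factors of $\ell$ along the common tail of $y$ and $x$ cancel, leaving
\[
\widetilde{h}(y, p, x) = \ell(\lambda)\, \overline{\ell(x(0,b))}.
\]
Hence $[x,1]_\rho$ is precisely the set in \labelcref{eqn: topgraph orbit} with $n = b$, and $[x,z]_\rho = \{(\mu, wz) : (\mu, w) \in [x,1]_\rho\}$ is its image under the homeomorphism $(\mu,w)\mapsto(\mu,wz)$ of $E^\infty \times \T$. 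Since every point of $E^\infty \times \T$ is such an $(x,z)$, the action $\rho$ is minimal if and only if \labelcref{eqn: topgraph orbit} is dense for every $x \in E^\infty$, which is the desired equivalence. I expect the telescoping computation to be routine; the main obstacle is the bookkeeping in matching $\beta^\ell$ with $\alpha^h$ through Yeend's isomorphism, since this requires pinning down the conventions (direction of concatenation, indexing of the shift, and the precise form of $j_{X(E)}(\xi)$ as a groupoid function) sharply enough to confirm the generator formula.
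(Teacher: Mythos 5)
Your proposal is correct and follows essentially the same route as the paper: identify $C^*(E)$ with $C^*(\GT)$ via Yeend's theorem, set $h(x) \coloneqq \ell(x_1)$, apply \cref{thm: CP simple}, and match the set \labelcref{eqn: topgraph orbit} with the $\rho$-orbit of $(x,1)$ using the telescoping identity $\widetilde{h}(\mu x, \lav \mu \rav - \lav \nu \rav, \nu x) = \ell(\mu)\overline{\ell(\nu)}$ together with the fact that $\rho$ commutes with rotation in the $\T$-coordinate. The only cosmetic difference is that the paper constructs $\beta^\ell$ intrinsically from the unitary $\xi \mapsto \ell \cdot \xi$ on the graph correspondence via the universal property of $\OO_{X(E)}$ and then checks that it intertwines with $\alpha^h$ under Yeend's isomorphism, whereas you define $\beta^\ell$ by transporting $\alpha^h$ through that isomorphism and verify the generator formula afterwards; both produce the same action.
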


\begin{proof}
The map $\xi \mapsto \ell \cdot \xi$ is a unitary operator $U_\ell$ on the graph correspondence $X(E)$. If $\xi \in C_c(E^1)$ is a positive-valued function such that $o$ is injective on $\supp(\xi)$, then a quick calculation shows that conjugation by $U_\ell$ fixes the rank-$1$ operator $\Theta_{\xi, \xi}$. Using this, it is routine to see that if $(\psi,\pi)$ is a covariant Toeplitz representation of $E$ as in \cite[Definitions 2.2~and~2.10]{LPS2014}, then so is $(\psi \circ U_\ell, \pi)$. So the universal property of $C^*(E) \cong \OO_{X(E)}$ described by \cite[Theorem~2.13]{LPS2014} yields a unique automorphism $\beta^\ell$ that fixes $j_{C_0(E^0)}(C_0(E^0))$ and satisfies $\beta^\ell(j_{X(E)}(\xi)) = j_{X(E)}(\ell \cdot \xi)$ for $\xi \in C_c(E^1)$. The formula $\beta^\ell_n \coloneqq (\beta^\ell)^n$ then gives the desired action.\footnote{We could also appeal to the fourth paragraph of \cite[Page~462]{LN2004}.}

Since $t\colon E^1 \to E^0$ is proper, \cite[Propositions~3.11~and~3.16]{AB2018} show that $E^\infty$ is a locally compact Hausdorff space and $T$ is a local homeomorphism. By \cite[Theorem~5.2]{Yeend2006CM}, there is an isomorphism $\phi\colon C^*(E) \to C^*(\GT)$ such that
\[
\phi(j_{C_0(E^0)}(f))(x,m,y) = \delta_{x,y} \, \delta_{m,0} \, f(t(x)) \quad \text{ for } f \in C_0(E^0)
\]
and
\[
\phi(j_{X(E)}(\xi))(x,m,y) = \delta_{T(x), y} \, \delta_{m,1} \, \xi(x_1)\quad \text{ for } \xi \in C_c(E^1).
\]

Define $h\colon E^\infty \to \T$ by $h(x) \coloneqq \ell(x_1)$. Then $h$ is continuous. Let $\alpha^h \in \Aut(C^*(\GT))$ be the automorphism $\alpha^h_1$ of \cref{cor: the CP}\cref{item: action exists}. A routine calculation shows that $\alpha^h \circ \phi$ agrees with $\phi \circ \beta^\ell$ on $j_{C_0(E^0)}(C_0(E^0)) \medcup j_{X(E)}(C_c(E^1))$, and hence the uniqueness of the automorphism $\beta^\ell$ discussed in the first paragraph shows that $\alpha^h \circ \phi = \phi \circ \beta^\ell$. It therefore suffices to show that $C^*(\GT) \rtimes_{\alpha^h} \Z$ is simple if and only if the set described in \cref{eqn: topgraph orbit} is dense for each $x \in E^\infty$.

Fix $x \in E^\infty$. Let $\widetilde{h}\colon \GT \to \T$ be the $1$-cocycle of \cref{cor: the CP}\cref{item: continuous 1-cocycle}. We claim that the set described in \cref{eqn: topgraph orbit} is precisely the orbit of $(x,1)$ under the action $\rho$ of \cref{thm: CP simple}. We have
\[
(\GT)_x = \left\{ (\lambda T^n(x), \lav \lambda \rav - n, x) \,:\, n \in \N, \, \lambda \in E^* t(T^n(x)) \right\},
\]
and so for each $\gamma \in (\GT)_x$, we have
\begin{equation} \label{eqn: rho_gamma orbit}
\rho_\gamma(x,1) = \big(\lambda T^n(x), \, \widetilde{h} (\lambda T^n(x), \lav \lambda \rav - n, x)\big),
\end{equation}
for some $n \in \N$ and $\lambda \in E^* t(T^n(x))$. Direct calculation shows that
\begin{equation} \label{eqn: tildeh vs ell}
\widetilde{h}(\mu x, \lav \mu \rav - \lav \nu \rav, \nu x) = \ell(\mu)\overline{\ell(\nu)},
\end{equation}
for all $x \in E^\infty$ and $\mu,\nu \in E^* t(x)$. Together, \cref{eqn: rho_gamma orbit,eqn: tildeh vs ell} imply that the set described in \cref{eqn: topgraph orbit} is the orbit of $(x,1)$ under $\rho$. Since $\rho$ commutes with the action of $\T$ on $E^\infty \times \T$ by translation in the second coordinate, the orbit of $(x, 1)$ is dense if and only if the orbit of $(x, z)$ is dense for every $z \in \T$. That is, the set described in \cref{eqn: topgraph orbit} is dense for each $x \in E^\infty$ if and only if every $\rho$-orbit is dense. So the result follows from \cref{thm: CP simple}.
\end{proof}

To conclude, for the class of topological graphs appearing in \cref{cor: topgraph exact}, we give a sufficient condition phrased purely in terms of the graph without reference to the shift map on its infinite-path space, for simplicity of the crossed product described there. (The hypothesis that $E^\infty$ is uncountable is quite weak, and follows from a number of elementary conditions on the graph: for example, that $E^0$ is uncountable, or that $E$ has at least one vertex that supports at least two distinct cycles.)

\begin{cor} \label{cor: topgraph checkable}
Let $E = (E^0, E^1, t, o)$ be a second-countable topological graph such that the terminus map $t\colon E^1 \to E^0$ is proper and surjective, and the infinite-path space $E^\infty$ is uncountable. Let $\ell\colon E^1 \to \T$ be a continuous function. Extend $\ell$ to $E^*$ by defining $\ell(e_1 \dotsb e_n) \coloneqq \prod^n_{i=1} \ell(e_i)$ and $\ell\restr{E^0} \equiv 1$. For each $v \in E^0$, define
\[
\ForwardOrbit{v} \coloneqq \bigcup_{\mu \in E^*v} (t(\mu), \ell(\mu)) \subseteq E^0 \times \T.
\]
If $\ForwardOrbit{v}$ is dense in $E^0 \times \T$ for each $v \in E^0$, then the crossed product $C^*(E) \rtimes_{\beta^\ell} \Z$ of \cref{cor: topgraph exact} is simple.
\end{cor}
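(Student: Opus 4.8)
The plan is to deduce this from \cref{cor: topgraph exact}, which already reduces simplicity of $C^*(E) \rtimes_{\beta^\ell} \Z$ to the assertion that for every infinite path $x \in E^\infty$ the set \eqref{eqn: topgraph orbit} is dense in $E^\infty \times \T$. So I would fix $x \in E^\infty$ and prove that this set is dense under the standing hypothesis that $\ForwardOrbit{v}$ is dense in $E^0 \times \T$ for every $v \in E^0$. In fact it suffices to use the $n = 0$ slice of \eqref{eqn: topgraph orbit}: since $x(0,0)$ is the empty path and $\ell\restr{E^0} \equiv 1$, this slice is exactly $\{ (\lambda x, \ell(\lambda)) : \lambda \in E^* t(x) \}$, and as $\lambda$ runs over $E^* t(x)$ the pairs $(t(\lambda), \ell(\lambda))$ run over $\ForwardOrbit{t(x)}$, which is dense in $E^0 \times \T$ by hypothesis. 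The task is therefore to promote this density of terminus-vertices-with-phases to density of the full infinite paths $\lambda x$.

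To this end I would fix a nonempty basic open set of $E^\infty \times \T$. Recalling from \cite[Section~3]{AB2018} that the cylinder sets $Z(V) = \{ z \in E^\infty : z(0,K) \in V \}$, for $K \ge 1$ and $V \subseteq E^K$ open, form a basis for the topology on $E^\infty$, a typical basic open set has the form $Z(V) \times A$ with $A \subseteq \T$ an open arc. Choose $z_0 \in Z(V)$, put $\mu_0 \coloneqq z_0(0,K) \in V$, and set $v \coloneqq o(\mu_0) = o((\mu_0)_K)$. I would then use density of $\ForwardOrbit{t(x)}$ to select $\rho \in E^* t(x)$ with $t(\rho)$ arbitrarily close to $v$ and $\ell(\rho)$ arbitrarily close to a target phase $\zeta$ to be fixed below; any such $\rho$ has $o(\rho) = t(x)$, so it is a legitimate path to prepend to $x$.

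The key step is to splice a near-copy of $\mu_0$ on top of $\rho$. Because $t(\rho)$ is only close to $v$, I cannot concatenate $\mu_0$ with $\rho$ directly; instead I would build a length-$K$ path $\mu$ with $o(\mu) = t(\rho)$ that is close to $\mu_0$, working edge-by-edge from the origin end and using that the origin map $o \colon E^1 \to E^0$ is a local homeomorphism (a defining property of a topological graph). Writing $\mu_0 = (\mu_0)_1 \cdots (\mu_0)_K$ and setting $v' \coloneqq t(\rho)$ near $v = o((\mu_0)_K)$, the local homeomorphism $o$ produces a unique edge $\mu_K$ near $(\mu_0)_K$ with $o(\mu_K) = v'$; then $t(\mu_K)$ is near $t((\mu_0)_K) = o((\mu_0)_{K-1})$, and a further application yields $\mu_{K-1}$ near $(\mu_0)_{K-1}$ with $o(\mu_{K-1}) = t(\mu_K)$, and so on up to $\mu_1$. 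The resulting path $\mu = \mu_1 \cdots \mu_K$ satisfies $o(\mu) = t(\rho)$ and converges to $\mu_0$ as $v' \to v$; hence for $t(\rho)$ close enough to $v$ we have $\mu \in V$, and by continuity of $\ell$ we have $\ell(\mu)$ close to $\ell(\mu_0)$.

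Finally I would assemble $\lambda \coloneqq \mu\rho$, which lies in $E^* t(x)$ (as $o(\lambda) = o(\rho) = t(x)$) and has $\lambda(0,K) = \mu \in V$, so that $\lambda x \in Z(V)$. Its phase is $\ell(\lambda) = \ell(\mu)\,\ell(\rho)$, which is close to $\ell(\mu_0)\,\zeta$; choosing $\zeta$ near $\overline{\ell(\mu_0)}\,w_0$ for a fixed point $w_0 \in A$, and taking the neighbourhoods in the forward-orbit approximation small enough, forces $\ell(\lambda) \in A$. Thus $(\lambda x, \ell(\lambda)) \in Z(V) \times A$, proving density and hence, by \cref{cor: topgraph exact}, simplicity of the crossed product. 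I expect the only genuine obstacle to be the topological lifting in the third step: one must confirm the cylinder-set basis for $E^\infty$ and check carefully that the edge-by-edge construction of $\mu$ depends continuously on $v'$ (which rests precisely on $o$ being a local homeomorphism and $t$ being continuous), so that approximate vertex matching can be upgraded to an honest admissible concatenation landing in the prescribed cylinder with controlled phase.
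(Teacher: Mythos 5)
Your proposal is correct and follows essentially the same route as the paper: use density of $\ForwardOrbit{t(x)}$ to land the terminus of a path $\lambda \in E^* t(x)$ in the $o$-image of the target cylinder while steering the phase, then splice on a length-$K$ prefix to enter the cylinder itself. The only difference is that the paper works from the outset with the basis of cylinders $Z(U)$, $U \subseteq E^n$ open with $o\restr{U}$ injective (from \cite{AB2018}), so your edge-by-edge lifting collapses to taking the unique $\mu \in U$ with $o(\mu) = t(\lambda)$; your more hands-on lifting via the local homeomorphism $o$ is also valid.
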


\begin{proof}
Suppose that $\ForwardOrbit{v}$ is dense in $E^0 \times \T$ for each $v \in E^0$. We aim to invoke \cref{thm: CP simple}. Fix $(x, w), (y, z) \in E^\infty \times \T$. Recall from \cite[Proposition~3.11~and~Lemma~3.13]{AB2018} that for $n \in \N$ and an open neighbourhood $U \subseteq E^n$ of $y(0,n)$ such that $o\restr{U}$ is injective, the set $Z(U) = \{ y' \in E^\infty : y'(0,n) \in U \}$ is a basic open neighbourhood of $y$. Let $d$ be the metric on $\T$ induced by the usual metric on $\R$ via the local homeomorphism $t \mapsto e^{it}$ from $\R$ to $\T$. Let $\rho$ be the action of $\GT$ on $E^\infty \times \T$ from \cref{thm: CP simple}. It suffices to fix a neighbourhood $U$ as above and an $\varepsilon > 0$ and show that there exists $\gamma \in (\GT)_x$ such that $\rho_\gamma(x,w) \in Z(U) \times B_d(z; \varepsilon)$. Let $\mu_y \coloneqq y(0,n) \in U$. Since $\ell$ is continuous, by shrinking $U$ if necessary, we may assume that
\begin{equation} \label[condition]{cond: U small}
d\big(\ell(\mu), \ell(\mu_y)\big) < \frac{\varepsilon}{2} \quad \text{ for all } \mu \in U.
\end{equation}
Since $o\colon E^1 \to E^0$ is a local homeomorphism, it is an open map, and so $o(U)$ is open. Since $\ForwardOrbit{t(x)}$ is dense in $E^0 \times \T$, we can find $\lambda \in E^* t(x)$ such that
\[
(t(\lambda), \ell(\lambda)) \in o(U) \times B_d\big(z\overline{\ell(\mu_y)}\overline{w}; \, \tfrac{\varepsilon}{2}\big).
\]
Let $\mu_{t(\lambda)}$ be the unique element of $U$ such that $o(\mu_{t(\lambda)}) = t(\lambda)$. Since $d$ is rotation-invariant, \cref{cond: U small} implies that
\begin{equation} \label{eqn: distance between mu terms}
d\big(\ell(\mu_{t(\lambda)} \lambda) w, \ell(\mu_y) \ell(\lambda) w\big) = d\big(\ell(\mu_{t(\lambda)}) \ell(\lambda) w, \ell(\mu_y) \ell(\lambda) w\big) = d\big(\ell(\mu_{t(\lambda)}), \ell(\mu_y)\big) < \frac{\varepsilon}{2}.
\end{equation}
Moreover, since $d$ is rotation-invariant and $\ell(\lambda) \in B_d\big(z\overline{\ell(\mu_y)}\overline{w}; \, \frac{\varepsilon}{2}\big)$, we have
\begin{equation} \label{eqn: distance to z}
d\big(\ell(\mu_y) \ell(\lambda) w, z\big) = d\big(\ell(\lambda), z \overline{\ell(\mu_y)} \overline{w}\big) < \frac{\varepsilon}{2}.
\end{equation}
Together, \cref{eqn: distance between mu terms,eqn: distance to z} imply that
\begin{equation} \label{eqn: distance to z less than epsilon}
d\big(\ell(\mu_{t(\lambda)} \lambda) w, z\big) \le d\big(\ell(\mu_{t(\lambda)} \lambda) w, \ell(\mu_y) \ell(\lambda) w\big) + d\big(\ell(\mu_y) \ell(\lambda) w, z\big) < \frac{\varepsilon}{2} + \frac{\varepsilon}{2} = \varepsilon.
\end{equation}
Now \cref{eqn: tildeh vs ell,eqn: distance to z less than epsilon} imply that
\[
\rho_{(\mu_{t(\lambda)} \lambda x, n + \lav \lambda \rav, x)}(x,w) = \big(\mu_{t(\lambda)} \lambda x, \, \ell(\mu_{t(\lambda)} \lambda) w\big) \in Z(U) \times B_d(z; \varepsilon),
\]
as required.
\end{proof}

\appendix

\section{Realising twisted group \texorpdfstring{C*-algebras}{C*-algebras} as induced algebras}
\label{sec_appendix: twisted group C*s are induced algebras}

In this appendix we describe how to realise twisted group C*-algebras as induced algebras, which is a key step in the proof of \cref{thm: twisted isotropy induced algebra}. These results are fairly well known and a detailed treatment is given in \cite[Theorem~4.3.1]{Armstrong2019}, so we give relatively little detail here. We assume knowledge of $C(X)$-algebras (or, more generally, $C_0(X)$-algebras). See \cite[Section~C.1]{Williams2007} for the definition and relevant results.

We first recall the definition of the induced algebra of a dynamical system. (See \cite[Section~6.3]{RW1998} for more details.)

\begin{definition} \label{def: induced algebra}
Let $G$ be a compact Hausdorff group acting continuously on the right of a locally compact Hausdorff space $X$, and let $\alpha$ be a strongly continuous action of $G$ on a C*-algebra $D$. The \hl{induced algebra} of the dynamical system $(D,G,\alpha)$ is defined by
\[
\Ind_G^X(D,\alpha) \coloneqq \{ f \in C_0(X,D) : f(x \cdot g) = \alpha_g^{-1}(f(x)) \text{ for all } x \in X \text{ and } g \in G \}.
\]
\end{definition}

\begin{thm} \label{thm: twisted group C* induced algebra}
Let $A$ be a countable discrete abelian group. Suppose that $\omega \in Z^2(A,\T)$ is a bicharacter that vanishes on $Z_\omega$, in the sense that $\omega(Z_\omega,A) \,\cup\, \omega(A,Z_\omega) = \{1\}$. Define $B \coloneqq A/Z_\omega$, and let $\tilde{\omega} \in Z^2(B,\T)$ be the bicharacter satisfying $\tilde{\omega}(p+Z_\omega,q+Z_\omega) = \omega(p,q)$ for all $p, q \in A$. Let $\{ u_p : p \in A \}$ be the canonical family of generating unitaries for the twisted group C*-algebra $C^*(A,\omega)$, and let $\{ U_{p+Z_\omega} :\, p+Z_\omega \in B \}$ be the canonical family of generating unitaries for the twisted group C*-algebra $C^*(B,\tilde{\omega})$.
\begin{enumerate}[label=(\alph*)]
\item \label{item: action of B hat on A hat for induced algebra} There is a continuous, free, proper right action of $\widehat{B}$ on $\widehat{A}$ given by
\[
(\phi \cdot \chi)(p) \coloneqq \phi(p) \, \chi(p+Z_\omega) \ \text{ for all } \phi \in \widehat{A}, \, \chi \in \widehat{B}, \, \text{and } p \in A.
\]
The orbit space $\widehat{A}/\widehat{B}$ is compact.
\item \label{item: action of B hat on C* for induced algebra} There is a strongly continuous action $\beta^B$ of $\widehat{B}$ on $C^*(B,\tilde{\omega})$ such that
\[
\beta_\chi^B(U_{p+Z_\omega}) = \chi(p+Z_\omega) \, U_{p+Z_\omega} \ \text{ for all } \chi \in \widehat{B} \text{ and } p \in A.
\]
\item \label{item: induced algebra isomorphism} There is an isomorphism $\Omega\colon C^*(A,\omega) \to \Ind_{\widehat{B}}^{\widehat{A}}\!\big(C^*(B,\tilde{\omega}),\,\beta^B\big)$ such that
\[
\Omega(u_p)(\phi) = \overline{\phi(p)} \, U_{p+Z_\omega} \ \text{ for all } p \in A \text{ and } \phi \in \widehat{A}.
\]
In particular, $C^*(A,\omega)$ is a $C(\widehat{A}/\widehat{B})$-algebra.
\end{enumerate}
\end{thm}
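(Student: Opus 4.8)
The plan is to take the three parts in order, obtaining (a) from Pontryagin duality, (b) from the universal property of the twisted group C*-algebra, and (c) by writing $\Omega$ down on generators and then checking it is an isomorphism fibrewise in the sense of $C_0(X)$-algebras. For part (a), I would dualise the short exact sequence $0 \to Z_\omega \to A \to B \to 0$. Since $A$ is countable and discrete, $\widehat{A}$ is compact, and Pontryagin duality converts the sequence into $0 \to \widehat{B} \to \widehat{A} \to \widehat{Z}_\omega \to 0$, realising $\widehat{B}$ inside $\widehat{A}$ as the annihilator of $Z_\omega$, i.e.\ as the characters $p \mapsto \chi(p+Z_\omega)$. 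Under this identification the formula $(\phi\cdot\chi)(p) = \phi(p)\,\chi(p+Z_\omega)$ is exactly translation of $\phi$ by the image of $\chi$, so continuity follows from continuity of multiplication in $\widehat{A}$ and freeness from injectivity of $\widehat{B}\hookrightarrow\widehat{A}$. Properness is automatic because $\widehat{B}$ is compact, and the orbit space is the compact topological quotient $\widehat{A}/\widehat{B}$, which \cite[Theorem~4.40]{Folland2016} identifies with $\widehat{Z}_\omega$ via $\phi\cdot\widehat{B}\mapsto\phi\restr{Z_\omega}$.

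For part (b), the assignment $U_{p+Z_\omega}\mapsto \chi(p+Z_\omega)\,U_{p+Z_\omega}$ preserves the defining $\tilde{\omega}$-relations because $\chi$ is a character, so the universal property of $C^*(B,\tilde{\omega})$ extends it to an endomorphism $\beta^B_\chi$; since $\beta^B_{\chi^{-1}}\circ\beta^B_\chi$ fixes the generators it is the identity, so each $\beta^B_\chi$ is an automorphism and $\chi\mapsto\beta^B_\chi$ is a homomorphism. Strong continuity is an $\tfrac{\varepsilon}{3}$-argument: $\chi\mapsto\beta^B_\chi(a)$ is continuous for $a$ in the dense $*$-subalgebra spanned by the $U_{p+Z_\omega}$, and the $\beta^B_\chi$ are uniformly isometric.

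For part (c), I would first set $V_p(\phi)\coloneqq\overline{\phi(p)}\,U_{p+Z_\omega}$ and verify both that $V_p$ lies in the induced algebra (a direct computation gives $V_p(\phi\cdot\chi)=(\beta^B_\chi)^{-1}(V_p(\phi))$) and that the $V_p$ obey the $\omega$-relations of $C^*(A,\omega)$ (using that $\phi$ is a character and $\tilde{\omega}(p+Z_\omega,q+Z_\omega)=\omega(p,q)$); the universal property of $C^*(A,\omega)$ then yields a $*$-homomorphism $\Omega$ with $\Omega(u_p)=V_p$. To show $\Omega$ is an isomorphism I would invoke the $C_0(X)$-algebra machinery of \cite[Appendix~C]{Williams2007} and \cite[Section~6.3]{RW1998}: the central copy of $C^*(Z_\omega)\cong C(\widehat{Z}_\omega)$ inside $C^*(A,\omega)$ (central precisely because $\omega$ vanishes on $Z_\omega$) makes $C^*(A,\omega)$ a $C(\widehat{Z}_\omega)$-algebra, the induced algebra is a $C(\widehat{A}/\widehat{B})$-algebra by construction, and $\Omega$ intertwines these structures. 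One must then check that every fibre map $\Omega_\mu$ is an isomorphism: the fibre of the induced algebra over an orbit is $C^*(B,\tilde{\omega})$ by \cite[Section~6.3]{RW1998}, while the fibre of $C^*(A,\omega)$ over $\mu$ collapses each $u_z$ to $\mu(z)$ and is again $C^*(B,\tilde{\omega})$; on generators $\Omega_\mu$ sends the image $\dot{u}_p$ of $u_p$ to $\overline{\phi_0(p)}\,U_{p+Z_\omega}$, and I would exhibit the explicit inverse $U_{p+Z_\omega}\mapsto\phi_0(p)\,\dot{u}_p$, checking it is well defined on cosets and $\tilde{\omega}$-multiplicative. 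The final assertion that $C^*(A,\omega)$ is a $C(\widehat{A}/\widehat{B})$-algebra is then transported through $\Omega$.

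The main obstacle is the bijectivity in part (c). The other steps are routine; the real work is organising the $C_0(X)$-algebra reduction and getting the base identification right. In particular, one has to notice that $\Omega$ respects the two central structures only after applying the inversion homeomorphism of $\widehat{Z}_\omega$, since $\Omega(u_z)$ takes the value $\overline{\mu(z)}$ over the orbit whose restriction to $Z_\omega$ is $\mu$; this is exactly why the fibre over $\mu$ must be matched with the orbit where $\phi_0\restr{Z_\omega}=\overline{\mu}$, after which the explicit fibrewise inverses make the argument go through.
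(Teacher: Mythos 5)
Your proposal is correct, and parts (a) and (b) match what the paper treats as routine; but your argument for the bijectivity of $\Omega$ in part (c) takes a genuinely different route. The paper constructs $\Omega$ exactly as you do, then proves surjectivity by viewing only the induced algebra as a $C(\widehat{A}/\widehat{B})$-algebra and applying the section-algebra Stone--Weierstrass theorem \cite[Proposition~C.24]{Williams2007}, and proves injectivity by a separate mechanism: the dual action $\beta^A$ of $\widehat{A}$ on $C^*(A,\omega)$ together with the faithful conditional expectation onto $\C 1_{C^*(A,\omega)}$ obtained by averaging over $\widehat{A}$ (\cref{lemma: action details}). A compatible action $\alpha^A$ on the induced algebra satisfies $\alpha^A_\phi \circ \Omega = \Omega \circ \beta^A_\phi$, so $\Omega(x) = 0$ forces $\Phi^A(x^*x) = 0$ and hence $x = 0$. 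You instead place compatible $C(\widehat{Z}_\omega)$-structures on \emph{both} sides---using the central copy of $C^*(Z_\omega) \cong C(\widehat{Z}_\omega)$ in $C^*(A,\omega)$, which is central precisely because $\omega$ vanishes on $Z_\omega$---and verify that each fibre map is an isomorphism by writing down an explicit inverse on generators. This treats injectivity and surjectivity uniformly and gives a concrete bundle picture of $C^*(A,\omega)$ over $\widehat{Z}_\omega$ with fibre $C^*(B,\tilde{\omega})$, at the cost of two delicate checks that you correctly isolate: well-definedness of $U_{p+Z_\omega} \mapsto \phi_0(p)\,\dot{u}_p$ on cosets (which requires $\phi_0\restr{Z_\omega} = \overline{\mu}$, since $\dot{u}_{p+z} = \mu(z)\,\dot{u}_p$ for $z \in Z_\omega$), and the fact that $\Omega$ intertwines the two central structures only after the inversion homeomorphism of $\widehat{Z}_\omega$, because $\Omega(u_z)(\phi) = \overline{\phi(z)}\,1$. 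The paper's route avoids identifying the fibres of $C^*(A,\omega)$ altogether; yours yields more structural information. You should just make explicit that global surjectivity still requires the Stone--Weierstrass step (\cite[Proposition~C.24]{Williams2007}) applied to the image of $\Omega$, which contains the central $C(\widehat{A}/\widehat{B})$ and exhausts each fibre; fibrewise surjectivity alone does not immediately give surjectivity without it.
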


A detailed proof of \cref{thm: twisted group C* induced algebra} can be found in \cite[Theorem~4.3.1]{Armstrong2019}. Parts \cref{item: action of B hat on A hat for induced algebra,item: action of B hat on C* for induced algebra} are routine, but we reproduce some of the details of part~\cref{item: induced algebra isomorphism} below. For this, we need the following preliminary result.

\begin{lemma} \label{lemma: action details}
Let $A$ be a countable discrete abelian group, and let $\omega \in Z^2(A,\T)$ be a bicharacter.
\begin{enumerate}[label=(\alph*)]
\item \label{item: strongly cts action} There is a strongly continuous action $\beta^A$ of $\widehat{A}$ on $C^*(A,\omega)$ such that $\beta_\phi^A(u_p) = \phi(p) u_p$ for all $\phi \in \widehat{A}$ and $p \in A$.
\item \label{item: cond exp action} There is a faithful conditional expectation $\Phi^A\colon C^*(A,\omega) \to \C 1_{C^*(A,\omega)}$ such that
\[
\Phi^A(x) = \int_{\widehat{A}} \, \beta_\phi^A(x) \, \d\phi \quad \text{ for all } x \in C^*(A,\omega).
\]
\item \label{item: injective homo of twisted group C*} Suppose that $Y$ is a nonzero unital C*-algebra and $\Psi\colon C^*(A,\omega) \to Y$ is a unital homomorphism. If $x \in C^*(A,\omega)$ satisfies $\Psi\big(\beta_\phi^A(x)\big) = 0$ for all $\phi \in \widehat{A}$, then $x = 0$.
\end{enumerate}
\end{lemma}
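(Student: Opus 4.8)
The plan is to build everything from the universal property of $C^*(A,\omega)$ and then reduce the injectivity statement in part~\cref{item: injective homo of twisted group C*} to a scalar computation via the conditional expectation of part~\cref{item: cond exp action}.

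For part~\cref{item: strongly cts action}, I would fix $\phi \in \widehat{A}$ and observe that $\{\phi(p)u_p : p \in A\}$ is again a family of unitaries satisfying the $\omega$-relation: since $\phi$ is a character, $(\phi(p)u_p)(\phi(q)u_q) = \phi(p+q)\,\omega(p,q)\,u_{p+q} = \omega(p,q)\,\phi(p+q)u_{p+q}$. The universal property of $C^*(A,\omega)$ then yields a homomorphism $\beta_\phi^A$ with $\beta_\phi^A(u_p) = \phi(p)u_p$, and $\beta_{\overline\phi}^A$ is an inverse on generators, so each $\beta_\phi^A$ is an automorphism. Checking $\beta_\phi^A \circ \beta_\psi^A(u_p) = (\phi\psi)(p)u_p$ on generators shows $\phi \mapsto \beta_\phi^A$ is a homomorphism into $\Aut(C^*(A,\omega))$. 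Strong continuity is then the usual $\tfrac{\varepsilon}{3}$-argument: $\phi \mapsto \beta_\phi^A(u_p) = \phi(p)u_p$ is continuous for each fixed $p$ because evaluation of characters is continuous, and the finite linear combinations of the $u_p$ are dense.

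For part~\cref{item: cond exp action}, I would use that $A$ is countable discrete, so $\widehat{A}$ is a compact abelian group, equipped with normalised Haar measure. Strong continuity from part~\cref{item: strongly cts action} makes $\phi \mapsto \beta_\phi^A(x)$ continuous, so the Bochner integral defining $\Phi^A$ converges. On a generator, orthogonality of characters gives $\Phi^A(u_p) = \left(\int_{\widehat A}\phi(p)\,\d\phi\right)u_p = \delta_{p,e}\,1$, so $\Phi^A$ has range $\C 1$ and restricts to the identity there; averaging over a compact group action is the standard way to produce a conditional expectation, so positivity, contractivity, idempotency and the module property are routine. Faithfulness I would argue directly: if $x^*x \ge 0$ and $\Phi^A(x^*x) = 0$, then for any state $\psi$ of $C^*(A,\omega)$ the function $\phi \mapsto \psi(\beta_\phi^A(x^*x))$ is continuous and nonnegative with integral $\psi(\Phi^A(x^*x)) = 0$, forcing it to vanish identically; evaluating at the trivial character gives $\psi(x^*x) = 0$, and since states separate points, $x^*x = 0$.

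Part~\cref{item: injective homo of twisted group C*} is where the pieces combine, and is the step I expect to carry the real content. Given $\Psi(\beta_\phi^A(x)) = 0$ for all $\phi$, I would pass to $x^*x$: using that $\Psi$ and $\beta_\phi^A$ are homomorphisms, $\Psi(\beta_\phi^A(x^*x)) = \Psi(\beta_\phi^A(x))^*\Psi(\beta_\phi^A(x)) = 0$ for every $\phi$. Because $\Psi$ is bounded and linear it commutes with the Bochner integral, so $\Psi(\Phi^A(x^*x)) = \int_{\widehat A}\Psi(\beta_\phi^A(x^*x))\,\d\phi = 0$. By part~\cref{item: cond exp action}, $\Phi^A(x^*x) = \lambda 1$ for some $\lambda \ge 0$, whence $\lambda\,\Psi(1) = \lambda 1_Y = 0$; since $Y \ne \{0\}$ the unit $1_Y$ is nonzero, so $\lambda = 0$. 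Thus $\Phi^A(x^*x) = 0$, and faithfulness from part~\cref{item: cond exp action} gives $x^*x = 0$, i.e.\ $x = 0$. The only delicate points are justifying that $\Psi$ passes through the integral and that $\Psi$ is injective on the scalars $\C 1$ (which holds because $\Psi$ is unital and $Y$ is nonzero); both are handled by the reduction to $\C 1$ effected by $\Phi^A$.
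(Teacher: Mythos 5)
Your proposal is correct and follows essentially the same route as the paper: parts (a) and (b) are the standard universal-property and Haar-averaging arguments that the paper delegates to \cite[Lemmas~4.3.2 and~4.3.4]{Armstrong2019}, and your part (c) — passing to $x^*x$, pulling $\Psi$ through the Bochner integral, using that a unital $\Psi$ into a nonzero algebra is injective on $\C 1$, and then invoking faithfulness of $\Phi^A$ — is exactly the paper's argument. No gaps.
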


\begin{proof}
Parts~\cref{item: strongly cts action,item: cond exp action} follow from standard arguments (see \cite[Lemmas 4.3.2 and 4.3.4]{Armstrong2019}).

For part~\cref{item: injective homo of twisted group C*}, fix $x \in C^*(A,\omega)$ such that $\Psi\big(\beta_\phi^A(x)\big) = 0$ for all $\phi \in \widehat{A}$. Then
\[
\Psi\big(\beta_\phi^A(x^*x)\big) = \Psi\big(\beta_\phi^A(x)\big)^* \, \Psi\big(\beta_\phi^A(x)\big) = 0,
\]
and \cite[Lemma~C.3]{RW1998} implies that
\[
\Psi\big(\Phi^A(x^*x)\big) \,=\, \Psi\Big( \int_{\widehat{A}} \beta_\phi^A(x^*x) \, \d\phi \Big) \,=\, \int_{\widehat{A}} \Psi\big(\beta_\phi^A(x^*x)\big) \, \d\phi \,=\, 0.
\]
Since $\Psi$ is unital, it is injective on $\C 1_{C^*(A, \omega)}$, and so we deduce that $\Phi^A(x^*x) = 0$. Hence $x = 0$, because $\Phi^A$ is faithful.
\end{proof}

\begin{proof}[Proof of \cref{thm: twisted group C* induced algebra}\cref{item: induced algebra isomorphism}]
Let $\YY_{A,\omega} \coloneqq \Ind_{\widehat{B}}^{\widehat{A}}\!\big(C^*(B,\tilde{\omega}),\,\beta^B\big)$. We aim to use the universal property of $C^*(A,\omega)$ to find a homomorphism $\Omega\colon C^*(A,\omega) \to \YY_{A,\omega}$ such that $\Omega(u_p)(\phi) = \overline{\phi(p)} \, U_{p+Z_\omega}$ for all $p \in A$ and $\phi \in \widehat{A}$. For each $p \in A$, define $v_p\colon \widehat{A} \to C^*(B,\tilde{\omega})$ by $v_p(\phi) \coloneqq \overline{\phi(p)} \, U_{p+Z_\omega}$. A routine argument shows that each $v_p$ is continuous and that $v_p(\phi \cdot \chi) = \big(\beta_\chi^B\big)^{-1}\big(v_p(\phi)\big)$ for all $\phi \in \widehat{A}$ and $\chi \in \widehat{B}$, and hence $v_p \in \YY_{A,\omega}$. It is clear that each $v_p$ is a unitary. For all $p, q \in A$, we have
\[
U_{p+Z_\omega} \, U_{q+Z_\omega} \,=\, \tilde{\omega}(p+Z_\omega,q+Z_\omega) \, U_{p+q+Z_\omega} \,=\, \omega(p,q) \, U_{p+q+Z_\omega},
\]
and hence for all $\phi \in \widehat{A}$, we have
\[
(v_p v_q)(\phi) \,=\, \overline{\phi(p)} \, U_{p+Z_\omega} \, \overline{\phi(q)} \, U_{q+Z_\omega} \,=\, \omega(p,q) \, \overline{\phi(p+q)} \, U_{p+q+Z_\omega} \,=\, \omega(p,q) \, v_{p+q}(\phi).
\]
Therefore, $v_p v_q = \omega(p,q) \, v_{p+q}$, and so the universal property of $C^*(A,\omega)$ implies that there is a homomorphism $\Omega\colon C^*(A,\omega) \to \YY_{A,\omega}$ such that $\Omega(u_p) = v_p$ for each $p \in A$.

We first show that $\Omega$ is surjective. Let $Z\big(\YY_{A,\omega}\big)$ denote the centre of $\YY_{A,\omega}$. By \cite[Proposition~3.49]{Williams2007}, the unital C*-algebra $\YY_{A,\omega}$ is a $C(\widehat{A}/\widehat{B})$-algebra with respect to the nondegenerate homomorphism $\Phi_{\YY_{A,\omega}}\colon C(\widehat{A}/\widehat{B}) \to Z\big(\YY_{A,\omega}\big)$ given by
\[
\Phi_{\YY_{A, \omega}}(f)(\phi) = f(\phi \cdot \widehat{B}) \, 1_{C^*(B, \tilde{\omega})}.
\]
For each $\phi \in \widehat{A}$, the set
\[
I_{\phi \cdot \widehat{B}} \,\coloneqq\, \clspan\big\{ \Phi_{\YY_{A,\omega}}(f) \, g \,:\, f \in C(\widehat{A}/\widehat{B}),\, g \in \YY_{A,\omega},\, f(\phi \cdot \widehat{B}) = 0 \big\}
\]
is an ideal of $\YY_{A,\omega}$. Define
\[
\AA \,\coloneqq\, \bigsqcup_{\phi \cdot \widehat{B} \in \widehat{A}/\widehat{B}} \YY_{A,\omega} / I_{\phi \cdot \widehat{B}}\,,
\]
and let $\rho\colon \AA \to \widehat{A}/\widehat{B}$ be the surjective map given by $\rho(g + I_{\phi \cdot \widehat{B}}) \coloneqq \phi \cdot \widehat{B}$. An application of \cite[Proposition~C.10(a)~and~Theorem~C.25]{Williams2007} shows that there is a unique topology on $\AA$ such that $\big(\AA, \rho, \widehat{A}/\widehat{B}\big)$ is an upper semicontinuous C*-bundle, and that for each $g \in \YY_{A,\omega}$, the section $\phi \cdot \widehat{B} \mapsto g + I_{\phi \cdot \widehat{B}}$ is continuous. Define
\[
\Gamma(\AA) \coloneqq \big\{ h\colon \widehat{A}/\widehat{B} \to \AA \,:\, h \text{ is continuous, and } \rho\big(h(\phi \cdot \widehat{B})\big) = \phi \cdot \widehat{B} \text{ for all } \phi \in \widehat{A} \big\},
\]
and let $F\colon \YY_{A,\omega} \to \Gamma(\AA)$ be the map given by $F(g)(\phi \cdot \widehat{B}) \coloneqq g + I_{\phi \cdot \widehat{B}}$, for all $g \in \YY_{A,\omega}$ and $\phi \in \widehat{A}$. By \cite[Theorem~C.26]{Williams2007}, $F$ is a $C(\widehat{A}/\widehat{B})$-linear isomorphism of $\YY_{A,\omega}$ onto the $C(\widehat{A}/\widehat{B})$-algebra $\Gamma(\AA)$. An application of \cite[Proposition~C.24]{Williams2007} shows that $F\!\left(\Omega\big(C^*(A,\omega)\big)\right)$ is a dense subspace of $\Gamma(\AA)$. Since $F$ is an isomorphism, it follows that $\Omega\big(C^*(A,\omega)\big)$ is dense in $\YY_{A,\omega}$, and hence $\Omega$ is surjective.

To see that $\Omega$ is injective, let $\beta^A$ be the strongly continuous action of \cref{lemma: action details}\cref{item: strongly cts action}. For each $\phi \in \widehat{A}$, the map $v_p\mapsto \phi(p)v_p$ extends to an automorphism $\alpha_\phi^A$ of $\YY_{A,\omega}$ satisfying $\alpha_\phi^A \circ \Omega = \Omega \circ \beta_\phi^A$. If $x \in C^*(A,\omega)$ satisfies $\Omega(x) = 0$, then for all $\phi \in \widehat{A}$, we have $\Omega\big(\beta_\phi^A(x)\big) = \alpha_\phi^A\big(\Omega(x)\big) = 0$, and then \cref{lemma: action details}\cref{item: injective homo of twisted group C*} gives $x = 0$. Hence $\Omega$ is injective. Thus $\Omega$ is an isomorphism, and $C^*(A,\omega)$ is a $C(\widehat{A}/\widehat{B})$-algebra with respect to the homomorphism $\Omega^{-1} \circ \Phi_{\YY_{A,\omega}}$.
\end{proof}

\newpage
\bibliographystyle{amsplain}
\makeatletter\renewcommand\@biblabel[1]{[#1]}\makeatother
\bibliography{ABS1_references}

\providecommand{\bysame}{\leavevmode\hbox to3em{\hrulefill}\thinspace}
\providecommand{\MR}{\relax\ifhmode\unskip\space\fi MR }
\providecommand{\MRhref}[2]{%
  \href{http://www.ams.org/mathscinet-getitem?mr=#1}{#2}
}
\providecommand{\href}[2]{#2}
\begin{thebibliography}{10}

\bibitem{ADR2000}
C.~Anantharaman-Delaroche and J.~Renault, \emph{{Amenable Groupoids (with a
  foreword by Georges Skandalis and Appendix B by E. Germain)}}, Monographies
  de L'Enseignement Math\'ematique, vol.~36, L'Enseignement Math\'ematique,
  Geneva, 2000.

\bibitem{Armstrong2014}
B.~Armstrong, \emph{Twisted topological higher-rank graphs and their
  {C*-algebras}}, Honours thesis, University of Wollongong, Australia, 2014.

\bibitem{Armstrong2019}
\bysame, \emph{Simplicity of twisted {C*-algebras} of topological higher-rank
  graphs}, Ph.D. thesis, The University of Sydney, Australia, August 2019.

\bibitem{Armstrong2022}
\bysame, \emph{A uniqueness theorem for twisted groupoid {C*-algebras}}, J.
  Funct. Anal. \textbf{283} (2022), 1--33, \doi{10.1016/j.jfa.2022.109551}.

\bibitem{AB2018}
B.~Armstrong and N.~Brownlowe, \emph{Product-system models for twisted
  {C*-algebras} of topological higher-rank graphs}, J. Math. Anal. Appl.
  \textbf{466} (2018), 1443--1475, \doi{10.1016/j.jmaa.2018.06.052}.

\bibitem{BK1973}
L.~Baggett and A.~Kleppner, \emph{Multiplier representations of abelian
  groups}, J. Funct. Anal. \textbf{14} (1973), 299--324,
  \doi{10.1016/0022-1236(73)90075-x}.

\bibitem{BCFS2014}
J.H. Brown, L.O. Clark, C.~Farthing, and A.~Sims, \emph{Simplicity of algebras
  associated to \'etale groupoids}, Semigroup Forum \textbf{88} (2014),
  433--452, \doi{10.1007/s00233-013-9546-z}.

\bibitem{BCSS2016}
J.H. Brown, L.O. Clark, A.~Sierakowski, and A.~Sims, \emph{Purely infinite
  simple {C*-algebras} that are principal groupoid {C*-algebras}}, J. Math.
  Anal. Appl. \textbf{439} (2016), 213--234, DOI:
  \href{https://doi.org/10.1016/j.jmaa.2016.02.055}{10.1016/{\allowbreak}j.jmaa.2016.02.055}.

\bibitem{Brown1982}
K.S. Brown, \emph{{Cohomology of Groups}}, Graduate Texts in Mathematics,
  vol.~87, Springer, New York, 1982, \doi{10.1007/978-1-4684-9327-6}.

\bibitem{BO2008}
N.P. Brown and N.~Ozawa, \emph{{C*-Algebras and Finite-Dimensional
  Approximations}}, Graduate Studies in Mathematics, vol.~88, American
  Mathematical Society, Providence, RI, 2008, \doi{10.1090/gsm/088}.

\bibitem{CRST2021}
T.M. Carlsen, E.~Ruiz, A.~Sims, and M.~Tomforde, \emph{Reconstruction of
  groupoids and {C*-rigidity} of dynamical systems}, Adv. Math. \textbf{390}
  (2021), 1--55, \doi{10.1016/j.aim.2021.107923}.

\bibitem{Deaconu1995}
V.~Deaconu, \emph{Groupoids associated with endomorphisms}, Trans. Amer. Math.
  Soc. \textbf{347} (1995), 1779--1786,
  \doi{10.1090/s0002-9947-1995-1233967-5}.

\bibitem{Engelking1989}
R.~Engelking, \emph{{General Topology}}, 2nd ed., Sigma Series in Pure
  Mathematics, vol.~6, Heldermann Verlag, Berlin, 1989.

\bibitem{Exel2008}
R.~Exel, \emph{Inverse semigroups and combinatorial {C*-algebras}}, Bull. Braz.
  Math. Soc. (N.S.) \textbf{39} (2008), 191--313,
  \doi{10.1007/s00574-008-0080-7}.

\bibitem{ER2007}
R.~Exel and J.~Renault, \emph{Semigroups of local homeomorphisms and
  interaction groups}, Ergodic Theory Dynam. Systems \textbf{27} (2007),
  1737--1771, \doi{10.1017/S0143385707000193}.

\bibitem{FHKP2022}
C.~Farsi, L.~Huang, A.~Kumjian, and J.~Packer, \emph{Cocycles on groupoids
  arising from {$\N^k$}-actions}, Ergodic Theory Dynam. Systems \textbf{42}
  (2022), 3325--3356, \doi{10.1017/etds.2021.69}.

\bibitem{Folland2016}
G.B. Folland, \emph{{A Course in Abstract Harmonic Analysis}}, 2nd ed.,
  Textbooks in Mathematics, CRC Press, Boca Raton, FL, 2016,
  \doi{10.1201/b19172}.

\bibitem{Goehle2009}
G.~Goehle, \emph{Groupoid crossed products}, Ph.D. thesis, Dartmouth College,
  Hanover, NH, May 2009, \doi{10.1349/ddlp.263}.

\bibitem{Green1978}
P.~Green, \emph{The local structure of twisted covariance algebras}, Acta Math.
  \textbf{140} (1978), 191--250, \doi{10.1007/bf02392308}.

\bibitem{aHKS2011}
A.~{\noopsort{Huef}{an Huef}}, A.~Kumjian, and A.~Sims, \emph{{A
  Dixmier--Douady theorem for Fell algebras}}, J. Funct. Anal. \textbf{260}
  (2011), 1543--1581, \doi{10.1016/j.jfa.2010.11.011}.

\bibitem{Katsura2004TAMS}
T.~Katsura, \emph{A class of {C*-algebras} generalizing both graph algebras and
  homeomorphism {C*-algebras} {I}, fundamental results}, Trans. Amer. Math.
  Soc. \textbf{356} (2004), 4287--4322, \doi{10.1090/s0002-9947-04-03636-0}.

\bibitem{Kleppner1965}
A.~Kleppner, \emph{Multipliers on abelian groups}, Math. Ann. \textbf{158}
  (1965), 11--34, \doi{10.1007/bf01370393}.

\bibitem{KP2000}
A.~Kumjian and D.~Pask, \emph{Higher rank graph {C*-algebras}}, New York J.
  Math. \textbf{6} (2000), 1--20.

\bibitem{KPR1998}
A.~Kumjian, D.~Pask, and I.~Raeburn, \emph{{Cuntz--Krieger} algebras of
  directed graphs}, Pacific J. Math. \textbf{184} (1998), 161--174,
  \doi{10.2140/pjm.1998.184.161}.

\bibitem{KPRR1997}
A.~Kumjian, D.~Pask, I.~Raeburn, and J.~Renault, \emph{Graphs, groupoids, and
  {Cuntz--Krieger} algebras}, J. Funct. Anal. \textbf{144} (1997), 505--541,
  \doi{10.1006/jfan.1996.3001}.

\bibitem{KPS2012JFA}
A.~Kumjian, D.~Pask, and A.~Sims, \emph{Homology for higher-rank graphs and
  twisted {C*-algebras}}, J. Funct. Anal. \textbf{263} (2012), 1539--1574,
  \doi{10.1016/j.jfa.2012.05.023}.

\bibitem{KPS2013JMAA}
\bysame, \emph{On the {$K$-theory} of twisted higher-rank-graph {C*-algebras}},
  J. Math. Anal. Appl. \textbf{401} (2013), 104--113,
  \doi{10.1016/j.jmaa.2012.11.030}.

\bibitem{KPS2015TAMS}
\bysame, \emph{On twisted higher-rank graph {C*-algebras}}, Trans. Amer. Math.
  Soc. \textbf{367} (2015), 5177--5216, \doi{10.1090/s0002-9947-2015-06209-6}.

\bibitem{KPS2016JNG}
\bysame, \emph{Simplicity of twisted {C*-algebras} of higher-rank graphs and
  crossed products by quasifree actions}, J. Noncommut. Geom. \textbf{10}
  (2016), 515--549, \doi{10.4171/jncg/241}.

\bibitem{LN2004}
M.~Laca and S.~Neshveyev, \emph{{KMS states of quasi-free dynamics on Pimsner
  algebras}}, J. Funct. Anal. \textbf{211} (2004), 457--482,
  \doi{10.1016/j.jfa.2003.08.008}.

\bibitem{LPS2014}
H.~Li, D.~Pask, and A.~Sims, \emph{An elementary approach to {C*-algebras}
  associated to topological graphs}, New York J. Math. \textbf{20} (2014),
  447--469.

\bibitem{Li2020}
X.~Li, \emph{{Every classifiable simple C*-algebra has a Cartan subalgebra}},
  Invent. Math. \textbf{219} (2020), 653--699,
  \doi{10.1007/s00222-019-00914-0}.

\bibitem{OPT1980}
D.~Olesen, G.K. Pedersen, and M.~Takesaki, \emph{Ergodic actions of compact
  abelian groups}, J. Operator Theory \textbf{3} (1980), 237--269.

\bibitem{RT1985}
I.~Raeburn and J.L. Taylor, \emph{{Continuous trace C*-algebras with given
  Dixmier--Douady class}}, J. Austral. Math. Soc. Ser. A \textbf{38} (1985),
  394--407, \doi{10.1017/s1446788700023661}.

\bibitem{RW1998}
I.~Raeburn and D.P. Williams, \emph{{Morita Equivalence and Continuous-Trace
  C*-Algebras}}, Math. Surveys and Monographs, vol.~60, Amer. Math. Soc.,
  Providence, RI, 1998, \doi{10.1090/surv/060}.

\bibitem{Renault1980}
J.~Renault, \emph{{A Groupoid Approach to C*-Algebras}}, Lecture Notes in
  Math., vol. 793, Springer, Berlin, 1980, \doi{10.1007/bfb0091072}.

\bibitem{Renault1991}
\bysame, \emph{{The ideal structure of groupoid crossed product C*-algebras
  (with an appendix by Georges Skandalis)}}, J. Operator Theory \textbf{25}
  (1991), 3--36.

\bibitem{Renault2003}
\bysame, \emph{{AF} equivalence relations and their cocycles}, Operator
  algebras and mathematical physics ({C}onstan\c{t}a, 2001), Theta, Bucharest,
  2003, \doi{10.48550/arXiv.math/0111182}, pp.~365--377.

\bibitem{Sims2020}
A.~Sims, \emph{Hausdorff \'etale groupoids and their {C*-algebras}}, Operator
  algebras and dynamics: groupoids, crossed products, and Rokhlin dimension
  (F.~Perera, ed.), Advanced Courses in Mathematics, CRM Barcelona,
  Birkh\"auser, 2020, \doi{10.1007/978-3-030-39713-5}.

\bibitem{SW2016}
A.~Sims and D.P. Williams, \emph{The primitive ideals of some \'etale groupoid
  {C*-algebras}}, Algebr. Represent. Theor. \textbf{19} (2016), 255--276,
  \doi{10.1007/s10468-015-9573-4}.

\bibitem{Williams2007}
D.P. Williams, \emph{{Crossed Products of C*-Algebras}}, Math. Surveys and
  Monographs, vol. 134, Amer. Math. Soc., Providence, RI, 2007,
  \doi{10.1090/surv/134}.

\bibitem{Yeend2006CM}
T.~Yeend, \emph{Topological higher-rank graphs and the {C*-algebras} of
  topological {$1$-graphs}}, Operator theory, operator algebras, and
  applications, Contemp. Math., vol. 414, Amer. Math. Soc., Providence, RI,
  2006, \doi{10.1090/conm/414/07812}, pp.~231--244.

\bibitem{Yeend2007JOT}
\bysame, \emph{Groupoid models for the {C*-algebras} of topological higher-rank
  graphs}, J. Operator Theory \textbf{57} (2007), 95--120.

\end{thebibliography}
\vspace{4ex}

\end{document}